\begin{document}

\title[Micropterons in Diatomic FPUT Lattices]{Micropteron Traveling Waves in Diatomic Fermi-Pasta-Ulam-Tsingou Lattices under the Equal Mass Limit}

\author{Timothy E. Faver}
\address{Mathematical Institute, University of Leiden, P.O. Box 9512, 2300 RA Leiden, The Netherlands, t.e.faver@math.leidenuniv.nl}

\author{Hermen Jan Hupkes}
\address{Mathematical Institute, University of Leiden, P.O. Box 9512, 2300 RA Leiden, The Netherlands, hhupkes@math.leidenuniv.nl}

\keywords{FPU, FPUT, nonlinear Hamiltonian lattice, diatomic lattice, heterogeneous granular media, generalized solitary wave, nonlocal solitary wave, nanopteron, soliton}

\thanks{We thank Doug Wright for originally suggesting this problem to us and for his helpful comments and perspectives.}

\subjclass[2010]{Primary 35C07, 37K60; Secondary 35B20, 35B40}

\maketitle

\begin{abstract}
The diatomic Fermi-Pasta-Ulam-Tsingou (FPUT) lattice is an infinite chain of alternating particles connected by identical nonlinear springs.  
We prove the existence of micropteron traveling waves in the diatomic FPUT lattice in the limit as the ratio of the two alternating masses approaches 1, at which point the diatomic lattice reduces to the well-understood monatomic FPUT lattice.  
These are traveling waves whose profiles asymptote to a small periodic oscillation at infinity, instead of vanishing like the classical solitary wave.  
We produce these micropteron waves using a functional analytic method, originally due to Beale, that was successfully deployed in the related long wave and small mass diatomic problems.
Unlike the long wave and small mass problems, this equal mass problem is not singularly perturbed, and so the amplitude of the micropteron's oscillation is not necessarily small beyond all orders (i.e., the traveling wave that we find is not necessarily a nanopteron).  
The central challenge of this equal mass problem hinges on a hidden solvability condition in the traveling wave equations, which manifests itself in the existence and fine properties of asymptotically sinusoidal solutions (Jost solutions) to an auxiliary advance-delay differential equation.
\end{abstract}

\section{Introduction}

\subsection{The diatomic FPUT lattice}
A diatomic Fermi-Pasta-Ulam-Tsingou (FPUT) lattice is an infinite one-dimensional chain of particles of alternating masses connected by identical springs.
These lattices, also called mass dimers, are a material generalization of the finite lattice of identical particles studied numerically by Fermi, Pasta, and Ulam \cite{fput-original} and Tsingou \cite{dauxois}; such lattices are valued in applications as models of wave propagation in discrete and granular materials \cite{brillouin, kevrekidis}.  

We index the particles and their masses by $j \in \Z$ and let $u_j$ denote the position of the $j$th particle.
After a routine nondimensionalization, we may assume that the $j$th particle has mass 
\begin{equation}\label{masses}
m_j 
= \begin{cases}
1, &j \text{ is odd} \\
m, &j \text{ is even}
\end{cases}
\end{equation}
and that each spring exerts the force $F(r) = r+r^2$ when stretched a distance $r$ from its equilibrium length.
Newton's law then implies that the position functions $u_j$ satisfy the system
\begin{equation}\label{original equations of motion}
\begin{cases}
\ddot{u}_j = F(u_{j+1}-u_j)-F(u_j-u_{j-1}), &j \text{ is odd} \\
\\
m\ddot{u}_j = F(u_{j+1}-u_j)-F(u_j-u_{j-1}), &j \text{ is even}.
\end{cases}
\end{equation}
We sketch a diatomic FPUT lattice in Figure \ref{fig-mass dimer}.

\begin{figure}
\[
\input{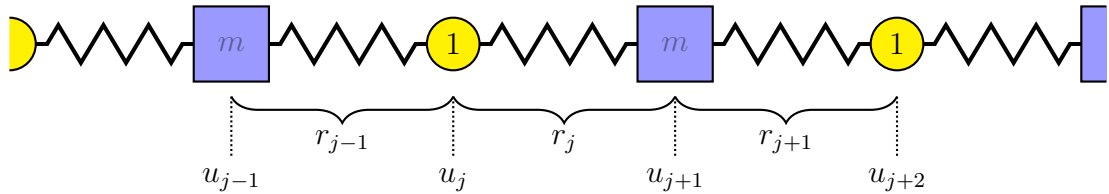}
\]
\caption{The mass dimer ($r_j := u_{j+1}-u_j$)}
\label{fig-mass dimer}
\end{figure}

We are interested in the existence and properties of traveling waves in diatomic lattices in the limit as the ratio of the two alternating masses approaches 1. 
Specifically, we define the relative displacement between the $j$th and the $(j+1)$st particle to be
\[
r_j := u_{j+1}-u_j,
\]
and then we make the traveling wave ansatz
\begin{equation}\label{tw ansatz}
r_j(t) = \begin{cases}
p_1(j-ct), &j \text{ is even} \\
p_2(j-ct), &j \text{ is odd}.
\end{cases}
\end{equation}
Here $p_1$ and $p_2$ are the traveling wave profiles and $c \in \R$ is the wave speed.

When the masses are identical, the lattice reduces to a {\it{monatomic}} lattice, which, due to the work of Friesecke and Wattis \cite{friesecke-wattis} and Friesecke and Pego \cite{friesecke-pego1,friesecke-pego2,friesecke-pego3,friesecke-pego4} is known to bear solitary traveling waves, i.e., waves whose profiles vanish exponentially fast at spatial infinity; see also Pankov \cite{pankov} for a comprehensive overview of monatomic traveling waves.
Our interest, then, is to determine how the monatomic solitary traveling wave changes when the mass ratio is close to 1.

\subsection{Parameter regimes}
We derive our motivation for this equal mass situation from two recent papers studying traveling waves in diatomic lattices under different limits.
Faver and Wright \cite{faver-wright} fix the mass ratio\footnote{They work with $w := 1/m > 1$; after rescaling and relabeling the lattice, we may equivalently think of their results for $m \in (0,1)$.} and consider the long wave limit, i.e., they look for traveling waves where the wave speed is close to a special $m$-dependent threshold called the ``speed of sound'' and where the traveling wave profile is close to a certain KdV $\sech^2$-type soliton.  
Hoffman and Wright \cite{hoffman-wright} fix the wave speed and consider the small mass limit, in which the ratio of the alternating masses approaches zero, thereby reducing the lattice from diatomic to monatomic\footnote{This is not the same monatomic lattice that results from the equal mass limit; the springs in this small mass limiting lattice are double the length of the original springs in the diatomic lattice, and so they exert twice the original force.
This factor of 2 ends up affecting the wave speed of the traveling waves that Hoffman and Wright construct: theirs have speed close to $\sqrt{2}$.}.
Figure \ref{fig-all the results} sketches the bands of long wave (in yellow) and small mass (in red) traveling waves and indicates, roughly, how they depend on wave speeds and mass ratios.

In both problems, the solitary wave that exists in the limiting case perturbs into a traveling wave whose profile asymptotes to a small amplitude periodic oscillation or ``ripple.'' 
That is, the wave is not ``localized'' in the ``core'' of the classical solitary wave, and so, per Boyd \cite{boyd}, it is a {\it{nonlocal}} solitary wave.
Moreover, the long wave and small mass problems are singularly perturbed, which causes the amplitude of their periodic ripples to be small beyond all orders of the long wave/small mass parameter.
So, these nonlocal traveling waves are, in Boyd's parlance, {\it{nanopterons}}; see \cite{boyd} for an overview of the nanopteron's many incarnations in applied mathematics and nature. 

The question of the equal mass limit then follows naturally from the success of these two studies.
It was raised in the conclusion of \cite{faver-wright}, where the authors wondered if the diatomic long wave profiles would converge to those found by Friesecke and Pego in the monatomic long wave limit \cite{friesecke-pego1}, and appears as far back as Brillouin's book \cite{brillouin}, which examines both the small and equal mass limits for lattices with linear spring forces.

The articles \cite{faver-wright} and \cite{hoffman-wright} both derive their nanopteron traveling waves via a method due to Beale \cite{beale2} for a capillary-gravity water wave problem.
Beale's method was later adapted by Amick and Toland \cite{amick-toland} for a model  singularly perturbed KdV-type fourth order equation.
More recently, Faver \cite{faver-spring-dimer, faver-dissertation} used Beale's method to study the long wave problem in spring dimer lattices (FPUT lattices with alternating spring forces but constant masses), and Johnson and Wright \cite{johnson-wright} adapted it for a singularly perturbed Whitham equation.

Although the equal mass problem is not, ultimately, singularly perturbed, its structure has enough in common with these predecessors that we are able to adapt Beale's method to this situation, too.
We discover that the monatomic traveling wave perturbs into a ``micropteron'' traveling wave as the mass ratio hovers around 1.
Boyd uses this term to refer to a nonlocal solitary wave whose ripples are only algebraically small in the relevant small parameter, not small beyond {\it{all}} algebraic orders.
We do not find such small beyond all orders estimates in our equal mass problem, and so we consciously avoid using the term ``nanopteron'' for our profile.

We sketch this micropteron wave in Figure \ref{fig-nonlocal sol wave} and provide in Figure \ref{fig-all the results} an informal, but evocative, cartoon comparing the three families of nonlocal solitary waves (long wave, small mass, equal mass) that now exist for the diatomic FPUT lattice.
We state our main result below in Theorem \ref{main theorem - informal}.

Beyond these nanopteron problems, wave propagation in diatomic and more generally heterogeneous lattices has received considerable recent attention; we mention, among others, the papers \cite{chirilus-bruckner-etal, betti-pelinovsky, qin,vswp,sv,wattis,lustri} for theoretical and numerical examples of waves in FPUT lattices under different material regimes.
See \cite{gmwz} for a discussion of long wave KdV approximations in polyatomic FPUT lattices and \cite{co-ops} for a further discussion of the metastability of these waves in diatomic lattices.
The paper \cite{hmsz} studies a regular perturbation problem in monatomic FPUT lattices in which the spring force is perturbed from a known piecewise quadratic potential; the resulting solutions are asymptotic, like ours, to a sinusoid whose amplitude is algebraically small. 
Lattice differential equations abound in contexts beyond the FPUT model that we study here; see, for example, \cite{hmssvv} for a survey of traveling wave results for the Nagumo lattice equation and related models.

\begin{figure}
\[
%
%
%
\begin{tikzpicture}
%
%
\draw[thick,<->] (-7.5,0)--(7.5,0)node[right]{$x$};
\draw[thick,<->] (0,-.5)--(0,3);

\draw[thick] plot[domain=-7.5:7.5,smooth,samples=500] (\x,{2/(1+(\x)^2)+cos(deg(30*\x))/8});
\draw[densely dotted,very thick,blue] plot[domain=-7.5:7.5,smooth,samples=500] (\x,{2.35/(1+2*(\x)^2)});

\draw[blue,<-,thick] (-.25,2.25)--(-1.25,2.5)node[left]{$\varsigma_c$};

\draw[<-,thick] (1.5,.85)--(2.5,1.5);
\node[anchor=north west,outer sep=0pt, inner sep = 0pt] at (2.55,1.65)
{
\begin{minipage}{1.5in}
\setlength{\baselineskip}{0pt}
the nonlocal solitary \\
wave profile
\end{minipage}
};

\end{tikzpicture}
%
\]
\caption{One of the micropteron profiles ($p_1$ or $p_2$) sketched close to the monatomic solitary wave $\varsigma_c$.  
In a nanopteron, the periodic ripple would be so small as to be invisible relative to the monatomic profile; this is not the case in the equal mass limit.}
\label{fig-nonlocal sol wave}
\end{figure}
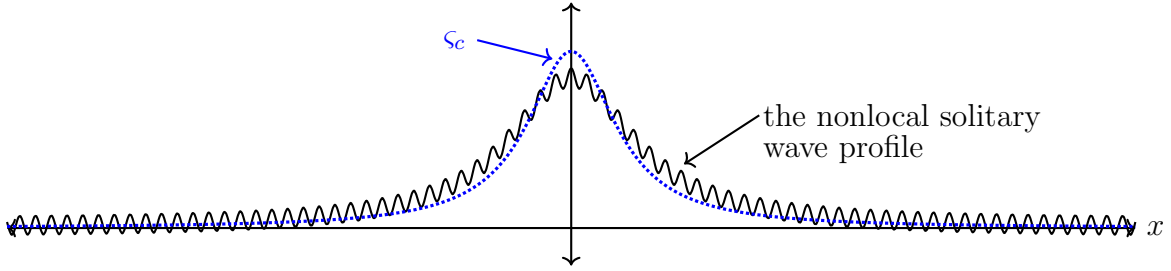

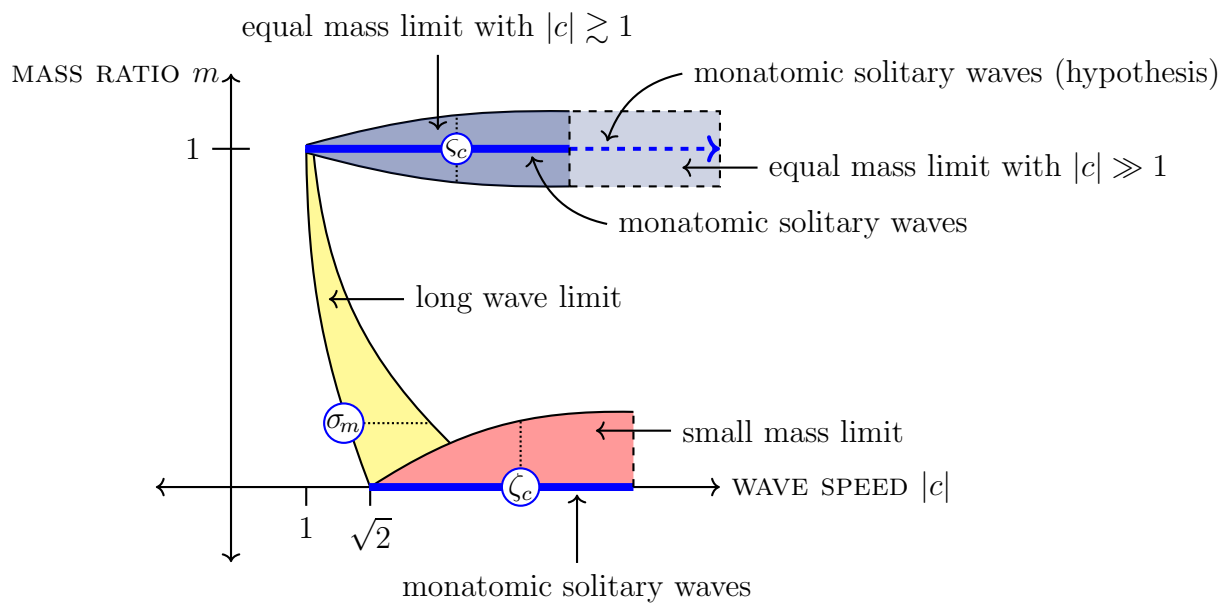
\begin{figure}
\[
\begin{tikzpicture}[thick]

\draw[<->] (0,-1)--(0,5.5)node[left]{\sc mass ratio $m$};
\draw[<->] (-1,0)--(6.5,0)node[right]{\sc wave speed $|c|$};

\draw (-.25,4.5)node[left]{1}--(.25,4.5);
\draw (1,0)--(1,-.25)node[below]{1};

\draw (1.85,0)--(1.85,-.25)node[below]{$\sqrt{2}$};


\begin{scope}

\clip (0,4.5)--(0,0)--(1.85,0) to[bend left=17] (4.5+.85,1)--(4.5+.85,4.5)--cycle;

\fill[yellow,opacity=.4] (1,4.5) to[bend right = 10] (1.85,0)--(3,0)--(3,.5) to[bend left = 20] (1.1,4.4);

\draw (1,4.5) to[bend right=10] (1.85,0);
\draw (1.1,4.415) to[bend right=20] (3,.5);

\end{scope}

\begin{scope}

\clip (1,4.5) to[bend right = 10] (1.85,0)--(3,.5) to[bend left = 20] (1,4.5);
\draw[densely dotted] (1,.85)--(4,.85);
\end{scope}

\node[circle,draw=blue,outer sep=0pt,inner sep = 0pt,fill=white] at (1.5,.85){$\sigma_{\!m}$};

\draw[<-] (1.3,2.5)--(2.3,2.5)node[right]{long wave limit};


\fill[red,opacity=.4] (1.85,0) to[bend left=17] (4.5+.85,1)--(4.5+.85,0)--cycle;

\draw (1.85,0) to[bend left=17] (4.5+.85,1);

\draw[blue,line width = 3pt] (1.85-.015,0)--(4.5+.85,0);
\draw[<-] (4+.85,.75)--(5+.85,.75)node[right]{small mass limit};
\draw[<-] (3.75+.85,-.1)--(3.75+.85,-1.01)node[below]{monatomic solitary waves};

\begin{scope}
\clip (1+.85,0) to[bend left=17] (4.5+.85,1)--(4.5+.85,0)--cycle;

\draw[densely dotted] (3+.85,1)--(3+.85,0);
\end{scope}

\node[circle,draw=blue,outer sep=0pt,inner sep = 0pt,fill=white] at (3+.85,0){$\zeta_c$};

\draw[dashed] (4.5+.85,0)--(4.5+.85,1);


\draw[thick] (1,4.55) to[bend left=8] (4.5,5);
\draw[thick] (1,4.45) to[bend right=8] (4.5,4);

\fill[leidenblue,opacity=.4] (1,4.55) to[bend left=8] (4.5,5)--(4.5,4) to[bend left=8] (1,4.45);
\fill[leidenblue,opacity=.2] (4.5,5) rectangle (6.5,4);

\draw[blue,line width = 3pt] (.985,4.5)--(4.5,4.5);
\draw[blue,dashed,->, line width = 1.5pt] (4.5,4.5)--(6.5,4.5);

\draw[dashed] (4.5,5) rectangle (6.5,4);
\draw[->] (2.75,5.75)node[above]{equal mass limit with $|c| \gtrsim 1$}--(2.75,4.75);
\draw[<-] (5,4.6)to[bend left] (6,5.5)node[right]{monatomic solitary waves (hypothesis)};
\draw[<-] (4,4.4) to[bend right] (5,3.5)node[right]{monatomic solitary waves};
\draw[<-] (6,4.25)--(7,4.25)node[right]{equal mass limit with $|c| \gg 1$};

\begin{scope}
\clip (1,4.55) to[bend left=8] (4.5,5)--(4.5,4) to[bend left=8] (1,4.45);
\draw[densely dotted] (3,4)--(3,5);
\end{scope}

\node[circle,draw=blue,outer sep=.25pt,inner sep = .25pt,fill=white] at (3,4.5){$\varsigma_c$};

\end{tikzpicture}
\]
\caption{A comparison of nonlocal solitary waves across the different diatomic FPUT problems.
In this paper, for $|c| \gtrsim 1$ fixed and $m \approx 1$, we find micropterons close to a Friesecke-Pego solitary wave profile $\varsigma_c$.
Under suitable hypotheses on the existence of a monatomic solitary wave for $|c| \gg 1$, we may still find micropterons close to that profile.
For $|c| \gtrsim \sqrt{2}$ fixed and $m \approx 0$, Hoffman and Wright found nanopterons close to a (different) Friesecke-Pego solitary wave $\zeta_c$.
For $m \in (0,1)$ fixed and $|c|$ close to an $m$-dependent threshold called the ``speed of sound,'' Faver and Wright found nanopterons close to a KdV $\sech^2$-type profile $\sigma_m$.
That the bands collapse as $|c| \to 1^+$ or $|c| \to \sqrt{2}^+$ is intentional; see (the proofs of) Lemma C.7 in \cite{faver-wright}, Lemma 7.1 in \cite{hoffman-wright}, and Lemma \ref{critical frequency technical lemma} in this paper.
This graphic elides the interesting, but difficult, question of how the different families of waves interact; for example, for $|c| \gtrsim 1$ and $m \approx 1$, how do the long wave nanopterons and equal mass micropterons compare?
See Section 7 in \cite{faver-wright} for a further discussion of the challenges involved in addressing these questions.}
\label{fig-all the results}
\end{figure}

\subsection{The traveling wave problem}
After making the traveling wave ansatz \eqref{tw ansatz} for the original equations of motion \eqref{original equations of motion}, we write $1/m = 1+\mu$ for $\mu \in (-1,1)$ and make the linear change of variables
\[
\rho_1 = \frac{p_1+p_2}{2}
\quadword{and}
\rho_2 = \frac{p_1-p_2}{2}
\]
to obtain the equivalent system
\begin{equation}\label{cov simplified}
\bunderbrace{c^2\rhob'' + \D_{\mu}\rhob + \D_{\mu}\nl(\rhob,\rhob)}{\G_c(\rhob,\mu)}
=0.
\end{equation}
The details of this change of variables are discussed in Appendix \ref{tw derivation}.
For now, we focus on the definitions and properties of the operators $\D_{\mu}$ and $\nl$.

First, let $S^d$ be the ``shift-by-$d$'' operator defined by $(S^df)(x) := f(x+d)$ and set
\[
A := S^1 + S^{-1},
\qquad 
\delta := S^1-S^{-1}.
\] 
Then we have
\begin{equation}\label{D-mu defn}
\D_{\mu}
:= \frac{1}{2}\begin{bmatrix*}
(2+\mu)(2-A) &\mu\delta \\
-\mu\delta &(2+\mu)(2+A)
\end{bmatrix*}.
\end{equation}
Next, we define
\begin{equation}\label{nl}
\nl(\rhob,\grave{\rhob}) 
= \begin{pmatrix*}
\nl_1(\rhob,\grave{\rhob}) \\
\nl_2(\rhob,\grave{\rhob})
\end{pmatrix*}
:= \begin{pmatrix*} 
\rho_1\grave{\rho}_1 + \rho_2\grave{\rho}_2 \\
\rho_1\grave{\rho}_2 + \grave{\rho}_1{\rho}_2
\end{pmatrix*}.
\end{equation}

The version \eqref{cov simplified} is particularly useful because it preserves a number of symmetries.
Namely, $\G_c$ maps
\begin{equation}\label{all the pretty symmetries}
\{\text{even functions}\} \times \{\text{odd functions}\} \times \R
\to \{\text{even mean-zero functions}\} \times \{\text{odd functions}\}.
\end{equation}
We prove these symmetries in Appendix \ref{tw derivation}.
We say that a function $f$ is ``mean-zero'' if $\hat{f}(0) = 0$; here $\hat{f}$ is the Fourier transform of $f$, and our conventions for the Fourier transform are outlined in Appendix \ref{fourier analysis appendix}.

When $\mu = 0$, the diatomic lattice reverts to a monatomic lattice, and the traveling wave problem \eqref{cov simplified} reduces to
\[
\begin{cases}
c^2\rho_1''+(2-A)(\rho_1+\rho_1^2+\rho_2^2) =0 \\
c^2\rho_2'' + (2+A)(2\rho_1\rho_2) =0.
\end{cases}
\]
If we take $\rho_2 = 0$, then the second equation is satisfied, and the first becomes
\[
c^2\rho_1'' + (2-A)(\rho_1+\rho_1^2) 
= 0.
\]
This is the equation for the traveling wave profile of a monatomic FPUT lattice.
For $|c| \gtrsim 1$, it has an even exponentially decaying (or localized) solution due to Friesecke and Pego \cite{friesecke-pego1}, which we call $\varsigma_c$.
We discuss the properties of $\varsigma_c$ in greater detail in Theorem \ref{friesecke-pego}.

\subsection{Linearizing at the Friesecke-Pego solution}\label{linearize at fp section}
If we set $\varsigmab_c := (\varsigma_c,0)$, we see that $\G_c(\varsigmab_c,0) = 0$.
Then for $\mu$ small, we are interested in solutions $\rhob$ to $\G_c(\rhob,\mu)$ that are close to the Friesecke-Pego solution $\varsigmab_c$.
In order to perturb from $\varsigmab_c$, we first define Sobolev spaces of exponentially localized functions.

\begin{definition}\label{Hrq defn}
Let
\begin{equation}\label{hrq}
H_q^r := \set{f \in H^r(\R)}{\cosh^q(\cdot)f \in H^r(\R)},
\qquad
\norm{f}_{H_q^r} = \norm{f}_{r,q} := \norm{\cosh^q(\cdot)f}_{H^r(\R)}.
\end{equation}
and 
\[
E_q^r := H_q^r \cap \{\text{even functions}\}
\quadword{and}
O_q^r := H_q^r \cap \{\text{odd functions}\}.
\]
For a function $\fb = (f_1,f_2) \in H_q^r \times H_q^r$, we set
\[
\norm{\fb}_{r,q}
:= \norm{f_1}_{r,q} + \norm{f_2}_{r,q}.
\]
\end{definition}

Under this notation, we have $\varsigma_c \in \cap_{r=0}^{\infty} E_q^r$ for $q$ sufficiently small.
We set $\rhob = \varsigmab_c + \varrhob$, where $\varrhob = (\varrho_1,\varrho_2) \in E_q^2 \times O_q^2$, and compute that $\G_c(\varsigmab_c+\varrhob,\mu) = 0$ if and only if $\varrho_1$ and $\varrho_2$ satisfy the system
\begin{equation}\label{distilled tw syst}
c^2\varrhob'' + \D_0\varrhob + 2\D_0\nl(\varsigmab_c,\varrhob)
= \rhsb_c(\varrhob,\mu)
= \begin{pmatrix*}
\rhs_{c,1}(\varrhob,\mu) \\
\rhs_{c,2}(\varrhob,\mu)
\end{pmatrix*}.
\end{equation}
The right side $\rhsb_c(\varrhob,\mu)$ is ``small'' in the sense that it consists, roughly, of linear combinations of terms of the form $\mu$, $\mu\varrhob$, and $\varrhob^{.2}$.

The first component of this system has the form
\begin{equation}\label{H-c defn}
\bunderbrace{c^2\varrho_1'' + (2-A)(1+2\varsigma_c)\varrho_1}{\H_c\varrho_1}
= \rhs_{c,1}(\varrhob,\mu).
\end{equation}
The operator $\H_c$ is the linearization of the monatomic traveling wave problem at $\varsigma_c$.
Proposition 3.1 from \cite{hoffman-wright} tells us that, for $q$ sufficiently small, $\H_c$ is invertible from $E_q^{r+2}$ to $E_{q,0}^r$ for any $r \ge 0$, where
\[
E_{q,0}^r
:= \set{f \in E_q^r}{\hat{f}(0) = 0}.
\]
In some sense, this can be seen as a spectral stability result for the monatomic wave; see also Lemma 4.2 in \cite{friesecke-pego3} and Lemma 6 in \cite{herrmann-matthies-uniqueness} for Fredholm properties of $\H_c$, under different guises, in exponentially weighted Sobolev spaces.
It follows that \eqref{H-c defn} is equivalent to
\begin{equation}\label{varrho-1 fp}
\varrho_1
= \H_c^{-1}\rhs_{c,1}(\varrhob,\mu),
\end{equation}
which is a fixed point equation for $\varrho_1$ on the function space $E_q^r$.
We now attempt to construct a similar fixed point equation for $\varrho_2$; our failure in this attempt will be quite instructive.

\subsection{The operator $\L_c$}\label{Lc intro section}
The second component of \eqref{distilled tw syst} is
\begin{equation}\label{L-c defn}
\bunderbrace{c^2\varrho_2'' + (2+A)(1+2\varsigma_c)\varrho_2}{\L_c\varrho_2}
= \rhs_{c,2}(\varrhob,\mu).
\end{equation}
The operator $\L_c$ is the sum of a constant-coefficient second-order advance-delay differential operator and a nonlocal term, which we write more explicitly as 
\begin{equation}\label{B-c Sigma-c defns}
\L_cf
= \bunderbrace{c^2f'' + (2+A)f}{\B_cf}
+ \bunderbrace{2(2+A)(\varsigma_cf)}{-\Sigma_cf}.
\end{equation}
The minus sign on $\Sigma_c$ is purely for convenience.
If $\L_c$ were invertible from $O_q^{r+2}$ to $O_q^r$, then \eqref{L-c defn} would rearrange to a fixed point equation for $\varrho_2$, which we could combine with \eqref{varrho-1 fp} to get a fixed point problem for $\varrhob$.
In this attempt we fail: $\L_c$ is injective but not surjective.

\subsubsection{Injectivity}\label{injectivity section}
We first sketch how, at least in the case $|c| \gtrsim 1$, the operator $\L_c$ is injective from $O_q^{r+2}$ to $O_q^r$.
We begin with the constant-coefficient part $\B_c$.
This operator is a Fourier multiplier with symbol
\begin{equation}\label{tB-c defn}
\tB_c(k)
:= -c^2k^2 + 2 + 2\cos(k).
\end{equation}
That is, if $f \in L^2$ or $f \in L_{\per}^2$, then
\[
\hat{\B_cf}(k)
= \tB_c(k)\hat{f}(k).
\]
See Appendix \ref{fourier multipliers appendix} for further definitions and properties of Fourier multipliers.
A straightforward application of the intermediate value theorem yields a unique $\omega_c > 0$ such that $\tB_c(\omega_c) = 0$.
This Fourier condition in fact characterizes the range of $\B_c$: using results of Beale (phrased as Lemma \ref{beale fm} in this paper) and some further properties of $\tB_c$ from Proposition \ref{symbol of B}, one can show that $\B_c$ is invertible from $O_q^{r+2}$ to the subspace
\[
\Dfrak_{c,q}^r 
:= \set{f \in O_q^r}{\hat{f}(\omega_c) = 0}.
\]

Next, when $|c| \gtrsim 1$, the Friesecke-Pego solution $\varsigma_c$ is small in the sense that $\norm{\varsigma_c}_{L^{\infty}} = \O((c-1)^2)$; see part \ref{FP Linfty-2} of Proposition \ref{hoffman-wayne proposition}.
Consequently, the operator $\Sigma_c$ is also small.
However, $\Sigma_c$ does not map $O_q^{r+2}$ to $\Dfrak_{q,c}^r$; otherwise, we could use the Neumann series to invert $\B_c-\Sigma_c$.
Nonetheless, one can parley the smallness of $\Sigma_c$ and the invertibility of $\B_c$ into a coercive estimate of the form
\begin{equation}\label{L-c coercive}
\L_cf = g, \ f \in O_q^2, \ g \in O_q^0
\Longrightarrow
\norm{f}_{2,q}
\le C(c,q)\norm{g}_{0,q},
\end{equation}
which implies that $\L_c$ is injective on $O_q^2$ and, by the containment $O_q^{r+2} \subseteq O_q^2$ for $r \ge 0$, on $O_q^r$ for all $r \ge 0$.

\subsubsection{A characterization of the range of $\L_c$}\label{char of range}
We show, abstractly, that $\L_c$ is not surjective from $O_q^{r+2}$ to $O_q^r$.
Since the operator $\Sigma_c$ localizes functions, it is compact from $O_q^{r+2}$ to $O_q^r$, and so the Fredholm index of $\L_c = \B_c + \Sigma_c$ equals the index of $\B_c$
The Fourier analysis in Section \ref{injectivity section} shows that the index of $\B_c$ is $-1$, so $\L_c$ also has index $-1$.
Since $\L_c$ is injective, we conclude that $\L_c$ has a one-dimensional cokernel in $O_q^r$ and thus is not surjective.

However, we can characterize the range of $\L_c$ in $O_q^r$ more precisely.
Classical functional analysis tells us there is a nontrivial bounded linear functional $\zfrak_c$ on $O_q^0$ such that
\begin{equation}\label{equal mass solvability}
\L_cf = g, \surjmatter
\iff
\zfrak_c[g] = 0.
\end{equation}
Let
\[
\Zcal_q^{\star}
:= \set{f \in L_{\loc}^1}{\sech^q(\cdot)f \in L^2} \cap \{\text{odd functions}\}.
\]
The Riesz representation theorem then furnishes an nonzero function $\tzfrak_c \in \Zcal_q^{\star}$ such that 
\[
\zfrak_c[g] 
= \int_{-\infty}^{\infty} g(x)\tzfrak_c(x) \dx, \ g \in O_q^0.
\]
It follows that $\L_c^*\tzfrak_c = 0$, where 
\begin{equation}\label{L-c-star defn}
\L_c^*g
:= \B_cg + \bunderbrace{2\varsigma_c(x)(2+A)g}{-\Sigma_c^*g}
\end{equation}
is the $L^2$-adjoint of $\L_c$.
We conclude from \eqref{equal mass solvability} the ``solvability condition''
\begin{equation}\label{equal mass solvability revealed}
\L_cf = g, \surjmatter
\iff \int_{-\infty}^{\infty} g(x)\tzfrak_c(x) \dx = 0.
\end{equation}

It appears, then, that our attempt to solve the traveling wave problem $\G_c(\rhob,\mu) = 0$ by perturbing from the Friesecke-Pego solution $\varsigmab_c$ will fail, since $\L_c$ is not surjective.
Moreover, although we can characterize the range of $\L_c$ precisely via \eqref{equal mass solvability revealed}, all we know about $\tzfrak_c$ is that $\L_c^*\tzfrak_c = 0$ and $\tzfrak_c \in \Zcal_q^{\star}$.
The kernel of $\L_c^*$ in $\Zcal_q^{\star}$ must be one-dimensional, as otherwise, $\L_c^*$ would have Fredholm index $-2$ or lower, and so $\tzfrak_c$ is unique up to scalar multiplication.
But this function space $\Zcal_q^{\star}$ is quite large --- it contains, for example, all odd functions in $L^2$ and $L^{\infty}$ --- and so further features of $\tzfrak_c$ are not immediately apparent.

To determine our next steps, we look back to the work of our predecessors in the long wave \cite{faver-wright} and small mass \cite{hoffman-wright} problems.
In each of these problems, an operator similar to $\L_c$ appears; each of these operators on $O_q^{r+2}$ has a one-dimensional cokernel in $O_q^r$ because of a solvability condition like \eqref{equal mass solvability revealed}.
Moreover, the authors were able to construct odd solutions in $W^{2,\infty}$ to their versions of $\L_c^*g = 0$ that asymptote to a sinusoid.
We refer to such solutions as ``Jost solutions,'' due to their similarity to the classical Jost solutions for the Schrodinger equation \cite{reed-simon3}. 
Seeing how the Jost solutions in the long wave and small mass problems are determined both guides us to the features of the Jost solution that we seek for $\L_c^*$ and help us appreciate what is intrinsically different about $\L_c^*$ when compared to its analogues in the prior nanopteron problems.

\subsubsection{Jost solutions in the long wave limit \cite{faver-wright}}
The analogue of $\L_c$ in this problem is, roughly, the operator 
\[
\W_{\varepsilon}f
:= (1+\varepsilon^2)\varepsilon^2f'' + \M_{\varepsilon}f,
\]
where $\varepsilon \approx 0$ and $\M_{\varepsilon}$ is a Fourier multiplier with the real-valued symbol $\tM(\varepsilon\cdot)$, i.e., $\hat{\M_{\varepsilon}f}(k) = \tM(\varepsilon{k})\hat{f}(k)$.
The wave speed is intrinsically linked to the small long wave parameter $\varepsilon$, and so $c$ does not appear here.
Since $\W_{\varepsilon}$ is a Fourier multiplier with real-valued symbol $\tW_{\varepsilon}(k) := -(1+\varepsilon^2)\varepsilon^2k^2 + \tM(\varepsilon{k})$, it is self-adjoint in $L^2$.
An intermediate value theorem argument similar to the one referenced in Section \ref{injectivity section} gives the existence of a unique $\Omega_{\varepsilon} > 0$ such that $\tW_{\varepsilon}(K) = 0$ if and only if $K = \pm\Omega_{\varepsilon}$, and so $\W_{\varepsilon}\sin(\Omega_{\varepsilon}\cdot) = \W_{\varepsilon}\cos(\Omega_{\varepsilon}\cdot) = 0$.
That is, the Jost solutions are exactly sinusoidal.

\subsubsection{Jost solutions in the small mass limit \cite{hoffman-wright}}
Here the analogue of $\L_c$ is, roughly,  
\[
\T_c(m)f
:= \bunderbrace{c^2mf'' + (1+2\zeta_c(x))f}{\S_c(m)f} + m\M_mf + m\J_m(\zeta_c(x)f).
\]
We take $m \approx 0$, where $m$ is the mass ratio of the diatomic lattice, per \eqref{masses}.
The operators $\M_m$ and $\J_m$ are Fourier multipliers that are $\O(1)$ in $m$, so the perturbation terms $m\M_m$ and $m\J_m$ are indeed small, but Hoffman and Wright found it essential to retain these terms rather than absorb them into their analogues of our right side $\rhs_{c,2}$ from \eqref{L-c defn}.
Here the exponentially localized function $\zeta_c$ is the Friesecke-Pego solitary wave profile corresponding to the monatomic lattice formed by taking $m=0$ in \eqref{masses}.

The $L^2$-adjoint of $\T_c(m)$ is 
\[
\T_c(m)^*g
= \S_c(m)g + m\M_mg + m\varsigma_c(x)\J_mg.
\]
Hoffman and Wright construct a nontrivial solution to $\T_c(m)^*g = 0$ as follows.
First, they show that $\S_c(m)$ vanishes on certain asymptotically sinusoidal functions, i.e., there exists $\j_c^m \in W^{2,\infty}$ such that 
\begin{equation}\label{hw asymptotics}
\S_c(m)\j_c^m = 0
\quadword{and}
\lim_{x \to \infty} |\j_c^m(x) - \sin(\Omega_c^m(x+\theta_c^m))| 
= 0
\end{equation}
for some critical frequency $\Omega_c^m$ and phase shift $\theta_c^m$.
This solution $\j_c^m$ is indeed a classical Jost solution for the Schrodinger operator $\S_c(m)$.
The proof of the asymptotics in \eqref{hw asymptotics} uses a polar coordinate decomposition that closely relies on the structure of $\S_c(m)$ as a second-order linear differential operator.

Second, it is clear that $\T_c(m)^*$ is a small nonlocal perturbation of $\S_c(m)$.
Using these facts, Hoffman and Wright perform an intricate variation of parameters argument (which again relies on the differential operator structure of $\S_c(m)$) in an asymptotically sinusoidal subspace of $W^{2,\infty}$ to construct a function $\gamma_c^m \in W^{2,\infty}$ with the properties that 
\[
\T_c(m)^*\gamma_c^m = 0
\quadword{and}
\lim_{x \to \infty} |\gamma_c^m(x) - \sin(\Omega_c^m(x +\vartheta_c^m))| = 0
\]
for some (new) phase shift $\vartheta_c^{m}$, which is a small perturbation of $\theta_c^m$.
A corollary to this analysis is the frequency-phase shift ``resonance'' relation $\sin(\Omega_c^m\vartheta_c^m) \ne 0$ for almost all values of $m$ close to zero, which turns out, in a subtle and surprising way, to be critical for their subsequent analysis.

\subsubsection{Toward Jost solutions for the equal mass operator $\L_c^*$}\label{Lc surjectivity intro}
Unlike the long wave operator $\W_{\varepsilon}$, the operator $\L_c^*$ is not a ``pure'' Fourier multiplier as it has the variable-coefficient piece $\Sigma_c^*$.
And unlike the small mass operator $\T_c(m)^*$, we cannot decompose $\L_c^*$ as the sum of a classical differential operator and a perturbation term that is small in $\mu$.
So, we cannot directly import prior results to produce the Jost solutions of $\L_c^*$. 

Our approach is to take advantage of two particular aspects of the structure of $\L_c^*$.
First, the constant-coefficient part $\B_c$ is an advance-delay operator formed by a simple linear combination of shift operators.
The Fredholm properties of such operators have received significant attention from Mallet-Paret \cite{mallet-paret}.
Next, the variable-coefficient piece $\Sigma_c^*$ is both exponentially localized and small for $|c| \gtrsim 1$.
These facts are sufficient to solve the equation $(\B_c-\Sigma_c^*)f=0$ in a class of ``one-sided'' exponentially weighted Sobolev spaces, whose features we specify below in Definition \ref{one sided expn weighted spaces defn}.

In broad strokes, then, we first use an adaptation of Mallet-Paret's theory due to Hupkes and Verduyn-Lunel \cite{hvl} to invert $\B_c$ on these one-sided spaces and, moreover, obtain a precise formula for its inverse.
Next, it turns out that $\Sigma_c^*$ does map between these one-sided spaces\footnote{Unlike its failure to map between $O_q^{r+2}$ and $\Dfrak_{c,q}^r$, as we saw in Section \ref{injectivity section}).} and so, since $\Sigma_c^*$ is still ``small,'' we are able to invert $\B_c-\Sigma_c^*$ with the Neumann series.
This procedure yields a function $\gamma_c \in W^{2,\infty}$ that satisfies $\L_c^*\gamma_c = 0$ and is asymptotic to a phase-shifted sinusoid of frequency $\omega_c$ at $\infty$, where $\omega_c$ is the ``critical frequency'' of $\B_c$ that appeared in Section \ref{injectivity section}.
The exact formulas that we enjoy for the inverses of $\B_c$ and then $\B_c-\Sigma_c^*$ permit us to calculate an asymptotic expansion for the phase shift.

\subsection{Main result for the case $|c| \gtrsim 1$}
Now that we understand precisely the range of $\L_c$, as well as its lack of surjectivity, we can confront again the traveling wave problem $\G_c(\rhob,\mu) = 0$ from \eqref{cov simplified}, which we seek to solve for $\rhob \approx \varsigmab_c$ and $\mu \approx 0$.
The general structure of this problem, most especially the solvability condition \eqref{equal mass solvability revealed}, puts us in enough concert with our predecessors in \cite{faver-wright} and  \cite{hoffman-wright} that we may follow their modifications of a method due to Beale \cite{beale2} for solving problems with such a solvability condition.
Specifically, we replace the perturbation ansatz $\rhob = \varsigmab_c + \varrhob$ with the {\it{nonlocal solitary wave}} anzatz $\rhob = \varsigmab_c + a\phib_c^{\mu}[a] + \etab$, where $\etab = (\eta_1,\eta_2)$ is exponentially localized and $a\phib_c^{\mu}[a]$ is a periodic solution to the traveling wave problem with amplitude roughly $a$ and frequency (very) roughly $\omega_c$.
Of course, proving the existence of periodic traveling wave solutions is a fundamental part of our analysis.

Under Beale's ansatz, one easily finds a fixed point equation for $\eta_1$, similar to how we converted \eqref{H-c defn} into \eqref{varrho-1 fp}.
Then we can extract from the solvability condition a fixed point equation for $a$, and, in turn, an equation for $\eta_2$.
We carry out this construction in Section \ref{nanopteron problem section}, where we prove (as Theorem \ref{main theorem formal}) a more technical version of our main theorem below.

\begin{theorem}\label{main theorem - informal}
Suppose $|c| \gtrsim 1$.
For $|\mu|$ sufficiently small, there are functions $\Upsilon_{c,1}^{\mu}$, $\Upsilon_{c,2}^{\mu}$, $\varphi_{c,1}^{\mu}$, $\varphi_{c,2}^{\mu} \in \Cal^{\infty}(\R)$ such that the traveling wave profiles
\[
\rho_{c,1}^{\mu} := \varsigma_c + \Upsilon_{c,1}^{\mu} + \varphi_{c,1}^{\mu}
\quadword{and}
\rho_{c,2}^{\mu} := \Upsilon_{c,2}^{\mu} + \varphi_{c,2}^{\mu}
\]
satisfy $\G_c((\rho_{c,1}^{\mu},\rho_{c,2}^{\mu}),\mu) = 0$.
The functions $\Upsilon_{c,1}^{\mu}$ and $\Upsilon_{c,2}^{\mu}$ are exponentially localized, while $\varphi_{c,1}^{\mu}$ and $\varphi_{c,2}^{\mu}$ are periodic.
The amplitude of all four functions is $\O(\mu)$.
\end{theorem}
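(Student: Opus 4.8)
The plan is to solve $\G_c(\rhob,\mu)=0$ for $\rhob$ near $\varsigmab_c$ by adapting Beale's method \cite{beale2}, exactly as in \cite{faver-wright,hoffman-wright}: the obstruction that $\L_c$ is injective but not surjective forces us to replace the naive perturbation ansatz $\rhob=\varsigmab_c+\varrhob$ by one that carries a small periodic ``ripple'' whose amplitude is an additional unknown, to be tuned so that the solvability functional in \eqref{equal mass solvability revealed} vanishes. The first ingredient is a family of genuinely periodic solutions. Because $\G_c(\cdot,\mu)$ is autonomous and, by \eqref{D-mu defn} and \eqref{tB-c defn}, the linearization of its second component at the zero profile is the Fourier multiplier $\B_c$ whose symbol vanishes precisely at $\pm\omega_c$, a Lyapunov--Schmidt reduction (equivalently, a convergent power series in the amplitude) on spaces of $2\pi/\omega$-periodic functions produces, for each small $a$ and each small $\mu$, a solution $a\phib_c^\mu[a]$ of $\G_c(a\phib_c^\mu[a],\mu)=0$, with $\phib_c^\mu[a]$ of unit size and smooth in $(a,\mu)$, together with a nonlinear dispersion relation $\omega=\omega_c+\O(a)+\O(\mu)$. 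Since the quadratic form $\nl$ of \eqref{nl} and the matrix $\D_\mu$ respect the even/odd splitting, $\phib_c^\mu[a]$ can be taken even in its first component and odd in its second; the only mild subtlety is to check that the higher harmonics generated by $\nl$ land on Fourier modes $k\ne 0,\pm\omega_c$ at which the relevant multipliers are invertible, which follows from the sign properties of the symbols for $|c|\gtrsim 1$.

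With this family in hand I would substitute Beale's ansatz $\rhob=\varsigmab_c+a\phib_c^\mu[a]+\etab$, with $\etab=(\eta_1,\eta_2)\in E_q^{r+2}\times O_q^{r+2}$, into \eqref{cov simplified}. Using $\G_c(\varsigmab_c,0)=0$, $\G_c(a\phib_c^\mu[a],\mu)=0$, and the bilinearity of $\nl$, the problem collapses to
\[
c^2\etab'' + \D_0\etab + 2\D_0\nl(\varsigmab_c,\etab) = \mathcal{R}_c(\etab,a,\mu),
\]
where $\mathcal{R}_c$ is a sum of: the $\O(\mu)$, exponentially localized term $-\bigl(\G_c(\varsigmab_c,\mu)-\G_c(\varsigmab_c,0)\bigr)$; the terms $-(\D_\mu-\D_0)\etab$ and $-2(\D_\mu-\D_0)\nl(\varsigmab_c,\etab)$, each carrying a factor $\mu$; the quadratic term $-\D_\mu\nl(\etab,\etab)$; the bilinear term $-2\D_\mu\nl(a\phib_c^\mu[a],\etab)$; and the distinguished term $-2\D_\mu\nl\bigl(\varsigmab_c,a\phib_c^\mu[a]\bigr)$, which is exponentially localized (the decay coming from $\varsigmab_c$) and of size $\O(a)$. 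Thus $\mathcal{R}_c$ maps into $E_q^r\times O_q^r$ for $q$ small, vanishes at $(\etab,a,\mu)=0$, and is $\O(|\mu|+|a|+\norm{\etab}_{r+2,q})$-small with matching Lipschitz bounds. By the parity statement \eqref{all the pretty symmetries} the first component $\mathcal{R}_{c,1}$ is mean-zero, so Proposition 3.1 from \cite{hoffman-wright} (invertibility of $\H_c$) turns the first component into the fixed-point equation $\eta_1=\H_c^{-1}\mathcal{R}_{c,1}(\etab,a,\mu)$, exactly as \eqref{H-c defn} became \eqref{varrho-1 fp}.

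The second component reads $\L_c\eta_2=\mathcal{R}_{c,2}(\etab,a,\mu)$, and by the solvability condition \eqref{equal mass solvability revealed} this has a solution $\eta_2\in O_q^{r+2}$ if and only if the scalar quantity
\[
\Phi_c(a,\etab,\mu):=\int_{-\infty}^{\infty}\mathcal{R}_{c,2}(\etab,a,\mu)(x)\,\tzfrak_c(x)\,\dx
\]
vanishes, where by the analysis of Section \ref{Lc surjectivity intro} we identify $\tzfrak_c$, up to a nonzero scalar, with the Jost solution $\gamma_c\in W^{2,\infty}$ of $\L_c^*\gamma_c=0$ that is asymptotic at $+\infty$ to a phase-shifted sinusoid of frequency $\omega_c$. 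I would then solve $\Phi_c(a,\etab,\mu)=0$ for $a$ by the implicit function theorem. The crucial non-degeneracy is that $\partial_a\Phi_c$ is bounded away from zero for $|c|\gtrsim 1$ and $|\mu|$ small: at $(\etab,\mu)=(0,0)$, $\partial_a\Phi_c\big|_{a=0}$ is a nonzero multiple of the pairing of $\bigl(2\D_0\nl(\varsigmab_c,\phib_c^0[0])\bigr)_2$ -- an exponentially localized profile modulated at frequency $\omega_c$ -- against $\gamma_c$, and because $\gamma_c$ oscillates at exactly that frequency $\omega_c$ with phase shift pinned down by the explicit Neumann-series inversion of $\B_c-\Sigma_c^*$, this pairing does not cancel. (This is the analogue of the resonance relation $\sin(\Omega_c^m\vartheta_c^m)\ne 0$ of \cite{hoffman-wright}, and the place where the ``hidden solvability condition'' of the abstract is paid off.) Solving gives $a=a(\etab,\mu)=\O(|\mu|+\norm{\etab}_{r+2,q})$, smooth in its arguments, after which the right side of the second component lies in the range of $\L_c$, and the coercive estimate \eqref{L-c coercive} furnishes $\eta_2=\L_c^{-1}\mathcal{R}_{c,2}(\etab,a(\etab,\mu),\mu)$ as a function of $(\etab,\mu)$.

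Combining the two fixed-point relations yields a single equation $\etab=\mathcal{T}_c^\mu(\etab)$; using the smallness and Lipschitz bounds on $\mathcal{R}_c$ and on $a(\cdot,\mu)$, together with the operator bounds for $\H_c^{-1}$ and for the partial inverse of $\L_c$, one checks that for $q$ small, $r$ large, and $|\mu|$ sufficiently small $\mathcal{T}_c^\mu$ maps a ball of radius $\O(\mu)$ in $E_q^{r+2}\times O_q^{r+2}$ into itself and is a contraction there, hence has a unique fixed point $\etab=(\eta_1,\eta_2)$. Together with $a=a(\etab,\mu)$ and the periodic profile $a\phib_c^\mu[a]$, this assembles into a solution $\rhob=(\rho_{c,1}^\mu,\rho_{c,2}^\mu)$ of $\G_c(\rhob,\mu)=0$; setting $\Upsilon_{c,1}^\mu:=\eta_1$, $\Upsilon_{c,2}^\mu:=\eta_2$, and $\varphi_{c,i}^\mu:=a\bigl(\phib_c^\mu[a]\bigr)_i$ gives the stated decomposition, with $\Upsilon_{c,i}^\mu$ exponentially localized, $\varphi_{c,i}^\mu$ periodic, and all amplitudes $\O(\mu)$ since every inhomogeneous term feeding the construction is $\O(\mu)$; finally, bootstrapping on the advance--delay equation $c^2\rhob''=-\D_\mu\rhob-\D_\mu\nl(\rhob,\rhob)$ (whose right side is a polynomial in shifts of $\rhob$, so that Sobolev regularity self-improves) upgrades $\rhob$ to $\Cal^\infty$. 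The step I expect to be the genuine obstacle is the non-degeneracy of $\partial_a\Phi_c$: it is invisible to soft Fredholm theory and rests entirely on the existence and the fine asymptotic properties -- frequency exactly $\omega_c$ and an explicit phase shift -- of the Jost solution $\gamma_c$ constructed in Section \ref{Lc surjectivity intro}; the periodic bifurcation, the two fixed-point equations, and the contraction are laborious but structurally parallel to \cite{faver-wright,hoffman-wright}.
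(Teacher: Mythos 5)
Your proposal follows essentially the paper's proof --- Beale's ansatz $\rhob=\varsigmab_c+a\phib_c^\mu[a]+\etab$, bifurcation to small-amplitude periodic solutions, the solvability condition characterized via the Jost solution $\gamma_c$ of $\L_c^*g=0$, and the non-degeneracy coming from the phase-shift resonance --- but with one organizational difference that deserves comment. You propose solving the solvability condition $\Phi_c(a,\etab,\mu)=0$ for $a$ via the implicit function theorem and then contracting on $\etab$ alone; this requires differentiable (you assert ``convergent power series,'' hence analytic) dependence of the periodic family $\phib_c^\mu[a]$ on the amplitude parameter $a$. The paper establishes only \emph{Lipschitz} dependence on $a$ (Proposition \ref{periodic solutions theorem}, part \ref{periodic lipschitz part}), and therefore does not invoke the classical IFT; instead it reformulates the amplitude equation as a scalar fixed-point $a=\nano_{c,3}^\mu(\etab,a)$ (equation \eqref{A defn}), couples it with the equations for $\eta_1$ and $\eta_2$ into a single three-component map $\nanob_c^\mu$, and solves the system with a modified contraction lemma (Lemma \ref{nano fp lemma}, from Johnson--Wright) that tolerates a loss of exponential decay rate between the mapping estimate and the Lipschitz estimate. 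Your route would work if you carried through the unproved (but plausible) analytic dependence in the periodic bifurcation; alternatively, a Lipschitz version of the IFT, or the paper's fixed-point reformulation, closes the gap with no loss. The remainder of your argument --- in particular the identification of the non-degeneracy with $\iota_c[\chi_c]=(2c^2\omega_c-\sin(\omega_c))\sin(\omega_c\vartheta_c)\ne 0$, traced back via the explicit Neumann-series construction of $\gamma_c$ to the phase-shift condition $\sin(\omega_c\vartheta_c)\ne 0$ --- matches the paper's Hypothesis \ref{hypothesis 4}, formula \eqref{iota-chi formula}, and Section \ref{verification of hypothesis 4} precisely, as does the concluding bootstrap for smoothness.
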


When $\mu = 0$ and our lattice is monatomic, the micropteron in Theorem \ref{main theorem - informal} reduces to the Friesecke-Pego solitary wave.
We remark that the stability of these micropteron traveling wave solutions for the equal mass limit, as well as the stability of the long wave and small mass nanopterons, is an intriguing open problem; see \cite{johnson-wright} for some initial forays into this arena.

\subsection{Toward the case $|c| \gg 1$}
Throughout this introduction, we have assumed that $|c|$ is close to 1.
This first allows us to summon a Friesecke-Pego solitary wave solution $\varsigma_c$ for the monatomic problem and next shows, through the coercive estimate \eqref{L-c coercive}, that $\L_c$ is injective from $O_q^{r+2}$ to $O_q^r$.
Third, taking $|c|$ close to 1 is fundamental for the Neumann series argument that allows us to invert $\B_c-\Sigma_c^*$ in the one-sided spaces.
Ostensibly, then, it appears that we are working with two small parameters, $\mu$ and $|c|-1$, in our equal mass problem.

This turns out, from a certain point of view, to be superfluous.
In Section \ref{arbitrary wave speed}, we present four simple and natural hypotheses that, if satisfied for an arbitrary wave speed $|c| > 1$ guarantee a micropteron solution in the equal mass limit.
All of these hypotheses are satisfied for $|c| \gtrsim 1$.
Among these hypotheses is the existence of an exponentially localized and spectrally stable traveling wave solution for the monatomic problem with wave speed possibly much greater 1; this is the origin of the dashed blue line in Figure \ref{fig-all the results}.
We emphasize that the existence of the Jost solutions to $\L_c^*g = 0$ is not one of these hypotheses; their construction, for an arbitrary $|c| > 1$, follows from a weaker hypothesis.

This approach has several advantages and justifications over working only in the near-sonic regime $|c| \gtrsim 1$.
First, it decouples the starring small parameter $\mu$ from the deuteragonist\footnote{In fact, a third small parameter also lurks in nonlocal solitary wave problems: the precise decay rate $q$ of the exponentially localized spaces $E_q^r$ and $O_q^r$.  Our hypotheses make explicit the decay rates that we employ and highlight the relationships between the rates at different stages of the problem.} $|c|-1$; after all, we are interested in the equal mass limit, not the near-sonic limit.
Next, it frees us from overreliance on the Friesecke-Pego traveling wave and leaves our results open to interpretation and invocation in the case of high-speed monatomic traveling waves.
For example, Herrmann and Matthies \cite{herrmann-matthies-asymptotic, herrmann-matthies-uniqueness, herrmann-matthies-stability} have developed a number of results on the asymptotics, uniqueness, and stability of solitary traveling wave solutions to the monatomic FPUT problem (albeit with different spring forces from ours) in the ``high-energy limit,'' which inherently assumes a large wave speed.
Additionally, the original monatomic solitary traveling wave of Friesecke and Wattis \cite{friesecke-wattis} does not come with a near-sonic restriction on its speed, and so, in principle, that wave could have speed much greater than 1.
Further study of the existence and properties of solutions to the monatomic traveling wave problem in the high wave speed regime remains an interesting open problem, and we are eager to see how our hypotheses may exist in concert with future solutions to that problem.

\subsection{Remarks on notation}\label{basic notation section}
We define some basic notation that will be used throughout the paper.

\begin{enumerate}[label=$\bullet$]

\item
The letter $C$ will always denote a positive, finite constant; if $C$ depends on some other quantities, say, $q$ and $r$, we will write this dependence in function notation, i.e., $C = C(q,r)$.
Frequently $C$ will depend on the wave speed $c$, in which case we write $C(c)$.

\item
We employ the usual big-$\O$ notation: if $x_{\ep} \in \C$ for $\ep \in \R$, then we write $x_{\ep} = \O(\ep^p)$ if there are constants $\ep_0$, $C(\ep_0,p) > 0$ such that 
\[
0 < \ep < \ep_0
\Longrightarrow |x_{\ep}| \le C(\ep_0,p)\ep^p.
\]

\item
Next, we need a $c$-dependent notion of big-$\O$ notation.
Suppose that $x_c^{\mu} \in \C$ for $c$, $\mu \in \R$.
Then we write $x_c^{\mu} = \O_c(\mu^p)$ if there are constants $\mu_c$, $C(c,p) > 0$ such that 
\[
|\mu| \le \mu_c
\Longrightarrow |x_c^{\mu}| \le C(c,p)|\mu|^p.
\]

\item
If $\X$ and $\Y$ are normed spaces, then $\b(\X,\Y)$ is the space of bounded linear operators from $\X$ to $\Y$ and $\b(\X) := \b(\X,\X)$.
\end{enumerate}

\subsection{Outline of the remainder of the paper}
Here we briefly discuss the structure of the rest of the paper.

\begin{enumerate}[label=$\bullet$]

\item
Section \ref{arbitrary wave speed} contains the precise statements of the four hypotheses under which we will work.

\item
Section \ref{periodic solutions section} constructs a family of small-amplitude periodic traveling wave solutions to the traveling wave problem \eqref{cov simplified} that possess a number of uniform estimates in the small parameter $\mu$.
These periodic traveling waves exist for arbitrary $|c| > 1$.

\item 
Section \ref{analysis of L-c} characterizes the range of $\L_c$ as an operator from $O_q^{r+2}$ to $O_q^r$ by constructing a Jost solution for $\L_c^*g = 0$.

\item
Section \ref{nanopteron problem section} converts the nonlocal solitary wave equations of Section \ref{beale's ansatz section} into a fixed point problem, which we then solve with a modified contraction mapping argument.

\item
Section \ref{hypotheses verification section} shows that the four main hypotheses of Section \ref{arbitrary wave speed} are valid in the case $|c| \gtrsim 1$.

\item
The appendices contain various technical proofs and ancillary background material.
\end{enumerate}
\section{The Traveling Wave Problem for Arbitrary Wave Speeds}\label{arbitrary wave speed}

In this section we discuss the four hypotheses that are sufficient to guarantee micropteron traveling wave solutions in the equal mass limit for arbitrary wave speeds.
Throughout, we fix $c \in \R$ with $|c| > 1$.
Since $|c|$ may not be close to 1, we are not guaranteed a monatomic traveling wave solution from Friesecke and Pego, and so we make its existence our first hypothesis.

\begin{hypothesis}\label{hypothesis 1}
There exist $q_{\varsigma}(c) > 0$ and a real-valued function $\varsigma_c \in E_{q_{\varsigma}(c)}^2$ such that 
\begin{equation}\label{what sigma c does}
c^2\varsigma_c'' + (2-A)(\varsigma_c+\varsigma_c^2) = 0.
\end{equation}
\end{hypothesis}

That is, $\varsigma_c$ is an exponentially localized traveling wave profile with wave speed $c$ for the monatomic FPUT equations of motion.
With $\varsigma_c$ satisfying \eqref{what sigma c does}, it follows that if $\varsigmab_c := (\varsigma_c,0)$, then $\G_c(\varsigmab_c,0) = 0$.
A straightforward bootstrapping argument, discussed in Appendix \ref{friesecke-pego bootstrap appendix}, endows arbitrary regularity to $\varsigma_c$.

\begin{proposition}\label{friesecke-pego bootstrap}
The monatomic traveling wave solution $\varsigma_c$ from Hypothesis \ref{hypothesis 1} also satisfies $\varsigma_c \in \cap_{r=1}^{\infty} E_{q_{\varsigma}(c)}^r$.
\end{proposition}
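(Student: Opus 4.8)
The plan is to bootstrap regularity from the equation \eqref{what sigma c does} itself, treating the shift operator $A = S^1 + S^{-1}$ as a bounded operator on every $H^r_{q_\varsigma(c)}$ and using the equation to trade two derivatives of $\varsigma_c$ against lower-order quantities. Write $q := q_\varsigma(c)$ for brevity. The starting point is the hypothesis $\varsigma_c \in E^2_q$, and the goal is the inductive claim: if $\varsigma_c \in E^r_q$ for some $r \ge 2$, then $\varsigma_c \in E^{r+2}_q$. Since $E^{r+2}_q \subseteq E^{r+1}_q$, iterating this yields $\varsigma_c \in \cap_{r=1}^\infty E^r_q$.

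First I would record the two structural facts that make the induction routine. (i) The operators $S^{\pm 1}$, hence $A$ and $2-A$, are bounded on $H^r_q$ for every $r \ge 0$: this follows because conjugating a shift by $\cosh^q(\cdot)$ produces a smooth bounded multiplier (the ratio $\cosh^q(x\pm1)/\cosh^q(x)$ is bounded with all derivatives bounded), and $S^{\pm1}$ commutes with differentiation; see the Fourier-multiplier appendix referenced as Appendix \ref{fourier multipliers appendix}. (ii) $H^r_q$ is a Banach algebra for $r \ge 1$: the weight is multiplicative, $\cosh^q(x) \le \cosh^q(x)\cdot\text{(bounded)}$ is not quite right, so more precisely one uses $\cosh^q(x) f(x) g(x)$ and the fact that $\cosh^q(\cdot) f, \cosh^q(\cdot) g \in H^r$ together with $\cosh^{-q} \in L^\infty$ give $\cosh^q(\cdot)(fg) = (\cosh^q(\cdot)f)(\cosh^{-q}(\cdot))(\cosh^q(\cdot)g) \in H^r$ by the standard algebra property of $H^r(\R)$ for $r\ge1$ and closure under multiplication by $H^r \cap L^\infty$ functions. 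In particular, if $\varsigma_c \in H^r_q$ with $r \ge 1$ then $\varsigma_c^2 \in H^r_q$.

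Now the inductive step. Assume $\varsigma_c \in E^r_q$, $r \ge 2$. By (i) and (ii), $(2-A)(\varsigma_c + \varsigma_c^2) \in H^r_q$, and it is even since $2-A$ and squaring preserve parity. Rearranging \eqref{what sigma c does},
\[
\varsigma_c'' = -\frac{1}{c^2}(2-A)(\varsigma_c + \varsigma_c^2) \in E^r_q,
\]
using $|c| > 1 > 0$. Since $\varsigma_c'' \in H^r_q$ and $\varsigma_c \in H^r_q$ (indeed $\varsigma_c \in H^2_q$ suffices here), the definition of the weighted Sobolev norm gives $\varsigma_c \in H^{r+2}_q$: concretely, $\cosh^q(\cdot)\varsigma_c \in H^r$ and one checks that the distributional derivatives of $\cosh^q(\cdot)\varsigma_c$ up to order $r+2$ lie in $L^2$ by expanding $(\cosh^q(\cdot)\varsigma_c)^{(r+2)}$ via the Leibniz rule into terms involving $\varsigma_c^{(j)}$ for $j \le r+2$, each weighted by a bounded smooth function; the only potentially problematic terms are those with $j \in \{r+1, r+2\}$, and these are controlled because $\varsigma_c^{(r+2)} = (\varsigma_c'')^{(r)}$ and $\varsigma_c'' \in H^r_q$ supplies exactly $r$ weighted-$L^2$ derivatives of $\varsigma_c''$. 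Parity is inherited, so $\varsigma_c \in E^{r+2}_q$, completing the induction.

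The main obstacle — such as it is — is bookkeeping rather than mathematics: one must be careful that the weight $\cosh^q$ interacts correctly with both the shifts and the products, i.e., that conjugating $A$ by the weight stays bounded on all $H^r$ and that the weighted space is a multiplicative algebra; neither is deep, but both must be invoked cleanly, which is presumably why the excerpt defers the details to Appendix \ref{friesecke-pego bootstrap appendix}. No smallness of $c$ or of $\varsigma_c$ is needed here, only $|c| > 1$ to divide by $c^2$; this is consistent with the proposition being stated without the near-sonic restriction.
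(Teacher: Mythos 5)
Your proof is correct and takes essentially the same route as the paper: rearrange \eqref{what sigma c does} to solve for $\varsigma_c''$, use that $(2-A)$ is bounded on $E_{q_\varsigma(c)}^r$ and that these weighted spaces are closed under squaring, and bootstrap to gain two derivatives per pass. The paper states the inversion and the algebra property without justification, whereas you supply the reasons (conjugating the shift by $\cosh^q$ stays bounded; the weight factors out of a product as $\cosh^q(fg) = (\cosh^q f)(\cosh^{-q})(\cosh^q g)$), so your writeup is simply a more explicit version of the same argument.
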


Next, we add the invertibility of the linearization of the monatomic traveling wave problem at $\varsigma_c$ as a hypothesis for the situation when $|c|$ is not necessarily close to 1.

\begin{hypothesis}\label{hypothesis 2}
There exists $q_{\H}(c) \in (0,\min\{1,q_{\varsigma}(c)\})$ such that the operator $\H_c$ from \eqref{H-c defn} is invertible from $E_{q_{\H}(c)}^2$ to $E_{q_{\H}(c),0}^0$.
\end{hypothesis}

In Section \ref{Lc surjectivity intro}, we claimed that the key to characterizing the range of $\L_c$, defined in \eqref{L-c defn}, was the invertibility of its $L^2$-adjoint, $\L_c^*$, defined in \eqref{L-c-star defn}, on a class of one-sided exponentially weighted spaces.
Now we make precise the definitions of these spaces and the weights for which $\L_c^*$ needs to be invertible.

\begin{definition}\label{one sided expn weighted spaces defn}
For $q \in \R$ and $m$, $r \in \N$, we define
\[
W_q^{r,\infty}(\R,\C^m)
:= \set{f \in L_{\loc}^1(\R,\C^m)}{e^{-q\cdot}\fb \in W^{r,\infty}(\R,\C^m)}
\]
and
\[
L_q^{\infty}(\R,\C^m)
:= W_q^{0,\infty}(\R,\C^m).
\]
When $m = 1$, we write just $W_q^{r,\infty}$ and $L_q^{\infty}$.
We set
\[
\norm{\fb}_{W_q^{r,\infty}(\R,\C^m)}
:= \norm{e^{-q\cdot}\fb}_{W^{r,\infty}(\R,\C^m)}.
\]
\end{definition}

\begin{hypothesis}\label{hypothesis 3}\label{hypotheses theorem}
There exists $q_{\L}(c) \in (0,\min\{q_{\H}(c),q_{\varsigma}(c)/2,1\})$ such that the operator $\L_c^*$ is invertible from $W_{-q_{\L}(c)}^{2,\infty}$ to $L_{-q_{\L}(c)}^{\infty}$.
\end{hypothesis}

Assuming this hypothesis we obtain a precise characterization of the range of $\L_c$.
We prove this theorem in Section \ref{analysis of L-c}.

\begin{proposition}\label{phase shift theorem - hypos section}
There is a nonzero odd function $\gamma_c \in W^{2,\infty}$ (a ``Jost solution'' to $\L_c^*g=0$) such that for $f \in O_q^{r+2}$ and $g \in O_q^r$, with $r \ge 0$ and $q \in [q_{\L}(c),1)$, we have
\[
\L_cf = g
\iff \int_{-\infty}^{\infty} g(x)\gamma_c(x) \dx = 0. 
\]
Moreover, $\gamma_c$ is asymptotically sinusoidal in the sense that, for some $\theta_c \in \R$,
\[
\lim_{x \to \infty} |\gamma_c(x)-\sin(\omega_c(x+\vartheta_c))|
= \lim_{x \to \infty} |(\gamma_c)'(x)-\omega_c\cos(\omega_c(x+\vartheta_c))|
= 0.
\]
\end{proposition}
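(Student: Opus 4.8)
The plan is to establish the solvability characterization by pairing the equation $\L_c f = g$ against the Jost solution $\gamma_c$ and then reading off the required orthogonality. First I would construct $\gamma_c$: Hypothesis \ref{hypothesis 3} gives the invertibility of $\L_c^*$ between the one-sided weighted spaces $W_{-q_{\L}(c)}^{2,\infty}$ and $L_{-q_{\L}(c)}^{\infty}$, so I would start from the constant-coefficient operator $\B_c$, use the Mallet--Paret/Hupkes--Verduyn-Lunel theory (as indicated in Section \ref{Lc surjectivity intro}) to invert $\B_c$ on these spaces with an explicit Green's-function-type formula, note that $\Sigma_c^*$ maps these one-sided spaces to themselves and is small because $\norm{\varsigma_c}_{L^\infty}$ is small (or, for arbitrary $|c|>1$, because of the weaker hypothesis alluded to), and invert $\B_c - \Sigma_c^*$ by a Neumann series. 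The candidate $\gamma_c$ is the image under $(\B_c-\Sigma_c^*)^{-1}$ of a suitable sinusoidal forcing term tuned to the critical frequency $\omega_c$ where $\tB_c(\omega_c)=0$; one checks $\L_c^*\gamma_c = 0$, oddness (by restricting to the odd subspace throughout, which is preserved since $\varsigma_c$ is even), and $\gamma_c \in W^{2,\infty}$ (not merely in a growing space, because the weight $-q_{\L}(c)<0$ controls growth at $+\infty$ and the sinusoidal part is bounded). The asymptotics follow from the explicit inversion formula: the Green's function for $\B_c$ decomposes into an oscillatory piece at frequency $\omega_c$ plus exponentially decaying contributions from the other (complex) roots of $\tB_c$, and the localized, small perturbation $\Sigma_c^*$ only shifts the phase by a convergent correction, yielding $\gamma_c(x) - \sin(\omega_c(x+\vartheta_c)) \to 0$ together with its derivative as $x\to\infty$, for an explicitly computable $\vartheta_c$.

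Next I would prove the biconditional. The forward direction is the classical Fredholm/integration-by-parts argument: if $\L_c f = g$ with $f \in O_q^{r+2}$, $g \in O_q^r$, then since $\gamma_c$ solves the adjoint equation $\L_c^*\gamma_c = 0$ and the weights match up ($f, \L_c f$ are exponentially localized while $\gamma_c$ grows no faster than a bounded sinusoid, so all the boundary terms in the integration by parts over $[-R,R]$ vanish as $R\to\infty$ — here oddness and the explicit shift structure of $A$, $\delta$ must be used to handle the advance-delay terms, shifting the argument in the integrals and using that $f$ decays), we get $\int g\,\gamma_c = \langle \L_c f, \gamma_c\rangle = \langle f, \L_c^* \gamma_c\rangle = 0$. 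For the converse, I would invoke the abstract Fredholm theory already set up in Section \ref{char of range}: $\L_c: O_q^{r+2}\to O_q^r$ is Fredholm of index $-1$ (because $\Sigma_c$ is compact and $\B_c$ has index $-1$) and injective, hence has a one-dimensional cokernel, so its range is exactly the kernel of a single nonzero functional; the forward direction shows that functional annihilates the range, and since $\gamma_c$ is nonzero and (by the kernel of $\L_c^*$ being one-dimensional in $\Zcal_q^\star$, as argued in Section \ref{char of range}) spans the relevant annihilator, pairing against $\gamma_c$ must cut out precisely the range. Thus $\int g\,\gamma_c = 0$ forces $g \in \operatorname{ran}(\L_c)$.

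I would then record that the characterization is independent of $r \ge 0$ and $q \in [q_{\L}(c),1)$: since $O_q^{r+2}\subseteq O_q^2$ and the spaces nest monotonically in $q$ and $r$, and since $\gamma_c$ is a fixed function, the pairing condition is the same for all admissible $(r,q)$ — one checks that $\gamma_c$ pairs well against any $g \in O_q^0$ for $q<1$ because $\cosh^q(\cdot) g \in L^2$ and $\sech^q(\cdot)\gamma_c$ decays (the sinusoid times $\sech^q$ is in $L^2$ for $q>0$), so $\gamma_c \in \Zcal_q^\star$, consistent with the earlier identification of $\tzfrak_c$ up to scalar. Finally I would identify $\vartheta_c$ (the proposition's $\theta_c$ is surely a typo for $\vartheta_c$) via the asymptotic expansion of the phase produced by the explicit Neumann-series inverse, deferring the actual coefficient computation to the detailed proof in Section \ref{analysis of L-c}.

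The main obstacle I expect is the construction and asymptotic analysis of $\gamma_c$ itself, \emph{not} the solvability biconditional. Specifically, the delicate point is showing that the one-sided weighted inverse of $\B_c$ produced by the advance-delay Fredholm theory has a Green's kernel with exactly the clean split ``pure sinusoid at frequency $\omega_c$ plus exponentially decaying tails,'' and that convolving this kernel against the localized coefficient $\varsigma_c$ (through $\Sigma_c^*$) preserves the asymptotically-sinusoidal structure while contributing only a convergent phase correction — this is where the explicit formula for $\B_c^{-1}$ on the one-sided spaces, and careful bookkeeping of which exponential rate $q_{\L}(c)$ is permissible relative to the imaginary parts of the nonreal roots of $\tB_c$, all have to mesh. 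The integration-by-parts step for advance-delay operators, while requiring care with the shifted integrals, is routine once the weights are correctly aligned.
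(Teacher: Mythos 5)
Your solvability biconditional is handled correctly and in the same spirit as the paper: the forward direction follows from $\L_c^*\gamma_c = 0$ together with an integration-by-parts whose boundary contributions vanish because $\L_c f$ is exponentially localized while $\gamma_c$ is merely bounded, and the converse uses the Fredholm index $-1$ and injectivity to get a one-dimensional cokernel annihilated by the functional $g \mapsto \int g\gamma_c$. The asymptotics-at-$-\infty$ machinery you sketch is also in the right spirit; the paper carries it out via the Laplace transform, a contour shift across a strip, and residues at $\pm i\omega_c$, which is essentially your ``oscillatory piece plus exponentially decaying tails'' picture in different language.

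The construction of $\gamma_c$ itself, however, contains two genuine gaps. First, ``restricting to the odd subspace throughout'' cannot work. An odd function in $W_{-q}^{r,\infty}$ with $q>0$ decays at $+\infty$, hence by oddness also at $-\infty$, so the odd subspace of the one-sided weighted space lies in a two-sided localized space; any $\gamma_c$ constructed there would vanish at both ends, whereas the Jost solution is asymptotic to a nontrivial sinusoid. The paper must therefore work without parity restrictions: it sets $f_c := e^{i\omega_c\cdot} + g_c$ with $g_c := [\L_c^*]^{-1}\Sigma_c^* e^{i\omega_c\cdot}$ (note the forcing $\Sigma_c^*e^{i\omega_c\cdot}$ is localized, and the sinusoid sits outside the range of the inverse; taken literally, ``$\gamma_c$ is the image of a sinusoidal forcing under $(\B_c-\Sigma_c^*)^{-1}$'' would give $\L_c^*\gamma_c \ne 0$), and only afterwards imposes oddness by forming $\tilde{f}_c(x) = f_c(x) - f_c(-x)$, then taking $\re$ or $\im$.

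Second, and more seriously, nothing in your proposal shows that the limiting oscillation is nonzero. The antisymmetrization and the passage to real/imaginary parts could a priori annihilate the oscillatory tail, leaving $\gamma_c \equiv 0$, which would vacuously satisfy $\L_c^*\gamma_c = 0$ but fail to give a solvability characterization. The paper devotes a separate argument (the claim $\E_c(x)\ne 0$, proved by contradiction) to rule this out: if $\E_c \equiv 0$ then $\tilde{f}_c$ decays at both ends and Hypothesis~\ref{hypothesis 3} forces $\tilde{f}_c = 0$, so $f_c$ is even; the pair $f_c$, $\overline{f_c}$ then exhibits a two-dimensional cokernel for $\L_c$ on the unsymmetrized spaces $H_q^r$, and the resulting representation of the odd cokernel functional as a combination of these even functions gives a parity contradiction. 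Without an argument of this type, the claimed asymptotics are not established and neither is the nonvanishing of $\gamma_c$, both of which the proposition asserts.
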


Last, like Hoffman and Wright, we need one more condition on the interaction between the critical frequency $\omega_c$ from Section \ref{injectivity section} and the phase shift $\vartheta_c$ of the Jost solution to $\L_c^*f = 0$ from Theorem \ref{phase shift theorem - hypos section}.
This condition arises in practice much later for quite a technical reason; see Section \ref{eta2 a eqn constr}.

\begin{hypothesis}\label{hypothesis 4}
$\sin(\omega_c\vartheta_c) \ne 0.$
\end{hypothesis}

All four hypothesis hold when $|c| \gtrsim 1$; we give the proof of the next theorem in Section \ref{hypotheses verification section}.
The verification of Hypotheses \ref{hypothesis 1} and \ref{hypothesis 2} is merely a matter of quoting results from \cite{friesecke-pego1} and \cite{hoffman-wright},
On the other hand, verifying Hypotheses \ref{hypothesis 3} and \ref{hypothesis 4} relies on results from \cite{hvl}, and the proof of their validity is among the central technical results of this paper.

\begin{theorem}\label{hypotheses theorem}
There exists $c_{\star} > 1$ such that Hypotheses \ref{hypothesis 1}, \ref{hypothesis 2}, \ref{hypothesis 3}, and \ref{hypothesis 4} hold if $|c| \in (1,c_{\star})$.
\end{theorem}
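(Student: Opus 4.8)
The plan is to verify the four hypotheses one at a time in the regime $|c|\in(1,c_\star)$ for a suitable $c_\star>1$, with the first two being essentially citations and the last two requiring genuine work via the advance-delay operator theory of \cite{hvl}. For Hypothesis \ref{hypothesis 1}, I would quote the Friesecke-Pego result \cite{friesecke-pego1}: for $|c|$ slightly larger than the speed of sound $1$, the monatomic traveling wave equation \eqref{what sigma c does} has an even, exponentially localized solution $\varsigma_c\in E^2_{q_\varsigma(c)}$, together with the quantitative smallness estimate $\norm{\varsigma_c}_{L^\infty}=\O((c-1)^2)$ recorded as part \ref{FP Linfty-2} of Proposition \ref{hoffman-wayne proposition} (and the regularity bootstrap is Proposition \ref{friesecke-pego bootstrap}). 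For Hypothesis \ref{hypothesis 2}, I would invoke Proposition 3.1 of \cite{hoffman-wright}, which gives the invertibility of $\H_c$ from $E^2_{q_\H(c)}$ onto $E^0_{q_\H(c),0}$ for $q_\H(c)$ sufficiently small and $|c|\gtrsim 1$; this is the spectral stability of the monatomic solitary wave. One must only check the compatibility of the decay rates: choose $q_\varsigma(c)$, then $q_\H(c)\in(0,\min\{1,q_\varsigma(c)\})$, and later $q_\L(c)\in(0,\min\{q_\H(c),q_\varsigma(c)/2,1\})$, all shrinking as $c\to 1^+$ if necessary.

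The substance is Hypothesis \ref{hypothesis 3}: the invertibility of $\L_c^*=\B_c-\Sigma_c^*$ from $W^{2,\infty}_{-q_\L(c)}$ onto $L^\infty_{-q_\L(c)}$, where $\B_c$ is the constant-coefficient advance-delay operator $\B_cf=c^2f''+(2+A)f$ with symbol $\tB_c(k)=-c^2k^2+2+2\cos(k)$, and $\Sigma_c^* f = -2\varsigma_c(x)(2+A)f$ is a variable-coefficient, exponentially localized perturbation. Following the outline in Section \ref{Lc surjectivity intro}, I would first analyze $\B_c$ alone on the one-sided exponentially weighted spaces. The key point is that on $W^{r,\infty}_{-q}$ with $q>0$ small, $\B_c$ becomes a Fredholm operator whose index is governed by the zeros of $\tB_c(k-iq)$ (equivalently, the characteristic roots of the associated functional differential operator in the strip $\{|\Im\,\lambda|\le q\}$); by Proposition \ref{symbol of B} the only real zero of $\tB_c$ is the simple pair $\pm\omega_c$, and for $q$ small there are no complex characteristic roots with $|\Im|\le q$ besides those perturbing off $\pm\omega_c$. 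Applying the Hupkes--Verduyn-Lunel adaptation \cite{hvl} of Mallet-Paret's theory \cite{mallet-paret}, $\B_c$ is invertible on these one-sided spaces with an explicit Green's-function-type formula for $\B_c^{-1}$, essentially built by shifting the contour of the Fourier representation $\B_c^{-1}g(x) = \frac{1}{2\pi}\int \tB_c(k)^{-1}\hat g(k)e^{ikx}\,dk$ past the poles at $\pm\omega_c$ and picking up residue terms that produce the asymptotically sinusoidal behavior with frequency $\omega_c$. Then, since $\varsigma_c$ is exponentially localized at a rate exceeding $q_\L(c)$, the operator $\Sigma_c^*$ does map $W^{2,\infty}_{-q_\L(c)}$ into $L^\infty_{-q_\L(c)}$ (unlike its failure on the two-sided spaces $O^{r+2}_q$), and because $\norm{\varsigma_c}_{L^\infty}=\O((c-1)^2)$ the composition $\B_c^{-1}\Sigma_c^*$ has operator norm $\O((c-1)^2)$, hence is a strict contraction for $|c|<c_\star$ with $c_\star$ close enough to $1$. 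A Neumann series then yields $(\B_c-\Sigma_c^*)^{-1}=\big(I-\B_c^{-1}\Sigma_c^*\big)^{-1}\B_c^{-1}$, establishing Hypothesis \ref{hypothesis 3}; the same explicit formulas give the Jost solution $\gamma_c$ of Proposition \ref{phase shift theorem - hypos section} and an asymptotic expansion of the phase shift $\vartheta_c$ as $c\to 1^+$.

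Finally, for Hypothesis \ref{hypothesis 4} I would use that asymptotic expansion: the Neumann series correction to $\B_c^{-1}$ shifts the leading-order phase by a quantity that is $\O((c-1)^2)$ relative to the ``bare'' phase coming from $\B_c$ alone, and one computes the leading behavior of $\omega_c\vartheta_c$ as $c\to 1^+$ explicitly (the critical frequency $\omega_c$ itself has a known limit/expansion as $c\to1^+$ from the implicit equation $\tB_c(\omega_c)=0$, and is the source of the band collapse mentioned in Figure \ref{fig-all the results} and Lemma \ref{critical frequency technical lemma}). Provided this leading term is nonzero --- which it is, because at $\mu=0$/$c=1$ the phase shift is tied to the residue structure of $\tB_c^{-1}$ at $\pm\omega_c$ and does not degenerate --- we get $\sin(\omega_c\vartheta_c)\ne 0$ for $|c|$ in a punctured right-neighborhood of $1$, possibly after further shrinking $c_\star$. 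Taking $c_\star$ to be the minimum of the finitely many thresholds produced along the way completes the proof.

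I expect the main obstacle to be the rigorous invocation of \cite{hvl} for $\B_c$ on the one-sided spaces: one must carefully match the Fourier-multiplier viewpoint with the functional-differential/characteristic-root viewpoint, track which characteristic roots lie in the relevant strip as $c$ varies near $1$ (so that the Fredholm index is exactly $-1$ on the two-sided space and $0$ with a one-dimensional asymptotically sinusoidal behavior on the one-sided space), and extract the explicit inverse formula with enough precision to read off both the asymptotics of $\gamma_c$ and the expansion of $\vartheta_c$ needed for Hypothesis \ref{hypothesis 4}. The contraction estimate itself, once the mapping property of $\Sigma_c^*$ on the one-sided spaces and the bound $\norm{B_c^{-1}}=\O(1)$ (uniform for $c$ in a right-neighborhood of $1$, up to the shrinking weight $q_\L(c)$) are in hand, is routine.
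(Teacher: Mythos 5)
Your overall architecture matches the paper's: Hypotheses \ref{hypothesis 1} and \ref{hypothesis 2} by citation, Hypothesis \ref{hypothesis 3} by inverting $\B_{\cep}$ on one-sided weighted spaces via \cite{hvl} and then absorbing $\Sigma_{\cep}^*$ with a Neumann series, and Hypothesis \ref{hypothesis 4} by extracting an explicit expansion of the phase shift from the resulting formulas. However, there are two quantitative points where your argument as written would not close.

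First, your claim that $\norm{\B_c^{-1}}=\O(1)$ on the one-sided spaces is false. The operator norm of $[\B_{\cep}^{\pm}]^{-1}$ from $L_{\pm q_{\ep}}^{\infty}$ to $W_{\pm q_{\ep}}^{2,\infty}$ blows up as the decay weight $q_{\ep}=\bpzc\ep$ shrinks; the paper's Proposition \ref{B inv prop} gives $\norm{[\B_{\cep}^{\pm}]^{-1}}=\O(\ep^{-1})$, and the mechanism is the pole of $1/\tB_{\cep}$ approaching the integration contour as $q_{\ep}\to 0$ (see the $L^1$ estimate \eqref{inv ft L1 O(q) est}, which scales like $1/q$). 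With $\norm{\Sigma_{\cep}^*}=\O(\ep^2)$, the composition $[\B_{\cep}^-]^{-1}\Sigma_{\cep}^*$ then has norm $\O(\ep)$, not $\O((c-1)^2)$. The Neumann series still converges, so the conclusion survives, but the balance between the blow-up rate of $[\B_{\cep}^{\pm}]^{-1}$ and the smallness of $\Sigma_{\cep}^*$ is exactly the delicate point that your phrasing elides, and one must prove the $\O(\ep^{-1})$ bound before concluding anything. (Relatedly, $\norm{\varsigma_{\cep}}_{L^\infty}=\O(\ep^2)=\O(c-1)$, not $\O((c-1)^2)$, since $\cep-1\sim\ep^2/48$.)

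Second, your verification of Hypothesis \ref{hypothesis 4} is too vague to be a proof. Saying the phase shift ``does not degenerate'' because of ``the residue structure of $\tB_c^{-1}$'' misses the actual content: $\B_c e^{i\omega_c\cdot}=0$, so the unperturbed operator has zero phase shift, and $\vartheta_{\cep}$ is generated entirely by the $\Sigma_{\cep}^*$ perturbation. The paper (Propositions \ref{b-residue} and \ref{alpha beta prop}) shows $\vartheta_{\cep}=\ep\vartheta_{\ep,0}$ with $\vartheta_{\ep,0}$ real and bounded above and below by positive constants, which requires two genuine computations: (i) the first Neumann term $\alpha_{\ep}[\M e^{i\omega_{\cep}\cdot}]$ is purely imaginary and of size exactly $\O(\ep)$, because $\lt_{\pm}[\varsigma_{\cep}e^{i\omega_{\cep}\cdot}](i\omega_{\cep})=\tfrac{1}{2}\norm{\varsigma_{\cep}}_{L^1}$ (using the positivity of $\varsigma_{\cep}$), and $\norm{\varsigma_{\cep}}_{L^1}$ is comparable to $\ep$; and (ii) the competing coefficient $\beta_{\ep}[\M g_{\ep}]$, which would spoil the sign if it were $\O(\ep)$, is in fact $\O(\ep^2)$, and showing this requires an oscillatory-integral/Riemann--Lebesgue estimate on $\int_0^{\infty}e^{2i\omega_{\cep}s}\varsigma_{\cep}(s)\,ds$ (or equivalently $\hat{\sigma}(2\omega_{\cep}/\ep)$). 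Without (ii) the sign of $\sin(\omega_{\cep}\vartheta_{\cep})$ is not controlled. Once one has $\vartheta_{\cep}=\ep\vartheta_{\ep,0}$ with $\vartheta_{\ep,0}$ bounded and nonzero, the conclusion uses $\omega_{\cep}\in(1,\pi/2)$ (from parts \ref{critical frequency bounds} and \ref{c0} of Proposition \ref{critical frequency props prop}) so that $\ep\omega_{\cep}|\vartheta_{\ep,0}|\in(0,\pi)$ for $\ep$ small, whence $\sin(\omega_{\cep}\vartheta_{\cep})\ne0$.
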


Under these hypotheses we retain the existence of micropterons in the equal mass limit.
We prove the next theorem, our central result, as Theorem \ref{main theorem formal}.

\begin{theorem}
Suppose that Hypotheses \ref{hypothesis 1}, \ref{hypothesis 2}, \ref{hypothesis 3}, and \ref{hypothesis 4} hold for some $|c| > 1$.
Then the results of Theorem \ref{main theorem - informal} remain true.
\end{theorem}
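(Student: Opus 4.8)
The plan is to set up Beale's ansatz and reduce the traveling wave problem $\G_c(\rhob,\mu)=0$ to a coupled fixed point system. First I would invoke Section \ref{periodic solutions section} to obtain the family of small-amplitude periodic traveling waves $a\phib_c^\mu[a]$, solving $\G_c$ up to a residual that is exponentially localized (after subtracting the periodic part) and controllably small in $a$ and $\mu$; this step relies only on $|c|>1$ and not on any of the four hypotheses. Then I would substitute $\rhob = \varsigmab_c + a\phib_c^\mu[a] + \etab$ with $\etab=(\eta_1,\eta_2)$ exponentially localized, expand, and collect terms. The first component, thanks to Hypothesis \ref{hypothesis 2} (invertibility of $\H_c$ on $E_{q_{\H}(c)}^2 \to E_{q_{\H}(c),0}^0$), rearranges directly to a fixed point equation $\eta_1 = \H_c^{-1}(\text{stuff depending on } \eta_1,\eta_2,a,\mu)$, exactly as \eqref{H-c defn} became \eqref{varrho-1 fp}. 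Hypothesis \ref{hypothesis 1} and Proposition \ref{friesecke-pego bootstrap} guarantee $\varsigma_c$ exists with the regularity needed to make the nonlinear terms $\D_0\nl(\varsigmab_c,\etab)$ and the products with $\varsigma_c$ bounded on the weighted spaces.

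The second component $\L_c\eta_2 = (\text{right side})$ is not directly solvable since $\L_c$ is not surjective; here Proposition \ref{phase shift theorem - hypos section} (whose hypotheses are Hypotheses \ref{hypothesis 1}--\ref{hypothesis 3}) supplies the Jost solution $\gamma_c$ and the solvability condition $\int g(x)\gamma_c(x)\,dx = 0$. The standard Beale maneuver is: the amplitude $a$ is the extra free parameter, and demanding that the $\eta_2$-equation's right side be orthogonal to $\gamma_c$ yields a scalar fixed point equation for $a$; solving it (to leading order $a = \O(\mu)$) then leaves a right side in the range of $\L_c$, and one uses a right inverse of $\L_c$ — injective with one-dimensional cokernel, via the coercive estimate in the style of \eqref{L-c coercive} and the Fredholm index $-1$ established in Section \ref{char of range} — to solve for $\eta_2$. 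Crucially, the leading-order coefficient of $a$ in this scalar equation involves $\int \varphi_{c}^\mu \gamma_c$ or a similar pairing between the periodic ripple and the asymptotically sinusoidal Jost solution, and this is where Hypothesis \ref{hypothesis 4} ($\sin(\omega_c\vartheta_c)\neq 0$) enters: it guarantees that the relevant oscillatory integral does not vanish, so the scalar equation is nondegenerate and can be inverted for $a$. Assembling $(\eta_1, \eta_2, a)$ into a single vector unknown, I would close the whole system by a contraction mapping (or the implicit function theorem) on a small ball in $E_q^{r+2}\times O_q^{r+2}\times\R$, with contraction constant controlled by $|\mu|$; then set $\Upsilon_{c,i}^\mu := \eta_i + (\text{localized part of } a\phib_c^\mu[a])_i$ and $\varphi_{c,i}^\mu := (\text{periodic part})_i$, and bootstrap regularity to $\Cal^\infty$ using the smoothing already present in $\H_c^{-1}$ and in the periodic construction.

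The main obstacle I expect is the derivation and inversion of the scalar $a$-equation: one must compute the pairing $\zfrak_c[\text{right side of } \eta_2\text{-equation}]$ carefully, isolate the term linear in $a$ (which comes from how the periodic profile $a\phib_c^\mu[a]$ feeds through the nonlinearity $\D_0\nl(\varsigmab_c,\cdot)$ and the $\mu$-dependent operator $\D_\mu$ into the $\L_c$-equation), show that its coefficient is bounded away from zero uniformly for small $\mu$ using Hypothesis \ref{hypothesis 4} and the asymptotics of $\gamma_c$ from Proposition \ref{phase shift theorem - hypos section}, and control the remainder (the genuinely nonlinear and higher-order-in-$\mu$ contributions) as a small Lipschitz perturbation. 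This is the step where the "hidden solvability condition" advertised in the abstract actually does its work, and it is technically delicate because $\gamma_c$ is only bounded (in $W^{2,\infty}$, not localized), so the pairing $\int g\gamma_c\,dx$ only makes sense because $g$ itself is exponentially localized — one must track weights carefully to ensure every integrand decays, which is exactly why Hypothesis \ref{hypothesis 3} is stated with the particular weight constraint $q_{\L}(c) < q_\varsigma(c)/2$. The remaining estimates (mapping properties of $\D_\mu - \D_0$, Lipschitz bounds on $\nl$, uniform bounds on the periodic family) are routine given Sections \ref{periodic solutions section}--\ref{analysis of L-c}, so I would relegate them to the detailed proof of Theorem \ref{main theorem formal} in Section \ref{nanopteron problem section}.
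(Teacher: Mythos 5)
Your proposal matches the paper's argument in Section \ref{nanopteron problem section} essentially step for step: Beale's ansatz, the $\H_c^{-1}$-based equation for $\eta_1$, the solvability functional $\iota_c$ applied to the $\eta_2$-equation to extract a scalar equation for $a$ whose leading coefficient $\iota_c[\chi_c]=(2c^2\omega_c-\sin\omega_c)\sin(\omega_c\vartheta_c)$ is nonzero precisely by Hypothesis \ref{hypothesis 4}, then $\eta_2=\L_c^{-1}\P_c(\cdots)$, and finally a fixed point iteration with $\Cal^\infty$ bootstrap. The one technical refinement you do not anticipate is that a plain contraction on a single weighted ball in $E_{q}^{r}\times O_{q}^{r}\times\R$ does not close: the Lipschitz estimates on terms such as $\tell_{c,3}^\mu$ come from decay-borrowing (Proposition \ref{general Hrq estimates prop}), which degrades the exponential weight, so the paper must use the two-norm fixed point scheme of Lemma \ref{nano fp lemma}, proving uniform bounds in $\X^1$ (weight $q_\star(c)$) but contraction only in the weaker $\X^0$ (weight $\qbar_\star(c)<q_\star(c)$). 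Also, a small slip: $a\phib_c^\mu[a]$ is entirely periodic, so the theorem's localized pieces $\Upsilon_{c,i}^\mu$ are just the $\eta_i$ and the periodic pieces $\varphi_{c,i}^\mu$ are $a\phi_{c,i}^\mu[a]$; there is no ``localized part'' of the periodic solution to merge in.
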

\section{Periodic Solutions}\label{periodic solutions section}
In this section we state our existence result for small-amplitude periodic solutions to the traveling wave problem $\G_c(\rhob,\mu) = 0$ from \eqref{cov simplified}.
We emphasize that the results in this section hold for all $|c| > 1$; we do not need to invoke any of the $c$-dependent hypotheses here.

We work in the periodic Sobolev spaces
\begin{equation}\label{H-per-r}
H_{\per}^r(\R,\C^m)
:= \set{\fb \in L_{\per}^2(\R,\C^m)}{\norm{\fb}_{H_{\per}^r(\R,\C^m)}  < \infty},
\end{equation}
where
\[
\norm{\fb}_{H_{\per}^r(\R,\C^m)} 
:= \left(\sum_{k=-\infty}^{\infty} (1+k^2)^r|\hat{\fb}(k)|^2\right)^{1/2}.
\]
These are Hilbert spaces with the inner product
\begin{equation}\label{H-per-r ip}
\ip{\fb}{\gb}_{H_{\per}^r(\R,\C^m}
= \sum_{k=-\infty}^{\infty} (1+k^2)^r(\hat{\fb}(k)\cdot\hat{\gb}(k)).
\end{equation}
When $m = 1$, we write just $H_{\per}^r$.
We will continue to exploit the symmetries of our traveling wave problem and work on the subspaces
\[
E_{\per}^r 
:= \set{f \in H_{\per}^r}{f \text{ is even}},
\]
\[
E_{\per,0}^r 
:= \set{f \in E_{\per}^r}{\hat{f}(0) = 0},
\]
and
\[
O_{\per}^r 
:= \set{f \in H_{\per}^r}{f \text{ is odd}}.
\]

Set $\rhob(x) = \phib(\omega{x})$, where $\phib \in E_{\per,0}^2 \times O_{\per}^2 \subseteq H_{\per}^r(\R,\C^2)$ and $\omega \in \R$.
Under this scaling, the problem $\G_c(\rhob,\mu) = 0$ becomes
\begin{equation}\label{Phib-c-mu defn}
\bunderbrace{c^2\omega^2\phib'' + \D_{\mu}[\omega]\phib + \D_{\mu}[\omega]\nl(\phib,\phib)}{\Phib_c^{\mu}(\phib,\omega)}=0,
\end{equation}
where
\[
\D_{\mu}[\omega] 
:= \frac{1}{2}
\begin{bmatrix*}
(2+\mu)(1-A[\omega]) &\mu\delta[\omega] \\
-\mu\delta[\omega] &(2+\mu)(1+A[\omega])
\end{bmatrix*},
\]
with
\[
A[\omega] 
:= S^{\omega} + S^{-\omega}
\quadword{and}
\delta[\omega] 
:= S^{\omega}-S^{-\omega}.
\]

As in the proofs of periodic solutions for the long wave problems (Theorems 4.1 in \cite{faver-wright} and 3.1 in \cite{faver-spring-dimer}) and the small mass problem (Theorem 5.1 in \cite{hoffman-wright}), we first look for solutions to the linear problem, which is
\begin{equation}\label{Gamma-c-mu defn}
\bunderbrace{c^2\omega^2\phib'' + \D_{\mu}[\omega]\phib}{\Gamma_c^{\mu}[\omega]\phib}
= 0.
\end{equation}
Observe that if $\Gamma_c^{\mu}[\omega]\phib = 0$, then taking the Fourier transform yields
\begin{equation}\label{ev ew rel}
c^2\omega^2k^2\hat{\phib}(k) = \tD_{\mu}[\omega{k}]\hat{\phib}(k),
\end{equation}
where
\begin{equation}\label{tDmu defn}
\tD_{\mu}[K]:=
\begin{bmatrix*}
(2+\mu)(1-\cos(K)) &i\mu\sin(K) \\
-i\mu\sin(K) &(2+\mu)(1+\cos(K))
\end{bmatrix*}.
\end{equation}
Thus $\Gamma_c^{\mu}[\omega]\phib = 0$ with $\phib$ nonzero if and only if, for some $k \in \Z$, $\hat{\phi}(k) \ne 0$ is an eigenvector of the matrix $\tD_{\mu}[\omega{k}] \in \C^{2\times2}$ corresponding to the eigenvalue $c^2\omega^2k^2$.

We compute that the eigenvalues of $\tD_{\mu}[K]$ are $\lambda_{\mu}^{\pm}(K)$, where
\begin{equation}\label{lambda eigs defn}
\lambda_{\mu}^{\pm}(K) := 2+\mu \pm \sqrt{\mu^2 + 4(1+\mu)\cos^2(K)}.
\end{equation}
These are the same eigenvalues studied in \cite{faver-wright}, \cite{hoffman-wright}, and \cite{faver-spring-dimer}.
See Figure \ref{fig-eigencurves} for a sketch of the curves $\lambda_{\mu}^{\pm}(K)$ against $c^2K^2$. 

So, $\Gamma_c^{\mu}[\omega]\phib = 0$ with $\hat{\phib}(k) \ne 0$ if and only if
\[
c^2\omega^2k^2 = \lambda_{\mu}^-(\omega{k})
\quadword{or}
c^2\omega^2k^2 = \lambda_{\mu}^+(\omega{k})
\]
We will prove below in Proposition \ref{lambda props} that if $|c| > 1$, then the first equality above can never hold, at least over an appropriate $c$-dependent range of $\mu$, while the second holds precisely at $\omega = \omega_c^{\mu}/k$, $k \in \Z\setminus\{0\}$, for a certain ``critical frequency'' $\omega_c^{\mu} > 0$, which is an $\O_c(\mu)$ perturbation of the frequency $\omega_c$ from Section \ref{injectivity section}.
We elect to take $\omega = \omega_c^{\mu}$ (i.e., $k = \pm 1$) so that the frequency of our periodic solutions is close to $\omega_c$; this is important in our construction of the micropterons in Section \ref{nanopteron problem section}.

We now highlight several important properties of $\omega_c^{\mu}$, both contained in and proved as part of Lemma \ref{critical frequency technical lemma}.

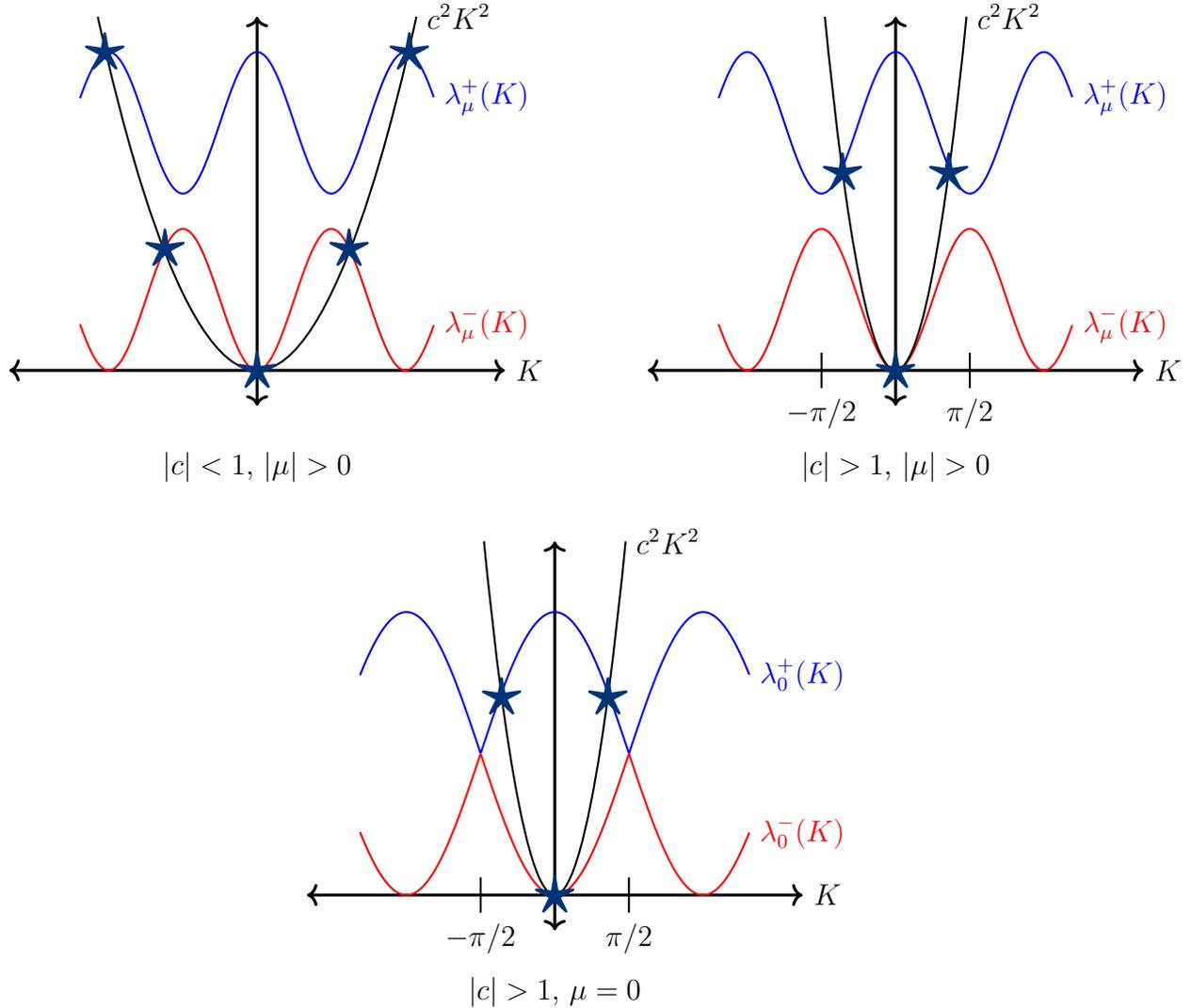
\begin{figure}
\begin{tabular}{cc}
\begin{tikzpicture}[thick]

\draw[<->, very thick] (-3.5,0)--(3.5,0)node[right]{$K$};
\draw[<->, very thick] (0,-.5)--(0,5);
\draw[blue] plot[domain=-2.5:2.5,smooth,samples=500] (\x,{3.5+cos(3*\x r)})node[right]{$\lambda_{\mu}^+(K)$};
\draw[red] plot[domain=-2.5:2.5,smooth,samples=500] (\x,{sin(deg(3*\x+3*pi/2))+1})node[right]{$\lambda_{\mu}^-(K)$};

\draw (-2.25,5) parabola bend (0,0) (2.25,5)node[right]{$c^2K^2$};

\node[below] at (0,-1){$|c| < 1$, $|\mu| > 0$};

\eigstar{-2.15}{4.5};
\eigstar{-1.3}{1.725};
\eigstar{0}{0};
\eigstar{1.3}{1.725};
\eigstar{2.15}{4.5};

\end{tikzpicture}\qquad
&\qquad
\begin{tikzpicture}[thick]

\draw[<->, very thick] (-3.5,0)--(3.5,0)node[right]{$K$};
\draw[<->, very thick] (0,-.5)--(0,5);
\draw[blue] plot[domain=-2.5:2.5,smooth,samples=500] (\x,{3.5+cos(3*\x r)})node[right]{$\lambda_{\mu}^+(K)$};
\draw[red] plot[domain=-2.5:2.5,smooth,samples=500] (\x,{sin(deg(3*\x+3*pi/2))+1})node[right]{$\lambda_{\mu}^-(K)$};

\draw (-1,5) parabola bend (0,0) (1,5)node[right]{$c^2K^2$};

\node[below] at (0,-1){$|c| > 1$, $|\mu| > 0$};

\draw (pi/3,-.25)node[below]{$\pi/2$}--(pi/3,.25);
\draw (-pi/3,-.25)node[below]{$-\pi/2$}--(-pi/3,.25);

\eigstar{-.5*1.5}{5.6*.5};
\eigstar{0}{0};
\eigstar{.5*1.5}{5.6*.5};

\end{tikzpicture}
\end{tabular}

\[
\begin{tikzpicture}[thick]

\draw[<->, very thick] (-3.5,0)--(3.5,0)node[right]{$K$};
\draw[<->, very thick] (0,-.5)--(0,5);

\draw[blue] plot[domain=-2.75:2.75,smooth,samples=500] (\x,{2+2*abs(cos(1.5*\x r))})node[right]{$\lambda_0^+(K)$};
\draw[red] plot[domain=-2.75:2.75,smooth,samples=500] (\x,{2-2*abs(cos(1.5*\x r))})node[right]{$\lambda_0^-(K)$};

\draw (-1,5) parabola bend (0,0) (1,5)node[right]{$c^2K^2$};

\node[below] at (0,-1){$|c| > 1$, $\mu = 0$};

\draw (pi/3,-.25)node[below]{$\pi/2$}--(pi/3,.25);
\draw (-pi/3,-.25)node[below]{$-\pi/2$}--(-pi/3,.25);

\eigstar{-.5*1.5}{5.6*.5};
\eigstar{0}{0};
\eigstar{.5*1.5}{5.6*.5};
\end{tikzpicture}
\]
\caption{Sketches of the graphs of the eigencurves $\lambda_{\mu}^{\pm}(K)$ and the parabola $c^2K^2$ for different cases on $|c|$ and $|\mu|$.
When $|c| < 1$, $\lambda_{\mu}^{\pm}(K)$ may have more intersections with $c^2K^2$ than just $K = \pm \omega_c^{\mu}$.  
Note that when $\mu = 0$, the curves $\lambda_0^{\pm}$ intersect at odd multiples of $\pi/2$, but this does not affect the eigenvalue analysis, since the critical frequencies $\omega_c^{\mu}$ are contained in $(-\pi/2,\pi/2)$.}
\label{fig-eigencurves}
\end{figure}

\begin{proposition}\label{critical frequency props prop}

\begin{enumerate}[label={\bf(\roman*)},ref={(\roman*)}]

\item
For all $|c| > 1$, there is $\mu_{\omega}(c) > 0$ such that if $|\mu| \le \mu_{\omega}(c)$, then there is a unique number $\omega_c^{\mu} > 0$ such that $c^2(\omega_c^{\mu})^2 = \lambda_{\mu}^+(\omega_c^{\mu})$.

\item\label{critical frequency bounds}
There are numbers $0 < A_c < B_c < \pi/2$ such that $A_c < \omega_c^{\mu} < B_c$ for all $|\mu| \le \mu_{\omega}(c)$.

\item\label{c0}
If $|c| \in (1,\sqrt{2}]$, then $A_c \ge 1$.

\item\label{omega-c-mu minus omega-c}
$\omega_c^{\mu} - \omega_c = \O_c(\mu).$
\end{enumerate}
\end{proposition}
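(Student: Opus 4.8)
The plan is to reduce all four claims to the study of the scalar function
\[
h_c^{\mu}(\omega) := c^2\omega^2 - \lambda_{\mu}^+(\omega), \qquad \omega \ge 0,
\]
whose positive zeros are exactly the candidate critical frequencies, and to read everything off from monotonicity and continuity of $h_c^{\mu}$ together with a single application of the implicit function theorem. I keep $|\mu| < 1$, so that $1+\mu > 0$. First I would record the elementary behavior of $\lambda_{\mu}^+$ from \eqref{lambda eigs defn}: since $\cos^2(\omega) \in [0,1]$ one has $\lambda_{\mu}^+(0) = 2(2+\mu)$, $\lambda_{\mu}^+(\pi/2) = 2+\mu+|\mu|$, and $2+\mu-|\mu| \le \lambda_{\mu}^+(\omega) \le 2(2+\mu)$ for every $\omega$, while differentiating and using $\sin^2(2\omega) = 4\sin^2(\omega)\cos^2(\omega)$ gives
\[
\frac{d}{d\omega}\lambda_{\mu}^+(\omega) = \frac{-2(1+\mu)\sin(2\omega)}{\sqrt{\mu^2 + 4(1+\mu)\cos^2(\omega)}}, \qquad \Bigl|\frac{d}{d\omega}\lambda_{\mu}^+(\omega)\Bigr| \le 2\sqrt{1+\mu},
\]
this derivative being smooth and strictly negative on $(0,\pi/2)$.

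\textbf{Parts (i) and (ii).} On $[0,\pi/2]$ we have $(h_c^{\mu})'(\omega) = 2c^2\omega - \tfrac{d}{d\omega}\lambda_{\mu}^+(\omega) > 0$, so $h_c^{\mu}$ is strictly increasing there; since $h_c^{\mu}(0) = -2(2+\mu) < 0$ while $h_c^{\mu}(\pi/2) = c^2\pi^2/4 - (2+\mu+|\mu|) \ge \pi^2/4 - 2 - 2|\mu|$, which is positive for $|\mu|$ small because $c^2 > 1$ and $\pi^2/4 > 2$, there is a unique zero $\omega_c^{\mu} \in (0,\pi/2)$. To exclude zeros on $[\pi/2,\infty)$ I note that there $(h_c^{\mu})'(\omega) \ge c^2\pi - 2\sqrt{1+\mu} > 0$ after shrinking $\mu_{\omega}(c)$ so that $2\sqrt{1+\mu} < \pi$, so $h_c^{\mu}$ keeps increasing past $\pi/2$ and stays positive; this proves (i). Since $h_c^{\mu}$ is strictly increasing on $[0,\pi/2]$ and vanishes at $\omega_c^{\mu}$, the bound $A_c < \omega_c^{\mu} < B_c$ is equivalent to $h_c^{\mu}(A_c) < 0 < h_c^{\mu}(B_c)$; as $\mu \mapsto h_c^{\mu}(\omega)$ is continuous at $\mu = 0$ for each fixed $\omega \in [0,\pi/2)$, it suffices to choose $0 < A_c < B_c < \pi/2$ (independent of $\mu$) satisfying the strict inequalities $c^2A_c^2 < 2 + 2\cos(A_c)$ and $c^2B_c^2 > 2 + 2\cos(B_c)$ at $\mu = 0$ --- take $A_c$ near $0$ and $B_c$ near $\pi/2$, the latter using $c^2\pi^2/4 > \pi^2/4 > 2$ --- and then shrink $\mu_{\omega}(c)$ so the inequalities persist, giving (ii).

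\textbf{Parts (iii) and (iv).} For (iii): when $|c| \le \sqrt2$ we have $c^2 \le 2 < 2 + 2\cos(1) = \lambda_0^+(1)$, so after further shrinking $\mu_{\omega}(c)$ we get $h_c^{\mu}(1) = c^2 - \lambda_{\mu}^+(1) < 0$ for all $|\mu| \le \mu_{\omega}(c)$, and monotonicity of $h_c^{\mu}$ on $[0,\pi/2]$ then forces $\omega_c^{\mu} > 1$, so $A_c$ may be taken to equal $1$. For (iv): at $\mu = 0$ we have $\lambda_0^+(\omega) = 2 + 2\cos(\omega)$ on $(0,\pi/2)$, so $h_c^0(\omega) = -\tB_c(\omega)$ there with $\tB_c$ from \eqref{tB-c defn}; hence $\omega_c^0$ is a positive zero of $\tB_c$, which forces $\omega_c^0 = \omega_c$ by uniqueness of the latter. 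The map $(\omega,\mu) \mapsto c^2\omega^2 - \lambda_{\mu}^+(\omega)$ is smooth near $(\omega_c,0)$ --- there $\cos(\omega_c) > 0$, so the radicand is positive --- with $\omega$-derivative $(h_c^0)'(\omega_c) > 0$, so the implicit function theorem yields a smooth branch $\mu \mapsto \Omega(\mu)$ of zeros with $\Omega(0) = \omega_c$; by continuity $\Omega(\mu) \in (0,\pi/2)$ for small $\mu$, so part (i)'s uniqueness gives $\Omega(\mu) = \omega_c^{\mu}$, and the mean value theorem gives $\omega_c^{\mu} - \omega_c = \Omega(\mu) - \Omega(0) = \O_c(\mu)$.

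The step I expect to need the most care is the global uniqueness in (i). The curve $\lambda_{\mu}^+$ oscillates, so a priori the parabola $c^2\omega^2$ could meet it several times where the two are comparable in size; ruling this out rests on the uniform Lipschitz bound $|\tfrac{d}{d\omega}\lambda_{\mu}^+| \le 2\sqrt{1+\mu}$ together with the numerical inequalities $\pi^2/4 > 2$ and $2\sqrt{1+\mu} < \pi$, all of which must hold at once and --- for the later decoupling of $\mu$ from $|c|-1$ --- uniformly in $|c| > 1$. Since the slack $c^2\pi^2/4 - 2$ shrinks as $|c| \to 1^+$, the bookkeeping that arranges a single smallness threshold $\mu_{\omega}(c)$ valid for all four parts simultaneously (indeed one independent of $c$) is where one must be most attentive.
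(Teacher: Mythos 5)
Your proof is correct and takes a genuinely different route from the paper's, and in a couple of places a cleaner one. The paper's proof of monotonicity of $\Lambda_c^+(K,\mu) = c^2K^2 - \lambda_\mu^+(K)$ works on all of $(0,\infty)$ via the pointwise Lipschitz bound $|(\lambda_\mu^+)'(K)| \le 2(1+\mu)|K|$, which forces the threshold $\Mu(c) = \min\{1,(c^2-1)/4\}$ and hence a $\mu$-window that collapses as $|c| \to 1^+$. You instead split at $\pi/2$: on $(0,\pi/2)$ monotonicity is automatic because $(\lambda_\mu^+)'<0$ there (a small observation the paper does not exploit), and on $[\pi/2,\infty)$ you use the uniform bound $|(\lambda_\mu^+)'|\le 2\sqrt{1+\mu}$ together with $c^2\pi > \pi > 2\sqrt{2}$, which needs no restriction on $\mu$ beyond $|\mu|<1$. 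For part (iv) you invoke the implicit function theorem at $(\omega_c,0)$ plus the mean value theorem, rather than the paper's direct subtraction and Taylor expansion of the square root; the IFT route is shorter once one notices that the radicand $\mu^2+4(1+\mu)\cos^2(\omega)$ is strictly positive near $(\omega_c,0)$. What the paper's more laborious approach buys is twofold: explicit $\mu$-uniform bounds such as $A_c = \sqrt{2}/|c|$, and simultaneous control of $\Lambda_c^-$, which is needed elsewhere in the argument (Lemma \ref{lambda props}\ref{upsilon-c-mu part}) but not for this proposition.

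Two minor points worth tightening. First, when $\mu=0$ the function $\lambda_0^+(\omega)=2+2|\cos(\omega)|$ is only Lipschitz across odd multiples of $\pi/2$, not $C^1$, so the "keeps increasing" step on $[\pi/2,\infty)$ should be phrased via the fundamental theorem of calculus for Lipschitz functions (derivative positive a.e.\ implies strictly increasing), or handled interval-by-interval as the paper does. Second, your phrase "after shrinking $\mu_\omega(c)$ so that $2\sqrt{1+\mu}<\pi$" is vacuous: since $|\mu|<1$ already gives $2\sqrt{1+\mu}<2\sqrt{2}<\pi$, no shrinkage is needed at that step. Neither affects correctness.
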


\begin{remark}
The restriction $|c| \in (1,\sqrt{2}]$ will appear in several technical estimates throughout the rest of the paper.
This is merely to ensure the convenient lower bound $\omega_c^{\mu} > 1$, which will be useful when we verify the four hypotheses for $|c| \gtrsim 1$.
\end{remark}

Following the methods of our predecessors, we will use this critical frequency $\omega_c^{\mu}$ in a modified Crandall-Rabinowitz-Zeidler bifurcation from a simple eigenvalue argument \cite{crandall-rabinowitz, zeidler} to construct the exact periodic solutions to the full problem $\G_c(\rhob,\mu) = 0$.
Here is our result, proved in Appendix \ref{periodic solutions appendix}.

\begin{proposition}\label{periodic solutions theorem}
For each $|c| > 1$, there are $\mu_{\per}(c) \in (0,\min\{\mu_{\omega}(c),1\})$ and $a_{\per}(c) > 0$ such that for all $|\mu| \le \mu_{\per}(c)$, there are maps
\begin{align*}
\omega_c^{\mu}[\cdot] &\colon [-a_{\per}(c),a_{\per}(c)] \to \R \\
\psi_{c,1}^{\mu}[\cdot] &\colon [-a_{\per}(c),a_{\per}(c)] \to \Cal_{\per}^{\infty} \cap \{\text{even functions}\}  \\
\psi_{c,2}^{\mu}[\cdot] &\colon [-a_{\per}(c),a_{\per}(c)] \to \Cal_{\per}^{\infty} \cap \{\text{odd functions}\} 
\end{align*}
with the following properties.

\begin{enumerate}[label={\bf(\roman*)}, ref = {(\roman*)}]

\item\label{periodic expansion part}
If
\begin{equation}\label{phib-c-mu-a expansion}
\phib_c^{\mu}[a](x) 
:= \begin{pmatrix*}
\upsilon_c^{\mu}\cos(\omega_c^{\mu}[a]x) \\
\sin(\omega_{c}^{\mu}[a]x)
\end{pmatrix*}
+ \begin{pmatrix*}
\psi_{c,1}^{\mu}[a](\omega_{c}^{\mu}[a]x) \\
\psi_{c,2}^{\mu}[a](\omega_{c}^{\mu}[a]x)
\end{pmatrix*},
\end{equation}
where $\upsilon_{c}^{\mu} = \O_c(\mu)$ is defined below in \eqref{upsilonw defn}, then $\G_{c}(a\phib_{c}^{\mu}[a],\mu) = 0$.

\item\label{omega-c-mu-0=0}
$\omega_{c}^{\mu}[0] = \omega_{c}^{\mu}.$

\item\label{psib-c-mu-0=0}
$\psi_{c,1}^{\mu}[0] = \psi_{c,2}^{\mu}[0] = 0$.

\item\label{periodic lipschitz part}
For each $r \ge 0$, there is $C(c,r) > 0$ such that if $|a|$, $|\grave{a}| \le a_{\per}$ and $|\mu| \le \mu_{\per}(c)$, then
\begin{equation}\label{periodic theorem lipschitz}
|\omega_{c}^{\mu}[a] -\omega_{c}^{\mu}[\grave{a}]|
+ \norm{\psi_{c,1}^{\mu}[a] - \psi_{c,1}^{\mu}[\grave{a}]}_{W^{r,\infty}}
+ \norm{\psi_{c,2}^{\mu}[a]-\psi_{c,2}^{\mu}[\grave{a}]}_{W^{r,\infty}}
\le C(c,r)|a-\grave{a}|.
\end{equation}
\end{enumerate}
\end{proposition}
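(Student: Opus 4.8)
\textbf{Proof proposal for Proposition \ref{periodic solutions theorem}.}

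The plan is to reduce \eqref{Phib-c-mu defn} to a bifurcation-from-a-simple-eigenvalue problem in the spirit of Crandall--Rabinowitz--Zeidler, exactly as was done for the long wave and small mass analogues. First I would rescale: writing $\rhob(x) = \phib(\omega x)$ turns $\G_c(\rhob,\mu) = 0$ into $\Phib_c^{\mu}(\phib,\omega) = 0$ on $E_{\per,0}^2 \times O_{\per}^2$, and I would further factor out the amplitude by setting $\phib = a\psib + a(\upsilon_c^{\mu}\cos(\omega\cdot), \sin(\omega\cdot))$, where the leading sinusoid is the eigenvector of $\tD_\mu[\omega]$ built in Lemma \ref{critical frequency technical lemma} and $\upsilon_c^{\mu}$ (defined via \eqref{upsilonw defn}) is the associated component ratio, itself $\O_c(\mu)$. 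The linear operator $\Gamma_c^{\mu}[\omega]$ is a Fourier multiplier that, by the eigenvalue analysis in Proposition \ref{critical frequency props prop} and Proposition \ref{lambda props}, has a one-dimensional kernel when $\omega = \omega_c^{\mu}$, spanned by the $k = \pm 1$ mode; the restriction $|c| > 1$ is precisely what kills the $\lambda_\mu^-$ branch and isolates the single crossing $c^2\omega^2 = \lambda_\mu^+(\omega)$. Working on the symmetry-adapted subspaces $E_{\per,0}^r \times O_{\per}^r$ removes the $k = 0$ mode and, crucially, restores a well-posed Fredholm setup: on these subspaces $\Gamma_c^{\mu}[\omega_c^{\mu}]$ is invertible on the complement of the kernel.

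Next I would set up the Lyapunov--Schmidt / implicit function theorem machinery. Let $P$ be the spectral projection onto the $k = \pm 1$ Fourier mode and decompose $\psib = a_1 (\text{leading mode}) + \psib^\perp$ with $\psib^\perp \in (I-P)(E_{\per,0}^r \times O_{\per}^r)$. The range equation $(I-P)\Phib_c^{\mu}(\phib,\omega) = 0$ is solved for $(\psib^\perp, \omega)$ in terms of $a$ by the implicit function theorem: at $a = 0$, $\mu$ arbitrary small, $(\psib^\perp,\omega) = (0,\omega_c^{\mu})$ solves it, the linearization in $(\psib^\perp,\omega)$ is an isomorphism because of the invertibility of $\Gamma_c^{\mu}[\omega_c^{\mu}]$ off the kernel together with the transversality condition $\partial_\omega(c^2\omega^2 - \lambda_\mu^+(\omega))|_{\omega_c^{\mu}} \neq 0$ (this transversality follows from part \ref{critical frequency bounds} of Proposition \ref{critical frequency props prop}, which confines $\omega_c^{\mu}$ to $(0,\pi/2)$ where the eigencurve $\lambda_\mu^+$ is strictly decreasing while $c^2\omega^2$ strictly increases). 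This produces $C^\infty$ (indeed analytic, since the nonlinearity is polynomial) maps $a \mapsto \omega_c^{\mu}[a]$ and $a \mapsto \psib^\perp = (\psi_{c,1}^{\mu}[a], \psi_{c,2}^{\mu}[a])$ on a $c$-dependent interval $[-a_{\per}(c), a_{\per}(c)]$, with $\omega_c^{\mu}[0] = \omega_c^{\mu}$ and $\psi_{c,j}^{\mu}[0] = 0$, giving parts \ref{omega-c-mu-0=0} and \ref{psib-c-mu-0=0}. The remaining bifurcation equation $P\Phib_c^{\mu} = 0$ is then solved automatically: the quadratic nonlinearity $\nl(\phib,\phib)$ maps the odd/even-mean-zero sinusoid into the $k = 0$ and $k = \pm 2$ modes, so its $P$-component vanishes identically, and one reads off part \ref{periodic expansion part} --- $\G_c(a\phib_c^{\mu}[a],\mu) = 0$ with $\phib_c^{\mu}[a]$ as in \eqref{phib-c-mu-a expansion}. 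The $\Cal_{\per}^\infty$ regularity is a standard elliptic/bootstrap observation: a periodic $H^2$ solution of a constant-coefficient advance-delay equation with analytic (polynomial) nonlinearity is automatically smooth.

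The Lipschitz estimate \ref{periodic lipschitz part} in every $W^{r,\infty}$ norm follows from the smoothness of the solution maps plus Sobolev embedding $H_{\per}^{r+1} \hookrightarrow W^{r,\infty}$, but I expect the genuine technical work --- and the main obstacle --- to be making every constant's dependence on $c$ \emph{explicit and uniform in $\mu$}. Concretely, the size of the bifurcation interval $a_{\per}(c)$, the neighborhood $\mu_{\per}(c)$, and the constants $C(c,r)$ must be controlled using only the $\mu$-uniform bounds $A_c < \omega_c^{\mu} < B_c < \pi/2$ from Proposition \ref{critical frequency props prop}\ref{critical frequency bounds}; this forces one to quantify the $\mu$-uniform lower bound on $|\partial_\omega(c^2\omega^2 - \lambda_\mu^+(\omega))|$ over $\omega \in [A_c, B_c]$ and the $\mu$-uniform operator-norm bound on the partial inverse of $\Gamma_c^{\mu}[\omega]$ on the complement of its kernel, both of which degenerate as $|c| \downarrow 1$. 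Carrying the quantitative implicit-function-theorem estimates through the Lyapunov--Schmidt reduction with these uniformities is the delicate part; the rest is bookkeeping, and the details I would relegate to Appendix \ref{periodic solutions appendix} as the statement promises.
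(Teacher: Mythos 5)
Your overall framework — rescale to $\Phib_c^{\mu}(\phib,\omega)=0$, isolate the simple crossing $\omega_c^{\mu}$, peel off the amplitude $a$ and the eigenvector $\nub_c^{\mu}$, and run a Lyapunov--Schmidt reduction — matches the paper's Appendix \ref{periodic solutions appendix}, but you have swapped the roles of the range and bifurcation equations, and this is not a cosmetic slip. You propose to solve the range equation $(I-P)\Phib_c^{\mu}(\phib,\omega)=0$ for the \emph{pair} $(\psib^{\perp},\omega)$ by the implicit function theorem, with the transversality condition supplying invertibility. This cannot work. After factoring out $a$, the range equation at $a=0$, $\psib^{\perp}=0$ reduces to $(I-P)\Gamma_c^{\mu}[\omega]\nub_c^{\mu}=0$, which holds for \emph{every} $\omega$: $\Gamma_c^{\mu}[\omega]$ is a Fourier multiplier and $\nub_c^{\mu}$ has Fourier support only at $k=\pm 1$, entirely inside the range of $P$. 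So the $\omega$-derivative of the range equation vanishes identically at the base point, and the linearization in $(\psib^{\perp},\omega)$ is degenerate, not an isomorphism. Conversely, the bifurcation equation $P\Phib_c^{\mu}=0$ is \emph{not} satisfied automatically: its $\omega$-dependent piece $P\Gamma_c^{\mu}[\omega]\nub_c^{\mu}$ vanishes only at $\omega=\omega_c^{\mu}$, and the $\omega$-derivative of that term, namely $\ip{\partial_{\omega}\Gamma_c^{\mu}[\omega_c^{\mu}]\nub_c^{\mu}}{\nub_c^{\mu}}_0$, is exactly the quantity bounded away from zero in part (iv) of Lemma \ref{lambda props}; that is the transversality which actually determines $\omega=\omega_c^{\mu}[a]$. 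Your observation that $\nl$ of a pure $k=\pm1$ mode lands in $k=0,\pm 2$ only addresses the leading order — once $\psib^{\perp}\ne 0$, the cross term $\nl(\nub_c^{\mu},\psib^{\perp})$ feeds back into $k=\pm 1$. The paper therefore extracts $\xi=\omega-\omega_c^{\mu}$ from the $\varpi_c^{\mu}$-projected (bifurcation) equation \eqref{xi eqn} and $\psib$ from the $\Pi_c^{\mu}$-projected (range) equation \eqref{psib eqn}.

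A second issue you do not anticipate, and which the paper flags explicitly: the terms $\rhs_{c,1}^{\mu}(\xi)$ and $\rhs_{c,2}^{\mu}(\xi)$ are linear in $\xi$ with no compensating factor of $\mu$, which blocks a direct application of the quantitative fixed-point Lemma \ref{fixed point lemma} that the paper uses in place of a bare implicit function theorem (so that the Lipschitz estimates in \eqref{periodic theorem lipschitz} with $\mu$-uniform constants fall out of the construction). The paper's remedy is to re-substitute $\xi=\Xi_c^{\mu}(\psib,\xi,a)$ into the equation for $\psib$ and regroup via $\rhs_{c,6}^{\mu}$ in \eqref{rhs-6 periodic defn}; Hoffman and Wright did not face this in the small mass problem because their scaling already carried a factor of $m$. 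You are right that the genuine work is in making the constants explicit and $\mu$-uniform, but this particular structural obstruction is not mere bookkeeping and needs to be addressed before the fixed-point lemma can be invoked.
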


For later use, we isolate two additional estimates on the periodic solutions and their frequencies; the proof follows directly from Proposition \ref{periodic solutions theorem}.

\begin{corollary}\label{periodic corollary}
Under the notation of Proposition \ref{periodic solutions theorem}, we have
\begin{equation}\label{periodic theorem bounded}
\sup_{\substack{|\mu| \le \mu_{\per}(c) \\ |a| \le a_{\per}(c)}} |\omega_c^{\mu}[a]| + \norm{\phib_c^{\mu}[a]}_{W^{r,\infty}}
< \infty.
\end{equation}
and
\begin{equation}\label{naked psib-c-mu-a est}
\sup_{|\mu| \le \mu_{\per}(c)} \norm{\psib_c^{\mu}[a]}_{W^{r,\infty}} 
\le C(c,r)|a|.
\end{equation}
\end{corollary}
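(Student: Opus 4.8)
The statement to prove is Corollary~\ref{periodic corollary}, which asks for two uniform bounds on the periodic solutions $\phib_c^\mu[a]$ and their frequencies $\omega_c^\mu[a]$, given the content of Proposition~\ref{periodic solutions theorem}. The plan is to derive both estimates directly from the conclusions of that proposition together with the explicit form of the expansion \eqref{phib-c-mu-a expansion}; there is essentially no new analysis required, only bookkeeping with the Lipschitz bound and the normalizations at $a = 0$.

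\textbf{Step 1: The uniform bound \eqref{periodic theorem bounded}.} First I would write $\omega_c^\mu[a] = \omega_c^\mu[0] + (\omega_c^\mu[a] - \omega_c^\mu[0])$ and use part~\ref{omega-c-mu-0=0} of Proposition~\ref{periodic solutions theorem} to identify $\omega_c^\mu[0] = \omega_c^\mu$, which by Proposition~\ref{critical frequency props prop}\ref{critical frequency bounds} lies in the fixed interval $(A_c, B_c)$ uniformly in $\mu$; then the increment is controlled by the Lipschitz estimate \eqref{periodic theorem lipschitz} (with $r = 0$, say) times $|a - 0| \le a_{\per}(c)$, so $|\omega_c^\mu[a]| \le B_c + C(c,0)\,a_{\per}(c)$, a finite constant independent of $\mu$ and $a$. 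For the $W^{r,\infty}$ norm of $\phib_c^\mu[a]$, I would treat the two summands in \eqref{phib-c-mu-a expansion} separately. The sinusoidal part $(\upsilon_c^\mu \cos(\omega_c^\mu[a]x), \sin(\omega_c^\mu[a]x))$ has $W^{r,\infty}$ norm bounded by a constant depending only on $r$, $|\upsilon_c^\mu| = \O_c(\mu)$ (hence bounded for $|\mu| \le \mu_{\per}(c)$), and powers of the frequency $|\omega_c^\mu[a]|$, which we just bounded uniformly; differentiating brings down at most $\omega_c^\mu[a]^r$, so this contributes something finite and uniform. For the corrector $\psi_{c,i}^\mu[a](\omega_c^\mu[a]\,\cdot)$, I would apply the Lipschitz bound \eqref{periodic theorem lipschitz} with $\grave a = 0$ and part~\ref{psib-c-mu-0=0} (which gives $\psi_{c,i}^\mu[0] = 0$) to get $\|\psi_{c,i}^\mu[a]\|_{W^{r,\infty}} \le C(c,r)|a| \le C(c,r)a_{\per}(c)$; rescaling the argument by the uniformly bounded factor $\omega_c^\mu[a]$ only changes the constant. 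Taking the supremum over $|\mu| \le \mu_{\per}(c)$ and $|a| \le a_{\per}(c)$ then gives \eqref{periodic theorem bounded}.

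\textbf{Step 2: The linear-in-$a$ bound \eqref{naked psib-c-mu-a est}.} This is immediate from the Lipschitz estimate with $\grave a = 0$: by part~\ref{psib-c-mu-0=0} of Proposition~\ref{periodic solutions theorem}, $\psi_{c,1}^\mu[0] = \psi_{c,2}^\mu[0] = 0$, so \eqref{periodic theorem lipschitz} reads $\|\psi_{c,1}^\mu[a]\|_{W^{r,\infty}} + \|\psi_{c,2}^\mu[a]\|_{W^{r,\infty}} \le C(c,r)|a|$, which in the $\psib$-notation is exactly $\|\psib_c^\mu[a]\|_{W^{r,\infty}} \le C(c,r)|a|$. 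The supremum over $|\mu| \le \mu_{\per}(c)$ is harmless because the constant $C(c,r)$ in \eqref{periodic theorem lipschitz} is already $\mu$-independent. One small point to note is a possible rescaling of the argument: if the notation $\psib_c^\mu[a]$ in the corollary refers to the unscaled profile $x \mapsto (\psi_{c,1}^\mu[a](x), \psi_{c,2}^\mu[a](x))$ this is verbatim \eqref{periodic theorem lipschitz}, and if it refers to the composed version $x \mapsto \psib_c^\mu[a](\omega_c^\mu[a]x)$ then one absorbs the uniformly bounded frequency (Step~1) into the constant.

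There is no real obstacle here; the only thing to be careful about is that every constant that appears—$B_c$, $a_{\per}(c)$, $\mu_{\per}(c)$, $\sup|\upsilon_c^\mu|$, and $C(c,r)$—is genuinely independent of $\mu$ (for $|\mu|$ below the stated threshold) and of $a$, which is exactly what Proposition~\ref{periodic solutions theorem} and Proposition~\ref{critical frequency props prop} have been arranged to provide. Thus the ``proof follows directly from Proposition~\ref{periodic solutions theorem}'' claim in the excerpt is accurate, and the write-up is just the assembly of these pieces.
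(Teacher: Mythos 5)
Your proposal is correct, and since the paper leaves the proof of this corollary implicit (it simply says the result ``follows directly from Proposition~\ref{periodic solutions theorem}''), what you have written is exactly the bookkeeping the authors intended the reader to supply: bound $\omega_c^\mu[a]$ by combining Proposition~\ref{critical frequency props prop}\ref{critical frequency bounds}, part~\ref{omega-c-mu-0=0} of Proposition~\ref{periodic solutions theorem}, and the Lipschitz estimate~\eqref{periodic theorem lipschitz}; bound the sinusoidal and corrector pieces of~\eqref{phib-c-mu-a expansion} separately via~\eqref{Wrinfty scaling est}; and obtain~\eqref{naked psib-c-mu-a est} from~\eqref{periodic theorem lipschitz} with $\grave a = 0$ together with part~\ref{psib-c-mu-0=0}. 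Your remark about the possible notational ambiguity in $\psib_c^\mu[a]$ is also resolved correctly: the way~\eqref{naked psib-c-mu-a est} is invoked in the later mapping estimates (it is composed with the scaling $\omega_c^\mu[a]\cdot$ afterward) confirms it denotes the unscaled pair $(\psi_{c,1}^\mu[a],\psi_{c,2}^\mu[a])$, so the bound is verbatim~\eqref{periodic theorem lipschitz} applied at $\grave a=0$.
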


We emphasize that our periodic solutions persist for $\mu = 0$, i.e., for the monatomic lattice.
Such persistence at the zero limit of the small parameter was impossible in the long wave and small mass limits, as there the analogue of the critical frequency diverged to $+\infty$ as the small parameter approached zero.
This cannot happen in our problem, due to the bounds on $\omega_c^{\mu}$ in part \ref{critical frequency bounds} in Proposition \ref{critical frequency props prop}.

We also note that the existence of periodic solutions to the monatomic traveling wave problems has already been established by Friesecke and Mikikits-Leitner \cite{fml} in the long wave limit using a construction inspired by \cite{friesecke-pego1}.
These periodic traveling waves are close to a KdV cnoidal profile, whereas when $\mu = 0$, the expansion \eqref{phib-c-mu-a expansion} says that the periodic solutions from Proposition \ref{periodic solutions theorem} are, to leading order in the amplitude parameter $a$, close to $(0,\sin(\omega_c\cdot))$.
\section{Analysis of the Operators $\L_c$ and $\L_c^*$}\label{analysis of L-c}

Proposition \ref{phase shift theorem - hypos section} characterized the range of $\L_c$, defined in \eqref{B-c Sigma-c defns}, as an operator from $O_q^{r+2}$ to $O_q^r$.  
In this section we prove that theorem, which we restate below with some additional details.

\begin{theorem}\label{abstract phase shift theorem}
Let $|c| > 1$ satisfy Hypotheses \ref{hypothesis 1} and \ref{hypothesis 3}.

\begin{enumerate}[label={\bf(\roman*)},ref={(\roman*)}]

\item\label{injectivity of L}
The operator $\L_c$ is injective from $O_q^{r+2}$ to $O_q^r$ for each $r \ge 0$ and $q \ge q_{\L}(c)$.

\item\label{abstract solvability condition}
There is a nonzero odd function $\gamma_c \in W^{2,\infty}$ such that for $f \in O_q^{r+2}$ and $g \in O_q^r$, with $r \ge 0$ and $q \in [q_{\L}(c),1)$, we have
\begin{equation}\label{L prop}
\L_cf = g
\iff \bunderbrace{\int_{-\infty}^{\infty} g(x)\gamma_c(x) \dx}{\iota_c[g]} = 0. 
\end{equation}
Moreover, $\L_c^*\gamma_c = 0$, where $\L_c^*$ is the $L^2$-adjoint of $\L_c$ and was defined in \eqref{L-c-star defn}.

\item\label{abstract asymptotics of gamma}
The function $\gamma_c$ satisfies the limits
\begin{equation}\label{gamma asymptotics}
\lim_{x \to \infty} |\gamma_c(x)-\sin(\omega_c(x+\vartheta_c))|
= \lim_{x \to \infty} |(\gamma_c)'(x)-\omega_c\cos(\omega_c(x+\vartheta_c))|
= 0.
\end{equation}

\item\label{iota-c continuous functional}
The functional $\iota_c$ is bounded on $O_q^r$ for any $q$, $r \ge 0$, i.e., there is $C(c,q,r) > 0$ such that 
\begin{equation}\label{iota-c functional estimate}
|\iota_c[f]|
\le C(c,q,r)\norm{f}_{r,q}, \ f \in O_q^r.
\end{equation}
\end{enumerate}
\end{theorem}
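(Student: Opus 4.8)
The plan is to establish the four parts of Theorem~\ref{abstract phase shift theorem} by first reducing the surjectivity/range question for $\L_c$ to a Jost-solution construction for the adjoint $\L_c^*$, and then reading off the asymptotics and the functional bound from the formula produced by that construction. For part~\ref{injectivity of L}, I would reproduce the coercive estimate \eqref{L-c coercive}: the constant-coefficient operator $\B_c$ is a Fourier multiplier with symbol $\tB_c(k) = -c^2k^2 + 2 + 2\cos(k)$, which (via an intermediate value theorem argument and the properties collected in the referenced Proposition on the symbol of $\B_c$) vanishes only at $k = \pm\omega_c$ with $\omega_c \in (0,\pi/2)$; using Beale's multiplier lemma, $\B_c$ is invertible from $O_q^{r+2}$ onto the codimension-one subspace $\Dfrak_{c,q}^r = \{f \in O_q^r : \hat f(\omega_c) = 0\}$ for $q \in [q_{\L}(c),1)$. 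Then $\Sigma_c = 2(2+A)(\varsigma_c \,\cdot\,)$ is exponentially localized (because $\varsigma_c \in E_{q_\varsigma(c)}^r$ decays and $q_{\L}(c) < q_\varsigma(c)/2$), hence compact from $O_q^{r+2}$ to $O_q^r$, and small when $|c| \gtrsim 1$; even without smallness, under Hypothesis~\ref{hypothesis 3} the injectivity of $\L_c^*$ on the one-sided space forces injectivity of $\L_c$, and the compact perturbation gives $\operatorname{ind}\L_c = \operatorname{ind}\B_c = -1$, so $\L_c$ has a one-dimensional cokernel in $O_q^r$, independent of $r$ (the cokernel is the same one-dimensional space for every admissible $r,q$ because elements of the cokernel, being solutions of $\L_c^* g = 0$, are automatically smooth and exponentially growing at most like $e^{q_{\L}(c)|x|}$).

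For parts~\ref{abstract solvability condition} and~\ref{abstract asymptotics of gamma}, I would construct the Jost solution $\gamma_c$ directly, following the strategy sketched in Section~\ref{Lc surjectivity intro}. The key point is that $\L_c^* = \B_c - \Sigma_c^*$ with $\Sigma_c^* g = -2\varsigma_c(x)(2+A)g$ exponentially localized and small. Using the adaptation of Mallet-Paret's theory in the form of Hupkes--Verduyn-Lunel, I would invert $\B_c$ on the one-sided weighted space $W_{-q_{\L}(c)}^{2,\infty}$, obtaining not just invertibility but an explicit Green's-function-type formula for $\B_c^{-1}$ built from the characteristic roots of the symbol $\tB_c$ (the real roots $\pm i\omega_c$ on the imaginary axis plus roots off it contributing exponential decay). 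Because $\Sigma_c^*$ maps $W_{-q_{\L}(c)}^{2,\infty}$ into $L_{-q_{\L}(c)}^\infty$ and is small in operator norm for $|c| \gtrsim 1$ (or small under the relevant hypothesis), the Neumann series inverts $\B_c - \Sigma_c^*$ on these one-sided spaces; this is exactly the content of Hypothesis~\ref{hypothesis 3}. Feeding the pure sinusoid $\sin(\omega_c x)$ — which lies in the kernel of $\B_c$ but not of $\L_c^*$ — through a correction built from $(\B_c - \Sigma_c^*)^{-1}$ applied to $\Sigma_c^* \sin(\omega_c \,\cdot\,) \in L_{-q_{\L}(c)}^\infty$ produces $\gamma_c \in W^{2,\infty}$ with $\L_c^*\gamma_c = 0$, odd by the symmetry structure, and asymptotic as $x \to \infty$ to a phase-shifted sinusoid $\sin(\omega_c(x+\vartheta_c))$; the explicit formula for the inverse is what lets one both read off the phase shift $\vartheta_c$ and differentiate to get the matching $C^1$ limit for $(\gamma_c)'$. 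The equivalence $\L_c f = g \iff \int g \gamma_c = 0$ then follows from the abstract Fredholm alternative \eqref{equal mass solvability revealed}: $\gamma_c$ spans $\ker \L_c^*$ in $\Zcal_q^\star$ (a one-dimensional space since otherwise $\operatorname{ind}\L_c^* \le -2$), so any nonzero element of that kernel — in particular our explicit $\gamma_c$ — serves as the representative $\tzfrak_c$, up to a scalar that I can normalize by the stated asymptotics.

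For part~\ref{iota-c continuous functional}, the functional $\iota_c[f] = \int_{-\infty}^\infty f(x)\gamma_c(x)\,dx$ is well defined and bounded on $O_q^r$ for any $q, r \ge 0$ because $f \in O_q^r$ decays like $\cosh^{-q}$ (exponentially), while $\gamma_c$ is bounded (in fact $\gamma_c \in W^{2,\infty}$), so $f\gamma_c \in L^1$ with $\|f\gamma_c\|_{L^1} \le \|\gamma_c\|_{L^\infty} \|f\|_{L^1} \le C \|\gamma_c\|_{L^\infty}\|f\|_{r,q}$, the last step using the continuous embedding $H_q^r \hookrightarrow L^1$ (valid for any $q > 0$; for $q = 0$ use $H^r \hookrightarrow L^2$ only against the tail, but in all uses in the paper $q \ge q_{\L}(c) > 0$, and the statement as written with $q \ge 0$ follows by noting $\gamma_c$ is bounded together with a standard weighted interpolation — though the essential case is $q > 0$). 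I expect the main obstacle to be the Jost-solution construction in parts~\ref{abstract solvability condition}--\ref{abstract asymptotics of gamma}: specifically, extracting precise asymptotics — a genuine $C^1$ limit to a sinusoid, not merely boundedness — out of the Hupkes--Verduyn-Lunel inversion formula for the advance-delay operator $\B_c$, since that formula is a contour-integral/residue expression whose leading oscillatory term must be separated cleanly from exponentially decaying remainders, and then showing that the small nonlocal perturbation $\Sigma_c^*$ only shifts the phase without destroying the asymptotic form. The injectivity and the functional bound are, by contrast, essentially bookkeeping once the Fredholm-index count and the weighted embeddings are in place.
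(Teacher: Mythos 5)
Your treatment of parts~\ref{injectivity of L} and~\ref{iota-c continuous functional} matches the paper's (bootstrap $\L_c^*f=0$ to force $f\in W_{-q_\L(c)}^{2,\infty}$, invoke Hypothesis~\ref{hypothesis 3}, pass injectivity to $\L_c$; for the functional bound use $\gamma_c\in L^\infty$ against the $L^1$-embedding of $O_q^r$ for $q>0$), and your worry that the essential case is $q>0$ is well-founded. But there is a genuine gap in the Jost-solution construction for parts~\ref{abstract solvability condition}--\ref{abstract asymptotics of gamma}. You propose $\gamma_c := \sin(\omega_c\,\cdot\,) - (\B_c-\Sigma_c^*)^{-1}\Sigma_c^*\sin(\omega_c\,\cdot\,)$ with the inverse taken on the one-sided space $W_{-q_\L(c)}^{2,\infty}\to L_{-q_\L(c)}^\infty$, and assert it is odd and asymptotic to $\sin(\omega_c(x+\vartheta_c))$ as $x\to\infty$. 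Both assertions fail for this formula. The correction $h := (\B_c-\Sigma_c^*)^{-1}\Sigma_c^*\sin(\omega_c\,\cdot\,)$ lies in $W_{-q_\L(c)}^{2,\infty}$ and therefore decays as $x\to+\infty$, so $\gamma_c$ would asymptote to $\sin(\omega_c x)$ with $\vartheta_c=0$; this is incompatible with Hypothesis~\ref{hypothesis 4}, which requires $\sin(\omega_c\vartheta_c)\ne 0$ and which the paper later verifies to hold for $|c|\gtrsim 1$. Moreover, the one-sided weighted space is not invariant under $x\mapsto -x$, so there is no ``symmetry structure'' that makes this $\gamma_c$ odd; $h$ is generically neither odd nor even.

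The missing idea is \emph{reflection followed by antisymmetrization}. The paper starts from the complex ansatz $f_c := e^{i\omega_c\cdot} + g_c$ with $g_c := [\L_c^*]^{-1}\Sigma_c^*e^{i\omega_c\cdot} \in W_{-q_\L(c)}^{2,\infty}$, so $f_c(x)\to e^{i\omega_c x}$ as $x\to+\infty$ with no shift. The nontrivial content is the asymptotics of $f_c$ at $-\infty$: via the Laplace transform and residue theory in Mallet-Paret's style (equations \eqref{residue switcheroo}--\eqref{limit of f at -infty}), one finds $f_c(x)\to (1+\alpha_c)e^{i\omega_c x}+\beta_c e^{-i\omega_c x}$ as $x\to-\infty$, where $\alpha_c,\beta_c$ are residue coefficients. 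Then $\tilde f_c(x):=f_c(x)-f_c(-x)$ satisfies $\L_c^*\tilde f_c=0$ (because $\B_c$ and $\Sigma_c^*$ commute with reflection, $\varsigma_c$ being even), is odd by construction, and — crucially — its asymptotics at $+\infty$ combine the $+\infty$ behavior of $f_c(x)$ with the $-\infty$ behavior of $f_c$ carried through $f_c(-x)$. Taking $\Im[\tilde f_c]$ (or $\Re$, whichever is nonzero, which requires the separate claim $\E_c\ne 0$ from Section~\ref{proof of E-c claim}) and rescaling yields the odd $\gamma_c$ with the phase-shifted sinusoidal limit \eqref{gamma asymptotics}, where $\vartheta_c$ is determined by $\alpha_c,\beta_c$. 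Without this reflection step, the $-\infty$ residue data — the only source of the phase shift — never reaches $+\infty$, and the odd symmetry is never enforced. Your sketch can be repaired by replacing $\sin$ with $e^{i\omega_c\cdot}$ and inserting the antisymmetrization, but as written the construction produces the wrong function.
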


Throughout the proof of this theorem, we will use a number of properties of the symbol $\tB_c$ of the operator $\B_c$, which are proved in Appendix \ref{proof of B props appendix}.

\begin{proposition}\label{symbol of B}
For $|c| > 1$, the function
\begin{equation}\label{tB-c-mu defn}
\tB_c(z)
:= -c^2z^2+2+2\cos(z)
\end{equation}
has the following properties.

\begin{enumerate}[label={\bf(\roman*)}, ref={(\roman*)}]

\item\label{B simple zeros}
Let $q_{\B} = 1$. 
Then for $z \in S_{3q_{\B}} := \set{z \in \C}{|\im(z)| \le 3q_{\B}}$, we have $\tB_c(z) = 0$ if and only if $z=\pm\omega_c$, where $\omega_c \in (\sqrt{2}/|c|,\pi/2)$ was previously studied in Section \ref{injectivity section}.
The zeros at $z=\pm\omega_c$ are simple.
Additionally, 
\begin{equation}\label{b-b-c ineq}
\inf_{|c| \in (1,\sqrt{2}]} |(\tB_c)'(\omega_c)| > 0.
\end{equation}

\item\label{B meromorphic}
The function $1/\tB_c$ is meromorphic on the strip $S_{3q_{\B}}$.
The only poles of $1/\tB_c$ in $S_{3q_{\B}}$ are $z=\pm\omega_c$, and each of these poles is simple.

\item\label{B quadratic decay}
There exists $r_{\B}(c) > 0$ such that if $z \in S_{3q_{\B}}$ with $|z| \ge r_{\B}(c)$, then
\begin{equation}\label{B quadratic estimate}
\frac{1}{|\tB_c(z)|}
\le \frac{2}{|\re(z)|^2}.
\end{equation}
Moreover,
\begin{equation}\label{r0 sup}
r_0
:= \sup_{|c| \in (1,\sqrt{2}]} r_{\B}(c) < \infty.
\end{equation}

\item\label{B L1 q estimate}
There is a constant $C_{\B} > 0$ such that if $0 < |q| \le q_{\B}$, then 
\begin{equation}\label{inv ft L1 O(q) est}
\sup_{|c| \in (1,\sqrt{2}]} \bignorm{\ft^{-1}\left[\frac{1}{\tB_c(-\cdot+iq)}\right]}_{L^1} \le \frac{C_{\B}}{|q|}.
\end{equation}
\end{enumerate}
\end{proposition}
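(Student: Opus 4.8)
\textbf{Proof plan for Proposition \ref{symbol of B}.}

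The plan is to treat the four claims in order, using elementary complex analysis on the strip $S_{3q_{\B}}$. For part \ref{B simple zeros}, I would first note that on the real axis $\tB_c(z) = -c^2z^2 + 2 + 2\cos(z)$ is even, equals $4$ at $z = 0$, and is strictly decreasing for $z > 0$ (its derivative $-2c^2 z - 2\sin z < 0$ on $(0,\pi)$, and it is already negative past $z = \pi$ since $-c^2\pi^2 + 4 < 0$), so the intermediate value theorem gives a unique positive real zero $\omega_c$; the bound $\omega_c \in (\sqrt{2}/|c|,\pi/2)$ comes from checking the signs of $\tB_c$ at those two points (at $z = \pi/2$ one has $-c^2\pi^2/4 + 2 < 0$, and at $z = \sqrt{2}/|c|$ one has $-2 + 2 + 2\cos(\sqrt 2/|c|) > 0$). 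Simplicity is immediate from $(\tB_c)'(\omega_c) \neq 0$, and \eqref{b-b-c ineq} follows because $(\tB_c)'(\omega_c) = -2c^2\omega_c - 2\sin(\omega_c)$ is bounded away from $0$ uniformly for $|c| \in (1,\sqrt 2]$ using the lower bound $\omega_c > \sqrt 2/|c| \geq 1$. The genuinely non-obvious part is ruling out complex zeros in the strip $|\im z| \leq 3$: here I would write $z = x + iy$ and estimate $|{-c^2 z^2 + 2}| \geq c^2(x^2 - y^2) - 2 \geq c^2 x^2 - 9c^2 - 2$ against $|2 + 2\cos z| \leq 2 + 2\cosh y \leq 2 + 2\cosh 9$, which forces $|x|$ to be bounded; then on the resulting compact rectangle one argues that $\im(\cos z) = -\sin x \sinh y$ must vanish, so either $y = 0$ (already handled) or $\sin x = 0$, i.e. $x = n\pi$, and for those $x$ one checks $\re \tB_c = -c^2(n^2\pi^2 - y^2) + 2 \pm 2\cosh y$ cannot vanish in the strip — this last case analysis is the main technical obstacle and may need the restriction $|c| > 1$ together with $|y| \leq 3$ in an essential way, possibly via a slightly more careful two-variable argument rather than crude bounds.

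For part \ref{B meromorphic}: $\tB_c$ is entire (polynomial plus cosine), so $1/\tB_c$ is meromorphic on all of $\C$, and by part \ref{B simple zeros} its only zeros in $S_{3q_{\B}}$ are the simple zeros at $\pm\omega_c$; hence $1/\tB_c$ has exactly simple poles there and no others in the strip.

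For part \ref{B quadratic decay}: for $z = x + iy \in S_{3q_{\B}}$ I would write $|\tB_c(z)| \geq c^2|z|^2 - 2 - 2|\cos z|$ and use $|\cos z| \leq \cosh(3)$ together with $|z|^2 \geq x^2 = |\re z|^2$ to get $|\tB_c(z)| \geq c^2|\re z|^2 - (2 + 2\cosh 3)$; choosing $r_{\B}(c)$ so that $c^2 r_{\B}(c)^2 \geq 2(2 + 2\cosh 3)$ — e.g. $r_{\B}(c) = \sqrt{2(2+2\cosh 3)}/|c|$ plus a margin to dominate the $c^2|\re z|^2$ term by $\tfrac12 c^2|\re z|^2$ from below — yields $|\tB_c(z)| \geq \tfrac12 c^2 |\re z|^2 \geq \tfrac12 |\re z|^2$ since $|c| > 1$, which is \eqref{B quadratic estimate}. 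The sup bound \eqref{r0 sup} is then immediate since the chosen $r_{\B}(c)$ is decreasing in $|c|$ on $(1,\sqrt 2]$, so $r_0 = r_{\B}(1)$ is finite.

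For part \ref{B L1 q estimate}: the idea is that $k \mapsto 1/\tB_c(-k + iq)$ is a smooth function of $k \in \R$ (the denominator has no real zeros on the shifted line since $q \neq 0$ keeps us off $\pm\omega_c$, using part \ref{B meromorphic}), it decays like $|k|^{-2}$ for large $|k|$ by part \ref{B quadratic decay}, and similarly its first two $k$-derivatives decay. A standard estimate — writing $\ft^{-1}[g](x)$ and splitting $|x| \leq 1$ versus $|x| > 1$, on the latter integrating by parts twice in $k$ to gain the factor $x^{-2}$ — bounds $\|\ft^{-1}[1/\tB_c(-\cdot + iq)]\|_{L^1}$ by a constant times $\sup_k\big((1+k^2)|1/\tB_c(-k+iq)|\big) + \sup_k\big((1+k^2)|\partial_k^2(1/\tB_c(-k+iq))|\big)$ plus lower order terms. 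The only place $q$ enters dangerously is near the (shifted-away) poles at $\pm\omega_c$: there $1/\tB_c$ has size comparable to $1/|q|$ and its derivatives size comparable to $1/|q|^2$, $1/|q|^3$, but these large values are confined to a $k$-interval of length comparable to $|q|$, so the net contribution to the relevant suprema-weighted-against-measure bound scales like $1/|q|$; making this precise — ideally by using the residue/partial-fractions structure $1/\tB_c(z) = \frac{1}{(\tB_c)'(\omega_c)(z - \omega_c)} - \frac{1}{(\tB_c)'(\omega_c)(z + \omega_c)} + (\text{holomorphic, $q$-uniformly bounded})$ on a neighborhood of the real axis within $S_{3q_{\B}}$, and noting $\ft^{-1}$ of the explicit pole terms on the line $\im = q$ produces $L^1$ functions of norm exactly $\O(1/|q|)$ — is where I would spend the most care, and it is the second main obstacle after the complex-zero count in part \ref{B simple zeros}. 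The uniform-in-$c$ sup over $|c| \in (1,\sqrt 2]$ then follows from the uniform lower bound \eqref{b-b-c ineq} on $|(\tB_c)'(\omega_c)|$, the uniform bound \eqref{r0 sup} on $r_{\B}(c)$, and continuity/compactness for the bounded range of $c$.
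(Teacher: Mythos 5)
Your sketches of parts \ref{B meromorphic}, \ref{B quadratic decay}, and \ref{B L1 q estimate} follow essentially the paper's own route: \ref{B meromorphic} is an immediate corollary of \ref{B simple zeros}; \ref{B quadratic decay} is the reverse triangle inequality plus a crude sup bound on $|\cos z|$ over the strip; and for \ref{B L1 q estimate} the paper, like you, expands
\[
\frac{1}{\tB_c(z)} = \frac{1}{(\tB_c)'(\omega_c)(z-\omega_c)} + \frac{1}{(\tB_c)'(-\omega_c)(z+\omega_c)} + \Rfrak_c(z),
\]
reads off the inverse Fourier transforms of the two pole terms as one-sided exponentials with $L^1$ norms $\sim 1/|q|$, and bounds $\Rfrak_c$ in $L^1$ uniformly in $q$ and $c$ by a contour shift to $\im z = \pm 2q_{\B}$ together with \eqref{B quadratic estimate}, \eqref{b-b-c ineq}, and \eqref{r0 sup}. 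Your real-axis analysis in \ref{B simple zeros} (monotonicity of $\tB_c$ on $(0,\infty)$, sign checks at $\sqrt 2/|c|$ and $\pi/2$, and $(\tB_c)'(\omega_c) = -2c^2\omega_c - 2\sin\omega_c$ bounded away from zero via $\omega_c > 1$ on $|c| \in (1,\sqrt 2]$) is also fine.

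The gap is in your argument ruling out non-real zeros. You reduce to a compact rectangle and then assert that $\im\,\tB_c(z) = 0$ forces $\im(\cos z) = -\sin x \sinh y$ to vanish, hence $\sin x = 0$ or $y = 0$. But $\im\,\tB_c(x+iy) = -2c^2 xy - 2\sin x\sinh y$: the quadratic term $-c^2 z^2$ also contributes $-2c^2 xy$ to the imaginary part, and the two terms can cancel with neither being zero. So the case analysis $x = n\pi$ is not exhaustive and the step fails. The paper's fix is exactly the ``more careful two-variable argument'' you anticipated needing: for $y \ne 0$ and $x \ne 0$, $\im\,\tB_c(x+iy) = 0$ rearranges to
\[
\sinc(x) := \frac{\sin x}{x} = -\frac{c^2 y}{\sinh y}.
\]
Since $y/\sinh y$ is positive, even, and decreasing in $|y|$ with $3/\sinh 3 > 1/4$, and since $c^2 > 1$, the right side is strictly below $-1/4$ throughout $0 < |y| \le 3q_{\B} = 3$; but $\sinc(x) > -1/4$ for every real $x$. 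Hence no off-axis zero has $x \ne 0$, and at $x = 0$ one has $\re\,\tB_c(iy) = c^2 y^2 + 2 + 2\cosh y > 0$. I would replace your rectangle-plus-cases step with this rearrangement. (Two incidental typos to fix in your write-up: the bound in part \ref{B simple zeros} should read $\cosh 3$, not $\cosh 9$, matching what you correctly use in \ref{B quadratic decay}; and the paper's own $|(\tB_c)'(k)| = 2|c^2 k - \sin k|$ has a sign slip --- your $-2c^2 z - 2\sin z$ is the correct derivative, and the uniform lower bound still holds.)
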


Last, we mention two useful properties of the one-sided exponentially weighted spaces from Definition \ref{one sided expn weighted spaces defn}.
First, if $q > 0$ and $\fb \in W_q^{r,\infty}(\R,\C^m)$, then $\fb$ vanishes at $-\infty$, and if $\fb \in W_{-q}^{r,\infty}(\R,\C^m)$, then $\fb$ vanishes at $\infty$.
Next, the Sobolev embedding and some calculus tell us that the two-sided exponentially weighted spaces $H_q^r$, defined in \eqref{hrq}, are continuously embedded in $W_{\pm{q}}^{r-1,\infty}$ for $r \ge 1$.

Now we are ready to prove Theorem \ref{abstract phase shift theorem}.
We distribute the proof over the remainder of this section.

\subsection{Injectivity of $\L_c \colon O_q^{r+2} \to O_q^r$}
Recall from the definition of the $H_q^r$ spaces in \eqref{hrq} that if $0 \le r_1 \le r_2$ and $0 \le q_1 \le q_2$, then $O_{q_2}^{r_2} \subseteq O_{q_1}^{r_1}$.
Since we take $q_{\L}(c) \le q$ and $r \ge 0$, it therefore suffices to prove that $\L_c$ is injective from $O_{q_{\L}(c)}^2$ to $O_{q_{\L}(c)}^0$.

We do this by considering $\L_c$ and $\L_c^*$ as unbounded operators on $L^2$ with domain $O_{q_{\L}(c)}^2$.
For convenience, we recall that 
\begin{equation}\label{L-c adjoint defn}
\L_c^*f
= \bunderbrace{c^2f'' + (2+A)f}{\B_cf} + \bunderbrace{2\varsigma_c(x)(2+A)f}{-\Sigma_c^*f}.
\end{equation}
Suppose $\L_c^*f = 0$ for some $f \in O_{q_{\L}(c)}^2.$
Then 
\[
f'' = -\frac{(2+A)f + 2\varsigma_c(x)(2+A)f}{c^2}.
\]
Since $\varsigma_c \in \cap_{r=0}^{\infty} E_{q_{\varsigma}(c)}^r$, we can bootstrap from this equality to obtain $f \in \cap_{r=0}^{\infty} O_{q_{\L}(c)}^r$.
In particular, we have $f \in O_{q_{\L}(c)}^3 \subseteq W_{-q_{\L}(c)}^{2,\infty}$.
Hypothesis \ref{hypothesis 3} then implies that $f = 0$, and so $\L_c^*$ must be injective.
That is, 0 is not an eigenvalue of $\L_c^*$, and so 0 is also not an eigenvalue of $\L_c$.
Hence $\L_c$ is injective on $O_{q_{\L}(c)}^2$.

\subsection{A solution to $\L_c^*f = 0$}\label{mallet-paret approach section}
To obtain the forward implication in \eqref{L prop}, it suffices to find a function $\gamma_c$ with $\L_c^*\gamma_c = 0$.
We will construct a very particular $\gamma_c$ that will give us the asymptotics \eqref{gamma asymptotics}.
We do this in three steps.
First, we find a function $f_c$ satisfying $\L_c^*f_c = 0$ using methods derived from \cite{mallet-paret}.
Then the structure of the operator $\L_c^*$ guarantees that $\tilde{f}_c(x) := f_c(x)-f_c(-x)$ also satisfies $\L_c^*f_c = 0$.
Last, since the function $\varsigma_c$ and all the coefficients in $\L_c^*$ are real-valued, the functions $\re[\tilde{f}_c]$ and $\im[\tilde{f}_c]$ satisfy $\L_c^*\re[\tilde{f}_c] = \L_c^*\im[\tilde{f}_c] = 0$ as well.
We will show that a rescaled version of either $\re[\tilde{f}_c]$ or $\im[\tilde{f}_c]$ has the sinusoidal asymptotics \eqref{gamma asymptotics}.

Consider the problem $\L_c^*f = 0$ and make the ansatz
\[
f(x) = e^{i\omega_cx} + g(x),
\]
where $g \in L_{-q_{\L}(c)}^{\infty}$.
Per \ref{B simple zeros} of Proposition \ref{symbol of B}, we have $\B_ce^{i\omega_c\cdot} = 0$, so $\L_c^*f = 0$ if and only if
\begin{equation}\label{what g-c-mu does}
\L_c^*g = \Sigma_c^*e^{i\omega_c\cdot}.
\end{equation}
We use the definition of $\Sigma_c^*$ in \eqref{L-c adjoint defn} and the property $e^{q_{\varsigma}(c)|\cdot|}\varsigma_c \in L^{\infty}$ to obtain $\Sigma_c^*e^{i\omega_c\cdot} \in L_{-q_{\L}(c)}^{\infty}$.
Let $[\L_c^*]^{-1}$ be the inverse of $\L_c^*$ from $W_{-q_{\L}(c)}^{2,\infty}$ to $L_{-q_{\L}(c)}^{\infty}$, per Hypothesis \ref{hypothesis 2}.
If we set
\begin{equation}\label{f-c-mu defn}
g_c := [\L_c^*]^{-1}\Sigma_c^*e^{i\omega_c\cdot}
\quadword{and}
f_c := e^{i\omega_c\cdot} + g_c
\end{equation}
then $\L_c^*f_c = 0$.

\subsection{Asymptotics of $f_c$ at $\pm\infty$}
Since $g_c \in W_{-q_{\L}(c)}^{2,\infty}$ with $q_{\L}(c) > 0$, we know that $g_c$ vanishes at $\infty$.
Thus we have the asymptotics
\begin{equation}\label{limit of f at infty}
\lim_{x \to \infty} |f_c(x) - e^{i\omega_cx}|
= 0.
\end{equation}

Now we need the asymptotics of $f_c$ at $-\infty$.
For this, we turn to the methods of Mallet-Paret in \cite{mallet-paret}.
Specifically, we follow the proof of his Proposition 6.1.
We rewrite \eqref{what g-c-mu does} as 
\begin{equation}\label{1d version}
\B_cg_c = \bunderbrace{\Sigma_c^*(g_c+e^{i\omega_c\cdot})}{h_c}.
\end{equation}

Since $g_c \in W_{-q_{\L}(c)}^{2,\infty}$, we have $e^{q_{\L}(c)\cdot}g_c \in L^{\infty}$.
The Laplace transform $\lt_-[g_c]$ is therefore defined and analytic on $\re(z) < -q_{\L}(c)$; the definition and some essential properties of this Laplace transform are given in Appendix \ref{laplace transform appendix}.
The bounds on $q_{\L}(c)$ from Hypothesis \ref{hypothesis 3} ensure that we may find $q_{\gamma}(c) > 0$ with
\[
q_{\L}(c)
< q_{\gamma}(c)
< \min\{q_{\varsigma}(c)-q_{\L}(c),q_{\B}\}.
\]
Since $e^{q_{\varsigma}(c)|\cdot|}\varsigma_c$, $e^{q_{\L}(c)\cdot}g_c \in L^{\infty}$ with $q_{\L}(c) < q_{\varsigma}(c)-q_{\L}(c)$, it follows that $e^{(q_{\varsigma}(c)-q_{\L}(c))|\cdot|}h_c \in L^{\infty}$, and therefore $\lt_-[h_c] = \lt_-[\B_cg_c]$ is defined and analytic on $|\re(z)| < (q_{\varsigma}(c)-q_{\L}(c))$.
Elementary properties of the Laplace transform give the bounds
\begin{equation}\label{useful lt bd}
\sup_{|\re(z)| \le q_{\gamma}(c)} |\lt_-[\B_cg_c](z)| < \infty.
\end{equation}

Next, we use the formula \eqref{lt-inv} for the inverse Laplace transform to write
\begin{equation}\label{gb lt inv}
g_c(x)
= \frac{1}{2\pi{i}}\int_{\re(z) = q_{\gamma}(c)} \lt_-[g_c](-z)e^{xz} \dz.
\end{equation}
Then we apply the Laplace transform $\lt_-$ to our advance-delay problem \eqref{1d version} and find
\begin{equation}\label{eqn post lt}
\tB_c(iz)\lt_-[g_c](z)
= \Rfrak_c(z) + \lt_-[h_c](z)
= \Rfrak_c(z) + \lt_-[\B_cg_c](z),
\end{equation}
where the remainder term $\Rfrak_c$ arises from the identities \eqref{lt-deriv id} and \eqref{lt shift id-} for the Laplace transform under derivatives and shifts, respectively.
More precisely, we have
\begin{equation}\label{Rfrak-c defn}
\Rfrak_c(z)
:= 2\int_0^1 \big(g_c(x)e^{z(1-x)}+g_c(-x)e^{z(x-1)}\big)\dx - zg_c(0) - g_c'(0)
\end{equation}
and we note that $\Rfrak_c$ is entire.

Part \ref{B simple zeros} of Lemma \ref{symbol of B} tells us that $\tB_c(iz) \ne 0$ for $0 < |\re(z)| < q_{\B}$ and so $\tB_c(iz) \ne 0$ for $0 \le |\re(z)| < q_{\gamma}(c)$ and $z \ne \pm{i\omega}_c$.
We can therefore solve for $\lt_-[g_c](z)$ in \eqref{eqn post lt} and find
\[
\lt_-[g_c](z)
= \frac{\Rfrak_c(z)+ \lt_-[\B_cg_c](z)}{\tB_c(iz)}, \ 0 \le |\re(z)| < q_{\B}, \ z \ne \pm{i\omega_c}.
\]
For $x < 0$, the formula \eqref{gb lt inv} for $g_c$ then becomes
\begin{equation}\label{pre switcheroo lt}
g_c(x)
= \frac{1}{2\pi{i}}\int_{\re(z) = q_{\gamma}(c)} e^{-xz}\frac{\Ifrak_c(z)}{\tB_c(iz)} \dz,
\qquad
\Ifrak_c(x,z) := \big(\Rfrak_c(-z)+ \lt_-[\B_cg_c](-z)\big).
\end{equation}

This integrand is meromorphic on an open set containing $|\re(z)| < q_{\gamma}(c)$; it has simple poles at $z=\pm{i\omega_c}$ and is analytic elsewhere.
Moreover, the quadratic decay of $\tB_c(z)$ for $|z|$ large from part \ref{B quadratic decay} of Lemma \ref{symbol of B}, the estimate \eqref{useful lt bd}, and the definition of $\Rfrak_c$ in \eqref{Rfrak-c defn} imply
\[
\sup_{|k| \le q_{\gamma}(c)}
\lim_{|q| \to \infty} \left|\frac{e^{-x(k+iq)}\Ifrak_c(k+iq)}{\tB_c(ik-q)}\right|
= 0, \ x < 0.
\]
We can therefore shift the contour of the integral in \eqref{gb lt inv} from $\re(z) = q_{\gamma}(c)$ to $\re(z) = -q_{\gamma}(c)$ and obtain from the residue theorem that
\begin{equation}\label{residue switcheroo}
g_c(x)
= \bunderbrace{\frac{1}{2\pi{i}}\int_{\re(z) = -q_{\gamma}(c)} \frac{e^{-xz}\Ifrak_c(x,z)}{\tB_c(-iz)} \dz}{r_c(x)}
+ \alpha_ce^{i\omega_cx}
+ \beta_ce^{-i\omega_cx},
\end{equation}
where
\begin{equation}\label{res-c+}
\alpha_ce^{i\omega_cx}
= 2\pi{i}\res\left(\frac{e^{-xz}\Ifrak_c(x,z)}{\tB_c(-iz)};z=i\omega_c\right)
\end{equation}
and
\begin{equation}\label{res-c-}
\beta_ce^{-i\omega_cx}
= 2\pi{i}\res\left(\frac{e^{-xz}\Ifrak_c(x,z)}{\tB_c(-iz)};z=-i\omega_c\right).
\end{equation}
Since $\tB_c$ has simple zeros at $z = \pm{i}\omega_c$ by part \ref{B simple zeros} of Lemma \ref{symbol of B}, and since $\Ifrak_c(x,\cdot)$ and $e^{x\cdot}$ are analytic on $|\re(z)| \le q_{\gamma}(c)$, these residues are 
\[
\res\left(\frac{\Ifrak_c(x,z)}{\tB_c(-iz)};z=\pm{i}\omega_c\right) \\
= \frac{e^{\pm{i}\omega_cx}\Ifrak_c(\mp{i}\omega)}{i(\tB_c)'(\pm\omega_c)}.
\]

The strategy of the remainder of Mallet-Paret's proof of his Proposition 6.1 in \cite{mallet-paret} shows that both $r_c(x)$ and $r_c'(x)$ from \eqref{residue switcheroo} vanish as $x \to -\infty$.
Thus
\begin{equation}\label{limit of f at -infty}
\lim_{x \to -\infty} |f_c(x) - e^{i\omega_cx} - \alpha_ce^{i\omega_cx} - \beta_ce^{-i\omega_cx}|
=0.
\end{equation}

\begin{remark}
The limitation of this approach is that we do not have an explicit formula for $g_c$, and so we cannot calculate further the residues in \eqref{res-c+} and \eqref{res-c-}.
When $|c| \gtrsim 1$, we can use the Neumann series and results from \cite{hvl} to produce an explicit formula for $(\B_c-\Sigma_c^*)^{-1}$; we do this in Sections \ref{B-cep inverse} and \ref{B-cep Sigma-cep star inverse Neumann}.
In turn, this does give a formula for $g_c$ and, ultimately, $\alpha_c$ and $\beta_c$.

On the other hand, we point out that our proof here does not use the fact that $\varsigma_c$ solves the monatomic traveling wave problem; instead, we need only the decay property $e^{q_{\varsigma}(c)|\cdot|}\varsigma_c \in L^{\infty}$.
The methods of this section could therefore be applied to much more general advance-delay operators than $\L_c^*$; indeed, Mallet-Paret's results in \cite{mallet-paret} are phrased for a very broad class of such operators.
\end{remark}

\subsection{The phase shift revealed}\label{phase shift revealed}
With $f_c$ from \eqref{f-c-mu defn}, the structure of $\L_c^*$ implies that $\tilde{f}_c(x) :=f_c(x) - f_c(-x)$ also satisfies $\L_c^*\tilde{f}_c = 0$.
Setting
\begin{equation}\label{E-c defn}
\E_c(x)
:= e^{i\omega_cx}-\big(e^{-i\omega_cx}+\alpha_ce^{-i\omega_cx} + \beta_ce^{i\omega_cx}\big),
\end{equation}
we have
\begin{equation}\label{E-c limit}
\lim_{x \to \infty} \left|\tilde{f}_c(x) - \E_c(x)\right|
= 0.
\end{equation}
This follows by estimating
\begin{multline*}
\left|\tilde{f}_c(x) - \big[e^{i\omega_cx}-\big(e^{-i\omega_cx}+\alpha_ce^{-i\omega_cx} + \beta_ce^{i\omega_cx}\big)\big]\right| \\
\\
= \left|f_c(x)-f_c(-x)- \big[e^{i\omega_cx}-\big(e^{-i\omega_cx}+\alpha_ce^{-i\omega_cx} + \beta_ce^{i\omega_cx}\big)\big]\right| \\
\\
\le |f_c(x) - e^{i\omega_cx}|
+ \left|f_c(-x)-\big(e^{-i\omega_cx}+\alpha_ce^{-i\omega_cx} + \beta_ce^{i\omega_cx}\big)\right|
\end{multline*}
and using the limits \eqref{limit of f at infty} and \eqref{limit of f at -infty}.

Now we claim
\begin{equation}\label{E-c claim}
\E_c(x) \ne 0, \ x \in \R.
\end{equation}
We prove this claim in Section \ref{proof of E-c claim}.
Then either $\re[\E_c]$ or $\im[\E_c]$ does not vanish identically on $\R$.
We assume that $\im[\E_c]$ does not vanish.
If it is $\re[\E_c]$ that does not vanish, then the proof still proceeds along the lines of what follows below.

Set $\breve{f}_c(x) := \im[\tilde{f}_c(x)] = \im[f_c(x) - f_c(-x)]$.
Then $\L_c^*\breve{f}_c = 0$ and, from \eqref{E-c limit}, we have
\begin{equation}\label{fcup limit}
\lim_{x \to \infty}
\left| \breve{f}_c(x) - \im[\E_c(x)]\right|
= 0.
\end{equation}
Write
\[
\alpha_c
= \alpha_{c,\r} + i\alpha_{c,\i}
\quadword{and}
\beta_c
= \beta_{c,\r} +i\beta_{c,\i},
\]
where $\alpha_{c,\r}$, $\alpha_{c,\i}$, $\beta_{c,\r}$, $\beta_{c,\i} \in \R$.
Then
\begin{equation}\label{im pre-trig}
\im[\E_c(x)]
= (2 + \alpha_{c,\r}-\beta_{c,\r})\sin(\omega_cx) 
-(\alpha_{c,\i} + \beta_{c,\i})\cos(\omega_cx).
\end{equation}

Now we need the identity
\begin{equation}\label{big trig}
A\sin(\omega{x}) + B\cos(\omega{x})
= \sqrt{A^2+B^2}\sin\left(\omega{x} + \arctan\left(\frac{B}{A}\right)\right),
\end{equation}
valid for $A$, $B$, $\omega \in \R$ with $A \ge 0$ (in the case $A = 0$, we interpret $\arctan(B/0) = \arctan(\pm\infty) = \pm\pi/2$).
Since $\im[\E_c]$ does not vanish identically, at least one of the coefficients $(2 + \alpha_{c,\r}-\beta_{c,\r})$, $(\alpha_{c,\i} + \beta_{c,\i})$ is nonzero.
We then apply the identity \eqref{big trig} directly to \eqref{im pre-trig} and conclude that 
\[
\gamma_c
:= \frac{\sgn(2+\alpha_{c,\r}-\beta_{c,\r})(2+\alpha_{c,\r}-\beta_{c,\r})}{\sqrt{(2+\alpha_{c,\r}-\beta_{c,\r})^2 + (\alpha_{c,\i}+\beta_{c,\i})^2}}\breve{f}_c
\]
satisfies $\L_c^*\gamma_c = 0$ and 
\begin{equation}\label{abstract phase shift limit 1}
\lim_{x \to \infty}
|\gamma_c(x) - \sin(\omega_c(x + \vartheta_c))|
=0,
\end{equation}
where 
\[
\vartheta_c
:= -\frac{1}{\omega_c}\arctan\left(\frac{\alpha_{c,\i} + \beta_{c,\i}}{2 + \alpha_{c,\r}-\beta_{c,\r}}\right).
\]

Since $\L_c^*\gamma_c = 0$, we have the forward implication of \eqref{L prop} in part \ref{abstract solvability condition} of Theorem \ref{abstract phase shift theorem}, and also \eqref{abstract phase shift limit 1} implies the first limit in the asymptotics \eqref{gamma asymptotics} from part \ref{abstract asymptotics of gamma}.
Proving the second limit in \eqref{gamma asymptotics} is essentially a matter of establishing the limit \eqref{fcup limit} with $\breve{f}_c(x)$ replaced by $\partial_x[\breve{f}_c](x)$ and $\Im[\E_c(x)]$ replaced by $\Im[\E_c'(x)]$.  
The validity of this limit with derivatives, in turn, is a consequence of the two representations for $f_c$: first, per \eqref{f-c-mu defn}, as $f_c(x) = e^{i\omega_cx} + g_c(x)$, where $g_c$ and $g_c'$ vanish at $\infty$, and, next, with $g_c$ replaced by its expression in \eqref{residue switcheroo}, in which $r_c'$ decays at $-\infty$.

Last, since $f_c$ and $f_c'$ are asymptotic to bounded functions at $\pm\infty$ by \eqref{limit of f at infty} and \eqref{limit of f at -infty}, it follows that $\gamma_c$, $\gamma_c' \in L^{\infty}$, which implies \eqref{iota-c functional estimate}.
This proves part \ref{iota-c continuous functional}.

\subsection{Characterization of the range of $\L_c$ as an operator from $O_q^{r+2}$ to $O_q^r$}
Here we prove the reverse implication in \eqref{L prop}.
In Section \ref{char of range}, we argued\footnote{This argument used the injectivity of $\L_c$ that was established in Section \ref{injectivity section} for the restricted case $|c| \gtrsim 1$. 
Now we may rely on part \ref{injectivity of L} of Theorem \ref{abstract phase shift theorem} to obtain injectivity for arbitrary $c$ assuming Hypothesis \ref{hypothesis 3}, and the results of Section \ref{char of range} remain true.} that the cokernel of $\L_c$ in $O_q^r$ was one-dimensional, and we characterized the range of $\L_c$ via
\begin{equation}\label{char of range - zfrak}
\L_cf = g, \surjmatter
\iff \bunderbrace{\int_{-\infty}^{\infty} g(x)\tzfrak_c(x) \dx}{\zfrak_c[g]} = 0
\end{equation}
for some odd function $\tzfrak_c \in L_{\loc}^1$ with $\sech^q(\cdot)\tzfrak_c \in L^2$.
On the other hand, we constructed in Section \ref{phase shift revealed} an odd nontrivial function $\gamma_c \in W^{2,\infty}$ such that 
\[
\bunderbrace{\int_{-\infty}^{\infty} (\L_cf)(x)\gamma_c(x) \dx}{\iota_c[\L_cf]}
= 0
\]
for all $f \in O_q^{r+2}$.
The functions $\gamma_c$ and $\tzfrak_c$ must be linearly dependent, as otherwise the functionals $\iota_c$ and $\zfrak_c$ would be linearly independent and $\L_c$ would have a cokernel of dimension at least 2.
The characterization \eqref{char of range - zfrak} therefore implies \eqref{L prop}.

\subsection{Proof of the claim \eqref{E-c claim}}\label{proof of E-c claim}
Fix $q \in [q_{\L}(c),q_{\B})$ and suppose the claim is false, so that $\E_c(x) = 0$ for all $x$.
Then \eqref{E-c limit} and the exponential decay of $g_c$ imply that $\tilde{f}_c(x) = f_c(x)-f_c(-x)$ vanishes exponentially fast at $\pm\infty$, so $\tilde{f}_c \in W_{-q_{\L}(c)}^{2,\infty}$.
But $\L_c^*\tilde{f}_c = 0$, so Hypothesis \ref{hypothesis 3} forces $\tilde{f}_c = 0$.
That is, $f_c$ must be even.

Now observe that $\L_c^*f_c = \L_c^*\overline{f}_c = 0$, where $f_c$ and its complex conjugate $\overline{f}_c$ are linearly independent since they asymptote, respectively, to $e^{i\omega_cx}$ and $e^{-i\omega_cx}$ at $\infty$.
The functionals
\[
\zfrak_{c,1}[g]
:= \int_{-\infty}^{\infty} g(x)f_c(x) \dx
\quadword{and}
\zfrak_{c,2}[g]
:= \int_{-\infty}^{\infty} g(x)\overline{f_c(x)} \dx,
\]
defined for $g \in H_q^0$, are therefore also linearly independent.
Moreover, $\zfrak_{c,1}[\L_cf] = \zfrak_{c,2}[\L_cf] = 0$ for all $f \in H_q^2$.

The methods of Section \ref{char of range} can be adapted to show that $\L_c$ has a two-dimensional cokernel in $H_q^0$ when considered as an operator from $H_q^2$ to $H_q^0$.
Consequently, if $\zfrak$ is any functional on $H_q^0$ such that $\zfrak[\L_cf] = 0$ for all $f \in H_q^2$, then $\zfrak$ must be a linear combination of $\zfrak_{c,1}$ and $\zfrak_{c,2}$.
On the other hand, with $\zfrak_c$ and $\tzfrak_c$ from \eqref{char of range - zfrak}, we already have $\zfrak_c[\L_cf] = 0$ for all $f \in H_q^2$, and so $\zfrak_c$ must be a linear combination of $\zfrak_{c,1}$ and $\zfrak_{c,2}$.
But then the odd function $\tzfrak_c$ must be a linear combination of $f_c$ and $\overline{f}_c$, and so $\tzfrak_c$ is even, a contradiction.
\section{The Micropteron Fixed Point Problem}\label{nanopteron problem section}

\subsection{Beale's ansatz}\label{beale's ansatz section}
We study our problem $\G_c(\rhob,\mu) = 0$ from \eqref{cov simplified} under Beale's ansatz
\[
\rhob = \varsigmab_c + a\phib_c^{\mu}[a] + \etab,
\]
where

\begin{enumerate}[label=$\bullet$]

\item
$\varsigmab_c = (\varsigma_c,0)$ solves $\G_c(\varsigmab_c,0) = 0$, per Hypothesis \ref{hypothesis 1}.

\item
$\phib_c^{\mu}[a]$ is periodic, $a \in \R$ with $|a| \le a_{\per}(c)$, $|\mu| \le \mu_{\per}(c)$, and $\G_c(a\phib_c^{\mu}[a],\mu) = 0$ by Proposition \ref{periodic solutions theorem}.

\item
$\etab \in E_q^r\times O_q^r$ with $r \ge 2$ and $q >0$ to be specified later.
\end{enumerate}

We expand $\G_c(\varsigmab_c + a\varphib_c^{\mu}[a]+\etab,\mu)$ using the bilinearity \eqref{nl symm bil} of $\nl$ and the following decomposition of  $\D_{\mu}$ from \eqref{D-mu defn} as the sum of a diagonal operator and a small perturbation term:
\[
\D_{\mu}
= \D_0 + \bunderbrace{\mu\begin{bmatrix*} (2-A)/2 &\delta \\ -\delta &(2+A)/2\end{bmatrix*}}{\mu\Dring}.
\]
Next, we cancel a number of terms with the existing solutions
\[
\G_c(\varsigmab_c,0) = 0
\quadword{and}
\G_c(a\varphib_c^{\mu}[a],\mu) = 0.
\]
After some further rearrangements, we find that $\G_c(\varsigmab_c + a\varphib_c^{\mu}[a]+\etab) = 0$ is equivalent to
\begin{multline}\label{nanopteron equations 1}
c^2\etab'' 
+ \D_0\etab
+ 2\D_0\nl(\varsigmab_c,\etab)
= -\mu\Dring(\varsigmab_c + \nl(\varsigmab_c,\varsigmab_c))
-\mu\Dring(\etab -2\nl(\varsigmab_c,\etab))
-2a\D_{\mu}\nl(\varsigmab_c,\phib_c^{\mu}[a]) \\
\\
-2a\D_{\mu}\nl(\phib_c^{\mu}[a],\etab)
-\D_{\mu}\nl(\etab,\etab).
\end{multline}
Recalling the definitions of $\H_c$ in \eqref{H-c defn} and $\L_c$ in \eqref{L-c defn}, we see that the left side of this equation is just the diagonal operator $\diag(\H_c,\L_c)$ applied to $\etab$.
Then we can rewrite \eqref{nanopteron equations 1} in the componentwise form
\begin{equation}\label{nanopteron equations 2}
\begin{cases}
\H_c\eta_1
= \sum_{k=1}^5 h_{c,k}^{\mu}(\etab,a) \\
\\
\L_c\eta_2
= \sum_{k=1}^5 \ell_{c,k}^{\mu}(\etab,a),
\end{cases}
\end{equation}
where the multitude of terms on the right side are
\begin{equation}\label{h ell defns}
\begin{aligned}
h_{c,1}^{\mu}(\etab,a)&:= -\mu\Dring(\varsigmab_c + \nl(\varsigmab_c,\varsigmab_c))\cdot\e_1
\qquad&\ell_{c,1}^{\mu}(\etab,a)&:= -\mu\Dring(\varsigmab_c + \nl(\varsigmab_c,\varsigmab_c))\cdot\e_2 \\[5pt]
h_{c,2}^{\mu}(\etab,a) &:= -\mu\Dring(\etab -2\nl(\varsigmab_c,\etab))\cdot\e_1 
\qquad&\ell_{c,2}^{\mu}(\etab,a) &:= -\mu\Dring(\etab -2\nl(\varsigmab_c,\etab))\cdot\e_2 \\[5pt]
h_{c,3}^{\mu}(\etab,a) &:= -2a\D_{\mu}\nl(\varsigmab_c,\phib_c^{\mu}[a])\cdot\e_1 
\qquad&\ell_{c,3}^{\mu}(\etab,a) &:= -2a\D_{\mu}\nl(\varsigmab_c,\phib_c^{\mu}[a])\cdot\e_2 \\[5pt]
h_{c,4}^{\mu}(\etab,a) &:= -2a\D_{\mu}\nl(\phib_c^{\mu}[a],\etab)\cdot\e_1 
\qquad&\ell_{c,4}^{\mu}(\etab,a) &:= -2a\D_{\mu}\nl(\phib_c^{\mu}[a],\etab)\cdot\e_2 \\[5pt]
h_{c,5}^{\mu}(\etab,a) &:= -\D_{\mu}\nl(\etab,\etab)\cdot\e_1 
\qquad&\ell_{c,5}^{\mu}(\etab,a) &:= -\D_{\mu}\nl(\etab,\etab)\cdot\e_2.
\end{aligned}
\end{equation}
There are, indeed, many terms here, but the salient features are that $h_{c,k}^{\mu}(\etab,a) \in E_{q,0}^r$ and $\ell_{c,k}^{\mu}(\etab,a) \in O_q^r$ for $\etab \in E_q^r \times O_q^r$ and $a \in \R$ and that most of these terms are ``small.''
For example, $h_{c,1}^{\mu}$ is $\O_c(\mu)$, $h_{c,2}^{\mu}$ is, very roughly, of the form $\mu\etab$, and $h_{c,4}^{\mu}$ and $h_{c,5}^{\mu}$ are quadratic in $\etab$ and $a$.
The terms $h_{c,3}^{\mu}$ and $\ell_{c,3}^{\mu}$ are more complicated and merit more precise analysis later.
We will use the explicit algebraic structure of the terms and their smallness frequently in our subsequent proofs.

\subsection{Construction of the equation for $\eta_1$}\label{eta1 eqn section}
We first extend Hypothesis \ref{hypothesis 2} to allow $\H_c$ to be invertible over a broader range of exponentially localized spaces.
The proof of the next proposition is in Appendix \ref{H-c invert general appendix}.

\begin{proposition}\label{H-c invert general}
Assume Hypotheses \ref{hypothesis 1} and \ref{hypothesis 2}.
There exist $q_{\H}^{\star}(c)$, $q_{\H}^{\star\star}(c)$ with $q_{\H}(c) \le q_{\H}^{\star} (c)< q_{\H}^{\star\star} (c)< \min\{1,q_{\varsigma}(c)\}$ such that for any $q \in [q_{\H}^{\star}(c),q_{\H}^{\star\star}(c)]$ and $r \ge 0$, the operator $\H_c$ is invertible from $E_q^{r+2}$ to $E_{q,0}^r$.
\end{proposition}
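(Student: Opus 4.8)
The plan is to bootstrap the invertibility of $\H_c$ from the single weight $q_{\H}(c)$ guaranteed by Hypothesis \ref{hypothesis 2} to an open range of weights. The strategy is standard for Fourier-multiplier-plus-compact-perturbation operators, and relies on two observations: first, that $\H_c = c^2\partial_x^2 + (2-A)(1+2\varsigma_c)$ decomposes as a constant-coefficient advance-delay operator $\B_c^{\H} := c^2\partial_x^2 + (2-A)$ plus the exponentially localized multiplication-type perturbation $2(2-A)(\varsigma_c \cdot)$, which is compact from $E_q^{r+2}$ to $E_q^r$ for $q < q_\varsigma(c)$; second, that the constant-coefficient part has a symbol $\tH_c(k) := -c^2k^2 + (2-\cos k)\cdot(\text{factor})$ — more precisely the symbol of $(2-A)$ is $2 - 2\cos k \ge 0$ with a double zero only at $k=0$ — so that $\B_c^{\H}$ is invertible on $E_{q,0}^{r+2} \to E_{q,0}^r$ for all $q$ in a strip $|q| < q_{\B}$ via Beale's Fourier multiplier lemma (Lemma \ref{beale fm}), exactly as in Section \ref{injectivity section}. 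The mean-zero condition $\hat f(0)=0$ is precisely what kills the zero of the symbol at the origin, which is why the target space is $E_{q,0}^r$ rather than $E_q^r$.

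First I would record that, by the Sobolev-type embedding and interpolation properties of the $H_q^r$ scale (noted just before Section 4.1 and used repeatedly), it suffices to prove the claim for $r=0$: once $\H_c$ is bijective $E_q^2 \to E_{q,0}^0$, elliptic bootstrapping from the identity $c^2\eta_1'' = g - (2-A)(1+2\varsigma_c)\eta_1$ promotes regularity to all $r \ge 0$, uniformly in the relevant weight range. Next, I would invoke the abstract Fredholm-stability principle: for a fixed $r$, the family $q \mapsto \H_c \in \b(E_q^{2}, E_{q,0}^{0})$ depends continuously on $q$ (conjugating by $\cosh^q$ turns the weight into a parameter entering the symbol analytically), each $\H_c$ in this family has Fredholm index $0$ — because $\B_c^{\H}$ has index $0$ on the mean-zero spaces and $\Sigma_c^{\H} := 2(2-A)(\varsigma_c\cdot)$ is compact — and invertibility at one weight $q_{\H}(c)$ implies invertibility on a neighborhood, since the set of invertible operators is open and index is locally constant. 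This yields some open interval $(q_{\H}(c) - \epsilon, q_{\H}(c) + \epsilon)$ of admissible weights; intersecting with $(0, \min\{1, q_\varsigma(c)\})$ and picking $q_{\H}^{\star}(c) := q_{\H}(c)$, $q_{\H}^{\star\star}(c)$ to be any strict interior point of the resulting open interval gives the claimed constants with $q_{\H}(c) \le q_{\H}^{\star}(c) < q_{\H}^{\star\star}(c) < \min\{1, q_\varsigma(c)\}$.

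There is one subtlety worth flagging, which I expect to be the main technical obstacle: one must verify that the \emph{target} space really is $E_{q,0}^0$ for every $q$ in the range — i.e., that $\H_c$ maps $E_q^2$ into the mean-zero subspace — and that the Fredholm index of $\H_c : E_q^2 \to E_{q,0}^0$ is genuinely $0$ (not $\pm 1$) for \emph{all} such $q$, so that injectivity (which is inherited from Hypothesis \ref{hypothesis 2}, or rather follows because a nontrivial kernel element would lie in $E_q^2 \subseteq E_{q_{\H}(c)}^2$ once $q \ge q_{\H}(c)$, contradicting invertibility there) combined with index $0$ forces surjectivity. The mean-zero containment is a symmetry/structural fact: $(2-A)$ annihilates constants, hence $\widehat{(2-A)u}(0) = 0$ for any $u$, and $\widehat{c^2\eta_1''}(0) = 0$ trivially, so $\widehat{\H_c\eta_1}(0)=0$ automatically — this is exactly the symmetry \eqref{all the pretty symmetries} restricted to the first component. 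For the index count, the key input is part \ref{B simple zeros}-type analysis of $\B_c^{\H}$: its symbol $\tH_c^{\B}(k) = -c^2k^2 + 2 - 2\cos k$ has, for $|c|>1$, a real zero only at $k=0$ (of order two) and no zeros in a strip $|\im z| \le q_{\B}$ away from the origin, so after removing the origin via the mean-zero constraint the operator is invertible on the strip and contributes index $0$; the compact perturbation $\Sigma_c^{\H}$ then leaves the index unchanged. The cleanest route is simply to cite the relevant structural facts about $\B_c^{\H}$ (an analogue of Proposition \ref{symbol of B}, whose proof in Appendix \ref{proof of B props appendix} transfers verbatim with $2+2\cos z$ replaced by $2-2\cos z$ and the zero relocated to the origin) and Beale's Lemma \ref{beale fm}, rather than reproving them. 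I would structure the appendix proof as: (1) reduce to $r=0$; (2) decompose $\H_c = \B_c^{\H} + \Sigma_c^{\H}$ and establish compactness of $\Sigma_c^{\H}$ on the weighted scale for $q < q_\varsigma(c)$; (3) establish $\B_c^{\H} : E_q^2 \to E_{q,0}^0$ is bijective for $|q| < q_{\B}$, hence $\H_c$ is Fredholm of index $0$ there; (4) invoke openness of invertibility around $q_{\H}(c)$ together with injectivity from Hypothesis \ref{hypothesis 2} to conclude on an open sub-interval; (5) define $q_{\H}^{\star}(c), q_{\H}^{\star\star}(c)$ inside that interval and bootstrap to general $r$.
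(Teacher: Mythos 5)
Your proposal is correct and uses the same two key ingredients as the paper — the splitting of $\H_c$ into the Fourier multiplier $\M_c$ with symbol $\tM_c(z) = -c^2z^2 + 2 - 2\cos z$ plus the localized remainder $2(2-A)(\varsigma_c\cdot)$, and Beale's Lemma \ref{beale fm} to invert $\M_c$ from $E_q^{r+2}$ to $E_{q,0}^r$ on a range $[q_{\H}^\star(c),q_{\H}^{\star\star}(c)]$ — but the route to surjectivity is genuinely different. The paper argues directly and elementarily: given $g \in E_{q,0}^r$ with $q$ in the Beale range, Hypothesis \ref{hypothesis 2} yields $f \in E_{q_{\H}(c)}^2$ solving $\H_c f = g$; rewriting this as $\M_c f = 2(A-2)(\varsigma_c f) + g$, noting that the product $\varsigma_c f$ gains decay because $\varsigma_c$ decays at rate $q_\varsigma(c) > q$, and applying $\M_c^{-1}$ shows $f \in E_q^2$, after which a derivative bootstrap handles $r > 0$. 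You instead reach surjectivity via Fredholm theory: compactness of the localized term gives $\H_c$ index $0$, and injectivity (which you correctly obtain from $E_q^2 \subseteq E_{q_{\H}(c)}^2$) then forces bijectivity. Both are sound; the paper's avoids index theory entirely and makes the decay gain explicit, which is arguably cleaner. Two imprecisions to repair in yours: Beale's lemma does not give invertibility of $\M_c$ for all $q < q_{\B}$ but only on a sub-interval of the strip where the quadratic lower bound is verified, so step (3) must be phrased for that interval; and the openness-of-invertibility-via-$\cosh^q$-conjugation argument you sketch as a first pass is shakier than you suggest (conjugation produces a variable-coefficient operator, and it does not carry the mean-zero constraint $\hat f(0)=0$ to the codomain), though it is a redundant fallback once you commit to the index-$0$-plus-injectivity argument.
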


From now on we fix $q \in (\max\{q_{\L}(c),q_{\H}^{\star}(c)\},\min\{q_{\varsigma}(c),q_{\H}^{\star\star}(c),1\})$.
Since $q_{\H}^{\star}(c) < \min\{q_{\varsigma}(c),q_{\H}^{\star\star}(c),1\}$ and $q_{\L}(c) < \min\{q_{\varsigma}(c),q_{\H}(c),1\} < q_{\H}^{\star\star}(c)$ by Hypothesis \ref{hypothesis 3}, this interval is nonempty.
Moreover, since $q \in (q_{\H}^{\star}(c),q_{\H}^{\star\star}(c))$, Proposition \ref{H-c invert general} tells us that $\H_c$ is invertible from $E_q^{r+2}$ to $E_{q,0}^r$ for any $r \ge 0$.
We then invert $\H_c$ in the first equation in \eqref{nanopteron equations 2} to obtain a fixed point equation for $\eta_1$:
\begin{equation}\label{N1 defn}
\eta_1 
= -\H_c^{-1}\sum_{j=1}^6 h_{c,k}^{\mu}(\etab,a) 
=: \nano_{c,1}^{\mu}(\etab,a).
\end{equation}

\subsection{Construction of the fixed point equations for $a$}\label{eta2 a eqn constr} 
From the system \eqref{nanopteron equations 2}, the unknowns $\eta_2$ and $a$ must satisfy
\begin{equation}\label{eta2 eqn redux}
\L_c\eta_2 
= \sum_{k=1}^5 \ell_{c,k}^{\mu}(\etab,a)
\end{equation}
We formally differentiate the right side of \eqref{eta2 eqn redux} to isolate a term containing a factor of $a$:
\begin{equation}\label{chi-c-mu defn}
\partialx{a}\left[\sum_{k=1}^5 \ell_{c,k}^{\mu}(\etab,a)\right]\bigg|_{\etab=0,a=0}
= \partialx{a}[\ell_{c,3}^{\mu}(\etab,a)]\big|_{\etab=0,a=0}
= -2\D_{\mu}\nl(\varsigmab_c,\phib_c^{\mu}[0])\cdot\e_2
=: -\chi_c^{\mu}.
\end{equation}
We calculate this formal derivative by recalling the definitions of $\ell_{c,1}^{\mu},\ldots,\ell_{c,5}^{\mu}$ in \eqref{h ell defns} and observing that all the terms except $\ell_{c,3}^{\mu}$ are either constant in $a$ or quadratic in $\etab$ and $a$, in which case their derivatives with respect to $a$ at $\etab = 0$ and $a = 0$ are zero.
We expect, therefore, that the term 
\[
\ell_{c,3}^{\mu}(\etab,a) + a\chi_c^{\mu}
\]
will be roughly quadratic in $a$.
We could then write
\[
\sum_{k=1}^5 \ell_{c,k}^{\mu}(\etab,a)
= -a\chi_c^{\mu} + \bunderbrace{\sum_{\substack{k=1 \\ k \ne 3}}^5 \ell_{c,k}^{\mu}(\etab) + \big(\ell_{c,3}^{\mu}(\etab,a)+a\chi_c^{\mu}\big)}{\small\text{these terms are ``small''}}.
\]
However, it turns out to be more convenient not to work with $\chi_c^{\mu}$ but instead with
\begin{equation}\label{chi-c defn}
\chi_c
:= \chi_c^0
= 2\D_{0}\nl(\varsigmab_c,\phib_c^{0}[0])\cdot\e_2
= (2+A)(\varsigma_c\sin(\omega_c\cdot)).
\end{equation}

We now set
\begin{equation}\label{tells}
\tell_{c,k}^{\mu}(\etab,a)
:= \begin{cases}
\ell_{c,k}^{\mu}(\etab,a), k \ne 3 \\
\ell_{c,3}^{\mu}(\etab,a) + a\chi_c, \ k = 3,
\end{cases}
\end{equation}
so that \eqref{eta2 eqn redux} is equivalent to 
\begin{equation}\label{ell to tell}
\L_c\eta_2 + a\chi_c
= \sum_{k=1}^5 \tell_{c,k}^{\mu}(\etab,a).
\end{equation}
Then we apply the functional $\iota_c$, defined in \eqref{L prop}, to both sides of \eqref{ell to tell} to find
\begin{equation}\label{pre-a eqn}
a\iota_c[\chi_c]
= \sum_{k=1}^5 \iota_c[\tell_{c,k}^{\mu}(\etab,a)].
\end{equation}
The asymptotics \eqref{gamma asymptotics} on $\gamma_c$ give us an explicit formula for $\iota_c[\chi_c]$, which we prove in Appendix \ref{iota-chi formula proof appendix}:
\begin{equation}\label{iota-chi formula}
\iota_c[\chi_c] 
= (2c^2\omega_c-\sin(\omega_c))\sin(\omega_c\vartheta_c)
\end{equation}

Since $|c| > 1$, we have
\begin{equation}\label{first iota-chi estimate}
2c^2\omega_c-\sin(\omega_c)
> \omega_c-\sin(\omega_c)
> 0.
\end{equation}
We assumed in Hypothesis \ref{hypothesis 4} that $\sin(\omega_c\vartheta_c) \ne 0$, and so we have $\iota_c[\chi_c] \ne 0$.
Then \eqref{pre-a eqn} is equivalent to
\begin{equation}\label{A defn}
a
= \frac{1}{\iota_c[\chi_c]}\sum_{k=1}^5 \iota_c[\tell_{c,k}^{\mu}(\etab,a)]
=: \nano_{c,3}^{\mu}(\etab,a).
\end{equation}

\subsection{Construction of the fixed point equation for $\eta_2$}
We substitute the fixed point equation \eqref{A defn} for $a$ into \eqref{ell to tell} to find
\begin{equation}\label{eta2 before P}
\L_c\eta_2
= \sum_{k=1}^5 \tell_{c,k}^{\mu}(\etab,a) -a\chi_c
= \sum_{k=1}^5 \tell_{c,k}^{\mu}(\etab,a) - \frac{1}{\iota_c[\chi_c]}\iota_c\left[\sum_{k=1}^6 \tell_{c,k}^{\mu}(\etab,a)\right]\chi_c.
\end{equation}
If we set
\begin{equation}\label{P-c-mu defn}
\P_cf
:= f - \frac{\iota_c[f]}{\iota_c[\chi_c]}\chi_c,
\end{equation}
then \eqref{eta2 before P} can be written more succinctly as
\begin{equation}\label{eta2 with P}
\L_c\eta_2 = \P_c\sum_{k=1}^5 \tell_{c,k}^{\mu}(\etab,a).
\end{equation}
It is obvious from the definition of $\chi_c$ in \eqref{chi-c defn} that $\chi_c \in \cap_{r=0}^{\infty} O_{q_{\varsigma}(c)}^r \subseteq \cap_{r=0}^{\infty} O_q^r$, and so $\P_cf \in O_q^r$ for any $f \in O_q^r$.
Also, a straightforward calculation shows that $\iota_c[\P_cf] = 0$.

Now, recall that in Section \ref{eta1 eqn section} we specified $q$ so that $q \in (q_{\L}(c),1)$.
Then part \ref{abstract solvability condition} of Theorem \ref{abstract phase shift theorem} implies that $\P_cf$ is in the range of $\L_c$.
Conversely, if $\L_cf = g$, then $\iota_c[g] = 0$ and so $\P_cg = g$.
That is, $\P_c[O_q^r] = \L_c[O_q^{r+2}]$.
The injectivity of $\L_c$ on $O_q^r$ for $r \ge 1$, established as part \ref{injectivity of L} of Theorem \ref{abstract phase shift theorem}, then implies that $\L_c$ is bijective from $O_q^{r+2}$ to $\P_c[O_q^r]$.
The functional $\iota_c$ is continuous on $O_q^r$ by part \ref{iota-c continuous functional} of Theorem \ref{abstract phase shift theorem}, and the subspace
\[
\P_c[O_q^r]
= \L_c[O_q^{r+2}]
= \iota_c^{-1}(\{0\}) \cap O_q^r
\]
is closed in $O_q^r$.
Hence the inverse of $\L_c$ from $\P_c[O_q^r]$ to $O_q^{r+2}$ is a bounded operator, which we denote by $\L_c^{-1}$.

We are now able to rewrite \eqref{eta2 with P} as a fixed point equation for $\eta_2$:
\begin{equation}\label{N2 defn}
\eta_2 
= \L_c^{-1}\P_c\sum_{k=1}^5 \tell_{c,k}^{\mu}(\etab,a)
=: \nano_{c,2}^{\mu}(\etab,a).
\end{equation}
With our previously constructed equations \eqref{N1 defn} for $\eta_1$ and \eqref{A defn} for $a$, this gives us a fixed point problem for our three unknowns from Beale's ansatz:
\begin{equation}\label{nanob defn}
(\etab,a)
= (\nano_{c,1}^{\mu}(\etab,a), \nano_{c,2}^{\mu}(\etab,a), \nano_{c,3}^{\mu}(\etab,a))
=: \nanob_c^{\mu}(\etab,a).
\end{equation}

\subsection{Solution of the full fixed point problem}
We will solve the fixed point problem \eqref{nanob defn} using the following lemma, which was stated and proved as Lemma 4.10 in \cite{johnson-wright}.

\begin{lemma}\label{nano fp lemma}
Let $\X_0$ and $\X_1$ be reflexive Banach spaces with $\X_1 \subseteq \X_0$.
For $r > 0$, let $\Bfrak(r) := \set{x \in \X_1}{\norm{x}_{\X_1} \le r}$.
Suppose that for some $r_0 > 0$, there is a map $\nano \colon \Bfrak(r_0) \to \X_1$ with the following properties.

\begin{enumerate}[label={\bf(\roman*)}]

\item
$
\norm{x}_{\X_1} \le r_0 
\Longrightarrow \norm{\nano(x)}_{\X_1} \le r_0.
$

\item
There exists $\alpha \in (0,1)$ such that 
\[
\norm{x}_{\X_0}, \norm{\grave{x}}_{\X_1} \le r_0
\Longrightarrow
\norm{\nano(x)-\nano(\grave{x})}_{\X_0} 
\le \alpha\norm{x-\grave{x}}_{\X_0}.
\]
\end{enumerate}
Then there exists a unique $x_{\star} \in \X_1$ such that $\norm{x_{\star}}_{\X_1} \le r_0$ and $x_{\star} = \nano(x_{\star})$.
\end{lemma}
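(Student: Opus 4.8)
The obstruction is a norm mismatch: $\nano$ is a contraction only in the weaker norm $\norm{\cdot}_{\X_0}$, yet property (i) controls it only on the ball $\Bfrak(r_0)$, which is closed in $\X_1$ but need not be complete in $\X_0$. Thus the Banach fixed point theorem cannot be applied directly in either space. The plan is to run the Picard iteration, extract a limit in $\X_0$, and then invoke the reflexivity of $\X_1$ to push that limit back into $\Bfrak(r_0)$.

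Concretely, I would start from any $x_0 \in \Bfrak(r_0)$ (say $x_0 = 0$) and set $x_{n+1} := \nano(x_n)$. Property (i) and induction keep every $x_n$ in $\Bfrak(r_0)$, and hence --- using the continuous inclusion $\X_1 \subseteq \X_0$ normalized so that $\norm{\cdot}_{\X_0} \le \norm{\cdot}_{\X_1}$ --- also in the $\X_0$-ball of radius $r_0$, so property (ii) applies to consecutive iterates and gives $\norm{x_{n+1}-x_n}_{\X_0} \le \alpha^n \norm{x_1-x_0}_{\X_0}$. Since $\alpha \in (0,1)$ this is summable, so $(x_n)$ is Cauchy in $\X_0$ and converges to some $x_\star \in \X_0$. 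Once we know $x_\star \in \Bfrak(r_0)$, applying (ii) with $x = x_\star$ and $\grave{x} = x_n$ gives $\norm{\nano(x_\star)-x_{n+1}}_{\X_0} \le \alpha \norm{x_\star-x_n}_{\X_0} \to 0$, while $x_{n+1} \to x_\star$ in $\X_0$, so $\nano(x_\star) = x_\star$; and if $y_\star \in \Bfrak(r_0)$ is another fixed point then (ii) yields $\norm{x_\star-y_\star}_{\X_0} \le \alpha\norm{x_\star-y_\star}_{\X_0}$, forcing $x_\star = y_\star$ by injectivity of the inclusion.

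The one genuinely non-routine point --- and the reason reflexivity is hypothesized --- is showing $x_\star \in \Bfrak(r_0)$, which is also what makes $\nano(x_\star)$ meaningful. The sequence $(x_n)$ is bounded in the reflexive space $\X_1$, so some subsequence $(x_{n_k})$ converges weakly in $\X_1$ to a limit $\bar{x}$, and weak lower semicontinuity of the norm gives $\norm{\bar{x}}_{\X_1} \le \liminf_k \norm{x_{n_k}}_{\X_1} \le r_0$, that is, $\bar{x} \in \Bfrak(r_0)$. Since the inclusion $\X_1 \hookrightarrow \X_0$ is bounded and linear it is weak-to-weak continuous, so $x_{n_k} \rightharpoonup \bar{x}$ weakly in $\X_0$ as well; but $x_{n_k} \to x_\star$ strongly, hence weakly, in $\X_0$, and uniqueness of weak limits forces $\bar{x} = x_\star$. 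I expect this weak-compactness argument to be the crux; everything else is the standard contraction-mapping bookkeeping sketched above.
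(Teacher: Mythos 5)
Your proof is correct and follows the standard approach for this type of asymmetric-contraction lemma: run the Picard iteration so that the iterates are $\X_0$-Cauchy via property (ii), then use weak sequential compactness of the bounded ball in the reflexive space $\X_1$ together with weak lower semicontinuity of the norm and weak-to-weak continuity of the (tacitly assumed continuous) embedding to place the $\X_0$-limit back inside $\Bfrak(r_0)$, where $\nano$ is defined and (ii) can be applied again to identify the limit as a fixed point. The paper does not prove this lemma itself but cites Lemma 4.10 of \cite{johnson-wright}, whose argument is the same; as you correctly identify, the reflexivity of $\X_1$ (that of $\X_0$ is never actually used) is the crux, and the rest is routine contraction bookkeeping.
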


To invoke this lemma, we first need to specify the underlying Banach spaces.
With $q_{\H}^{\star}(c)$ and $q_{\H}^{\star\star}(c)$ from Proposition \ref{H-c invert general}, we fix $q_{\star}(c) \in (\max\{q_{\L}(c),q_{\H}^{\star}(c)\},\min\{q_{\varsigma}(c),q_{\H}^{\star\star}(c),1\})$ and $\qbar_{\star}(c) \in (q_{\H}(c),q_{\star}(c))$.
Let
\[
\X^r
:= \begin{cases}
E_{\qbar_{\star}(c)}^1 \times O_{\qbar_{\star}(c)}^1 \times \R, \ r = 0 \\
\\
E_{q_{\star}(c)}^r \times O_{q_{\star}(c)}^r \times \R, \ r \ge 1
\end{cases}
\]
and, for $r \ge 1$ and $\tau > 0$, set
\[
\U_{\tau,\mu}^r
:= \set{(\etab,a) \in \X^r}{\norm{\etab}_{r,q_{\star}(c)} + |a| \le \tau|\mu|}.
\]
The spaces $\X^r$ are Hilbert spaces and therefore they are reflexive, and the sets $\U_{\tau,\mu}^r$ are the balls of radius $\tau|\mu|$ centered at the origin in $\X^r$.

The next proposition shows that $\nanob_c^{\mu}$ satisfies the estimates from Lemma \ref{nano fp lemma}.
Its proof is in Appendix \ref{proof of main workhorse proposition appendix}.

\begin{proposition}\label{main workhorse proposition}
Assume that $|c| > 1$ satisfies Hypotheses \ref{hypothesis 1}, \ref{hypothesis 2}, \ref{hypothesis 3}, and \ref{hypothesis 4}.
There exist $\mu_{\star}(c)$, $\tau_c > 0$ such that if $|\mu| \le \mu_{\star}(c)$, then the following hold.

\begin{enumerate}[label={\bf(\roman*)},ref={(\roman*)}]

\item\label{main workhorse map part}
$(\etab,a) \in \U_{\tau_c,\mu}^1 
\Longrightarrow \nanob_c^{\mu}(\etab,a) \in \U_{\tau_c,\mu}^1$. 

\item\label{main workhorse lip part}
$(\etab,a), (\grave{\etab},\grave{a}) \in \U_{\tau_c,\mu}^1 
\Longrightarrow \norm{\nanob_c^{\mu}(\etab,a)-\nanob_c^{\mu}(\grave{\etab},\grave{a})}_{\X^0} 
\le \frac{1}{2}\norm{(\etab,a)-(\grave{\etab},\grave{a})}_{\X^0}$.

\item\label{main workhorse boot part}
For any $\tau > 0$, there is $\tilde{\tau} > 0$ such that 
\[
(\etab,a) \in \U_{\tau_c,\mu}^1 \cap \U_{\tau,\mu}^r
\Longrightarrow \nanob_c^{\mu}(\etab,a) \in \U_{\tilde{\tau},\mu}^{r+1}.
\]
\end{enumerate}
\end{proposition}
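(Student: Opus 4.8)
The plan is to establish the three claimed estimates directly; once they hold, Lemma~\ref{nano fp lemma} (with $\X_0=\X^0$, $\X_1=\X^1$, $r_0=\tau_c|\mu|$) produces the fixed point of $\nanob_c^{\mu}$ that the rest of Section~\ref{nanopteron problem section} requires. First I would assemble the tools. The four operators that build $\nanob_c^{\mu}$ are bounded: $\H_c^{-1}$ on the $E_q^r$-scale for both weights $q_{\star}(c)$ and $\qbar_{\star}(c)$, which Proposition~\ref{H-c invert general} (with Hypothesis~\ref{hypothesis 2}) covers; the bounded inverse $\L_c^{-1}\colon\P_c[O_q^r]\to O_q^{r+2}$ constructed above, valid for every $q\in(q_{\L}(c),1)$ and hence for both weights; the functional $\iota_c$ (part~\ref{iota-c continuous functional} of Theorem~\ref{abstract phase shift theorem}), for which $\iota_c[\chi_c]\ne0$ by Hypothesis~\ref{hypothesis 4} and \eqref{iota-chi formula}, so that $\nano_{c,3}^{\mu}$ is well defined; and $\P_c$ from \eqref{P-c-mu defn}, bounded on each $O_q^r$ because $\chi_c\in\cap_r O_{q_{\varsigma}(c)}^r$. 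Alongside these I would record the routine mapping properties of the weighted spaces: $H_q^r$ is a Banach algebra for $r\ge1$ with $H_q^r\cdot H_q^r\hookrightarrow H_{2q}^r\hookrightarrow H_q^r$, the shifts $S^{\pm1}$ (hence $A,\delta,\D_{\mu},\Dring$) are bounded on $H_q^r$, and multiplication by a fixed $W^{r,\infty}$ function is bounded on $H_q^r$; plus the uniform periodic-solution estimates $\norm{\phib_c^{\mu}[a]}_{W^{r,\infty}}\le C(c,r)$ and $\norm{\psib_c^{\mu}[a]}_{W^{r,\infty}}\le C(c,r)|a|$ (Corollary~\ref{periodic corollary}), $\upsilon_c^{\mu}=\O_c(\mu)$, the Lipschitz bound \eqref{periodic theorem lipschitz}, and $\omega_c^{\mu}-\omega_c=\O_c(\mu)$ (part~\ref{omega-c-mu minus omega-c} of Proposition~\ref{critical frequency props prop}).

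Next I would do the term-by-term bookkeeping of the right-hand sides \eqref{h ell defns}, \eqref{tells} on a ball $\U_{\tau,\mu}^r$. The terms $h_{c,1}^{\mu},\ell_{c,1}^{\mu}$ are built from $\mu$ and $\varsigmab_c$ alone, so they lie in $E_{q_{\varsigma}(c),0}^r\times O_{q_{\varsigma}(c)}^r$ with norm $\O_c(|\mu|)$; these are the only genuinely first-order-in-$\mu$ contributions, and they fix the scale of $\tau_c$. The terms $h_{c,2}^{\mu},\ell_{c,2}^{\mu}$ ($\sim\mu\etab$), $h_{c,4}^{\mu},\ell_{c,4}^{\mu}$ ($\sim a\,\etab$), and $h_{c,5}^{\mu},\ell_{c,5}^{\mu}$ ($\sim\nl(\etab,\etab)$) are all bounded by $C(c,r)\tau^2|\mu|^2$ on $\U_{\tau,\mu}^r$ via the algebra and shift bounds, with $C(c,r)$ independent of $\tau$. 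The $\eta_1$-side term $h_{c,3}^{\mu}=-2a\D_{\mu}\nl_1(\varsigmab_c,\phib_c^{\mu}[a])\cdot\e_1$ is already quadratically small: since $\varsigmab_c=(\varsigma_c,0)$, $\nl_1(\varsigmab_c,\phib_c^{\mu}[a])=\varsigma_c\,(\phib_c^{\mu}[a])_1$, and $(\phib_c^{\mu}[a])_1=\upsilon_c^{\mu}\cos(\omega_c^{\mu}[a]\cdot)+\psi_{c,1}^{\mu}[a](\omega_c^{\mu}[a]\cdot)$ has $W^{r,\infty}$-norm $\le C(c,r)(|\mu|+|a|)$, so $\norm{h_{c,3}^{\mu}}_{r,q}\le C(c,r)|a|(|\mu|+|a|)\le C(c,r)\tau^2|\mu|^2$. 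The one term that is not automatically small is $\tell_{c,3}^{\mu}$, because the second component of $\phib_c^{\mu}[a]$ is $\sin(\omega_c^{\mu}[a]\cdot)+\psi_{c,2}^{\mu}[a](\cdot)$, whose leading part $\sin(\omega_c\cdot)$ neither decays nor vanishes as $a,\mu\to0$; thus $\ell_{c,3}^{\mu}$ is only $O(|a|)$, and quadratic smallness must be recovered by the precisely tuned subtraction of $a\chi_c$.

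The hard part will be the estimate on $\tell_{c,3}^{\mu}(\etab,a)=\ell_{c,3}^{\mu}(\etab,a)+a\chi_c=-2a\D_{\mu}(\varsigma_c\,(\phib_c^{\mu}[a])_2)\cdot\e_2+a\chi_c$. Using $\D_{\mu}=\D_0+\mu\Dring$ and the fact that $\chi_c=(2+A)(\varsigma_c\sin(\omega_c\cdot))$ from \eqref{chi-c defn} was chosen exactly so that $a\chi_c$ cancels the value of $-2a\D_{\mu}(\varsigma_c(\phib_c^{\mu}[a])_2)\cdot\e_2$ at $a=0,\mu=0$, I would split $\tell_{c,3}^{\mu}$ into a $\mu\Dring$-piece (bounded by $C(c,r)|a||\mu|$), a piece involving $\varsigma_c\,\psi_{c,2}^{\mu}[a](\omega_c^{\mu}[a]\cdot)$ (bounded by $C(c,r)|a|^2$, since $\norm{\psi_{c,2}^{\mu}[a]}_{W^{r,\infty}}\le C(c,r)|a|$), and a piece involving $\varsigma_c\,[\sin(\omega_c^{\mu}[a]\cdot)-\sin(\omega_c\cdot)]$. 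For the last piece the pointwise bound $|\sin(\omega_c^{\mu}[a]x)-\sin(\omega_c x)|\le|\omega_c^{\mu}[a]-\omega_c|\,|x|$, together with the exponential decay of $\varsigma_c$, gives $\norm{\varsigma_c[\sin(\omega_c^{\mu}[a]\cdot)-\sin(\omega_c\cdot)]}_{r,q}\le C(c,r)|\omega_c^{\mu}[a]-\omega_c|$, and $|\omega_c^{\mu}[a]-\omega_c|\le|\omega_c^{\mu}[a]-\omega_c^{\mu}[0]|+|\omega_c^{\mu}-\omega_c|\le C(c)|a|+\O_c(|\mu|)$ by \eqref{periodic theorem lipschitz} and part~\ref{omega-c-mu minus omega-c} of Proposition~\ref{critical frequency props prop}. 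Hence $\norm{\tell_{c,3}^{\mu}}_{r,q}\le C(c,r)(|a||\mu|+|a|^2)\le C(c,r)\tau^2|\mu|^2$ on $\U_{\tau,\mu}^r$. The most delicate point is the Lipschitz-in-$\X^0$ version of this: differencing in $a$ produces, besides a small coefficient times $|a-\grave{a}|$, terms $\grave{a}\,[\sin(\omega_c^{\mu}[a]\cdot)-\sin(\omega_c^{\mu}[\grave{a}]\cdot)]$ and $\grave{a}\,[\psi_{c,2}^{\mu}[a](\omega_c^{\mu}[a]\cdot)-\psi_{c,2}^{\mu}[\grave{a}](\omega_c^{\mu}[\grave{a}]\cdot)]$, which must be controlled by $C(c)\tau|\mu|\,|a-\grave{a}|$ using both Lipschitz bounds in \eqref{periodic theorem lipschitz} and, again, the exponential weight to absorb the $|x|$-growth from $|\sin(\omega_1x)-\sin(\omega_2x)|\le|\omega_1-\omega_2||x|$. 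I expect this to be the principal obstacle of the proposition; everything else is a bilinear estimate in weighted Sobolev spaces.

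With these estimates the three conclusions follow by routine assembly. Summing the term bounds gives $\norm{\nanob_c^{\mu}(\etab,a)}_{\X^1}\le C_1(c)|\mu|+C_2(c)\tau^2|\mu|^2$ on $\U_{\tau,\mu}^1$ with $C_2(c)$ independent of $\tau$; taking $\tau_c:=2C_1(c)$ and then $\mu_{\star}(c)$ small enough that $C_2(c)\tau_c^2|\mu|\le C_1(c)$ yields the self-mapping property~\ref{main workhorse map part}. For part~\ref{main workhorse lip part}, the $k=1$ terms drop out of differences, and every remaining term --- including $h_{c,3}^{\mu}$ and $\tell_{c,3}^{\mu}$ --- is Lipschitz on $\U_{\tau_c,\mu}^1$ with constant $\O_c(|\mu|)$ once arguments and outputs are measured in the weaker norm $\norm{\cdot}_{\X^0}$ (which uses the smaller weight $\qbar_{\star}(c)$), since $\H_c^{-1},\L_c^{-1},\P_c,\iota_c$ remain bounded there; shrinking $\mu_{\star}(c)$ pushes this constant below $1/2$. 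For part~\ref{main workhorse boot part}, since $\varsigma_c$ and all components of $\phib_c^{\mu}[a]$ are smooth with the uniform $W^{r,\infty}$-bounds above, $\sum_k h_{c,k}^{\mu}$ and $\sum_k\tell_{c,k}^{\mu}$ inherit the regularity of $\etab$, so on $\U_{\tau_c,\mu}^1\cap\U_{\tau,\mu}^r$ they lie in $E_{q_{\star}(c),0}^r$, resp.\ $O_{q_{\star}(c)}^r$, with norm $\le C(c,r)(|\mu|+\tau^2|\mu|^2)$; applying $\H_c^{-1}$ and $\L_c^{-1}$ gains two derivatives, so $\nano_{c,1}^{\mu}(\etab,a)\in E_{q_{\star}(c)}^{r+2}$ and $\nano_{c,2}^{\mu}(\etab,a)\in O_{q_{\star}(c)}^{r+2}$, while $|\nano_{c,3}^{\mu}(\etab,a)|\le C(c,r)(|\mu|+\tau^2|\mu|^2)$, placing $\nanob_c^{\mu}(\etab,a)$ in $\U_{\tilde{\tau},\mu}^{r+1}$ for a suitable $\tilde{\tau}=\tilde{\tau}(c,r,\tau)$ once $\mu_{\star}(c)$ is small.
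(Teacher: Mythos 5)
Your proposal is correct and follows the paper's approach essentially to the letter: you factor the argument through the same term-by-term estimates on $h_{c,k}^{\mu}$ and $\tell_{c,k}^{\mu}$ that constitute the paper's Lemma \ref{workhorse lemma for workhorse prop}, identify $\tell_{c,3}^{\mu}$ as the delicate term and control it by decay borrowing as in Proposition \ref{general Hrq estimates prop} (the paper routes this via the intermediate $\chi_c^{\mu}$ from Lemma \ref{chi-c-mu minus chi-c est}, while you subtract $\chi_c$ directly; the calculations are equivalent), and then assemble the three conclusions via Lemma \ref{nano fp lemma}. The one point worth making crisper is the rationale for working in $\X^0$ in part (ii): the forced drop to the weaker weight $\qbar_{\star}(c)$ comes not merely from operator boundedness but from the decay-borrowing estimate on $\tell_{c,4}^{\mu}$, where the slowly-growing difference $\phib_c^{\mu}[a]-\phib_c^{\mu}[\grave{a}]$ is multiplied by $\etab$, which only decays at rate $q_{\star}(c)$ (not $q_{\varsigma}(c)$), so measuring the product requires borrowing into a strictly smaller weight.
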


Lemma \ref{nano fp lemma} therefore applies to produce a unique $(\etab_c^{\mu},a_c^{\mu}) \in \U_{\tau_c,\mu}^1$ such that $(\etab_c^{\mu},a_c^{\mu}) = \nanob_c^{\mu}(\etab_c^{\mu},a_c^{\mu})$.
We then bootstrap with part \ref{main workhorse boot part} of Proposition \ref{main workhorse proposition} to conclude that $\etab_c^{\mu}$ is smooth, and so we obtain our main result.

\begin{theorem}\label{main theorem formal}
Assume that $|c| > 1$ satisfies Hypotheses \ref{hypothesis 1}, \ref{hypothesis 2}, \ref{hypothesis 3}, and \ref{hypothesis 4} and take $\mu_{\star}(c)$ and $\tau_c$ from Proposition \ref{main workhorse proposition}.
Then for each $|\mu| \le \mu_{\star}(c)$, there exists a unique $(\etab_c^{\mu},a_c^{\mu}) \in \U_{\tau_c,\mu}^1$ such that $(\etab_c^{\mu},a_c^{\mu}) = \nanob_c^{\mu}(\etab_c^{\mu},a_c^{\mu})$.
Moreover, $\etab_c^{\mu} \in \cap_{r=0}^{\infty} E_{q_{\star}(c)}^r \times O_{q_{\star}(c)}^r$ and, for each $r \ge 0$, there is $C(c,r) > 0$ such that 
\begin{equation}\label{final estimate}
\norm{\etab_c^{\mu}}_{r,q_{\star}(c)} + |a_c^{\mu}| 
\le C(c,r)|\mu|.
\end{equation}
\end{theorem}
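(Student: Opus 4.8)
The plan is to apply the modified contraction mapping principle of Lemma \ref{nano fp lemma} with $\X_0 := \X^0$, $\X_1 := \X^1$, $r_0 := \tau_c|\mu|$, and $\nano := \nanob_c^{\mu}$. First I would verify the abstract hypotheses of that lemma. The spaces $\X^0$ and $\X^1$ are finite products of exponentially weighted Sobolev Hilbert spaces with a copy of $\R$, hence reflexive Banach spaces, and since $\qbar_{\star}(c) < q_{\star}(c)$ we have the continuous inclusions $E_{q_{\star}(c)}^1 \subseteq E_{\qbar_{\star}(c)}^1$ and $O_{q_{\star}(c)}^1 \subseteq O_{\qbar_{\star}(c)}^1$, so $\X^1 \subseteq \X^0$. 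With $r_0 = \tau_c|\mu|$, the ball $\Bfrak(r_0)$ from Lemma \ref{nano fp lemma} is precisely $\U_{\tau_c,\mu}^1$, and parts \ref{main workhorse map part} and \ref{main workhorse lip part} of Proposition \ref{main workhorse proposition} are exactly hypotheses (i) and (ii) of Lemma \ref{nano fp lemma}, the latter with contraction constant $\alpha = 1/2 \in (0,1)$. Lemma \ref{nano fp lemma} therefore yields a unique $(\etab_c^{\mu},a_c^{\mu}) \in \U_{\tau_c,\mu}^1$ with $(\etab_c^{\mu},a_c^{\mu}) = \nanob_c^{\mu}(\etab_c^{\mu},a_c^{\mu})$.

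For the smoothness of $\etab_c^{\mu}$ and the higher-order bounds in \eqref{final estimate}, I would bootstrap using part \ref{main workhorse boot part} of Proposition \ref{main workhorse proposition} together with the fixed point identity. Starting from $(\etab_c^{\mu},a_c^{\mu}) \in \U_{\tau_c,\mu}^1$, part \ref{main workhorse boot part} (with $r = 1$ and $\tau = \tau_c$) supplies $\tilde{\tau}_1 > 0$ with $\nanob_c^{\mu}(\etab_c^{\mu},a_c^{\mu}) \in \U_{\tilde{\tau}_1,\mu}^2$; since $(\etab_c^{\mu},a_c^{\mu})$ is a fixed point, this forces $(\etab_c^{\mu},a_c^{\mu}) \in \U_{\tilde{\tau}_1,\mu}^2$ and hence $\etab_c^{\mu} \in E_{q_{\star}(c)}^2 \times O_{q_{\star}(c)}^2$. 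Iterating — at stage $r$, applying part \ref{main workhorse boot part} with $\tau = \tilde{\tau}_{r-1}$ to obtain $(\etab_c^{\mu},a_c^{\mu}) \in \U_{\tilde{\tau}_r,\mu}^{r+1}$ — gives by induction that $(\etab_c^{\mu},a_c^{\mu}) \in \U_{\tilde{\tau}_r,\mu}^{r+1}$ for every $r \ge 1$, so $\etab_c^{\mu} \in \cap_{r=0}^{\infty} E_{q_{\star}(c)}^r \times O_{q_{\star}(c)}^r$. Reading off these memberships — and using $\norm{\etab_c^{\mu}}_{0,q_{\star}(c)} + |a_c^{\mu}| \le \norm{\etab_c^{\mu}}_{1,q_{\star}(c)} + |a_c^{\mu}| \le \tau_c|\mu|$ for $r \in \{0,1\}$ — delivers \eqref{final estimate} with $C(c,r) := \tau_c$ for $r \in \{0,1\}$ and $C(c,r) := \tilde{\tau}_{r-1}$ for $r \ge 2$.

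I do not expect a genuine obstacle in this concluding argument: the analytic work has all been absorbed into Proposition \ref{main workhorse proposition} (proved in the appendix) and into the construction of $\nanob_c^{\mu}$, which already builds in the solvability condition \eqref{equal mass solvability revealed} through the projection $\P_c$ and the scalar equation \eqref{A defn} for $a$. The one point worth emphasizing is why the ordinary Banach fixed point theorem is insufficient, so that Lemma \ref{nano fp lemma} is required: the map $\nanob_c^{\mu}$ is a self-map only on the strong ball $\U_{\tau_c,\mu}^1$, whereas its contraction estimate holds only in the weaker $\X^0$-topology (lower exponential weight and one fewer derivative), the familiar loss inherent in Beale's method, caused here by the nonlocal inverses $\H_c^{-1}$, $\L_c^{-1}$ and the quadratic and periodic terms in \eqref{h ell defns}. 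Lemma \ref{nano fp lemma} is designed precisely for this split-norm setting, and closing the loop through the fixed point identity is what then recovers arbitrary regularity and the $\O(\mu)$ bounds at all orders. Finally, writing out Beale's ansatz $\rhob = \varsigmab_c + a_c^{\mu}\phib_c^{\mu}[a_c^{\mu}] + \etab_c^{\mu}$ and splitting $a_c^{\mu}\phib_c^{\mu}[a_c^{\mu}]$ into its periodic components $\varphi_{c,i}^{\mu}$ and $\etab_c^{\mu}$ into its exponentially localized components $\Upsilon_{c,i}^{\mu}$, the amplitude bounds following from \eqref{final estimate} and Corollary \ref{periodic corollary}, recovers the statement of Theorem \ref{main theorem - informal}.
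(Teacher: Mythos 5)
Your proposal is correct and takes the same approach as the paper: apply Lemma \ref{nano fp lemma} with $\X_0 = \X^0$, $\X_1 = \X^1$, $r_0 = \tau_c|\mu|$, using parts \ref{main workhorse map part} and \ref{main workhorse lip part} of Proposition \ref{main workhorse proposition} to verify its hypotheses, then bootstrap with part \ref{main workhorse boot part} and the fixed-point identity to obtain smoothness and the $\O_c(\mu)$ bounds at every order. The paper states this in two compressed sentences just before the theorem; your write-out simply fills in the routine verifications (reflexivity of the Hilbert spaces $\X^r$, the inclusion $\X^1 \subseteq \X^0$ via $\qbar_{\star}(c) < q_{\star}(c)$, the induction on $r$ with constants $\tilde{\tau}_{r-1}$) and is accurate throughout.
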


\begin{remark}\label{why no small beyond all orders}
The estimate \eqref{final estimate} shows that the amplitude of the ripple is $a_c^{\mu} = \O_c(\mu)$,
which is not necessarily small beyond all orders of $\mu$.
Recall from \eqref{A defn} that, roughly, $a_c^{\mu} = \iota_c[\V_c^{\mu}(\etab_c^{\mu},a_c^{\mu})]$, where $\V_c^{\mu}$ maps $O_q^r \times \R$ to $O_q^r$.
Per \eqref{iota-c functional estimate}, we have an estimate of the form
\[
|a_c^{\mu}|
= |\iota_c[\V_c^{\mu}(\etab_c^{\mu},a_c^{\mu})]|
\le C(c,r)\norm{\V_c^{\mu}(\etab_c^{\mu},a_c^{\mu})}_{r,q_{\star}(c)}.
\]
The analogues of $\iota_c$ in the lattice nanopteron problems \cite{faver-wright}, \cite{hoffman-wright}, and \cite{faver-spring-dimer} all depended on $\mu$ and, if we denote one of these functionals by $\tilde{\iota}_{\mu}$, roughly had an estimate of the form
\[
|\tilde{\iota}_{\mu}[f]|
\le C(q,r)|\mu|^r\norm{f}_{r,q}
\]
for $f \in H_q^r$.
The proof of this estimate parallels the Riemann-Lebesgue lemma and closely relied on the fact that the analogue in those problems of the critical frequency $\omega_c^{\mu}$ was roughly $\O(\mu^{-1})$.
In turn, this ``high frequency'' estimate hinged on the existence of a singular perturbation in the problem.
Our equal mass problem is not singularly perturbed, our critical frequency $\omega_c^{\mu}$ remains bounded as $\mu \to 0$, and our ripple is not necessarily small beyond all orders of $\mu$.
\end{remark}
\section{The Proof of Theorem \ref{hypotheses theorem}: Verification of the Hypotheses for $|c| \gtrsim 1$}\label{hypotheses verification section}

\subsection{Verification of Hypothesis \ref{hypothesis 1}}
We extract the following result from Theorem 1.1 in \cite{friesecke-pego1}.

\begin{theorem}[Friesecke \& Pego]\label{friesecke-pego}
There exist $\ep_{\FP} > 0$, $q_{\FP} \in (0,1)$ such that if $\ep \in (0,\ep_{\FP})$, then there exists a unique positive function $\varsigma_{\cep} \in \cap_{r=1}^{\infty} E_{\ep{q}_{\FP}}^r$ with the following properties.

\begin{enumerate}[label={\bf(\roman*)}]

\item
If
\begin{equation}\label{cep defn}
\cep 
:= \left(1+\frac{\ep^2}{24}\right)^{1/2},
\end{equation}
then
\[
\cep^2\varsigma_{\cep}'' + (2-A)(\varsigma_{\cep}+\varsigma_{\cep}^2) = 0.
\]

\item
If
\begin{equation}\label{sigma defn}
\sigma(x) := \frac{1}{4}\sech^2\left(\frac{x}{2}\right),
\end{equation}
then for each $r \ge 0$, there is a constant $C(r) > 0$ such that 
\begin{equation}\label{original friesecke-pego estimate}
\bignorm{\frac{1}{\ep^2}\varsigma_{\cep}\left(\frac{\cdot}{\ep}\right) - \sigma}_{H^r} \le C(r)\ep^2
\end{equation}
for all $0 < \ep < \ep_{\FP}$.
\end{enumerate}
\end{theorem}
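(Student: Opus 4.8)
The statement is a reformulation, in the notation of the present paper, of the existence theorem of Friesecke and Pego (Theorem 1.1 of \cite{friesecke-pego1}) together with the quantitative KdV-limit estimates established there. The plan is to (i) identify the profile equation in part (i) with the monatomic traveling wave equation analyzed in \cite{friesecke-pego1}, (ii) quote their renormalization-and-fixed-point construction for the existence, uniqueness, and KdV asymptotics of $\varsigma_{\cep}$, and (iii) upgrade the regularity of $\varsigma_{\cep}$ to every order and record its exponential localization rate by bootstrapping the profile equation.

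First I would set up the dictionary. For the monatomic lattice $\ddot u_j = F(u_{j+1}-u_j) - F(u_j - u_{j-1})$ with $F(r) = r + r^2$, the relative displacement $r_j := u_{j+1} - u_j$ satisfies $\ddot r_j = (A-2)F(r_j)$, so the ansatz $r_j(t) = \varsigma(j - ct)$ yields exactly $c^2\varsigma'' + (2-A)(\varsigma + \varsigma^2) = 0$ — which is \eqref{what sigma c does} with $c = \cep$, and is the $\rho_2 \equiv 0$ reduction of \eqref{cov simplified} recorded in the introduction. Near the sonic speed $|c| = 1$ one rescales, roughly $\varsigma(x) \approx \ep^2\sigma(\ep x)$, and Taylor-expands the Fourier symbol $2 - 2\cos(k) = k^2 - \tfrac{1}{12}k^4 + \cdots$ of $2 - A$; with the speed calibrated as in \eqref{cep defn}, the leading-order balance is a KdV solitary-wave ODE whose unique even, positive, exponentially decaying solution is precisely the function $\sigma$ of \eqref{sigma defn}. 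This formal computation is what motivates both \eqref{cep defn} and \eqref{sigma defn}, and carrying it out rigorously — in an exponentially weighted Sobolev space, via a contraction whose fixed point is an $O(\ep^2)$ perturbation of $\sigma$ — is exactly the content of \cite{friesecke-pego1}.

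From Theorem 1.1 of \cite{friesecke-pego1} I would then extract: for $\ep$ in an interval $(0,\ep_{\FP})$, a unique even positive profile $\varsigma_{\cep}$ solving the profile equation; the bound \eqref{original friesecke-pego estimate}, the $O(\ep^2)$ (rather than $O(\ep)$) rate reflecting the absence of an $O(\ep)$ correction in the renormalized equation, a symmetry observed in \cite{friesecke-pego1}; and exponential decay at a rate that, after undoing the $\ep$-dilation, equals $\ep q_{\FP}$ for a fixed $q_{\FP} \in (0,1)$, so that $\cosh^{\ep q_{\FP}}(\cdot)\varsigma_{\cep} \in H^r$ for at least one $r \ge 1$, i.e.\ $\varsigma_{\cep} \in E^r_{\ep q_{\FP}}$. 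It remains to obtain $\varsigma_{\cep} \in \cap_{r=1}^\infty E^r_{\ep q_{\FP}}$, and this I would do exactly as in Proposition \ref{friesecke-pego bootstrap}: rewrite the profile equation as $\varsigma_{\cep}'' = -\cep^{-2}(2-A)(\varsigma_{\cep} + \varsigma_{\cep}^2)$ and observe that, since $A$ is a sum of shift operators and each $H^r_q$ with $r \ge 1$ is a Banach algebra on which the shifts act boundedly (using $\cosh^q(x\pm 1) \le C_q\cosh^q(x)$), the right-hand side lies in $E^r_{\ep q_{\FP}}$ whenever $\varsigma_{\cep}$ does; hence $\varsigma_{\cep} \in E^r_{\ep q_{\FP}}$ forces $\varsigma_{\cep} \in E^{r+2}_{\ep q_{\FP}}$, and induction closes the argument.

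The only real obstacle is the first step: reconciling Friesecke and Pego's normalizations — their choice of dependent and independent variables, the precise definition of their small parameter, and the exact constant relating the wave speed to that parameter — so that the extracted statement is precisely \eqref{cep defn}, \eqref{sigma defn}, and the all-orders $O(\ep^2)$ estimate, rather than an equivalent-up-to-reparametrization version. No genuinely new analysis is needed beyond what \cite{friesecke-pego1} already supplies; the work is bookkeeping with the constants.
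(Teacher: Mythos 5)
Your proposal is correct and matches the paper's own treatment: the paper likewise simply quotes Theorem 1.1 of \cite{friesecke-pego1}, notes (via their part (c) and Lemma 3.1) that the decay rate $b_0(\cep)$ is of order $\ep$ so it can be written as $\ep q_{\FP}$, and relies on the bootstrap argument of Proposition \ref{friesecke-pego bootstrap} to promote the regularity to $\cap_{r=1}^\infty E^r_{\ep q_{\FP}}$. Your only extra content beyond the paper's is the motivational KdV scaling computation, which is consistent with but not required for the citation-plus-bookkeeping argument.
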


We set 
\begin{equation}\label{q-varsigma-cep}
q_{\varsigma}(\cep)
:= \ep{q}_{\FP}
\end{equation}
in Hypothesis \ref{hypothesis 1}.
We remark that Friesecke and Pego label the decay rate for their profile $\varsigma_c$ as $b_0(c)$; part (c) of their Theorem 1.1 and their detailed proof in Lemma 3.1 reveal that, with $\cep$ from \eqref{cep defn}, they have $b_0(\cep) = \O(\ep)$. It is convenient for us to make explicit this order-$\ep$ dependence and rewrite this decay rate as $b_0(\cep) = \ep{q}_{\FP}$.

\subsection{Verification of Hypothesis \ref{hypothesis 2}}
As we mentioned in Section \ref{linearize at fp section} when we met the operator $\H_c$, the invertibility of $\H_c$ for $|c| \approx 1$ arises from Proposition 3.1 in \cite{hoffman-wright}.
Here is that proposition.

\begin{proposition}[Hoffman \& Wright]\label{hoffman-wright H inverse}
There exists $\ep_{\HWr} \in (0,\ep_{\FP}]$ such that for $0 < \ep < \ep_{\HWr}$, $r \ge 0$, and $0 < q < \ep{q}_{\FP}$, the operator $\H_{\cep}$, defined in \eqref{H-c defn}, is invertible from $E_q^{r+2}$ to $E_{q,0}^r$.
\end{proposition}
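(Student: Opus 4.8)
Since this proposition is a verbatim restatement of Proposition 3.1 in \cite{hoffman-wright}, the plan is simply to invoke that result. The only thing that must be checked is that the linearized operator $\H_{\cep}$ of \eqref{H-c defn} --- built from the spring force $F(r)=r+r^2$ and the near-sonic Friesecke-Pego profile $\varsigma_{\cep}$ of Theorem \ref{friesecke-pego} --- is literally the operator whose invertibility Hoffman and Wright establish, and that the even exponentially weighted spaces $E_q^{r+2}$ and $E_{q,0}^r$ of Definition \ref{Hrq defn} coincide with theirs in the regime $0<q<\ep q_{\FP}$. These are cosmetic identifications, after which the conclusion is immediate.

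Were a self-contained argument wanted, I would split $\H_{\cep}=\H_{\cep}^0-\Sigma$ with $\H_{\cep}^0:=\cep^2\partial_x^2+(2-A)$ and $\Sigma:=-2(2-A)(\varsigma_{\cep}\,\cdot\,)$. First I would analyze the constant-coefficient part: $\H_{\cep}^0$ is a Fourier multiplier with symbol $-\cep^2k^2+2-2\cos k$, which, since $\cep^2>1$, is strictly negative for all real $k\ne0$ and vanishes to second order at $k=0$; a compactness and continuity argument extends this nonvanishing (away from the origin) to a complex strip $|\im z|\le q$ as long as $q<\ep q_{\FP}$ is small. Because the factor $(2-A)$ already annihilates constants, the range of $\H_{\cep}^0$ is exactly the mean-zero subspace, and Beale's Fourier multiplier lemma (Lemma \ref{beale fm}) then gives that $\H_{\cep}^0$ is invertible from $E_q^{r+2}$ onto $E_{q,0}^r$, with the inverse gaining two derivatives thanks to the quadratic growth of the symbol. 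Next I would note that $\Sigma$ is multiplication by the exponentially localized (rate $\ep q_{\FP}>q$) function $\varsigma_{\cep}$ followed by a bounded operator, hence compact from $E_q^{r+2}$ into $E_{q,0}^r$, so $\H_{\cep}$ is Fredholm of index zero and invertibility reduces to triviality of the kernel. To rule out a kernel I would invoke the spectral stability of the monatomic solitary wave (Lemma 4.2 of \cite{friesecke-pego3}, Lemma 6 of \cite{herrmann-matthies-uniqueness}): an even, exponentially decaying $f$ with $\H_{\cep}f=0$ must vanish, because after rescaling to the KdV length scale $\H_{\cep}$ becomes $\ep^2$ times a small perturbation of the operator linearizing KdV at the soliton $\sigma$ of \eqref{sigma defn}, whose null space is spanned by the odd function $\sigma'$ and is therefore trivial on even functions.

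The hard part, and the reason this is a theorem rather than a one-line Neumann-series estimate, is the lack of uniformity as $\ep\to0$: the reciprocal symbol of $\H_{\cep}^0$ behaves like $\ep^{-2}k^{-2}$ for $|k|\lesssim\ep$, so the smallness $\|\varsigma_{\cep}\|_{L^\infty}=\O(\ep^2)$ coming from \eqref{original friesecke-pego estimate} does not by itself beat the $\O(\ep^{-2})$ blow-up of $(\H_{\cep}^0)^{-1}$ and run a naive perturbation argument. Controlling this genuinely requires the sharp spectral picture of the linearized KdV operator on even functions, set up at the correct KdV scaling --- which is precisely Hoffman and Wright's argument, and which I would quote rather than reproduce. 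Finally, once the proposition is in hand, Hypothesis \ref{hypothesis 2} for $|c|\gtrsim1$ follows at once: for $c=\cep$ with $0<\ep<\ep_{\HWr}$ choose $q_{\H}(\cep)\in(0,\min\{1,\ep q_{\FP}\})=(0,\min\{1,q_{\varsigma}(\cep)\})$ and apply Proposition \ref{hoffman-wright H inverse} with $r=0$.
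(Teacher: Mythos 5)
Your primary plan --- simply quote Proposition 3.1 of \cite{hoffman-wright}, after confirming that $\H_{\cep}$ and the spaces $E_q^{r+2}$, $E_{q,0}^r$ agree with theirs --- is exactly what the paper does; no independent proof is given there. The self-contained sketch you append, however, follows a different route from the one the paper itself attributes to Hoffman and Wright. You take the \emph{additive} decomposition $\H_{\cep}=\H_{\cep}^0-\Sigma$, invert the constant-coefficient part $\H_{\cep}^0$ from $E_q^{r+2}$ onto $E_{q,0}^r$ via Beale's lemma (Lemma \ref{beale fm}), observe that $\Sigma$ is compact, deduce Fredholm index zero, and reduce invertibility to triviality of the even kernel via the rescaled KdV spectral picture. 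The paper reports that Hoffman and Wright instead proceed through a \emph{multiplicative} factorization of $\H_{\cep}$ as a product of two operators --- one handled by Beale's Fourier multiplier lemma, the other a small perturbation of an operator Friesecke and Pego invert on unweighted even spaces, transferred to the weighted spaces $E_q^r$ by operator conjugation. Your Fredholm-plus-injectivity route is a legitimate alternative and correctly identifies why a Neumann series on the additive splitting cannot work (the $\O(\ep^{-2})$ low-frequency blow-up of $(\H_{\cep}^0)^{-1}$ cancels the $\O(\ep^2)$ smallness of $\varsigma_{\cep}$), but note that the injectivity step still ultimately rests on the same Friesecke--Pego/KdV linearized spectral analysis that underlies Hoffman and Wright's factorization, and you rightly acknowledge you would quote it rather than reprove it. Your closing choice of $q_{\H}(\cep)\in(0,\min\{1,\ep q_{\FP}\})$ is consistent with the paper's convention in \eqref{q-H-cep}, though the paper pins it down to $\ep q_{\FP}/2$.
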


For the precise value of the decay rate in Hypothesis \ref{hypothesis 2}, we will set 
\begin{equation}\label{q-H-cep}
q_{\H}(\cep) 
:= \frac{\ep{q}_{\FP}}{2}.
\end{equation} 
Recalling the definition of $q_{\varsigma}(\cep)$ in \eqref{q-varsigma-cep} and that we took $q_{\FP} < 1$ in Theorem \ref{friesecke-pego}, we have $q_{\H}(\cep) < \min\{1,q_{\varsigma}(\cep)\}$.

We wish to point out that the crux of the proof of Proposition \ref{hoffman-wright H inverse} by Hoffman and Wright is a clever factorization of $\H_{\cep}$ as a product of two operators, one of which is invertible from $E_q^{r+2}$ to $E_{q,0}^r$ due to Fourier multiplier theory proved by Beale in \cite{beale1} (which later appears as Lemma \ref{beale fm} in this paper), and the other of which is ultimately a small perturbation of an operator that Friesecke and Pego \cite{friesecke-pego1} prove is invertible from $E^r$ to $E^r$.  Hoffman and Wright apply operator conjugation (cf. Appendix D of \cite{faver-dissertation}) to extend the invertibility of this second operator from $E_q^r$ to $E_q^r$.  
It is interesting to note that this perturbation argument is the only time that Hoffman and Wright need to assume that their wave speed is particularly close to 1; for the rest of their paper, the small parameter $|c|-1$ does not play an explicit role, as it does for us in the concrete verification of our four main hypotheses.

We also note that we are writing $\ep{q}_{\FP}$ for the decay rate of our Friesecke-Pego solitary wave profile $\varsigma_{\cep}$; Hoffman and Wright use the notation $b_c$ for the decay rate of their Friesecke-Pego profile, which they denote by $\sigma_c$. 
In turn, this $b_c$ is equal to what Friesecke and Pego call $b_0(c)$.

\subsection{Verification of Hypothesis \ref{hypothesis 3} in the case $|c| \gtrsim 1$}\label{verification of hypothesis 3}

\subsubsection{Preliminary remarks}\label{preliminary remarks}
We will chose our decay rates $q$ to depend in a very precise way on $\ep$.
First, let
\begin{equation}\label{m}
\bpzc
:= \min\left\{\frac{1}{8},\frac{\apzc}{4},\frac{q_{\FP}}{4}\right\},
\end{equation}
where the constant $\apzc$ is defined below in Lemma \ref{hoffman-wayne lemma}.
Next, let
\begin{equation}\label{ep-B}
\ep_{\B}
:= \min\left\{\frac{\ep_{\HWa}}{2\bpzc},\frac{1}{2\bpzc},1\right\},
\end{equation}
where the threshold $\ep_{\HWa}$ comes from Lemma \ref{hoffman-wayne lemma}.
Last, for $0 < \ep < \ep_{\B}$, define
\begin{equation}\label{q-params}
q_{\ep} 
:=\bpzc\ep.
\end{equation}
These definitions ensure the useful bound
\begin{equation}\label{very hungry inequality}
0 
< q_{\ep}
< \min\left\{\frac{\ep}{4},\frac{\apzc\ep}{2},\frac{\ep{q}_{\FP}}{4},1\right\}
\end{equation}
and also, per \eqref{cep defn}, $1 < |\cep| \le \sqrt{2}$.

\subsubsection{Inversion of the operator $\B_{\cep}$}\label{B-cep inverse}
Here is our precise statement about the invertibility of $\B_{\cep}$.
Its proof relies on Theorem \ref{hvl theorem}, due to Hupkes and Verduyn Lunel \cite{hvl}.

\begin{proposition}\label{B inv prop}
For $0 < \ep < \ep_{\B}$, the operator $\B_{\cep}$ is invertible from $W_{-q_{\ep}}^{2,\infty}$ to $L_{-q_{\ep}}^{\infty}$ and from $W_{q_{\ep}}^{2,\infty}$ to $L_{q_{\ep}}^{\infty}$.
These spaces were defined in Definition \ref{one sided expn weighted spaces defn}.
We denote the inverse from $L_{-q_{\ep}}^{\infty}$ to $W_{-q_{\ep}}^{2,\infty}$ by $[\B_{\cep}^-]^{-1}$ and from $L_{q_{\ep}}^{\infty}$ to $W_{q_{\ep}}^{2,\infty}$ by $[\B_{\cep}^+]^{-1}$.
These operators have the estimates
\begin{equation}\label{B-cep norm inv est}
\norm{[\B_{\cep}^{\pm}]^{-1}}_{\b(L_{\pm{q}_{\ep}}^{\infty}, W_{\pm{q}_{\ep}}^{2,\infty})} 
= \O(\ep^{-1}).
\end{equation}
\end{proposition}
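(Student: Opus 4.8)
The plan is to invert the advance-delay operator $\B_{\cep}f = \cep^2 f'' + (2+A)f$ on the exponentially weighted spaces $W_{\pm q_\ep}^{2,\infty}$ by appealing directly to the general Fredholm and inversion theory for mixed-type functional differential equations developed by Hupkes and Verduyn Lunel, which I assume is available as \texttt{Theorem \ref{hvl theorem}} (cited just before the statement). Concretely, $\B_{\cep}$ is a second-order constant-coefficient functional differential operator whose characteristic function is (up to the substitution $z = ik$) exactly the symbol $\tB_{\cep}$ from \eqref{tB-c-mu defn}. The first step is to verify the hypotheses of that theory: $\B_{\cep}$ is autonomous, atomic at $0$ (the top-order coefficient $\cep^2 \neq 0$), and has finitely many characteristic roots in any vertical strip. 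I would then invoke \texttt{Proposition \ref{symbol of B}}: part \ref{B simple zeros} tells us the only zeros of $\tB_{\cep}$ in the strip $|\im z| \le 3 q_{\B} = 3$ are the simple zeros at $z = \pm \omega_{\cep}$, which lie on the real axis. In the language of the Laplace/Fourier variable, this means the characteristic equation $\tB_{\cep}(ik) = 0$ — equivalently $\cep^2 k^2 = 2 + 2\cos k$ after the rotation — has no roots with $|\re(\text{exponent})|$ in a neighborhood of $0$ once we pass to weights; more precisely, for a one-sided weight $e^{\mp q_\ep \cdot}$ with $0 < q_\ep < q_{\B} = 1$ and $q_\ep$ away from $\im(\pm\omega_{\cep}) = 0$, the shifted symbol $\tB_{\cep}(\cdot + iq_\ep)$ (resp. $-iq_\ep$) is bounded away from zero on the real line. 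By \eqref{very hungry inequality} we have $0 < q_\ep < \ep/4 < 1 = q_{\B}$, so $q_\ep$ sits strictly inside the admissible range, and by \ref{B simple zeros} there is no characteristic root on the shifted contour. Hence \texttt{Theorem \ref{hvl theorem}} applies and yields that $\B_{\cep}$ is invertible (not merely Fredholm of index zero — there are no roots between the contour and the axis to change the index) from $W_{\pm q_\ep}^{2,\infty}$ onto $L_{\pm q_\ep}^{\infty}$, giving the bounded inverses $[\B_{\cep}^{\mp}]^{-1}$.

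The second and more delicate step is the quantitative bound \eqref{B-cep norm inv est}: $\norm{[\B_{\cep}^{\pm}]^{-1}}_{\b(L_{\pm q_\ep}^{\infty}, W_{\pm q_\ep}^{2,\infty})} = \O(\ep^{-1})$. The natural route is the explicit representation of the inverse as convolution against the Green's function, which for a constant-coefficient operator is $\ft^{-1}[1/\tB_{\cep}(-\cdot + iq_\ep)]$ on the shifted contour; conjugating by $e^{\mp q_\ep \cdot}$ turns the weighted estimate into an unweighted $L^1$ bound on this inverse Fourier transform. This is exactly what part \ref{B L1 q estimate} of \texttt{Proposition \ref{symbol of B}} provides: for $0 < |q| \le q_{\B}$,
\[
\sup_{|c| \in (1,\sqrt 2]} \bignorm{\ft^{-1}\Bigl[\frac{1}{\tB_c(-\cdot + iq)}\Bigr]}_{L^1} \le \frac{C_{\B}}{|q|}.
\]
Since $1 < |\cep| \le \sqrt 2$ (noted after \eqref{very hungry inequality}) and $q = q_\ep = \bpzc\ep$, this gives an $L^1$ bound of size $C_{\B}/(\bpzc\ep) = \O(\ep^{-1})$ for the convolution kernel. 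Young's inequality then controls $e^{\mp q_\ep\cdot}[\B_{\cep}^{\pm}]^{-1} g = \bigl(\ft^{-1}[1/\tB_{\cep}(\mp\cdot + iq_\ep)]\bigr)*\bigl(e^{\mp q_\ep\cdot} g\bigr)$ in $L^\infty$ by the same order, and to get the two derivatives one multiplies the symbol by $1, ik, (ik)^2$; using the quadratic decay $1/|\tB_{\cep}(z)| \le 2/|\re z|^2$ from \ref{B quadratic decay} for $|z| \ge r_{\B}(c)$, together with $r_0 = \sup_{|c|\in(1,\sqrt2]} r_{\B}(c) < \infty$ from \eqref{r0 sup}, one checks that $k^j/\tB_{\cep}$ for $j = 1, 2$ still has an inverse Fourier transform (in the distributional sense, absorbing the polynomially-growing part into the differential operator itself, since $\B_{\cep}$ has order $2$) controlled with the same $\O(\ep^{-1})$ constant uniformly in $\ep$. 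Passing from $\cep$-dependence to $\ep$-dependence is harmless since $\cep^2 = 1 + \ep^2/24$ is bounded above and below.

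The main obstacle I anticipate is not the qualitative invertibility — that is essentially a black-box application of \texttt{Theorem \ref{hvl theorem}} once the no-root-on-the-contour condition is checked — but rather making the $\O(\ep^{-1})$ constant genuinely \emph{uniform} and of the right order. The subtlety is that the zero $\omega_{\cep}$ of $\tB_{\cep}$ is a real zero, i.e.\ it sits exactly on the contour $\re z = 0$ through which we do \emph{not} pass; for the shifted contours $\re z = \pm q_\ep$ we pass near this zero at distance $q_\ep = \O(\ep)$, and the residue contribution scales like $1/|(\tB_{\cep})'(\omega_{\cep})| \cdot 1/q_\ep$. The uniform lower bound $\inf_{|c|\in(1,\sqrt2]} |(\tB_c)'(\omega_c)| > 0$ from \eqref{b-b-c ineq} is precisely what prevents an extra blowup here, and the $1/q_\ep = \O(\ep^{-1})$ factor is the source of — and exactly matches — the claimed order. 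So the proof reduces to: (i) cite \texttt{Theorem \ref{hvl theorem}} with the root count from \ref{B simple zeros} to get invertibility on both one-sided spaces; (ii) write the inverse as the (weighted) convolution operator; (iii) apply \ref{B L1 q estimate} with $|c| = |\cep| \in (1,\sqrt 2]$ and $q = q_\ep$ to bound the zeroth-order part by $\O(\ep^{-1})$; (iv) use \ref{B quadratic decay} and \eqref{b-b-c ineq} to handle the first and second derivatives with the same order; (v) note $\cep \in (1,\sqrt 2]$ so all constants are $\ep$-independent apart from the explicit $q_\ep^{-1}$, yielding \eqref{B-cep norm inv est}.
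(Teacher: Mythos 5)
Your plan is essentially the paper's: qualitative invertibility comes from the Hupkes--Verduyn Lunel theory once part \ref{B simple zeros} of Proposition \ref{symbol of B} certifies that $\tB_{\cep}(iz)$ has no zeros on the shifted contours $\re(z) = \pm q_{\ep}$, and the quantitative $\O(\ep^{-1})$ bound comes from the $L^1$ kernel estimate \eqref{inv ft L1 O(q) est} of part \ref{B L1 q estimate}. Two details you treat informally are, however, where the paper does real work. First, Theorem \ref{hvl theorem} is phrased for first-order systems $\fb' - \sum_j A_j S^{d_j}\fb$; to apply it to the second-order operator $\B_{\cep}$ one must first rewrite $\B_{\cep}f = g$ as $\Lambda_{\ep}\fb = \gb$ with $\fb = (f,f')$ and $\gb = (0,g)$, checking that $\det\Delta_{\cep}(z) = \tB_{\cep}(iz)/\cep^2$, and then read off $[\B_{\cep}^{\pm}]^{-1}g$ as the first component of $(\Lambda_{\ep}^{\pm})^{-1}\gb$. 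This is routine, but it is what makes the cited theorem literally applicable, and it hands you the $W^{1,\infty}_{\pm q_{\ep}}$ bound for free. Second, your step (iv) --- multiplying the symbol by $(ik)^j$ and invoking the quadratic decay from part \ref{B quadratic decay} --- does not give a clean $L^1$ kernel bound for $j=2$, because $k^2/\tB_{\cep}(k) \to -1/\cep^2 \neq 0$ as $|k|\to\infty$ and hence is not the Fourier transform of an integrable kernel; the phrase ``absorbing the polynomially-growing part into the differential operator'' is precisely the escape hatch, but it is no longer a Fourier-side argument. The paper avoids this entirely by differentiating the equation: since $\B_{\cep}\big[[\B_{\cep}^{\pm}]^{-1}g\big] = g$, one has $\cep^2\,\partial_x^2\big[[\B_{\cep}^{\pm}]^{-1}g\big] = g - (2+A)[\B_{\cep}^{\pm}]^{-1}g$, so the top-order piece of the $W^{2,\infty}_{\pm q_\ep}$ norm is controlled, with no extra $\ep$-loss, by the $L^\infty_{\pm q_\ep}$ bound you already have together with boundedness of $2+A$. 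That is the route you should take to close step (iv); the appeal to \ref{B quadratic decay} and \eqref{b-b-c ineq} for the derivative bound can simply be dropped.
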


\begin{proof}
We convert the problem $\B_{\cep}f = g$ with $f \in W_{-q_{\ep}}^{2,\infty}$, $g \in L_{-q_{\ep}}^{\infty}$ to an equivalent first-order system.
Let
\begin{equation}\label{A-c-mu defn}
\A_c
:= 
\bunderbrace{\begin{bmatrix*}[r]
0 &1 \\
-2 &0
\end{bmatrix*}}{A_0(c)}
+ \bunderbrace{\begin{bmatrix*}
0 &0 \\
-2/c^2 &0 
\end{bmatrix*}}{A_1(c)}S^1
+ \begin{bmatrix*}
0 &0 \\
-2/c^2 &0
\end{bmatrix*}S^{-1},
\end{equation}
Then $\B_{\cep}f = g$ is equivalent to
\begin{equation}\label{actual first order syst}
\bunderbrace{\partial_x[\fb] - \A_{\cep}\fb}{\Lambda_{\ep}\fb}
=\gb, 
\qquad \fb := \begin{pmatrix*} f \\ f' \end{pmatrix*},
\qquad \gb := \begin{pmatrix*} 0 \\ g \end{pmatrix*}.
\end{equation}

Now set
\begin{equation}\label{mp version ce}
\Delta_c(z)
:= z\ind - (A_0(c) + A_1(c)e^z + A_1(c)e^{-z})
= \begin{bmatrix*}
z &-1 \\
\frac{1}{c^2}\big(2+2\cosh(z)\big) &z
\end{bmatrix*}
\end{equation}
and observe that 
\begin{equation}\label{mp version det}
\det(\Delta_c(z))
= z^2 + \frac{1}{c^2}\big(2+2\cosh(z)\big)
= \frac{1}{c^2}\tB_c(iz),
\end{equation}
where $\tB_c$ is defined in \eqref{tB-c-mu defn}.

Using the terminology of Theorem \ref{hvl theorem}, the determinant of the characteristic equation of \eqref{actual first order syst} is $\tB_{\cep}(iz)/\cep^2$.
By \eqref{very hungry inequality}, we have $0 < q_{\ep} < 1$, and so Proposition \ref{symbol of B} implies that $\tB_{\cep}(iz) \ne 0$ for $0 < |\re(z)| < q_{\ep}$.
We have therefore satisfied the hypotheses of Theorem \ref{hvl theorem} for the system \eqref{actual first order syst}, and so we conclude that $\Lambda_{\ep}$ is invertible from $W_{\pm{q}_{\ep}}^{1,\infty}$ to $L_{\pm{q}_{\ep}}^{\infty}$.

We will refer to this inverse as $(\Lambda_{\ep}^{\pm})^{-1}$.
More precisely, given $\gb \in L_{q_{\ep}}^{\infty}$, let $(\Lambda_{\ep}^+)^{-1}\gb$ be the unique element of $W_{q_{\ep}}^{1,\infty}$ such that $\Lambda_{\ep}[(\Lambda_{\ep}^+)^{-1}\gb] = \gb$.
Likewise, given $\gb \in L_{-q_{\ep}}^{\infty}$, let $(\Lambda_{\ep}^-)^{-1}\gb$ be the unique element of $W_{-q_{\ep}}^{1,\infty}$ such that $\Lambda_{\ep}[(\Lambda_{\ep}^-)^{-1}\gb] = \gb$.
If we set 
\[
\delta_{\ep}
:= \frac{\bpzc\ep}{2},
\]
where $\bpzc$ was defined in \eqref{m}, then \eqref{Lambda inv formula} gives a formula for $(\Lambda_{\ep}^{\pm})^{-1}\gb$:
\begin{multline}\label{1st order syst inverse}
(\Lambda_{\ep}^{\pm})^{-1}\gb)(x)
= \frac{1}{2\pi{i}}\int_{\pm{q}_{\ep} + \delta_{\ep} -i\infty}^{\pm{q}_{\ep} + \delta_{\ep} +i\infty} e^{xz}\Delta_{\cep}(z)^{-1}\lt_+[\gb](z)\dz \\
\\
+ \frac{1}{2\pi{i}}\int_{\pm{q}_{\ep}- \delta_{\ep} - i\infty}^{\pm{q}_{\ep} - \delta_{\ep} +i\infty}e^{xz}\Delta_{\cep}(z)^{-1}\lt_-[\gb](z) \dz.
\end{multline}
The function $\Delta_{\cep}$ was defined in \eqref{mp version ce}.

Now, recall that we have converted the one-dimensional problem $\B_{\cep}f = g$ to the vectorized equation $\Lambda_{\ep}\fb = \gb$ with $\fb = (f,f')$ and $\gb = (0,g)$, and so we are really interested in solving $\Lambda_{\ep}\fb = \gb$ for the first component of $\fb$.
We therefore take the dot product of \eqref{1st order syst inverse} with the vector $\e_1 = (1,0)$ and use the definition of $\Delta_{\cep}(z)$ in \eqref{mp version ce} to find
\begin{multline}\label{B inv formula}
[\B_{\cep}^{\pm}]^{-1}g
:= ((\Lambda_{\ep}^{\pm})^{-1}\gb)\cdot\e_1
= \frac{1}{2\pi{i}}\int_{\pm{q}_{\ep} + \delta_{\ep} -i\infty}^{\pm{q}_{\ep} + \delta_{\ep} +i\infty} \K_{\ep}(x,z)\lt_+[g](z)\dz \\
\\
+ \frac{1}{2\pi{i}}\int_{\pm{q}_{\ep}- \delta_{\ep} - i\infty}^{\pm{q}_{\ep} - \delta_{\ep} +i\infty}\K_{\ep}(x,z)\lt_-[g](z) \dz,
\end{multline}
where
\begin{equation}\label{K defn}
\K_{\ep}(x,z)
:= \frac{e^{xz}}{\tB_{\cep}(iz)}.
\end{equation}
As above, given $g \in L_{\pm{q}_{\ep}}^{\infty}$, the function $[\B_{\cep}^{\pm}]^{-1}g$ is the unique element of $W_{\pm{q}_{\ep}}^{1,\infty}$ to satisfy $\B_{\cep}[[\B_{\cep}^{\pm}]^{-1}g] = g$.
Note that, in fact, $[\B_{\cep}^{\pm}]^{-1}g \in W_{\pm{q}_{\ep}}^{2,\infty}$, since $\partial_x^2[[\B_{\cep}^{\pm}]^{-1}g] = \partial_x[(\Lambda_{\ep}^{\pm})^{-1}\gb \cdot \e_2] \in L_{\pm{q}_{\ep}}^{\infty}$.

Last, we prove the operator norm estimate \eqref{B-cep norm inv est}.
Part \ref{hvl-i} of Theorem \ref{hvl theorem} and some matrix-vector arithmetic imply the existence of a function $\Gscr_{\ep} \in \cap_{p=1}^{\infty} L^p$ such that 
\begin{equation}\label{actual convolution}
([\B_{\cep}^{\pm}]^{-1}g)(x)
= e^{q_{\ep}x}\int_{-\infty}^{\infty} e^{-q_{\ep}s}\Gscr_{\ep}(x-s)g(s) \ds,
\end{equation}
where 
\[
\hat{\Gscr_{\ep}}(k) 
= \frac{1}{\tB_{\cep}(-k+iq_{\ep})}.
\]
We have
\[
\norm{[\B_{\cep}^{\pm}]^{-1}g}_{W_{\pm{q}_{\ep}}^{2,\infty}}
= \norm{[\B_{\cep}^{\pm}]^{-1}g}_{L_{\pm{q}_{\ep}}^{\infty}}
+ \norm{\partial_x^2[[\B_{\cep}^{\pm}]^{-1}g]}_{L_{\pm{q}_{\ep}}^{\infty}}.
\]
We calculate
\[
\partial_x^2[[\B_{\cep}^{\pm}]^{-1}g]
= g-\frac{(2+A)[\B_{\cep}^{\pm}]^{-1}g}{c^2},
\]
and so
\[
\norm{\partial_x^2[[\B_{\cep}^{\pm}]^{-1}g]}_{L_{\pm{q}_{\ep}}^{\infty}}
\le \norm{g}_{L_{\pm{q}_{\ep}}^{\infty}} + C\norm{[\B_{\cep}^{\pm}]^{-1}g}_{L_{\pm{q}_{\ep}}^{\infty}}.
\]
Next, we estimate from \eqref{actual convolution} that
\[
\norm{[\B_{\cep}^{\pm}]^{-1}}_{L_{\pm{q}_{\ep}}^{\infty}}
\le \norm{\Gscr_{\ep}}_{L^1}
= \bignorm{\ft^{-1}\left[\frac{1}{\tB_{\cep}(-\cdot+iq_{\ep})}\right]}_{L^1}
\le \frac{C_{\B}}{q_{\ep}}
= \O(\ep^{-1}).
\]
The last inequality comes from \eqref{inv ft L1 O(q) est}.
\end{proof}

\subsubsection{Inversion of $\B_{\cep}-\Sigma_{\cep}^*$ with the Neumann series}\label{B-cep Sigma-cep star inverse Neumann}
Now consider the operator $\B_{\cep}-\Sigma_{\cep}^*$ from $W_{-{q}_{\ep}}^{2,\infty}$ to $L_{-{q}_{\ep}}^{\infty}$.
For simplicity, let
\begin{equation}\label{M defn}
\M
:= 2(2+A)
\quadword{and}
\tM(k) = 2(2+2\cos(k)),
\end{equation}
so
\[
\Sigma_{\cep}^*f 
= \varsigma_{\cep}(x)\M{f}.
\]
By part \ref{FP Linfty-2} of Proposition \ref{hoffman-wayne proposition} we estimate, for $f \in W_{-q_{\ep}}^{2,\infty}$,
\[
\norm{\Sigma_{\cep}^*f}_{L_{-q_{\ep}}^{\infty}}
\le \norm{e^{q_{\ep}\cdot}\varsigma_{\cep}\M{f}}_{L^{\infty}}
\le C\ep^2\norm{e^{q_{\ep}\cdot}\M{f}}_{L^{\infty}}
= C\ep^2\norm{\M{f}}_{L_{-q_{\ep}}^{\infty}} 
\le C\ep^2\norm{f}_{W_{-q_{\ep}}^{2,\infty}} .
\]
That is,
\[
\norm{\Sigma_{\cep}^*}_{\b(W_{-q_{\ep}}^{2,\infty},L_{-q_{\ep}}^{\infty})} 
= \O(\ep^2).
\]
We may therefore invert $\B_{\cep}-\Sigma_{\cep}^*$ from $W_{-q_{\ep}}^{2,\infty}$ to $L_{-q_{\ep}}^{\infty}$ using the Neumann series.
This verifies Hypothesis \ref{hypothesis 3} for $|c| \gtrsim 1$.
Specifically, we set $q_{\L}(\cep) := q_{\ep}$ from \eqref{q-params} and note from \eqref{very hungry inequality} that $q_{\ep} < \min\{q_{\varsigma}(\cep)/2,q_{\H}(\cep),1\}$.

\subsection{Verification of Hypothesis \ref{hypothesis 4}}\label{verification of hypothesis 4}
To compute the phase shift $\vartheta_{\cep}$ from Theorem \ref{abstract phase shift theorem}, we at first follow our methods from Section \ref{mallet-paret approach section} of a particular solution to $(\B_{\cep}-\Sigma_{\cep}^*)f = 0$.
However, now we can use the Neumann series to get an explicit formula for the inverse of this operator from $W_{-q_{\ep}}^{2,\infty}$ to $L_{-q_{\ep}}^{\infty}$, namely,
\begin{multline*}
\big[(\B_{\cep}-\Sigma_{\cep}^*)\big]^{-1}
= \big[\B_{\cep}\big(\ind-[\B_{\cep}^-]^{-1}\Sigma_{\cep}^*\big)\big]^{-1}
=\big(\ind-[\B_{\cep}^-]^{-1}\Sigma_{\cep}^*\big)^{-1}[\B_{\cep}^-]^{-1} 
= \sum_{k=0}^{\infty} \big([\B_{\cep}^-]^{-1}\Sigma_{\cep}^*\big)^{k}[\B_{\cep}^-]^{-1}.
\end{multline*}

As we did in Section \ref{mallet-paret approach section}, make the ansatz $f(x) = e^{i\omega_{\cep}x} + g(x)$, where $g \in W_{-q_{\ep}}^{2,\infty}$.
Then $(\B_{\cep}-\Sigma_{\cep}^*)f = 0$ if and only if
\begin{equation}\label{what h must satisfy}
(\B_{\cep}-\Sigma_{\cep}^*)g = \Sigma_{\cep}^*e^{i\omega_{\cep}\cdot}.
\end{equation}
The Neumann series implies
\begin{equation}\label{g-ep-mu appendix defn}
g_{\ep}
:= \big[(\B_{\cep}-\Sigma_{\cep}^*)\big]^{-1}\Sigma_{\cep}^*e^{i\omega_{\cep}\cdot}
= \sum_{k=0}^{\infty} \big([\B_{\cep}^-]^{-1}\Sigma_{\cep}^*\big)^{k+1}e^{i\omega_{\cep}\cdot}
\end{equation}
and if we set, as before,
\[
f_{\ep}(x) := e^{i\omega_{\cep}x} + g_{\ep}(x),
\]
then
\begin{equation}\label{f-ep-mu decay at +infty}
(\B_{\cep}-\Sigma_{\cep}^*)f_{\ep} = 0
\quadword{and}
\lim_{x \to \infty} |f_{\ep}(x) - e^{i\omega_{\cep}x}|
= 0.
\end{equation}

Now we need asymptotics on $f_{\ep}$ as $x \to -\infty$.
To develop these limits, we exploit a formula that relates $\B_{\cep}^-$ to $\B_{\cep}^+$.
The proof is in Appendix \ref{proof of prop b-residue appendix}.
Roughly speaking, this is motivated by our change of contours that converted the inverse Laplace transform \eqref{pre switcheroo lt} to \eqref{residue switcheroo}.

\begin{proposition}\label{b-residue}
There exists $\ep_{\RES} \in (0,\ep_{\B}]$ such that for $0 < \ep < \ep_{\RES}$ and $h \in L_{-q_{\ep}}^{\infty}$, we have $\varsigma_{\cep}h \in L_{q_{\ep}}^{\infty}$ and 
\begin{equation}\label{residue switcheroo prop}
[\B_{\cep}^-]^{-1}[\varsigma_{\cep}h](x)
= [\B_{\cep}^+]^{-1}[\varsigma_{\cep}h](x) 
+ \alpha_{\ep}[h]e^{i\omega_{\cep}x}
+ \beta_{\ep}[h]e^{-i\omega_{\cep}x},
\end{equation}
where the linear functionals $\alpha_{\ep}$ and $\beta_{\ep}$ are defined as
\begin{equation}\label{alpha defn}
\alpha_{\ep}[h] 
:= -i\left(\frac{\lt_+[\varsigma_{\cep}h](i\omega_{\cep})
+ \lt_-[\varsigma_{\cep}h](i\omega_{\cep})}{(\tB_{\ep})'(\omega_{\cep})}\right)
\end{equation}
and
\begin{equation}\label{beta defn}
\beta_{\ep}[h]
:= i\left(\frac{\lt_+[\varsigma_{\cep}h](-i\omega_{\cep})
+ \lt_-[\varsigma_{\cep}h](-i\omega_{\cep})}{(\tB_{\ep})'(-\omega_{\cep})}\right) .
\end{equation}
The functionals $\alpha_{\ep}$ and $\beta_{\ep}$ satisfy the estimate
\begin{equation}\label{alpha beta functional estimates}
\max\left\{|\alpha_{\ep}[h]|,
|\beta_{\ep}[h]|\right\}
\le C\ep\norm{h}_{L_{-q_{\ep}}^{\infty}}.
\end{equation}
\end{proposition}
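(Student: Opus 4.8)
The plan is to play the two contour--integral representations \eqref{B inv formula} of $[\B_{\cep}^-]^{-1}$ and $[\B_{\cep}^+]^{-1}$ against each other, applied to $g:=\varsigma_{\cep}h$: the four vertical contours appearing in those formulas sit at $\re z=\pm q_{\ep}\pm\delta_{\ep}$, with the two ``$-$'' contours in the open left half plane and the two ``$+$'' contours in the open right half plane (here $q_{\ep}>\delta_{\ep}$ by the choices in Section \ref{preliminary remarks}), so shifting the former onto the latter sweeps across the imaginary axis and picks up the residues of $1/\tB_{\cep}(iz)$ at its two simple zeros $z=\pm i\omega_{\cep}$. First I would record that $\varsigma_{\cep}h\in L_{q_{\ep}}^{\infty}\cap L_{-q_{\ep}}^{\infty}$ whenever $h\in L_{-q_{\ep}}^{\infty}$: since $\varsigma_{\cep}$ decays at rate $q_{\varsigma}(\cep)=\ep q_{\FP}$ while $h$ grows no faster than $e^{q_{\ep}|\cdot|}$ at $-\infty$, and $2q_{\ep}<q_{\varsigma}(\cep)$ by \eqref{very hungry inequality}, the product $\varsigma_{\cep}h$ in fact decays exponentially on both sides, at rates $q_{\varsigma}(\cep)\pm q_{\ep}>0$. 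Hence both $[\B_{\cep}^{\pm}]^{-1}[\varsigma_{\cep}h]$ are defined by Proposition \ref{B inv prop}, and the one--sided Laplace transforms $\lt_{\pm}[\varsigma_{\cep}h]$ are analytic and bounded on a vertical strip strictly containing $\{\,|\re z|\le q_{\ep}+\delta_{\ep}\,\}$; the inequalities among $q_{\ep}$, $\delta_{\ep}$, $q_{\varsigma}(\cep)$, and $q_{\FP}$ forced by \eqref{m}, \eqref{q-params}, and \eqref{very hungry inequality} are precisely what guarantees this.

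The heart of the argument, and the step I expect to be most delicate, is the contour shift. Writing $[\B_{\cep}^-]^{-1}[\varsigma_{\cep}h]$ via \eqref{B inv formula}, I would move the $\lt_+$--piece from $\re z=-q_{\ep}+\delta_{\ep}$ to $\re z=q_{\ep}+\delta_{\ep}$ and the $\lt_-$--piece from $\re z=-q_{\ep}-\delta_{\ep}$ to $\re z=q_{\ep}-\delta_{\ep}$, landing exactly on the two pieces of $[\B_{\cep}^+]^{-1}[\varsigma_{\cep}h]$. Inside the relevant strips the integrand $e^{xz}\lt_{\pm}[\varsigma_{\cep}h](z)/\tB_{\cep}(iz)$ is meromorphic with poles only at $z=\pm i\omega_{\cep}$, since $q_{\ep}+\delta_{\ep}<1=q_{\B}$ and part \ref{B simple zeros} of Proposition \ref{symbol of B} locates all zeros of $\tB_{\cep}$ in $S_{3q_{\B}}$, and these zeros are simple. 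The horizontal sides of the approximating rectangles vanish in the limit because $1/|\tB_{\cep}(iz)|=\O(|\im z|^{-2})$ on the strip by part \ref{B quadratic decay} of Proposition \ref{symbol of B} (uniformly in $\cep\in(1,\sqrt2]$ thanks to $r_0<\infty$), while $e^{xz}$ and $\lt_{\pm}[\varsigma_{\cep}h]$ stay bounded there. The residue theorem then produces \[ [\B_{\cep}^-]^{-1}[\varsigma_{\cep}h] = [\B_{\cep}^+]^{-1}[\varsigma_{\cep}h] + \alpha_{\ep}[h]\,e^{i\omega_{\cep}\cdot} + \beta_{\ep}[h]\,e^{-i\omega_{\cep}\cdot}, \] and a direct residue computation at the simple zeros --- using $\frac{d}{dz}\tB_{\cep}(iz)\big|_{z=\pm i\omega_{\cep}}=\mp i(\tB_{\cep})'(\omega_{\cep})$ and collecting the $\lt_+$ and $\lt_-$ contributions --- reproduces the explicit formulas \eqref{alpha defn} and \eqref{beta defn}. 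The orientation of the rectangles and the bookkeeping of the $2\pi i$ and sign factors is where one must be careful, and one also has to keep track that $\lt_{\pm}[\varsigma_{\cep}h]$ is analytic across the \emph{whole} strip between the ``$-$'' and ``$+$'' contours, which is where the two-sided decay of $\varsigma_{\cep}h$ (hence the strict inequality $q_{\ep}<q_{\varsigma}(\cep)$) is used.

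Finally, for the estimate \eqref{alpha beta functional estimates}: the denominators satisfy $|(\tB_{\cep})'(\pm\omega_{\cep})|\ge c_{0}>0$ uniformly in $\ep$ by \eqref{b-b-c ineq}, since $1<|\cep|\le\sqrt2$ for $\ep$ small. For the numerators, $|\lt_+[\varsigma_{\cep}h](\pm i\omega_{\cep})|+|\lt_-[\varsigma_{\cep}h](\pm i\omega_{\cep})|\le\|\varsigma_{\cep}h\|_{L^1}\le\|h\|_{L_{-q_{\ep}}^{\infty}}\int_{\R}|\varsigma_{\cep}(x)|e^{-q_{\ep}x}\,dx$, and the substitution $y=\ep x$, together with the exponential decay of $\varsigma_{\cep}$ at rate $q_{\varsigma}(\cep)>2q_{\ep}$ and its $\O(\ep^{2})$ weighted $L^{\infty}$-size (from Theorem \ref{friesecke-pego}, equivalently Proposition \ref{hoffman-wayne proposition}), bounds this integral by $\O(\ep^{2})\cdot\O(\ep^{-1})=\O(\ep)$. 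Dividing by the denominators yields $\max\{|\alpha_{\ep}[h]|,|\beta_{\ep}[h]|\}\le C\ep\|h\|_{L_{-q_{\ep}}^{\infty}}$. Taking $\ep_{\RES}\in(0,\ep_{\B}]$ small enough to validate the finitely many strip-- and uniformity--conditions used above then completes the proof.
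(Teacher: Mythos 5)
Your proposal is correct and follows essentially the same strategy as the paper: you shift the two vertical contours defining $[\B_{\cep}^-]^{-1}$ onto those defining $[\B_{\cep}^+]^{-1}$, picking up residues at the simple poles $z=\pm i\omega_{\cep}$, where the vanishing of the horizontal pieces at $\im z=\pm R$ is supplied by the quadratic decay of $1/\tB_{\cep}$ in Proposition \ref{symbol of B}\ref{B quadratic decay}; this is exactly the rectangular-contour argument in the paper's Appendix F.2, packaged as a contour deformation rather than a closed loop. Your treatment of \eqref{alpha beta functional estimates} is a modest streamlining: you bound $|\lt_{\pm}[\varsigma_{\cep}h](\pm i\omega_{\cep})|$ simultaneously by $\|h\|_{L_{-q_{\ep}}^{\infty}}\int_{\R}\varsigma_{\cep}(x)e^{-q_{\ep}x}\dx$ and close with a single rescaling $y=\ep x$ together with the weighted $L^{\infty}$ bound from Lemma \ref{hoffman-wayne lemma}, whereas the paper splits the harder $\lt_-$ term into a $\sigma$-piece and a remainder handled by Cauchy--Schwarz against the $H^1$ estimate of Proposition \ref{hoffman-wayne proposition}\ref{HW fundamental}; both routes use the same inputs and the same inequalities among $q_{\ep}$, $\apzc$, and $q_{\varsigma}(\cep)$ from \eqref{m}--\eqref{very hungry inequality}.
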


Recall that $f_{\ep}$ has the form $f_{\ep}(x) = e^{i\omega_{\cep}{x}} + g_{\ep}(x)$, where, from the definition of $g_{\ep}$ above in \eqref{g-ep-mu appendix defn}, 
\begin{equation}\label{new g-ep}
g_{\ep} 
= \sum_{k=0}^{\infty} \big([\B_{\cep}^-]^{-1}\Sigma_{\cep}^*\big)^{k+1}e^{i\omega_{\cep}\cdot}
= [\B_{\cep}^-]^{-1}[\varsigma_{\cep}\M{h}_{\ep}],
\qquad
h_{\ep}
:= \sum_{k=0}^{\infty} \big([\B_{\cep}^-]^{-1}\Sigma_{\cep}^*\big)^ke^{i\omega_{\cep}\cdot}.
\end{equation}
We use Proposition \ref{b-residue} to rewrite $g_{\ep}$ as
\begin{equation}\label{h after dance of residues}
g_{\ep}(x) 
= [\B_{\cep}^+]^{-1}[\varsigma_{\cep}\M{h}_{\ep}](x)
+ \alpha_{\ep}[\M{h}_{\ep}]e^{i\omega_{\cep}x}
+ \beta_{\ep}[\M{h}_{\ep}]e^{-i\omega_{\cep}x}.
\end{equation}
This is the analogue of \eqref{residue switcheroo}.

Since the image of $[\B_{\cep}^+]^{-1}$ consists of functions that decay to zero at $-\infty$, we have, as in \eqref{limit of f at -infty},
\begin{equation}\label{f-ep-mu decay at -infty}
\lim_{x \to -\infty} \left|f_{\ep}(x) - \left(e^{i\omega_{\cep}x}
+
\alpha_{\ep}\left[\M{h}_{\ep}\right]e^{i\omega_{\cep}x} 
+ \beta_{\ep}\left[\M{h}_{\ep}\right]e^{-i\omega_{\cep}x}\right)\right|
=0. 
\end{equation}
It is possible to obtain very precise estimates on the coefficients on $e^{\pm{i\omega_{\cep}x}}$ in \eqref{f-ep-mu decay at -infty}; the proof is in Appendix \ref{alpha beta prop proof appendix}.

\begin{proposition}\label{alpha beta prop}
There exists $\ep_{\theta} \in (0,\ep_{\B}]$ such that for all $0 < \ep < \ep_{\theta}$, there are numbers $\theta_{\ep,0}^+ = \O(1)$, $\theta_{\ep}^+ = \O(1)$, and $\theta_{\ep}^- = \O(1)$ with
\[
\alpha_{\ep}[\M{g}_{\ep}]
= i\ep\theta_{\ep,0}^+ + \ep^2\theta_{\ep}^+
\quadword{and}
\beta_{\ep}[\M{g}_{\ep}]
= \ep^2\theta_{\ep}^-.
\]
Moreover, $\theta_{\ep,0}^+ \in \R$, and there are constants $C_1$, $C_2 > 0$ such that 
\[
0 
< C_1 
< \theta_{\ep,0}^+ 
< C_2 
< \infty
\]
for all such $\ep$.
\end{proposition}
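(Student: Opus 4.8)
\emph{Strategy.} The plan is to read off the asymptotics of the two coefficients of $e^{\pm i\omega_{\cep}x}$ in the expansion \eqref{h after dance of residues}--\eqref{f-ep-mu decay at -infty} directly from the closed forms \eqref{alpha defn} and \eqref{beta defn}, combining the Neumann series \eqref{new g-ep} for the correction $g_{\ep}$ with the Friesecke--Pego scaling of Theorem \ref{friesecke-pego}. Since $h_{\ep}=e^{i\omega_{\cep}\cdot}+g_{\ep}$ by \eqref{new g-ep}, the argument passed to $\alpha_{\ep}$ and $\beta_{\ep}$ is $\M h_{\ep}=\tM(\omega_{\cep})e^{i\omega_{\cep}\cdot}+\M g_{\ep}$; I would treat $\tM(\omega_{\cep})e^{i\omega_{\cep}\cdot}$ as the leading part and $\M g_{\ep}$ as a harmless remainder. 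For the remainder, the operator-norm bound \eqref{B-cep norm inv est} together with $\norm{\Sigma_{\cep}^*}_{\b(W_{-q_{\ep}}^{2,\infty},L_{-q_{\ep}}^{\infty})}=\O(\ep^2)$ (established in Section \ref{B-cep Sigma-cep star inverse Neumann}) gives $\norm{g_{\ep}}_{W_{-q_{\ep}}^{2,\infty}}=\O(\ep)$ via \eqref{new g-ep}, and then the functional bound \eqref{alpha beta functional estimates} yields $\alpha_{\ep}[\M g_{\ep}]=\O(\ep^2)$ and $\beta_{\ep}[\M g_{\ep}]=\O(\ep^2)$, which are absorbed into the $\ep^2$-terms of the statement.

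\emph{The leading part.} Feeding $h=e^{i\omega_{\cep}\cdot}$ into \eqref{alpha defn} and \eqref{beta defn}, the one-sided Laplace transforms of $\varsigma_{\cep}e^{i\omega_{\cep}\cdot}$ evaluated at $+i\omega_{\cep}$ have the factor $e^{i\omega_{\cep}x}$ cancel the transform kernel, so (using the evenness of $\varsigma_{\cep}$ to glue the two half-line pieces) $\lt_+[\varsigma_{\cep}e^{i\omega_{\cep}\cdot}](i\omega_{\cep})+\lt_-[\varsigma_{\cep}e^{i\omega_{\cep}\cdot}](i\omega_{\cep})$ collapses to the full-line moment $\int_{\R}\varsigma_{\cep}(x)\,dx$, while at $-i\omega_{\cep}$ the exponent doubles and one is left with the oscillatory integral $\int_{\R}e^{2i\omega_{\cep}x}\varsigma_{\cep}(x)\,dx$. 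Thus, modulo the $\O(\ep^2)$ remainders, $\alpha_{\ep}[\M h_{\ep}]=-i\frac{\tM(\omega_{\cep})}{(\tB_{\cep})'(\omega_{\cep})}\int_{\R}\varsigma_{\cep}+\O(\ep^2)$ and $\beta_{\ep}[\M h_{\ep}]=i\frac{\tM(\omega_{\cep})}{(\tB_{\cep})'(\omega_{\cep})}\int_{\R}e^{2i\omega_{\cep}x}\varsigma_{\cep}(x)\,dx+\O(\ep^2)$. I would then invoke Theorem \ref{friesecke-pego}: upgrading the $H^r$-estimate \eqref{original friesecke-pego estimate} to a weighted $L^1$ estimate via the common exponential localization of $\varsigma_{\cep}$ and of $\sigma$ from \eqref{sigma defn}, and substituting $y=\ep x$, gives $\int_{\R}\varsigma_{\cep}=\ep\big(\int_{\R}\sigma+\O(\ep^2)\big)=\ep+\O(\ep^3)$ since $\int_{\R}\sigma=1$; the same substitution turns the oscillatory integral into $\ep\int_{\R}e^{2i(\omega_{\cep}/\ep)y}\big(\ep^{-2}\varsigma_{\cep}(\ep^{-1}y)\big)\,dy$, and since $\omega_{\cep}\in(\sqrt2/|\cep|,\pi/2)$ by part \ref{B simple zeros} of Proposition \ref{symbol of B} and $|\cep|\le\sqrt2$, the modulation frequency $\omega_{\cep}/\ep\to\infty$, so a single integration by parts against the uniformly $H^1$-bounded profile bounds it by $\ep\cdot\O(\ep)=\O(\ep^2)$. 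Because $\tM(\omega_{\cep})$ and $(\tB_{\cep})'(\omega_{\cep})=-2\cep^2\omega_{\cep}-2\sin\omega_{\cep}$ are real, this gives $\alpha_{\ep}[\M h_{\ep}]=i\ep\theta_{\ep,0}^+ +\ep^2\theta_{\ep}^+$ with $\theta_{\ep,0}^+=-\tM(\omega_{\cep})/(\tB_{\cep})'(\omega_{\cep})+\O(\ep^2)\in\R$, and $\beta_{\ep}[\M h_{\ep}]=\ep^2\theta_{\ep}^-$.

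\emph{Uniform positivity.} It remains to pin down $0<C_1<\theta_{\ep,0}^+<C_2<\infty$. Since $1<|\cep|\le\sqrt2$ forces $\omega_{\cep}\in(1,\pi/2)$, the quantity $\tM(\omega_{\cep})=4+4\cos\omega_{\cep}$ lies in $[4,8]$, while $|(\tB_{\cep})'(\omega_{\cep})|=2\cep^2\omega_{\cep}+2\sin\omega_{\cep}$ is bounded above and, by part \ref{B simple zeros} of Proposition \ref{symbol of B}, also bounded below by positive constants uniform in $\ep$; hence $-\tM(\omega_{\cep})/(\tB_{\cep})'(\omega_{\cep})>0$ lies in a fixed compact subinterval of $(0,\infty)$, and shrinking $\ep_{\theta}$ so that the $\O(\ep^2)$ correction cannot reach its endpoints completes the argument.

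\emph{Main obstacle.} The delicate step is the quantitative bookkeeping that converts the Friesecke--Pego $H^r$-convergence \eqref{original friesecke-pego estimate} into the sharp moment asymptotics $\int_{\R}\varsigma_{\cep}=\ep+\O(\ep^3)$ and $\int_{\R}e^{2i\omega_{\cep}x}\varsigma_{\cep}(x)\,dx=\O(\ep^2)$: one must pass from $H^r$ to a weighted $L^1$ bound, using that $\varsigma_{\cep}$ decays exponentially at rate comparable to $\ep$ with $\norm{\varsigma_{\cep}}_{L^\infty}=\O(\ep^2)$ (cf.\ part \ref{FP Linfty-2} of Proposition \ref{hoffman-wayne proposition}); one must confirm that the error in $\int_{\R}\varsigma_{\cep}$ is genuinely $\O(\ep^3)$, not merely $o(\ep)$, so that the ``$i\ep\theta_{\ep,0}^+ +\ep^2\theta_{\ep}^+$'' splitting closes; and one must keep every implied constant uniform over $\ep\in(0,\ep_{\theta})$. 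A lesser, clerical point is matching the precise conventions of Appendix \ref{laplace transform appendix} so that the sum in \eqref{alpha defn}, evaluated at $i\omega_{\cep}$ against $\varsigma_{\cep}e^{i\omega_{\cep}\cdot}$, really does recombine into $\int_{\R}\varsigma_{\cep}$.
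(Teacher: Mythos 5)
Your proposal is correct and shares the paper's overall architecture (Appendix E.3): peel off the $k=0$ Neumann term, evaluate the Laplace transforms against $\varsigma_{\cep}$, and combine the Friesecke--Pego scaling for the leading moment with the rapid oscillation of $e^{\pm 2i\omega_{\cep}x}$ for $\beta_{\ep}$. You also correctly read the proposition's ``$\M g_{\ep}$'' as $\M h_{\ep}$ with $h_{\ep}=e^{i\omega_{\cep}\cdot}+g_{\ep}$, which is what the proof actually computes and what \eqref{f-ep-mu decay at -infty} requires. The genuine difference is your remainder estimate: you deduce $\norm{g_{\ep}}_{W_{-q_{\ep}}^{2,\infty}}=\O(\ep)$ directly from \eqref{B-cep norm inv est} and the $\O(\ep^2)$ operator norm of $\Sigma_{\cep}^*$, and then apply the functional bound \eqref{alpha beta functional estimates}, whereas the paper establishes the term-by-term bound \eqref{toward O(ep-2) est} by induction and sums a geometric series. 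Your route is cleaner and avoids the induction; just note that the first Neumann term $[\B_{\cep}^-]^{-1}\Sigma_{\cep}^*e^{i\omega_{\cep}\cdot}$ needs the bound for $\Sigma_{\cep}^*\colon L^{\infty}\to L_{-q_{\ep}}^{\infty}$, since $e^{i\omega_{\cep}\cdot}\notin W_{-q_{\ep}}^{2,\infty}$; this is part \ref{FP weighted Linfty} of Proposition \ref{hoffman-wayne proposition}, and is exactly why the paper's \eqref{ind-est-1} carries a $\min$ over two norms. For $\beta_{\ep}$ you combine $\lt_+ + \lt_-$ into $\int_{\R}e^{2i\omega_{\cep}x}\varsigma_{\cep}(x)\,dx$ and integrate by parts once, while the paper evaluates $\lt_+$ alone, splits into cosine and sine pieces, treats them separately, and recovers $\lt_-$ by conjugation; both yield $\O(\ep^2)$, and your version needs a uniform $W^{1,1}$ bound (not merely an unweighted $H^1$ bound) on $\ep^{-2}\varsigma_{\cep}(\cdot/\ep)$, which Lemma \ref{hoffman-wayne lemma} supplies via Cauchy--Schwarz with the exponential weight and which you already flag in your ``Main obstacle.''
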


Set $\breve{f}_{\ep}(x) := \im[f_{\ep}(x)-f_{\ep}(-x)]$ and write
\[
\theta_{\ep}^+
= \theta_{\ep,\r}^++ i\theta_{\ep,\i}^+
\quadword{and}
\theta_{\ep}^-
= \theta_{\ep,\r}^- +i\theta_{\ep,\i}^-
\]
with $\theta_{\ep,\r}^{\pm}$, $\theta_{\ep,\i}^{\pm} \in \R$.
The limits \eqref{f-ep-mu decay at +infty} and \eqref{f-ep-mu decay at -infty} imply
\[
\lim_{x \to \infty}
\left| \breve{f}_{\ep}(x)- \left[(2 + \ep^2(\theta_{\ep,\r}^+-\theta_{\ep,\r}^-))\sin(\omega_{\cep}x) 
- \ep(\theta_{\ep,0}^+ +\ep\theta_{\ep,\i}^++ \ep\theta_{\ep,\i}^-)\cos(\omega_{\cep}x)\right] \right|
= 0.
\]

As in Section \ref{phase shift revealed}, then, the identity \eqref{big trig} implies that a rescaled version of $\breve{f}_{\ep}$, which we call $\gamma_{\ep}$, satisfies both $(\B_{\cep}-\Sigma_{\cep}^*)\gamma_{\ep} = 0$ and 
\[
\lim_{x \to \infty}
|\gamma_{\ep}(x) - \sin(\omega_{\cep}(x + \vartheta_{\cep}))|
=0,
\]
where 
\begin{equation}\label{ultimate phase shift}
\vartheta_{\cep}
:= -\frac{1}{\omega_{\cep}}\arctan\left(\ep\left(\frac{\theta_{\ep,0}^+ +\ep\theta_{\ep,\i}^++ \ep\theta_{\ep,\i}^-)}{2 + \ep^2(\theta_{\ep,\r}^+ - \theta_{\ep,\r}^-)}\right)\right).
\end{equation}
Since $\theta_{\ep,0}^+$ is nonzero and $\O(1)$, it follows from properties of the arctangent that we can write
\[
\vartheta_{\cep}
= \ep\vartheta_{\ep,0},
\]
where, for some constant $C_{\vartheta} > 0$, we have
\[
0 < \frac{1}{C_{\vartheta}} < |\vartheta_{\ep,0}| < C_{\vartheta} < \infty
\]
for all $0 < \ep < \ep_{\theta}$.

At last, we may verify Hypothesis \ref{hypothesis 4} for $|c| \gtrsim 1$.
By part \ref{critical frequency bounds} of Proposition \ref{critical frequency props prop}, we have $A_{\cep} < \omega_{\cep} < B_{\cep} < \pi/2$ for all $|\mu| \le \mu_{\per}(\cep)$.
From part \ref{c0} of that proposition, we deduce the additional bound $1 < A_{\cep}$ for $0 < \ep < \ep_{\B}$.
Then
\[
0 < \frac{\ep\omega_{\cep}}{C_{\vartheta}}
< \ep\omega_{\cep}|\vartheta_{cep,0}|
< C_{\vartheta}\ep\omega_{\cep}
< \pi
\]
for $\ep$ small enough, and so 
\[
\sin(\omega_{\cep}\vartheta_{\cep})
= \sin(\ep\omega_{\cep}\vartheta_{\ep,0})
> 0.
\]

\appendix

\section{Transform Analysis}

\subsection{Fourier analysis}\label{fourier analysis appendix}
If $\fb \in L^1(\R,\C^m)$, we set
\[
\ft[\fb](k)
= \hat{\fb}(k)
:= \frac{1}{\sqrt{2\pi}}\int_{-\infty}^{\infty} \fb(x)e^{-ikx} \dx
\]
and
\[
\ft^{-1}[\fb](x)
= \check{\fb}(x)
:= \frac{1}{\sqrt{2\pi}}\int_{-\infty}^{\infty} \fb(k)e^{ikx} \dx.
\]
If $\fb \in L_{\per}^2(\R,\C^m)$, we define 
\[
\ft[\fb](k)
= \hat{\fb}(k)
= \frac{1}{2\pi}\int_{-\pi}^{\pi} \fb(x)e^{-ik{x}} \dx,
\]
and we have
\[
\fb(x)
= \sum_{k=-\infty}^{\infty} \hat{\fb}(k)e^{ik{x}}.
\]
In either case, with $(S^d\fb)(x) := f(x+d)$, we have the identity
\[
\hat{S^d\fb}(k) = e^{ikd}\hat{\fb}(k).
\]

\subsubsection{Fourier multipliers}\label{fourier multipliers appendix}
We first work on the periodic Sobolev spaces $H_{\per}^r(\R,\C^m)$ from \eqref{H-per-r}.
Take $r$, $s \ge 0$ and suppose that $\tM \colon \R \to \C^{m \times m}$ is measurable with
\[
\sup_{k \in \R} \frac{\norm{\tM(k)}}{(1+k^2)^{(r-s)/2}} 
< \infty.
\]
Then Fourier multiplier $\M$ with symbol $\tM$, defined by
\[
(\M\fb)(x)
:= \sum_{k =-\infty}^{\infty} e^{ikx}\tM(k)\hat{\fb}(k),
\]
i.e., by
\begin{equation}\label{how fm works}
\hat{\M\fb}(k) = \tM(k)\hat{\fb}(k),
\end{equation}
is a bounded operator from $H_{\per}^r(\R,\C^m)$ to $H_{\per}^s(\R,\C^m)$.

We will need some calculus on ``scaled'' Fourier multipliers.
Let $\M$ be the Fourier multiplier with symbol $\tM$.
For $\omega \in \R$, define $\M[\omega]$ to be the Fourier multiplier with symbol $\tM(\omega{k})$, i.e.,
\[
\hat{\M[\omega]f}(k) 
:= \tM(\omega{k})\hat{f}(k).
\]
Last, for a function $\gb \colon \R \to \C^m$, let
\begin{equation}\label{Lip defn}
\Lip(\gb)
:= \sup_{\substack{x, \grave{x} \in \R \\ x \ne \grave{x}}} \left|\frac{\gb(x)-\gb(\grave{x})}{x-\grave{x}}\right|.
\end{equation}
The methods of Appendix D.3 of \cite{faver-dissertation} prove the next lemma.

\begin{lemma}\label{calculus on Fourier multipliers lemma}
Let $\tM \colon \R \to \C^{m \times m}$ be measurable.

\begin{enumerate}[label={\bf(\roman*)},ref={(\roman*)}]

\item\label{Fourier multiplier lipschitz}
Suppose $\Lip(\tM) < \infty$.
Then
\[
\norm{(\M[\omega]-\M[\grave{\omega}])\fb}_{H_{\per}^r(\R,\C^m)}
\le \Lip(\tM)\norm{\fb}_{H_{\per}^{r+1}(\R,\C^m)}|\omega-\grave{\omega}|.
\]

\item\label{Fourier multiplier derivative lipschitz}
Suppose $\tM$ is differentiable with $\Lip(\tM') < \infty$.
Fix $\omega_0 \in \R$ and let $\partial_{\omega}\M[\omega_0]$ be the Fourier multiplier with symbol $k\tM'(\omega_0{k})$, i.e.,
\[
\ft[\partial_{\omega}\M[\omega_0]f](k)
= k\tM'(\omega_0k)\hat{f}(k).
\]
Then
\[
\norm{(\M[\omega_0+\omega]-\M[\omega_0]-\omega\partial_{\omega}\M[\omega_0])\fb}_{H_{\per}^r(\R,\C^m)}
\le \Lip(\tM')\omega^2\norm{\fb}_{H_{\per}^{r+2}(\R,\C^m)}.
\]
\end{enumerate}
\end{lemma}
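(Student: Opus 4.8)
The plan is to carry out both estimates entirely on the Fourier side, where each one reduces to a pointwise bound on a difference of the (matrix-valued) symbols combined with the elementary inequality $k^{2j}\le(1+k^2)^{j}$. First I would record the setup: by \eqref{how fm works} a Fourier multiplier acts diagonally on Fourier coefficients, and by \eqref{H-per-r} and \eqref{H-per-r ip} we have $\norm{\fb}_{H_{\per}^r(\R,\C^m)}^2=\sum_{k\in\Z}(1+k^2)^r|\hat{\fb}(k)|^2$. I read $\Lip$ of a matrix-valued symbol via \eqref{Lip defn} with the operator norm on $\C^{m\times m}$ in place of $|\cdot|$, so that $|\tM(k)v|\le\norm{\tM(k)}\,|v|$ for $v\in\C^m$; the hypotheses $\Lip(\tM)<\infty$ and $\Lip(\tM')<\infty$ then force $\tM$ to grow at most linearly, which is what makes the scaled multipliers $\M[\omega]$ bounded from $H_{\per}^{r+1}(\R,\C^m)$ (resp.\ $H_{\per}^{r+2}(\R,\C^m)$) into $H_{\per}^r(\R,\C^m)$, so the statements are well posed.

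For part \ref{Fourier multiplier lipschitz}, I would note that $\M[\omega]-\M[\grave{\omega}]$ is the Fourier multiplier with symbol $\tM(\omega k)-\tM(\grave{\omega}k)$, which the Lipschitz hypothesis bounds in operator norm by $\Lip(\tM)\,|k|\,|\omega-\grave{\omega}|$. Plugging this into the Parseval expression for $\norm{(\M[\omega]-\M[\grave{\omega}])\fb}_{H_{\per}^r(\R,\C^m)}^2$, pulling the constant $\Lip(\tM)^2|\omega-\grave{\omega}|^2$ out of the sum, and absorbing the residual factor $k^2$ through $(1+k^2)^r k^2\le(1+k^2)^{r+1}$ leaves precisely $\norm{\fb}_{H_{\per}^{r+1}(\R,\C^m)}^2$; taking square roots gives the claim.

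For part \ref{Fourier multiplier derivative lipschitz}, the relevant symbol is the first-order Taylor remainder $\tM((\omega_0+\omega)k)-\tM(\omega_0k)-\omega k\,\tM'(\omega_0k)$, since $\omega\partial_{\omega}\M[\omega_0]$ has symbol $\omega k\,\tM'(\omega_0k)$. I would rewrite it, via the fundamental theorem of calculus applied entrywise, as $\omega k\int_0^1\big(\tM'((\omega_0+t\omega)k)-\tM'(\omega_0k)\big)\,dt$, bound the integrand for $t\in[0,1]$ by $\Lip(\tM')\,|t\omega k|\le\Lip(\tM')\,|\omega|\,|k|$, and conclude that the symbol has operator norm at most $\Lip(\tM')\,\omega^2k^2$. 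Then, exactly as before, Parseval together with $(1+k^2)^r k^4\le(1+k^2)^{r+2}$ yields $\norm{(\M[\omega_0+\omega]-\M[\omega_0]-\omega\partial_{\omega}\M[\omega_0])\fb}_{H_{\per}^r(\R,\C^m)}\le\Lip(\tM')\,\omega^2\norm{\fb}_{H_{\per}^{r+2}(\R,\C^m)}$.

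I do not expect a real obstacle: this is a routine symbol estimate, which is why the paper just cites the methods of Appendix D.3 of \cite{faver-dissertation}. The only points that need a little care are bookkeeping ones --- making sure that, because $\tM$ is $\C^{m\times m}$-valued, ``Lipschitz'' and the integral form of Taylor's theorem are applied with the operator norm / entrywise, and noting at the outset that the Lipschitz hypotheses already guarantee the operators in question map between the stated periodic Sobolev spaces, so that nothing is ill-defined.
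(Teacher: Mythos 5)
Your proof is correct, and it is the standard symbol-side argument that the paper has in mind when it refers to Appendix D.3 of \cite{faver-dissertation}: pass to Fourier coefficients via \eqref{H-per-r}, bound the difference of symbols pointwise (via the Lipschitz hypothesis for part \ref{Fourier multiplier lipschitz}, and via the integral form of the first-order Taylor remainder together with the Lipschitz hypothesis on $\tM'$ for part \ref{Fourier multiplier derivative lipschitz}), and absorb the resulting powers of $k$ using $(1+k^2)^r k^{2j}\le(1+k^2)^{r+j}$. Your care in reading $\Lip$ via the operator norm on $\C^{m\times m}$ and applying the fundamental theorem of calculus entrywise is exactly the bookkeeping needed, so there is nothing to add.
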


Next, we discuss some properties of the adjoint of a Fourier multiplier on periodic Sobolev spaces.

\begin{lemma}\label{periodic fm adjoint}
Suppose that $\M \in \b(H_{\per}^r(\R,\C^m),H_{\per}^s(\R,\C^m))$ is a Fourier multiplier with symbol $\tM \colon \R \to \C^{m \times m}$.

\begin{enumerate}[label={\bf(\roman*)},ref={(\roman*)}]

\item\label{adjoint of periodic fourier multiplier}
The adjoint operator $\M^* \in \b(H_{\per}^s(\R,\C^m),H_{\per}^r(\R,\C^m))$, i.e., the operator $\M^*$ satisfying
\[
\ip{\M\fb}{\gb}_{H_{\per}^s(\R,\C^m)} = \ip{\fb}{\M^*\gb}_{H_{\per}^r(\R,\C^m)}, \ \fb \in H_{\per}^r(\R,\C^m), \ \gb \in H_{\per}^s(\R,\C^m),
\]
is given by 
\begin{equation}\label{fm adjoint defn}
\hat{\M^*\gb}(k) = \frac{1}{(1+k^2)^{r-s}}\tM(k)^*\hat{\gb}(k),
\end{equation}
where $\tM(k)^* \in \C^{m \times m}$ is the conjugate transpose of $\tM(k) \in \C^{m \times m}$.

\item
Suppose that $\M \in \b(H_{\per}^r(\R,\C^m),H_{\per}^s(\R,\C^m))$ is a Fourier multiplier with symbol $\tM \colon \R \to \C^{m \times m}$.
Suppose as well that $\M$ has a one-dimensional kernel spanned by $\nub$ with $\hat{\nub}(k) = 0$ for all but finitely many $k$ and that $\tM(k)$ is self-adjoint for all $k \in \R$.
Then the adjoint $\M^*$ from part \ref{adjoint of periodic fourier multiplier} has a one-dimensional kernel in $H_{\per}^r(\R,\C^m)$ spanned by $\nub^*$, whose Fourier coefficients are
\begin{equation}\label{nub star for periodic fm sa kernel}
\hat{\nub^*}(k) := (1+k^2)^{(r-s)/2}\hat{\nub}(k).
\end{equation}
\end{enumerate}
\end{lemma}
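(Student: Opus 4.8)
The plan is to establish both parts by a direct computation with Fourier coefficients, using the definition \eqref{H-per-r ip} of the inner product on $H_{\per}^r(\R,\C^m)$ and the defining relation \eqref{how fm works} of a Fourier multiplier; throughout, $\cdot$ denotes the (Hermitian) inner product on $\C^m$. For part \ref{adjoint of periodic fourier multiplier}, I would first check that the operator given by \eqref{fm adjoint defn} is bounded from $H_{\per}^s(\R,\C^m)$ into $H_{\per}^r(\R,\C^m)$: it is the Fourier multiplier with symbol $k \mapsto (1+k^2)^{s-r}\tM(k)^*$, whose operator norm at frequency $k$ equals $(1+k^2)^{s-r}\norm{\tM(k)}$, and the growth hypothesis $\norm{\tM(k)} \le C(1+k^2)^{(r-s)/2}$ then yields the bound $C(1+k^2)^{(s-r)/2}$, which is exactly the estimate that guarantees boundedness from $H_{\per}^s(\R,\C^m)$ into $H_{\per}^r(\R,\C^m)$. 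Next, for $\fb \in H_{\per}^r(\R,\C^m)$ and $\gb \in H_{\per}^s(\R,\C^m)$, I would expand
\[
\ip{\M\fb}{\gb}_{H_{\per}^s(\R,\C^m)}
= \sum_{k=-\infty}^{\infty} (1+k^2)^s \big(\tM(k)\hat{\fb}(k)\big)\cdot\hat{\gb}(k),
\]
move $\tM(k)$ across the dot product on $\C^m$, where it becomes the conjugate transpose $\tM(k)^*$, and split $(1+k^2)^s = (1+k^2)^r(1+k^2)^{s-r}$ so that the sum becomes $\sum_k (1+k^2)^r\,\hat{\fb}(k)\cdot\big((1+k^2)^{s-r}\tM(k)^*\hat{\gb}(k)\big) = \ip{\fb}{\M^*\gb}_{H_{\per}^r(\R,\C^m)}$, with $\M^*$ the operator in \eqref{fm adjoint defn}. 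Uniqueness of the adjoint completes this part.

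For part (ii), the key point is that multiplying Fourier coefficients by the weight $(1+k^2)^{(r-s)/2}$ is an isomorphism $H_{\per}^r(\R,\C^m) \to H_{\per}^s(\R,\C^m)$ that carries $\ker\M$ onto $\ker\M^*$. Because $\tM(k)$ is self-adjoint, formula \eqref{fm adjoint defn} shows that $\gb \in H_{\per}^s(\R,\C^m)$ satisfies $\M^*\gb = 0$ if and only if $\tM(k)\hat{\gb}(k) = 0$ for every $k$, which is exactly the pointwise-in-$k$ condition defining $\ker\M$. Now $\nub^*$ with $\hat{\nub^*}(k) := (1+k^2)^{(r-s)/2}\hat{\nub}(k)$ has only finitely many nonzero Fourier coefficients since $\nub$ does, hence is a trigonometric polynomial belonging to every periodic Sobolev space (in particular to both $H_{\per}^s(\R,\C^m)$ and $H_{\per}^r(\R,\C^m)$) and is nonzero, and $\tM(k)\hat{\nub^*}(k) = (1+k^2)^{(r-s)/2}\tM(k)\hat{\nub}(k) = 0$, so $\M^*\nub^* = 0$. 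Conversely, given $\gb \in H_{\per}^s(\R,\C^m)$ with $\M^*\gb = 0$, the function with Fourier coefficients $(1+k^2)^{(s-r)/2}\hat{\gb}(k)$ lies in $H_{\per}^r(\R,\C^m)$ and in $\ker\M$, hence is a scalar multiple of $\nub$; undoing the weight shows $\gb$ is the corresponding scalar multiple of $\nub^*$. Thus $\ker\M^*$ is one-dimensional and spanned by $\nub^*$.

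I expect no serious obstacle here; the arguments are routine Fourier-series manipulations, and the substantive direction is merely the converse inclusion in part (ii), which follows immediately once the reweighting isomorphism onto $\ker\M$ is in place. The one point requiring care is bookkeeping the weight $(1+k^2)^{r-s}$ that distinguishes the inner products on $H_{\per}^r(\R,\C^m)$ and $H_{\per}^s(\R,\C^m)$ when $r \ne s$: this is precisely why a Fourier multiplier with self-adjoint symbol need not be self-adjoint as an operator, and why the generator $\nub^*$ of $\ker\M^*$ is a reweighting of $\nub$ rather than $\nub$ itself. A cosmetic subtlety is that $\M^*$ maps into $H_{\per}^r(\R,\C^m)$ while its kernel is nominally a subspace of its domain $H_{\per}^s(\R,\C^m)$; the finite-support hypothesis on $\hat{\nub}$ makes $\nub^*$ a trigonometric polynomial, so its span sits inside $H_{\per}^r(\R,\C^m)$ as well, which reconciles the phrasing of the statement.
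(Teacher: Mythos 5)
Your proof is correct. The paper does not actually supply a proof of Lemma~\ref{periodic fm adjoint}: it is stated in the Fourier analysis appendix without argument or citation, so there is no ``paper's own proof'' to compare against. The argument you give is the natural one and handles the only genuine subtleties: the boundedness of the proposed $\M^*$ via the symbol bound $\norm{\tM(k)} \le C(1+k^2)^{(r-s)/2}$, the book-keeping of the weight $(1+k^2)^{r-s}$ that distinguishes the $H_{\per}^r$ and $H_{\per}^s$ inner products (which is precisely why $\M^*$ is not obtained by simply conjugate-transposing the symbol), the reweighting $\hat{\gb}(k)\mapsto(1+k^2)^{(s-r)/2}\hat{\gb}(k)$ as an isometric isomorphism carrying $\ker\M^*$ onto $\ker\M$ in the converse direction of part (ii), and the observation that the finite-support hypothesis makes $\nub^*$ a trigonometric polynomial lying in all the $H_{\per}^t$ spaces, which resolves the superficial tension between ``kernel in $H_{\per}^r$'' and the fact that $\M^*$ has domain $H_{\per}^s$.
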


Last, we state a slight generalization of a result for Fourier multipliers on the exponentially weighted spaces $H_q^r$ from \eqref{hrq}.
In this case, a Fourier multiplier on $H_q^r$ (or $H^r$) is, of course, defined as before by \eqref{how fm works}.
For $0 < q < \grave{q}$, write
\[
S_{q,\grave{q}}
:= \set{z \in \C}{q \le |\im(z)| \le \grave{q}}.
\]

\begin{lemma}[Beale]\label{beale fm}
Let $0 < q_0 \le q_1 < q_2$ and suppose that $\tM \colon \R \to \C$ is a measurable function with the following properties.

\begin{enumerate}[label={\bf($\M$\arabic*)}]

\item
The function $\tM$ is analytic on the strips $S_{0,q_1}$ and $S_{q_1,q_2}$.

\item\label{beale simple zeros}
The function $\tM$ has finitely many zeros in $\R$, all of which are simple.  
Denote the collection of these zeros by $\Pfrak_{\M}$.

\item
There exist $C$, $z_0 > 0$ and $s \ge 0$ such that if $z \in S_{0,q_1} \cup S_{q_1,q_2}$ with $|z| \ge z_0$, then
\begin{equation}\label{beale est}
C|\re(z)|^s
\le |\tM(z)|.
\end{equation}
\end{enumerate}

Now let $\M$ be the Fourier multiplier with symbol $\tM$.
There exist $q_{\star}$, $q_{\star\star} > 0$ with $q_1 \le q_{\star} < q_{\star\star} \le q_2$ such that if $q \in [q_{\star},q_{\star\star}]$, then, for any $r \ge 0$, $\M$ is invertible from $H_q^{r+s}$ to the subspace
\[
\Dfrak_{\M,q}^r
:= \set{f \in H_q^r}{z \in \Pfrak_{\M} \Longrightarrow \hat{f}(z) = 0}
\]
and, for $f \in \Dfrak_{\M,q}^r$,
\[
\norm{\M^{-1}f}_{r+s,q}
\le \left(\sup_{k \in \R} \frac{(1+k^2)^{s/2}}{|\tM(k\pm{iq})|}\right)\norm{f}_{r,s}.
\]

There are two additional special cases.

\begin{enumerate}[label={\bf(\roman*)}, ref={(\roman*)}]

\item\label{beale special case}
If $q_1 = q_2$, then the result above is true for all $0 < q \le q_2$.

\item
If $\Pfrak_{\M} = \{0\}$ and 0 is a double zero of $\tM$, then the result above is still true if we replace $H_q^r$ and $H_q^{r+s}$ by $E_q^r$ and $E_q^{r+s}$ throughout.
\end{enumerate}
\end{lemma}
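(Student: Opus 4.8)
The plan is to transfer the whole problem to the Fourier side, where inverting $\M$ is nothing but dividing by $\tM$, and then to read off both the invertibility and the norm bound from complex analysis in a horizontal strip. The first ingredient is a Paley--Wiener characterization of the weighted spaces: $f \in H_q^r$ if and only if $\hat f$ extends to an analytic function on the open strip $\set{z \in \C}{|\im(z)| < q}$ whose boundary traces $\hat f(\cdot+iq)$ and $\hat f(\cdot-iq)$ lie in the weighted space $L_{(r)}^2 := \set{h}{(1+k^2)^{r/2}h(k) \in L^2(\R)}$, with $\norm{f}_{r,q}^2$ comparable to $\norm{\hat f(\cdot+iq)}_{L_{(r)}^2}^2 + \norm{\hat f(\cdot-iq)}_{L_{(r)}^2}^2$. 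This rests on the elementary fact that $\norm{\cosh^q(\cdot)f}_{H^r}$ is comparable to $\norm{e^{q(\cdot)}f}_{H^r} + \norm{e^{-q(\cdot)}f}_{H^r}$ (the ratios $e^{\pm q x}/\cosh^q(x)$ and their reciprocals on the appropriate half-lines are smooth with all derivatives bounded), together with the shift identity $\widehat{e^{\mp q(\cdot)}f}(k) = \hat f(k \mp iq)$ and Plancherel; this is carried out in Appendix~D of \cite{faver-dissertation} and I would simply quote it.

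Next I would locate the zeros of $\tM$. By the analyticity hypothesis $\tM$ is analytic and not identically zero on the closed strip $S_{0,q_2} = S_{0,q_1} \cup S_{q_1,q_2}$, while the growth bound \eqref{beale est} forces $|\tM|$ to be bounded below outside a compact subset of that strip; hence $\tM$ has only finitely many zeros in $S_{0,q_2}$, and by \ref{beale simple zeros} the real ones are precisely the simple zeros collected in $\Pfrak_{\M}$. Since these finitely many zeros occur at only finitely many values of $|\im(z)|$, I can choose $q_{\star}$, $q_{\star\star}$ with $q_1 \le q_{\star} < q_{\star\star} \le q_2$ such that $\tM$ has no zero $z$ with $0 < |\im(z)| \le q_{\star\star}$; in particular the closed band $\set{z}{q_{\star} \le |\im(z)| \le q_{\star\star}}$ is zero-free and the only zeros of $\tM$ in the closed strip $|\im(z)| \le q_{\star\star}$ are the points of $\Pfrak_{\M}$ on $\R$. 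Fix $q \in [q_{\star},q_{\star\star}]$ for the rest of the argument.

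Now I would invert $\M$ explicitly. Given $g \in \Dfrak_{\M,q}^r$, the transform $\hat g$ is analytic on $|\im(z)| < q$, vanishes at each point of $\Pfrak_{\M}$, and has traces in $L_{(r)}^2$; set $h := \hat g/\tM$. Because $\tM$ is zero-free on $|\im(z)| < q$ except for simple zeros at $\Pfrak_{\M}$, where $\hat g$ vanishes, the ratio $h$ is analytic on the whole open strip. On the two boundary lines, \eqref{beale est} gives $|\tM(k \pm iq)| \ge C|k|^s$ for $|k|$ large, and on the remaining compact set $\tM(\cdot \pm iq)$ is continuous and nonvanishing, so $c_q := \sup_{k \in \R} (1+k^2)^{s/2}/|\tM(k \pm iq)| < \infty$; writing $(1+k^2)^{(r+s)/2}|h(k \pm iq)| = (1+k^2)^{r/2}|\hat g(k \pm iq)| \cdot (1+k^2)^{s/2}/|\tM(k \pm iq)|$ shows the traces of $h$ lie in $L_{(r+s)}^2$ with norm at most $c_q\norm{g}_{r,q}$. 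By the Paley--Wiener characterization, $h = \hat f$ for a unique $f \in H_q^{r+s}$, and $\M f = g$ by construction, with $\norm{f}_{r+s,q} \le c_q\norm{g}_{r,q}$, which is the asserted bound. Injectivity is immediate, since $\M f = 0$ forces $\tM\hat f \equiv 0$ on the strip, hence $\hat f$ vanishes off a discrete set, hence $\hat f \equiv 0$.

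The two special cases are quick corollaries. If $q_1 = q_2$, the outer region $S_{q_1,q_2}$ collapses to the pair of lines $|\im(z)| = q_1$, there is no band to reserve, and the only zeros of $\tM$ in $|\im(z)| \le q_2$ are those on $\R$, so the construction works verbatim for every $0 < q \le q_2$. If $\Pfrak_{\M} = \{0\}$ with $0$ a double zero of $\tM$, then for $g \in E_q^r$ the transform $\hat g$ is even, so $\hat g(0) = 0$ already forces vanishing to order at least two at $0$; this matches the order of the (even) zero of $\tM$, so $h = \hat g/\tM$ is again analytic and even on the strip and the output $f$ lies in $E_q^{r+s}$ --- which is exactly why one works on the even subspaces there. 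I expect the only genuinely delicate point to be the zero count of the second step: guaranteeing a weight $q$ for which the open strip $\set{z}{0 < |\im(z)| < q}$ is entirely free of zeros of $\tM$, so that $\M$ really is onto $\Dfrak_{\M,q}^r$ and not merely into it. Both the analyticity on $S_{0,q_1} \cup S_{q_1,q_2}$ (making the zeros isolated) and the decay \eqref{beale est} (preventing them from accumulating at infinity) are used precisely here, and this is the reason the conclusion is stated for $q$ in an interval rather than for an arbitrary small weight; the first and third steps are then routine.
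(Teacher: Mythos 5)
The paper itself does not prove this lemma; it quotes Beale's Lemma 3 in \cite{beale1} (for the case $q_1 = q_2$) and takes the $q_1 < q_2$ extension as understood. Your route --- a Paley--Wiener identification of $H_q^r$ with analytic functions on the strip $|\im(z)| < q$ having weighted $L^2$ boundary traces, followed by division of $\hat{g}$ by the symbol --- is exactly the mechanism behind Beale's proof, and the norm bound drops out of the boundary estimate on $(1+k^2)^{s/2}/|\tM(k\pm iq)|$ as you say.

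The one place a genuine gap appears is the zero-count step, which you correctly identify as the delicate one. You claim to be able to choose $q_1 \le q_\star < q_{\star\star} \le q_2$ with no zeros of $\tM$ in $0 < |\im(z)| \le q_{\star\star}$, justified by finiteness of the zero set in the closed strip. Finiteness does yield a zero-free closed band $q_\star \le |\im(z)| \le q_{\star\star}$ inside $[q_1,q_2]$; it does not clear the region $0 < |\im(z)| < q_1$. Nothing in (M1)--(M3) as written forbids a nonreal zero $z_0$ there, and if one exists then $\hat{g}/\tM$ has a pole at $z_0$ for generic $g \in \Dfrak_{\M,q}^r$ and the construction fails --- indeed the conclusion is then false, since every $g = \M f$ with $f \in H_q^{r+s}$ must satisfy $\hat{g}(z_0) = 0$, a constraint absent from $\Dfrak_{\M,q}^r$. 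The intended reading of (M2), and what Beale actually assumes, is that $\Pfrak_\M$ is the \emph{entire} zero set of $\tM$ in the closed strip, not merely the real zeros; this is precisely what the paper checks in each application (parts \ref{B simple zeros} and \ref{B meromorphic} of Proposition \ref{symbol of B} for $\tB_c$, and the analogous discussion of $\tM_c$ in Appendix \ref{H-c invert general appendix}). Under that reading your choice of $q_\star$, $q_{\star\star}$ is available and the proof closes; as written, the inference from ``finitely many values of $|\im(z)|$'' to ``no zeros in $0 < |\im(z)| \le q_{\star\star}$'' does not follow, and you should make the zero-free assumption explicit.
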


This lemma was proved by Beale as Lemma 3 in \cite{beale1} in the particular case \ref{beale special case}.

\subsection{The Laplace transform}\label{laplace transform appendix}
Suppose $\fb \in L_{\loc}^1(\R,\C^m)$ with $e^{-a\cdot}\fb \in L^{\infty}(\R,\C^m)$.
We set
\[
\lt_+[\fb](z) 
:= \int_0^{\infty} \fb(s)e^{-sz} \ds, \ \re(z) > a
\]
and
\[
\lt_-[\fb](z) := \int_0^{\infty} \fb(-s)e^{sz} \ds, \ \re(z) < a.
\]
These Laplace transforms have the following useful properties.
First, $\lt_+[\fb]$ is analytic on $\re(z) > a$ and $\lt_-[\fb]$ is analytic on $\re(z) < a$.
In particular, if $e^{-a|\cdot|}\fb \in L^{\infty}(\R,\C^m)$ with $a > 0$, then $\lt_+[\fb]$ and $\lt_-[\gb]$ are defined and analytic on the common strip $|\re(z)| < a$.
Next, we have the inverse formulas
\begin{equation}\label{lt+inv}
\frac{\fb(x^+) + \fb(x^-)}{2} = \frac{1}{2\pi{i}}\int_{\re(z) = b} \lt_+[\fb](z)e^{xz} \dz, \ x > 0, \ b > a
\end{equation}
and
\begin{equation}\label{lt-inv}
\frac{\fb(x^+) + \fb(x^-)}{2} = \frac{1}{2\pi{i}}\int_{\re(z) = b} \lt_-[\fb](-z)e^{-xz} \dz, \ x < 0, \ b > -a,
\end{equation}
where 
\[
\fb(x^{\pm})
:= \lim_{t \to x^{\pm}} \fb(t).
\]
If $\fb$ is differentiable, then
\begin{equation}\label{lt-deriv id}
\lt_{\pm}[\fb'](z)
= \mp\fb(0) + z\lt_{\pm}[\fb](z).
\end{equation}
Last, the Laplace transform interacts with shift operators via the identities
\begin{equation}\label{lt shift id+}
\lt_+[S^d\fb](z) 
= e^{zd}\lt_+[\fb](z) -e^{zd}\int_0^d f(s)e^{-sz} \ds
\end{equation}
and
\begin{equation}\label{lt shift id-}
\lt_-[S^d\fb](z) 
= e^{zd}\lt_-[\fb](z) + e^{zd}\int_{-d}^0 \fb(-s)e^{sz} \ds.
\end{equation}
\section{Derivation of the Traveling Wave Problem \eqref{cov simplified}}\label{tw derivation}
Under the traveling wave ansatz \eqref{tw ansatz}, the original equations of motion \eqref{original equations of motion} for the diatomic FPUT lattice convert to the system
\begin{equation}\label{tw problem orig}
\begin{cases}
c^2p_1'' = -(1+w)(p_1+p_1^2) + (wS^1+S^{-1})(p_2+p_2)^2 \\
\\
c^2p_2'' = (wS^{-1}+S^1)(p_1+p_1^2) - (1+w)(p_2+p_2^2).
\end{cases}
\end{equation}
Here we define $w := 1/m$.
With $\pb = (p_1,p_2)$, \eqref{tw problem orig} compresses into
\begin{equation}\label{tw prob orig compr}
c^2\pb'' + L_w\pb
+
L_w\pb^{.2}
=0,
\qquad
L_w := \begin{bmatrix*}
(1+w) &-(wS^1+S^{-1}) \\
-(wS^{-1}+S^1) &(1+w)
\end{bmatrix*}.
\end{equation}
For $\fb = (f_1,f_2)$ and $\gb = (g_1,g_2)$, we use the notation 
\[
\fb^{.2} := \begin{pmatrix*} 
f_1^2 \\
f_2^2
\end{pmatrix*}
\quadword{and}
\fb.\gb := \begin{pmatrix*}
f_1g_1 \\
g_2g_2
\end{pmatrix*}.
\]
We note that \eqref{tw prob orig compr} is the same system that was derived in equation (2.4) in \cite{faver-wright}.

Set
\[
J 
:= \begin{bmatrix*}[r]
1 &1 \\
1 &-1
\end{bmatrix*},
\]
so $J^{-1} = J/2$.  
If we substitute $\pb = J\rhob$ with $\rhob = (\rho_1,\rho_2)$, then \eqref{tw prob orig compr} becomes
\[
c^2J\rhob'' + L_wJ\rhob + L_w(J\rhob)^{.2}
= 0.
\]
Multiplying through by $J^{-1}$, this is equivalent to
\begin{equation}\label{post cov}
c^2\rhob'' + \frac{1}{2}JL_wJ\rhob + \frac{1}{2}JL_w(J\rhob)^{.2} 
= 0.
\end{equation}
Setting $w=1+\mu$ for $|\mu| < 1$ and 
\[
\D_{\mu}
= \frac{1}{2}JL_{1+\mu}J
\quadword{and}
\nl(\rhob,\grave{\rhob})
:= (J\rhob).(J\grave{\rhob})
\]
we see that \eqref{post cov} is equivalent to \eqref{cov simplified}.
One easily checks that for functions $\rhob$, $\grave{\rhob}$, and $\breve{\rhob}$ and scalars $a$, we have
\begin{equation}\label{nl symm bil}
\nl(a\rhob,\grave{\rhob})
= a\nl(\rhob,\grave{\rhob}),
\qquad
\nl(\rhob,\grave{\rhob})
= \nl(\grave{\rhob},\rhob)
\quadword{and}
\nl(\rhob+\grave{\rhob},\breve{\rhob})
= \nl(\rhob,\breve{\rhob}) + \nl(\grave{\rhob},\breve{\rhob}),
\end{equation}
and so $\nl$ is indeed symmetric and bilinear.

Last, we discuss the even-odd symmetries of $\G_c$.
First, observe that if $f$ is even, then $Af$ is even and $\delta{f}$ is odd, while if $f$ is odd, then $Af$ is odd and $\delta{f}$ is even.
So, if $\rho_1$ is even and $\rho_2$ is odd and $\rhob = (\rho_1,\rho_2)$, then $(\D_{\mu}\rhob) \cdot \e_1$ is even and $(\D_{\mu}\rhob)\cdot\e_2$ is odd, where $\e_1 = (1,0)$ and $\e_2 = (0,1)$.
Likewise, if $\rho_1$ and $\grave{\rho}_1$ are even and $\rho_2$ and $\grave{\rho}_2$ are odd, then $\nl(\rhob,\grave{\rhob})\cdot\e_1$ is even and $\nl(\rhob,\grave{\rhob})\cdot\e_2$ is odd.
We conclude that $\G_c(\rhob,\mu)\cdot\e_1$ is even and $\G_c(\rhob,\mu)\cdot\e_2$ is odd when $\rho_1$ is even and $\rho_2$ is odd.

Next, if $f$ is integrable on $\R$ or $2P$-periodic and integrable on $[-P,P]$, then $(2-A)f$ and $\delta{f}$ are ``mean-zero'' in the sense that
\[
\int_{-\infty}^{\infty} \big((2-A)f\big)(x) \dx
= \int_{-P}^P \big((2-A)f\big)(x) \dx
= \int_{-\infty}^{\infty} (\delta{f})(x) \dx
= \int_{-P}^P (\delta{f})(x) \dx
= 0.
\]
Thanks to the structure of $\D_{\mu}$, all terms in $\G_c(\rhob,\mu) \cdot \e_1$ contain either a factor of $2-A$ or $\delta$, and so $\G_c(\rhob,\mu)\cdot\e_1$ is always mean-zero.
We conclude that the symmetries in \eqref{all the pretty symmetries} hold.
\section{Existence of Periodic Solutions}\label{periodic solutions appendix}

\subsection{Linear analysis}

We begin with two propositions that study the linearization $\Gamma_c^{\mu}[\omega]$ defined in \eqref{Gamma-c-mu defn}.
The first of these contains all the technical details needed to prove Proposition \ref{critical frequency props prop}.

\begin{lemma}\label{critical frequency technical lemma}
For each $|c| > 1$, there is $\Mu(c) > 0$ with the following properties.

\begin{enumerate}[label={\bf(\roman*)},ref={(\roman*)}]

\item\label{basic lambda props}
The functions $\lambda_{\mu}^{\pm}$ defined in \eqref{lambda eigs defn} are even, real-valued, and bounded on $\R$ with
\begin{equation}\label{lambda-mu1 est}
0 
\le \lambda_{\mu}^-(K) 
\le \begin{cases}
2(1+\mu), &\mu \in (-1,0] \\
2, &\mu \in [0,1)
\end{cases}
\end{equation}
and 
\begin{equation}\label{lambda-mu2 est}
\begin{cases}
2, &\mu \in (-1,0] \\
2(1+\mu), &\mu \in [0,1)
\end{cases}
\le \lambda_{\mu}^+(K)
\le 2+2(1+\mu)
\end{equation}
for all $K \in \R$.
For $\mu \ne 0$, $\lambda_{\mu}^{\pm}$ are differentiable on $\R$, while $\lambda_0^{\pm}$ are continuous on $\R$ and differentiable except at the points $K = (2j+1)\pi/2$, $j \in \Z$.

\item\label{Lambda str incr}
For $|\mu| < \Mu(c)$, the functions 
\[
K \mapsto \Lambda_c^{\pm}(K,\mu) 
:= c^2K^2-\lambda_{\mu}^{\pm}(K)
\]
are strictly increasing on $(0,\infty)$.

\item\label{Lambda- soln}
$\Lambda_c^-(K,\mu) = 0$ if and only if $K = 0$.

\item\label{existence of omega-c-mu!}
For all $|\mu| \le \Mu(c)$, there is a unique $\omega_c^{\mu} > 0$ such that $\Lambda_c^+(\omega_c^{\mu},\mu) = 0$.
Moreover, there are numbers $0 < A_c < B_c < \pi/2$ such that $A_c < \omega_c^{\mu} < B_c$ for each such $\mu$.
This root $\omega_c^{\mu}$ also satisfies the cruder bounds
\begin{equation}\label{omega-ep-mu cruder}
\sqrt{\frac{2}{c^2(1+\mu)}} 
\le \omega_c^{\mu} 
\le \sqrt{\frac{2(2+\mu)}{c^2(1+\mu)}}.
\end{equation}

\item
$\omega_c^{\mu} - \omega_c = \O_c(\mu)$.
\end{enumerate}
\end{lemma}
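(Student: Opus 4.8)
The statement asserts that the positive root $\omega_c^{\mu}$ of $\Lambda_c^+(\cdot,\mu)$ depends Lipschitz-continuously on $\mu$ near $\mu=0$, with Lipschitz constant depending only on $c$. The plan is to view $\omega_c^{\mu}$ as the root of the implicit scalar equation $\Psi(K,\mu):=\Lambda_c^+(K,\mu)=c^2K^2-\lambda_\mu^+(K)=0$ and to estimate its $\mu$-dependence by the mean value theorem. First I identify $\omega_c=\omega_c^0$: for $K\in(0,\pi/2)$ one has $\lambda_0^+(K)=2+2\cos(K)$ from \eqref{lambda eigs defn}, so $\Psi(K,0)=c^2K^2-2-2\cos(K)=-\tB_c(K)$, and by part \ref{existence of omega-c-mu!} (with $\mu=0$) the unique positive root $\omega_c^0$ lies in $(A_c,B_c)\subset(0,\pi/2)$, where it therefore coincides with the zero $\omega_c$ of $\tB_c$ from Section \ref{injectivity section}. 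More importantly, part \ref{existence of omega-c-mu!} confines \emph{both} $\omega_c$ and $\omega_c^{\mu}$ to the fixed interval $(A_c,B_c)$ with $B_c<\pi/2$, for all $|\mu|\le\Mu(c)$.

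The one genuine obstacle is that $\lambda_0^\pm$, and hence $\Psi(\cdot,0)$, fails to be differentiable at the odd multiples of $\pi/2$, so the argument needs to know a priori that we stay uniformly away from $\pi/2$; this is exactly what the bound $\omega_c^{\mu}<B_c<\pi/2$ from part \ref{existence of omega-c-mu!} supplies. Fix $\mu_1\in(0,\min\{\Mu(c),1\})$ and set $Q:=[A_c,B_c]\times[-\mu_1,\mu_1]$. On $Q$ we have $\cos(K)\ge\cos(B_c)>0$ and $1+\mu\ge 1-\mu_1>0$, so the radicand in \eqref{lambda eigs defn} satisfies $\mu^2+4(1+\mu)\cos^2(K)\ge 4(1-\mu_1)\cos^2(B_c)>0$; consequently $\lambda_\mu^+$, and therefore $\Psi$, is jointly real-analytic on a neighborhood of $Q$. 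Differentiating \eqref{lambda eigs defn} gives $\partial_K\lambda_\mu^+(K)=-2(1+\mu)\sin(2K)\big/\sqrt{\mu^2+4(1+\mu)\cos^2(K)}\le 0$ on $(0,\pi/2)$, since $\sin(2K)\ge 0$ there; hence $\partial_K\Psi(K,\mu)=2c^2K-\partial_K\lambda_\mu^+(K)\ge 2c^2A_c>0$ on $Q$ (consistent with the strict monotonicity in part \ref{Lambda str incr}; equivalently, at $(\omega_c,0)$ this quantity is $-(\tB_c)'(\omega_c)\ne 0$ by Proposition \ref{symbol of B}\ref{B simple zeros}). Finally $\partial_\mu\Psi$ is continuous on the compact set $Q$, so $|\partial_\mu\Psi|\le M(c)$ there for some $M(c)>0$.

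With these bounds the estimate is immediate, and the proof reduces to two applications of the mean value theorem. For $|\mu|\le\mu_1$ the points $(\omega_c^{\mu},\mu)$ and $(\omega_c,0)$ both lie in $Q$, and $\Psi$ vanishes at each, so there are $\xi$ between $\omega_c^{\mu}$ and $\omega_c$, and $\zeta$ between $0$ and $\mu$, with
\[
0=\Psi(\omega_c^{\mu},\mu)-\Psi(\omega_c,0)=\partial_K\Psi(\xi,\mu)\,(\omega_c^{\mu}-\omega_c)+\partial_\mu\Psi(\omega_c,\zeta)\,\mu.
\]
Solving for $\omega_c^{\mu}-\omega_c$ and using the bounds above yields
\[
|\omega_c^{\mu}-\omega_c|\;\le\;\frac{|\partial_\mu\Psi(\omega_c,\zeta)|}{\partial_K\Psi(\xi,\mu)}\,|\mu|\;\le\;\frac{M(c)}{2c^2A_c}\,|\mu|
\]
for all $|\mu|\le\mu_1$, which is precisely the assertion $\omega_c^{\mu}-\omega_c=\O_c(\mu)$ (with threshold $\mu_c:=\mu_1$). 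As flagged, the only delicate point is the regularity of $\lambda_\mu^+$ near $K=\pi/2$, and it is dispatched entirely by the uniform confinement of the critical frequencies established in part \ref{existence of omega-c-mu!}; alternatively one could invoke the implicit function theorem at $(\omega_c,0)$, where $\partial_K\Psi\ne 0$, to obtain a $C^1$ solution branch coinciding with $\omega_c^{\mu}$ by uniqueness, but the direct argument gives the explicit $c$-dependent constant.
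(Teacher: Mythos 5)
Your proposal addresses only the final item of the lemma, the estimate $\omega_c^{\mu}-\omega_c=\O_c(\mu)$, and treats parts \ref{basic lambda props} through \ref{existence of omega-c-mu!} as already established. Since those four items make up the bulk of the lemma, and since your argument for the fifth leans on part \ref{existence of omega-c-mu!} to confine both critical frequencies to the fixed interval $(A_c,B_c)\subset(0,\pi/2)$, the proposal as written leaves most of the lemma unproven.

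For the part you do prove, the argument is correct and takes a genuinely different route from the paper's. You treat $\Psi(K,\mu):=\Lambda_c^+(K,\mu)$ as a smooth function on a compact rectangle $Q=[A_c,B_c]\times[-\mu_1,\mu_1]$ (the confinement $B_c<\pi/2$ keeps you clear of the nondifferentiability of $\lambda_0^{\pm}$ at $K=\pi/2$), lower-bound $\partial_K\Psi\ge 2c^2A_c>0$ on $Q$ using $\partial_K\lambda_\mu^+\le 0$ on $(0,\pi/2)$, upper-bound $|\partial_\mu\Psi|$ on $Q$ by compactness, and conclude via two applications of the mean value theorem (once in $K$ at fixed $\mu$, once in $\mu$ at $K=\omega_c$). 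The paper instead subtracts the two root relations $c^2\omega_c^2-2-2\cos(\omega_c)=0$ and $c^2(\omega_c^{\mu})^2-\lambda_\mu^+(\omega_c^{\mu})=0$, Taylor-expands the radical in $\lambda_\mu^+$, and factors out $(\omega_c-\omega_c^{\mu})$; the resulting coefficient $c^2(\omega_c+\omega_c^{\mu})-2\big(\cos(\omega_c)-\cos(\omega_c^{\mu})\big)/(\omega_c-\omega_c^{\mu})$ is then bounded away from zero using the Lipschitz constant of cosine together with the lower bounds $\omega_c,\omega_c^{\mu}\ge\sqrt{2}/|c|$. Both arguments hinge on the same a priori confinement to a fixed compact subinterval of $(0,\pi/2)$. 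Yours is more modular --- essentially an implicit-function-theorem argument --- and hands you an explicit Lipschitz constant $M(c)/(2c^2A_c)$; the paper's direct manipulation avoids any discussion of joint smoothness of $\Psi$ in two variables and makes the source of the $\O_c(\mu)$ slightly more visible, namely the linear appearance of $\mu$ in the subtracted equations and in the expansion of the radical.
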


\begin{proof}

\begin{enumerate}[label={\bf(\roman*)}]

\item
We recall that the eigenvalue curves $\lambda_{\mu}^{\pm}$ were defined in \eqref{lambda eigs defn}.
Restricting $|\mu| \le 1$, we have
\begin{equation}\label{est on sqr root}
|\mu|
\le \sqrt{\mu^2+4(1+\mu)\cos^2(K)}
\le \sqrt{\mu^2+4(1+\mu)}
= \sqrt{(2+\mu)^2}
= 2+\mu.
\end{equation}
Then
\[
0 \le \lambda_{\mu}^-(K) \le 2+\mu-|\mu| = \begin{cases}
2(1+\mu), &\mu \in (-1,0) \\
2, &\mu \in [0,1).
\end{cases}
\]
This proves \eqref{lambda-mu1 est}.

Next, we use \eqref{est on sqr root} to bound
\[
2 + \mu + |\mu| \le \lambda_{\mu}^+(K) \le 2(2+\mu),
\]
where
\[
2+\mu+|\mu| = \begin{cases}
2, &\mu \in (-1,0) \\
2(1+\mu), &\mu \in [0,1).
\end{cases}
\]
This proves \eqref{lambda-mu2 est}.

\item
We prove this separately for the cases $\mu \in (-1,0)$, $\mu \in (0,1)$, and $\mu = 0$.  First, when $\mu \ne 0$, the derivatives $(\lambda_{\mu}^{\pm})'$ satisfy
\begin{equation}\label{basic lambda deriv id}
|(\lambda_{\mu}^{\pm})'(K)|
= 4(1+\mu)\left|\frac{\sin(K)\cos(K)}{\sqrt{\mu^2+4(1+\mu)\cos^2(K)}}\right|.
\end{equation}

\begin{enumerate}[label=,labelsep=0pt]

\item
{\it{The case $\mu \in (-1,0]$.}}
We rewrite \eqref{basic lambda deriv id} as
\[
|(\lambda_{\mu}^{\pm})'(K)| 
= 2\left(\frac{2(1+\mu)}{2+\mu}\right)|\sin(K)|\sqrt{\frac{1-\sin^2(K)}{1-\dfrac{4(1+\mu)}{(2+\mu)^2}\sin^2(K)}}.
\]
We can check that 
\[
0 < \frac{2(1+\mu)}{2+\mu} \le 1
\quadword{and}
0 < \frac{4(1+\mu)}{(2+\mu)^2} \le 1
\]
for $\mu \in (-1,0]$ and we also have the estimate
\[
\sup_{0 \le s \le 1} \frac{1-s}{1-rs} \le 1
\]
when $0 < r < 1$.
Combining these estimates, we find
\begin{equation}\label{lambda deriv est for neg mu}
|(\lambda_{\mu}^{\pm})'(K)| \le 2|\sin(K)| \le 2|K|.
\end{equation}
Taking $K > 0$, we have
\[
\partial_K[\Lambda_c^{\pm}(K,\mu)]
= 2c^2K-(\lambda_{\mu}^{\pm})'(K)
\ge 2c^2K - 2K
= 2(c^2-1)K
> 0.
\]

\item
{\it{The case $\mu \in (0,1)$.}}
We claim
\begin{equation}\label{aux claim}
\sup_{0 \le s \le 1} \frac{s}{\sqrt{\mu^2+4(1+\mu)s^2}} \le \frac{1}{2}
\end{equation}
when $\mu \in (0,1)$.
This inequality is equivalent to 
\[
0 \le \mu^2 + 4\mu{s}^2
\]
for all $s \in [0,1]$, and this clearly holds for $\mu \in (0,1)$.
Hence \eqref{aux claim} is true, and we use that estimate on \eqref{basic lambda deriv id} to find
\begin{equation}\label{lambda deriv est for pos mu}
|(\lambda_{\mu}^{\pm})'(K)| 
= 4(1+\mu)|\sin(K)|\frac{|\cos(K)|}{\sqrt{\mu^2+4(1+\mu)\cos^2(K)}}
\le 4(1+\mu)|K|\frac{1}{2}
= 2(1+\mu)|K|.
\end{equation}
Now we estimate
\[
\partial_K[\Lambda_c^{\pm}(K,\mu)]
= 2c^2K-(\lambda_{\mu}^{\pm})'(K)
> 2c^2K-2(1+\mu)K
= 2(c^2-(1+\mu))K
> 0.
\]
This last inequality holds if $c^2-(1+\mu) > 0$, and we can ensure this by taking $|\mu| < (c^2-1)/4$.
So, we take our threshold for $\mu$ to be
\begin{equation}\label{Mu-1-c defn}
\Mu(c) 
:= \min\left\{1,\frac{c^2-1}{4}\right\}
\end{equation}
and assume in the following that $|\mu| \le \Mu(c)$.
Note that $\Mu(c) \to 0^+$ as $|c| \to 1^+$, and so the $\mu$-interval over which we work necessarily shrinks as $|c| \to 1^+$.

\item
{\it{The case $\mu=0$.}}
We have
\[
\Lambda_c^{\pm}(K,0)
= c^2K^2 - 2 \mp 2|\cos(K)|,
\]
and so the maps $\Lambda_c^{\pm}(\cdot,0)$ are differentiable on all intervals of the form
\begin{equation}\label{special Lambdac interval}
\left(\frac{(2j+1)\pi}{2}, \frac{(2j+3)\pi}{2}\right),
\end{equation}
with derivative equal to
\[
2c^2K\pm2\sin(K),
\]
with the $\pm$ sign depending on the particular interval.
Since
\[
2c^2K\pm2\sin(K) > 0
\]
for all $K > 0$, and so we conclude that the maps $\Lambda_c^{\pm}(\cdot,0)$ are strictly increasing on all intervals of the form \eqref{special Lambdac interval}.
Since the maps $\Lambda_c^{\pm}(\cdot,0)$ are continuous, it follows that they are strictly increasing on all of $(0,\infty)$ and therefore have at most one real root.
\end{enumerate}

\item
This follows from part \ref{Lambda str incr}, the calculation $\Lambda_c^-(0,\mu) = 0$, and the evenness of $\Lambda_c^-(\cdot,\mu)$.

\item
Using the bounds \eqref{lambda-mu2 est} on $\lambda_{\mu}^+$, it is straightforward to show that $\Lambda_c^+(K,\mu) < 0$ for $K < \sqrt{2/c^2(1+\mu)}$ and $\Lambda_c^+(K,\mu) > 0$ for $K > \sqrt{2(2+\mu)/c^2(1+\mu)}$.
Consequently, there exists $\omega_c^{\mu} \in [\sqrt{2/c^2(1+\mu)},\sqrt{2(2+\mu)/c^2(1+\mu)}]$ such that $\Lambda_{\mu}^+(\omega_c^{\mu},\mu) = 0$.
This $\omega_c^{\mu}$ is necessarily unique because $\Lambda_c^+(\cdot,\mu)$ is strictly increasing on $(0,\infty)$.

We now want sharper, uniform bounds of the form $0 < A_c < \omega_c^{\mu} < B_c < \pi/2$, which the estimates on $\omega_c^{\mu}$ in the preceding paragraph do not necessarily give.
Let
\begin{equation}\label{A-c defn}
A_c 
:= \frac{\sqrt{2}}{|c|}.
\end{equation}
Since $|c| > 1$, we have $A_c < \sqrt{2} < \pi/2$.
The bound \eqref{lambda-mu2 est} implies $-\lambda_{\mu}^+(K) \le -2$ for all $K$, and so if $0 < K \le A_c$, then
\[
\Lambda_c^+(K,\mu) < 2-\lambda_{\mu}^+(K) \le 0.
\]
In particular, $\Lambda_c^+(A_c,\mu) < 0$.

We use \eqref{lambda-mu2 est} again to estimate
\begin{equation}\label{tricky biz-ness}
\Lambda_c^+\left(\frac{\pi}{2},\mu\right)
= \frac{c^2\pi^2}{4}-(2+\mu+|\mu|)
\ge 2(c^2-(1+|\mu|))
\ge 2(c^2-(1+\Mu(c))
> 0
\end{equation}
by the definition of $\Mu(c)$ in \eqref{Mu-1-c defn}.

All together, we have the chain of inequalities
\[
\Lambda_c^+(A_c,\mu) 
\le 0
< c^2-(1+\Mu(c))
< \Lambda_c^+\left(\frac{\pi}{2},\mu\right),
\]
and so the intermediate value theorem produces $B_c^{\mu} \in (A_c,\pi/2)$ such that 
\begin{equation}\label{defining prop of B-c-mu}
\Lambda_c^+(B_c^{\mu},\mu) = c^2-(1+\Mu(c)) > 0.
\end{equation}
A second application of the intermediate value theorem then yields $\omega_c^{\mu} \in (A_c,B_c^{\mu})$ such that $\Lambda_c^+(\omega_c^{\mu},\mu) = 0$.
This is, of course, the same $\omega_c^{\mu}$ that we initially found using the cruder bounds above.

Now let
\[
B_c 
:= \sup_{|\mu| \le \Mu(c)} B_c^{\mu}.
\]
We want to show $A_c < B_c < \pi/2$.
The first inequality is obvious since $A_c < B_c^{\mu}$. 
If $B_c = \pi/2$, then we may take a sequence $(\mu_n(c))$ in $[-\Mu(c),\Mu(c)]$ such that 
\[
\lim_{n \to \infty} B_c^{\mu_n(c)} = \frac{\pi}{2}
\quadword{and} 
\lim_{n \to \infty} \mu_n(c) = \overline{\mu}(c)
\]
for some $\overline{\mu}(c) \in [-\Mu(c),\Mu(c)]$.
In that case, \eqref{defining prop of B-c-mu} and the continuity of $\Lambda_c^+$ on $\R \times [-\Mu(c),\Mu(c)]$ imply
\[
 c^2-(1+\Mu(c))
= \lim_{n \to \infty} \Lambda_c^+(B_c^{\mu_n(c)},\mu_n(c))
= \Lambda_c^+\left(\frac{\pi}{2},\overline{\mu}(c)\right)
\ge  2(c^2-(1+\Mu(c)))
\]
by \eqref{tricky biz-ness}.
This is, of course, a contradiction.

\item
Now we show that $\omega_c^{\mu}-\omega_c = \O_c(\mu)$.
If $\omega_c = \omega_c^{\mu}$, then there is nothing to prove, so assume $\omega_c\ne\omega_c^{\mu}$.
Recall that $\omega_c$ satisfies
\[
c^2\omega_c^2-(2+2\cos(\omega_c)) 
= 0
\]
and $\omega_c^{\mu}$ satisfies
\[
c^2(\omega_c^{\mu})^2-\lambda_{\mu}^+(\omega_c^{\mu})
= 0.
\]
Subtracting these two equalities and using the definition of $\lambda_{\mu}^+$ in \eqref{lambda eigs defn}, we obtain
\[
c^2(\omega_c-\omega_c^{\mu}) + \mu - 2\cos(\omega_c)+\sqrt{\mu^2+4(1+\mu)\cos^2(\omega_c^{\mu})}
= 0.
\]
Taylor-expanding the square root and using the uniform bound $0 < A_c < \omega_c^{\mu} < B_c$ from part \ref{existence of omega-c-mu!}, this rearranges to
\begin{equation}\label{omega-c diff of squares}
(\omega_c-\omega_c^{\mu})\left(c^2(\omega_c+\omega_c^{\mu}) -2\left(\frac{\cos(\omega_c)-\cos(\omega_c^{\mu})}{\omega_c-\omega_c^{\mu}}\right)\right)
= \O_c(\mu).
\end{equation}
Since the cosine has Lipschitz constant equal to 1, we have
\[
\sup_{|\mu| \le \Mu(c)} 2\left|\frac{\cos(\omega_c)-\cos(\omega_c^{\mu})}{\omega_c-\omega_c^{\mu}}\right|
\le 2.
\]
From part \ref{existence of omega-c-mu!}, specifically \eqref{A-c defn}, we have $\sqrt{2}/|c| \le \omega_c^{\mu}$.
Likewise, since $\tB_c(\omega_c) = 0$ with $\tB_c$ defined in \eqref{tB-c defn}, one can extract the inequaliy $\sqrt{2}/|c| < \omega_c$.
Specifically, the proof is in Appendix \ref{proof of B props appendix} in the context of the proof of part \ref{B simple zeros} of Proposition \ref{symbol of B}.
Thus
\[
c^2(\omega_c+\omega_c^{\mu})
\ge 2\sqrt{2}|c|
> 2
\]
whenever $|c| > 1$, and so we have
\[
\inf_{|\mu| \le \Mu(c)} \left|c^2(\omega_c+\omega_c^{\mu}) -2\left(\frac{\cos(\omega_c)-\cos(\omega_c^{\mu})}{\omega_c-\omega_c^{\mu}}\right)\right|
> 0.
\]
We conclude from \eqref{omega-c diff of squares} that $\omega_c-\omega_c^{\mu} = \O_c(\mu)$.
\qedhere
\end{enumerate}
\end{proof}

The next lemma contains the remaining details that we need for the quantitative bifurcation argument that we will carry out in the following sections.
To phrase this lemma, we need the definitions of the periodic Sobolev spaces $H_{\per}^r$, $E_{\per}^r$, and $O_{\per}^r$ from the start of Section \ref{periodic solutions section}.
In this appendix only, we abbreviate
\[
\norm{\fb}_r
:= \norm{\fb}_{H_{\per}^r \times H_{\per}^r}.
\]
We also need to recall the definition of the operator $\Gamma_c^{\mu}[\gamma_c^{\mu}]$ from \eqref{Gamma-c-mu defn}.

\begin{lemma}\label{lambda props}
For each $|c| > 1$, there is $\mu_{\per}(c) \in (0,\Mu(c)]$ such that the following hold.

\begin{enumerate}[label={\bf(\roman*)},ref={(\roman*)}]

\item\label{upsilon-c-mu part}
There exists $\upsilon_c^{\mu} \in \R$ such that if 
\begin{equation}\label{nub defn}
\nub_c^{\mu}
:= \begin{pmatrix*}
\upsilon_c^{\mu}\cos(\cdot) \\
\sin(\cdot)
\end{pmatrix*},
\end{equation}
then the kernel of $\Gamma_c^{\mu}[\omega_c^{\mu}]$ in $E_{\per,0}^r \times O_{\per}^r$ is spanned by $\nub_c^{\mu}$ for all $r \ge 2$.
Moreover, there is a constant $C_{\upsilon}(c) > 0$ such that $|\upsilon_c^{\mu}| \le C_{\upsilon}(c)\mu$.

\item\label{range=kernel perp}
Given $\fb \in E_{\per,0}^{r+2} \times O_{\per}^{r+2}$ and $\gb \in E_{\per,0}^r \times O_{\per}^r$, we have $\Gamma_c^{\mu}[\omega_c^{\mu}]\fb = \gb$ if and only if $\ip{\gb}{\nub_c^{\mu}}_0 = 0$.

\item\label{sort of inverse bound}
For each $r \ge 0$, there is a constant $C(c,r) > 0$ such that if $\fb \in E_{\per,0}^{r+2} \times \O_{\per}^{r+2}$ and $\gb \in E_{\per,0}^r \times O_{\per}^r$ with 
\[
\Gamma_c^{\mu}[\omega_c^{\mu}]\fb = \gb
\quadword{and}
\ip{\fb}{\nub_c^{\mu}}_0 = \ip{\gb}{\nub_c^{\mu}}_0 = 0,
\]
then $\norm{\fb}_{r+2} \le C(c,r)\norm{\gb}_r$.
In particular, given $\gb \in E_{\per,0}^r \times O_{\per}^r$ satisfying $\ip{\gb}{\nub_c^{\mu}}_0 = 0$, there is a unique $\fb \in E_{\per,0}^{r+2}\times O_{\per}^{r+2}$ with $\Gamma_c^{\mu}[\omega_c^{\mu}]\fb = \gb$ and $\ip{\fb}{\nub_c^{\mu}}_0 = 0$, and, for this $\fb$, we write $\fb = \Gamma_c^{\mu}[\omega_c^{\mu}]^{-1}\gb$.

\item\label{ip bound}
There is a constant $C(c) > 0$ such that $|\ip{\partial_{\omega}\Gamma_c^{\mu}[\omega_c^{\mu}]\nub_c^{\mu}}{\nub_c^{\mu}}_0| \ge C(c)$.
\end{enumerate}
\end{lemma}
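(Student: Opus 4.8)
The plan is to realize $\Gamma_c^{\mu}[\omega_c^{\mu}]$ as a Fourier multiplier between the periodic Sobolev spaces, so that all four parts reduce to linear algebra for its $k$th symbol matrix
\[
M_k := -c^2(\omega_c^{\mu})^2k^2\ind + \tD_{\mu}[\omega_c^{\mu}k],
\]
with $\tD_{\mu}$ the Hermitian matrix from \eqref{tDmu defn}, combined with the spectral facts of Lemma \ref{critical frequency technical lemma}. For part \ref{upsilon-c-mu part}, a function $\phib$ lies in $\ker\Gamma_c^{\mu}[\omega_c^{\mu}]$ on $E_{\per,0}^r\times O_{\per}^r$ precisely when $\hat{\phib}(k)\in\ker M_k$ for every $k\in\Z$, i.e.\ when $c^2(\omega_c^{\mu}k)^2$ is an eigenvalue $\lambda_{\mu}^{\pm}(\omega_c^{\mu}k)$ of $\tD_{\mu}[\omega_c^{\mu}k]$, equivalently $\Lambda_c^{\pm}(\omega_c^{\mu}k,\mu)=0$. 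Parts \ref{Lambda str incr}--\ref{existence of omega-c-mu!} of Lemma \ref{critical frequency technical lemma} force $k=\pm1$ (the $k=0$ mode is killed by the mean-zero and oddness constraints), and since the discriminant $\mu^2+4(1+\mu)\cos^2\omega_c^{\mu}$ is strictly positive for $\omega_c^{\mu}\in(0,\pi/2)$, the eigenvalue $\lambda_{\mu}^+(\omega_c^{\mu})$ is simple; hence each $\ker M_{\pm1}$ is one-dimensional and so is $\ker\Gamma_c^{\mu}[\omega_c^{\mu}]$. Normalizing the $\lambda_{\mu}^+(\omega_c^{\mu})$-eigenvector of $\tD_{\mu}[\omega_c^{\mu}]$ so that its second coordinate is $-i/2$ gives first coordinate $\tfrac12\upsilon_c^{\mu}$ with
\[
\upsilon_c^{\mu}=\frac{\mu\sin\omega_c^{\mu}}{\sqrt{\mu^2+4(1+\mu)\cos^2\omega_c^{\mu}}+(2+\mu)\cos\omega_c^{\mu}}\in\R,
\]
and reassembling the $k=\pm1$ modes subject to the even/odd symmetry yields exactly $\nub_c^{\mu}=(\upsilon_c^{\mu}\cos(\cdot),\sin(\cdot))$. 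The denominator above is $\ge(2-\Mu(c))\cos B_c>0$ for $|\mu|\le\Mu(c)$ by part \ref{existence of omega-c-mu!} of Lemma \ref{critical frequency technical lemma}, and the numerator is $\O_c(\mu)$, so $|\upsilon_c^{\mu}|\le C_{\upsilon}(c)|\mu|$.

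For parts \ref{range=kernel perp} and \ref{sort of inverse bound}, note that each $M_k$ is Hermitian; for $|k|\ge2$ the term $-c^2(\omega_c^{\mu})^2k^2\ind$ dominates the uniformly bounded matrix $\tD_{\mu}[\omega_c^{\mu}k]$ (using $c^2(\omega_c^{\mu})^2\ge c^2A_c^2=2$, where $A_c=\sqrt2/|c|\le\omega_c^{\mu}$), so $M_k$ is invertible with $\|M_k^{-1}\|\le C(1+k^2)^{-1}$ uniformly in $|\mu|\le\mu_{\per}(c)$; and at $k=\pm1$, $\ker M_{\pm1}$ is spanned by $\hat{\nub_c^{\mu}}(\pm1)$ while the remaining eigenvalue $\lambda_{\mu}^-(\omega_c^{\mu})-\lambda_{\mu}^+(\omega_c^{\mu})=-2\sqrt{\mu^2+4(1+\mu)\cos^2\omega_c^{\mu}}$ is bounded away from $0$ uniformly because $\omega_c^{\mu}<B_c<\pi/2$. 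Thus $\Gamma_c^{\mu}[\omega_c^{\mu}]$ is Fredholm of index $0$ from $E_{\per,0}^{r+2}\times O_{\per}^{r+2}$ to $E_{\per,0}^r\times O_{\per}^r$, and $\gb$ is in its range iff each $\hat{\gb}(\pm1)$ is orthogonal in $\C^2$ to $\ker M_{\pm1}$; the parity and reality of $\gb$ make this single real condition exactly $\ip{\gb}{\nub_c^{\mu}}_0=0$, which is part \ref{range=kernel perp}. (Alternatively one may use the adjoint formula of Lemma \ref{periodic fm adjoint}, whose weight factor $(1+k^2)$ is constant on the support $\{\pm1\}$ of $\nub_c^{\mu}$ and so collapses the weighted orthogonality to the $H_{\per}^0$ one.) For part \ref{sort of inverse bound}, writing $\hat{\fb}(k)=M_k^{-1}\hat{\gb}(k)$ for $|k|\ge2$ and, at $k=\pm1$, the preimage of $\hat{\gb}(\pm1)$ fixed by $\ip{\fb}{\nub_c^{\mu}}_0=0$, the bounds above give
\[
\|\fb\|_{r+2}^2=\sum_{k=-\infty}^{\infty}(1+k^2)^{r+2}\|\hat{\fb}(k)\|^2\le C(c,r)\sum_{k=-\infty}^{\infty}(1+k^2)^{r}\|\hat{\gb}(k)\|^2=C(c,r)\|\gb\|_r^2 .
\]

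For part \ref{ip bound}, by Lemma \ref{calculus on Fourier multipliers lemma} the Fourier symbol of $\partial_{\omega}\Gamma_c^{\mu}[\omega_c^{\mu}]$ is $-2c^2\omega_c^{\mu}k^2\ind+k(\partial_K\tD_{\mu})[\omega_c^{\mu}k]$, which, since $\nub_c^{\mu}$ is supported at $k=\pm1$, need only be applied to $\hat{\nub_c^{\mu}}(\pm1)$. The identity term contributes $-2c^2\omega_c^{\mu}\|\hat{\nub_c^{\mu}}(1)\|^2$ per mode, while, $\lambda_{\mu}^+(\omega_c^{\mu})$ being simple, the Hellmann--Feynman identity gives $\big((\partial_K\tD_{\mu})[\omega_c^{\mu}]\hat{\nub_c^{\mu}}(1)\big)\cdot\overline{\hat{\nub_c^{\mu}}(1)}=(\lambda_{\mu}^+)'(\omega_c^{\mu})\|\hat{\nub_c^{\mu}}(1)\|^2$; summing the two (real, equal) contributions,
\[
\ip{\partial_{\omega}\Gamma_c^{\mu}[\omega_c^{\mu}]\nub_c^{\mu}}{\nub_c^{\mu}}_0=-2\big(2c^2\omega_c^{\mu}-(\lambda_{\mu}^+)'(\omega_c^{\mu})\big)\|\hat{\nub_c^{\mu}}(1)\|^2=-2\,\partial_K\Lambda_c^+(\omega_c^{\mu},\mu)\,\|\hat{\nub_c^{\mu}}(1)\|^2 .
\]
The estimate $\partial_K\Lambda_c^+(\omega_c^{\mu},\mu)\ge 2\big(c^2-(1+\Mu(c))\big)\omega_c^{\mu}\ge 2\big(c^2-(1+\Mu(c))\big)A_c>0$ from the proof of part \ref{Lambda str incr} of Lemma \ref{critical frequency technical lemma}, together with $\|\hat{\nub_c^{\mu}}(1)\|^2=\tfrac14(1+(\upsilon_c^{\mu})^2)\ge\tfrac14$, yields $|\ip{\partial_{\omega}\Gamma_c^{\mu}[\omega_c^{\mu}]\nub_c^{\mu}}{\nub_c^{\mu}}_0|\ge C(c)>0$.

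Once Lemma \ref{critical frequency technical lemma} is available the whole argument is an explicit Fourier-multiplier computation with no deep obstacle. The step I expect to be most delicate is the symmetry bookkeeping --- passing correctly between the $\C^2$-valued conditions on $\hat{\phib}(\pm1)$ and $\hat{\gb}(\pm1)$ and statements about real even/odd periodic functions, so as to land precisely on $\nub_c^{\mu}=(\upsilon_c^{\mu}\cos(\cdot),\sin(\cdot))$ in \ref{upsilon-c-mu part} and to see the pair of complex orthogonality conditions collapse to $\ip{\gb}{\nub_c^{\mu}}_0=0$ in \ref{range=kernel perp} --- while keeping every constant dependent only on $c$, not $\mu$; the uniform bounds $A_c<\omega_c^{\mu}<B_c<\pi/2$ of Lemma \ref{critical frequency technical lemma}\ref{existence of omega-c-mu!} are exactly what make the latter work, and the Hellmann--Feynman identity is the one structural ingredient in \ref{ip bound} beyond routine computation.
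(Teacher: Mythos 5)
Your proof is correct and, for parts \ref{upsilon-c-mu part}--\ref{sort of inverse bound}, runs along essentially the same lines as the paper's: mode-by-mode analysis of the symbol $M_k := -c^2(\omega_c^{\mu})^2k^2\ind + \tD_{\mu}[\omega_c^{\mu}k]$, with the $k=\pm1$ modes handled via Lemma \ref{critical frequency technical lemma} and the $|k|\ge 2$ modes via a Neumann series. A modest cosmetic improvement in your part \ref{sort of inverse bound}: at $|k|=1$ you bound the spectral gap $|\lambda_{\mu}^-(\omega_c^{\mu}) - \lambda_{\mu}^+(\omega_c^{\mu})| = 2\sqrt{\mu^2+4(1+\mu)\cos^2\omega_c^{\mu}}$ away from zero directly, exploiting Hermitian-ness and $\omega_c^{\mu}<B_c<\pi/2$, whereas the paper uses the orthogonality constraints to reduce to an explicit scalar equation in $\hat f_1(1)$ and bounds its coefficient from below --- both give the same uniform estimate. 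Part \ref{ip bound} is where your route genuinely diverges: you apply the Hellmann--Feynman formula for the $K$-derivative of the simple eigenvalue $\lambda_{\mu}^+$ of the Hermitian family $\tD_{\mu}[K]$ to obtain the \emph{exact} identity
\[
\ip{\partial_{\omega}\Gamma_c^{\mu}[\omega_c^{\mu}]\nub_c^{\mu}}{\nub_c^{\mu}}_0
= -2\,\partial_K\Lambda_c^+(\omega_c^{\mu},\mu)\,\norm{\hat{\nub}_c^{\mu}(1)}^2,
\]
and then read off the lower bound from the strict monotonicity $\partial_K\Lambda_c^+(\omega_c^{\mu},\mu)>0$ already proved in Lemma \ref{critical frequency technical lemma}; the paper instead does the matrix-vector computation by hand, writes the inner product as $I+II+III$ with $I=-(c^2\omega_c^{\mu}+\sin\omega_c^{\mu})$ dominant and $II,III=\O_c(\mu)$, and absorbs the corrections by shrinking $\mu_{\per}(c)$. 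Your identity is exact rather than asymptotic, is cleaner, and makes conceptually transparent that the Crandall--Rabinowitz nondegeneracy condition here is precisely the transversal crossing of $c^2K^2$ and $\lambda_{\mu}^+(K)$ at $K=\omega_c^{\mu}$; the paper's brute-force version has the minor advantage of not needing to invoke (or re-prove) the eigenvalue-perturbation formula. One can verify consistency at $\mu=0$: $\partial_K\Lambda_c^+(\omega_c,0) = 2c^2\omega_c + 2\sin\omega_c$ and $\norm{\hat{\nub}_c^0(1)}^2 = \tfrac14$, giving $-(c^2\omega_c+\sin\omega_c)$, matching the paper's term $I$.
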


\begin{proof}

\begin{enumerate}[label={\bf(\roman*)}]

\item
Suppose $\Gamma_c^{\mu}[\omega_c^{\mu}]\phib = 0$, for some nonzero $\phib = (\phi_1,\phi_2) \in E_{\per,0}^r \times O_{\per}^r$.
Let $k \in \Z$ such that $\hat{\phib}(k) \ne 0$.
Then $\hat{\phib}(k)$ is an eigenvector of $\tD_{\mu}[\omega_c^{\mu}k]$, and so it follows that 
\begin{equation}\label{two eig choices}
c^2(\omega_c^{\mu}k)^2-\lambda_{\mu}^-(\omega_c^{\mu}k) = 0
\quadword{or}
c^2(\omega_c^{\mu}k)^2-\lambda_{\mu}^+(\omega_c^{\mu}k) = 0.
\end{equation}
In the first case, part \ref{Lambda- soln} of Lemma \ref{critical frequency technical lemma} tells us that $\omega_c^{\mu}k = 0$, and consequently $k = 0$.
But we know $\hat{\phi}_1(0) = 0$ since $\phi_1 \in E_{\per,0}^r$, and we have $\hat{\phi}_2(0) = 0$ since $\phi_2$ is odd.
Then $\hat{\phib}(0) = 0$, a contradiction.

It therefore must be the case that the second equality in \eqref{two eig choices} holds, and so $\omega_c^{\mu}k = \pm\omega_c^{\mu}$ by part \ref{existence of omega-c-mu!} of Lemma \ref{critical frequency technical lemma}.
Hence $k = \pm 1$ and $\hat{\phib}(k) = 0$ for $|k| \ne 1$.
So, with $e_k(x) := e^{ikx}$, we may write 
\begin{multline*}
\phib 
= \hat{\phib}(-1)e_{-1} + \hat{\phib}(1)e_1 
= \begin{pmatrix*}
\hat{\phi}_1(-1)e_{-1} + \hat{\phi}_1(1)e_1 \\
\hat{\phi}_2(-1)e_{-1}+ \hat{\phi}_2(1) e_1 
\end{pmatrix*} 
= \begin{pmatrix*}
\hat{\phi}_1(1)(e_{-1}+e_1) \\
\hat{\phi}_2(1)(e_1-e_{-1}) 
\end{pmatrix*} 
= \begin{pmatrix*}
2\hat{\phi}_1(1)\cos(\cdot) \\
2i\hat{\phi}_2(1)\sin(\cdot)
\end{pmatrix*}.     
\end{multline*}
Here we have used the assumption that $\phi_1$ is even and $\phi_2$ is odd.

We know that $\hat{\phib}(1)$ is an eigenvector of $\tD_{\mu}[\omega_c^{\mu}]$ corresponding to the eigenvalue $\lambda_{\mu}^+(\omega_c^{\mu})$.
Using the definition of this matrix in \eqref{tDmu defn}, we conclude there is $a \in \C$ such that
\begin{equation}\label{upsilonw defn}
\hat{\phib}(1) = a\begin{pmatrix*}
i\upsilon_c^{\mu} \\
1
\end{pmatrix*},
\qquad
\upsilon_c^{\mu} := \frac{\mu\sin(\omega_c^{\mu})}{\lambda_{\mu}^+(\omega_c^{\mu})-(2+\mu)(1-\cos(\omega_c^{\mu}))},
\end{equation}
provided that the term in the denominator of $\upsilon_c^{\mu}$ is nonzero.
It is, in fact, bounded below away from zero.
If we assume 
\begin{equation}\label{Mu-2-c}
|\mu| 
\le \min\left\{\frac{1}{2},\Mu(c)\right\}
=:\Mu_1(c),
\end{equation}
then
\[
\lambda_{\mu}^+(\omega_c^{\mu})-(2+\mu)(1-\cos(\omega_c^{\mu}))
= \sqrt{\mu^2+4(1+\mu)\cos^2(\omega_c^{\mu})}+(2+\mu)\cos(\omega_c^{\mu}) 
\ge \sqrt{2}\cos(B_c)
\]
This gives the desired estimate $|\upsilon_c^{\mu}| \le C_{\upsilon}(c)|\mu|$.

We conclude that if $\Gamma_c^{\mu}[\omega_c^{\mu}]\phib = 0$, then
\[
\phib = \begin{pmatrix*}
2\hat{\phi}_1(1)\cos(\cdot) \\
2i\hat{\phi}_2(1)\sin(\cdot)
\end{pmatrix*}
\quadword{and}
\hat{\phib}(1) = a\begin{pmatrix*} i\upsilon_c^{\mu} \\ 1 \end{pmatrix*},
\]
thus
\[
\phib = 2ai\begin{pmatrix*}
\upsilon_c^{\mu}\cos(\cdot) \\
\sin(\cdot)
\end{pmatrix*},
\]
and so the kernel of $\Gamma_c^{\mu}[\omega_c^{\mu}]$ is spanned by the vector $\nub_c^{\mu}$.

\item
Recall that the symbol of $\Gamma_c^{\mu}[\omega_c^{\mu}]$ is 
\[
-c^2(\omega_c^{\mu}k)^2+\tD_{\mu}[\omega_c^{\mu}k],
\]
and this is a self-adjoint matrix in $\mathbb{C}^{2 \times 2}$ by the definition of $\tD_{\mu}$ in \eqref{tDmu defn}.
Moreover, by part \ref{upsilon-c-mu part}, the kernel of $\Gamma_c^{\mu}[\omega_c^{\mu}]$ is one-dimensional and spanned by $\nub_c^{\mu}$ from \eqref{nub defn}, where $\hat{\nub_c^{\mu}}(k) = 0$ for $k \ne \pm1$.
If we denote by $\Gamma_c^{\mu}[\omega_c^{\mu}]^*$ the adjoint of $\Gamma_c^{\mu}[\omega_c^{\mu}]$ from $E_{\per,0}^2 \times O_{\per}^2$ to $E_{\per,0}^0 \times O_{\per}^0$, Lemma \ref{periodic fm adjoint} tells us that the kernel of $\Gamma_c^{\mu}[\omega_c^{\mu}]^*$ is spanned by the function $(\nub_c^{\mu})^*$ whose Fourier coefficients are
\[
\hat{(\nub_c^{\mu})^*}(k) = (1+k^2)^{(2-0)/2}\hat{\nub_c^{\mu}}(k)
= \begin{cases}
2\hat{\nub_c^{\mu}}(k), &|k| = 1 \\
0, &|k| \ne 1.
\end{cases}
\]
That is, the kernel of $\Gamma_c^{\mu}[\omega_c^{\mu}]^*$ is just spanned by $\nub_c^{\mu}$.

Now we show
\begin{equation}\label{periodic solvability}
\Gamma_c^{\mu}[\omega_c^{\mu}]\fb = \gb, \ \fb \in E_{\per,0}^{r+2} \times O_{\per}^{r+2}, \ \gb \in E_{\per,0}^r \times O_{\per}^r
\iff \ip{\gb}{\nub_c^{\mu}}_0 = 0.
\end{equation}
The forward implication is clear from the containment $E_{\per,0}^r \times O_{\per}^r \subseteq E_{\per,0}^0 \times O_{\per}^0$.
For the reverse, take $\gb \in E_{\per,0}^r \times O_{\per}^r$ and suppose $\ip{\gb}{\nub_c^{\mu}}_0 = 0$.
Classical functional analysis tells us there is $\fb \in E_{\per,0}^2 \times O_{\per}^2$ such that $\Gamma_c^{\mu}[\omega_c^{\mu}]\fb = \gb$.
We then use the structure of $\Gamma_c^{\mu}[\omega_c^{\mu}]$ to bootstrap until we have $\fb \in E_{\per,0}^{r+2} \times O_{\per}^{r+2}$.

\item
If $\Gamma_c^{\mu}[\omega_c^{\mu}]\fb = \gb$, then for $k \in \Z\setminus\{0\}$ we have
\begin{equation}\label{matrix pre-neumann}
\left(\ind-\frac{1}{c^2(\omega_c^{\mu})^2k^2}\tD_{\mu}[\omega_c^{\mu}k]\right)\hat{\fb}(k) = -\frac{1}{c^2(\omega_c^{\mu})^2k^2}\hat{\gb}(k).
\end{equation}
Here $\ind$ is the $2\times2$ identity matrix, and we may ignore the case $k = 0$ since $\hat{\fb}(0) = \hat{\gb}(0) = 0$.
In this part of the proof, we will denote the $\infty$-norm of a matrix $A \in \mathbb{C}^{2\times2}$ by $\norm{A}$ and the 2-norm of a vector $\vb \in \mathbb{C}^2$ by $|\vb|$.

Our goal is to show that 
\begin{equation}\label{neumann goal}
\sup_{|k| \ge 2} \bignorm{\frac{1}{c^2(\omega_c^{\mu})^2k^2}\tD_{\mu}[\omega_c^{\mu}k]} \
< 1,
\end{equation}
in which case the matrix on the left in \eqref{matrix pre-neumann} is invertible by a Neumann series argument, and, moreover, this inverse is uniformly bounded in $k$.
This will show $|\hat{\fb}(k)| \le C(c)|\hat{\gb}(k)|$ for $|k| \ge 2$, and then we will handle the case $|k| = 1$ separately.

We begin with the case $|k| \ge 2$.
The estimate \eqref{omega-ep-mu cruder} yields
\begin{equation}\label{kge2 est1}
\frac{1}{c^2(\omega_c^{\mu})^2} \le \frac{1+\mu}{2},
\end{equation}
while the definition of $\tD_{\mu}[K]$ in \eqref{tDmu defn} implies
\begin{equation}\label{kge2 est2}
\bignorm{\tD_{\mu}[\omega_c^{\mu}k]} \le 2(2+\mu).
\end{equation}
We combine \eqref{kge2 est1} and \eqref{kge2 est2} and assume 
\begin{equation}\label{Mu-3-c}
|\mu| 
\le \min\left\{\frac{1}{10},\Mu_1(c)\right\}
=: \Mu_2(c)
\end{equation}
to find
\[
\bignorm{\frac{1}{c^2(\omega_c^{\mu})^2k^2}\tD_{\mu}[\omega_c^{\mu}k]} 
\le \frac{(1+\mu)(2+\mu)}{k^2}
\le \frac{3}{k^2}
\le \frac{3}{4}
\]
for $|k| \ge 2$.
This proves \eqref{neumann goal}.

Now we show $|\hat{\fb}(\pm1)| \le C_{c}|\hat{\gb}(\pm1)|$.
It is here that we need the additional hypothesis in part \ref{sort of inverse bound} that $\ip{\fb}{\nub_c^{\mu}}_0 = \ip{\gb}{\nub_c^{\mu}}_0 = 0$, which implies
\begin{equation}\label{|k|=1 stuff}
\hat{f}_2(1) = -i\upsilon_c^{\mu}\hat{f}_1(1)
\quadword{and}
\hat{g}_2(1) = -i\upsilon_c^{\mu}\hat{g}_1(1).
\end{equation}
This allows to rearrange the equality $\ft[\Gamma_c^{\mu}[\omega_c^{\mu}]\fb](1)\cdot\e_1 = \hat{\gb}(1)\cdot\e_1$ into
\[
\big(-c^2(\omega_c^{\mu})^2+(2+\mu)(1-\cos(\omega_c^{\mu}))-i\upsilon_c^{\mu}\mu\sin(\omega_c^{\mu})\big)\hat{f}_1(1) 
= \hat{g}_1(1).
\]
Some straightforward calculus proves 
\[
c{K}^2-2(1-\cos(K)) 
> K^2-2(1-\cos(K))
> 2\cos(1)-1
> 0
\]
for $K > 1$, so it follows that 
\[
|-c^2(\omega_c^{\mu})^2+(2+\mu)(1-\cos(\omega_c^{\mu}))-i\upsilon_c^{\mu}\mu\sin(\omega_c^{\mu})|
\ge \frac{2\cos(1)-1}{2},
\]
provided that 
\begin{equation}\label{Mu-4-c defn}
|\mu| 
< \min\left\{\frac{2\cos(1)-1}{8}, \frac{2\cos(1)-1}{4C_{\upsilon}(c)},\Mu_2(c)\right\}
=: \Mu_3(c).
\end{equation}
This shows
\[
|\hat{f}_1(1)| \le C(c)|\hat{g}_1(1)|.
\]

If $\upsilon_c^{\mu} = 0$, then \eqref{|k|=1 stuff} provides $\hat{f}_2(1) = 0 = \hat{g}_2(1)$, and there is nothing more to prove.
Otherwise, we have
\[
\frac{1}{|\upsilon_c^{\mu}|}|\hat{f}_2(1)| 
= |\hat{f}_1(1)|
\le C(c)|\hat{g}_1(1)|
= \frac{1}{|\upsilon_c^{\mu}|}|\hat{g}_2(1)|,
\]
and so $|\hat{f}_2(1)| \le C(c)|\hat{g}_2(1)|$.
The estimates for $k=-1$ follow by the even-odd symmetry of $\fb$ and $\gb$.

Last, if $\gb \in E_{\per,0}^r \times O_{\per}^r$ with $\ip{\gb}{\nub_c^{\mu}}_0 = 0$, then part \ref{range=kernel perp} gives $\fb \in E_{\per,0}^{r+2} \times O_{\per}^{r+2}$ such that $\Gamma_c^{\mu}[\omega_c^{\mu}]\fb = \gb$.
Requiring $\ip{\fb}{\nub_c^{\mu}}_0 = 0$ is enough to make $\fb$ unique; the proof is a straightforward exercise in Hilbert space theory.

\item
A lengthy calculation using patient matrix-vector multiplication along with the identity
\[
\ft[\partial_{\omega}\Gamma_c^{\mu}[\omega_c^{\mu}]\nub_c^{\mu}](k)
= -2c^2\omega_c^{\mu}k^2\hat{\nub_c^{\mu}}(k) + k\tD_{\mu}'(\omega_c^{\mu}k)\hat{\nub_c^{\mu}}(k),
\]
the definition of $\nub_c^{\mu}$ in \eqref{nub defn}, the fact that $\hat{\nub_c^{\mu}}(k) = 0$ for $|k| \ne 1$, and the definition of $\tD_{\mu}$ in \eqref{tDmu defn} and its corresponding componentwise derivative $\tD_{\mu}'$ yields
\begin{multline*}
\ip{\partial_{\omega}\Gamma_c^{\mu}[\omega_c^{\mu}]\nub_c^{\mu}}{\nub_c^{\mu}}_0
= 
\bunderbrace{-(c^2\omega_c^{\mu}+\sin(\omega_c^{\mu}))}{I} 
+\bunderbrace{\mu\sin(\omega_c^{\mu})\big(((\upsilon_c^{\mu})^2-1)+4\upsilon_c^{\mu}+\mu\cos(\omega_c^{\mu})\big)}{II} \\
\\
+ \bunderbrace{2\sin(\omega_c^{\mu})\big((\upsilon_c^{\mu})^2 + 4\upsilon_c^{\mu} + \mu\cos(\omega_c^{\mu})\big)}{III}.
\end{multline*}
Since $\upsilon_c^{\mu} = \O_c(\mu)$, it follows that $II = \O_c(\mu)$ and $III = \O_c(\mu)$.
And since $0 <A_c < \omega_c^{\mu} < B_c$, we have
\[
c^2\omega_c^{\mu}+\sin(\omega_c^{\mu})
> c^2A_c + \sin(A_c).
\]
It follows that for $|\mu|$ small, say, $|\mu| \le \mu_{\per}(c) \le \Mu_3(c)$, we have
\[
|\ip{\partial_{\omega}\Gamma_c^{\mu}[\omega_c^{\mu}]\nub_c^{\mu}}{\nub_c^{\mu}}_0|
\ge \frac{c^2A_c+\sin(A_c)}{2}
> 0.
\qedhere
\]
\end{enumerate}
\end{proof}

\subsection{Bifurcation from a simple eigenvalue}
We are now ready to solve the periodic traveling wave problem $\Phib_c^{\mu}(\phib,\omega) = 0$ posed in \eqref{Phib-c-mu defn}.
We follow \cite{hoffman-wright} and \cite{crandall-rabinowitz} and make the revised ansatz
\[
\phib = a\nub_c^{\mu} + a\psib
\quadword{and}
\omega = \omega_c^{\mu} + \xi.
\]
where $a$, $\xi \in \R$ and $\ip{\nub_c^{\mu}}{\psib}_0 = 0$.
Then our problem $\Phib_c^{\mu}(\phib,\omega) = 0$ becomes 
\[
\Phib_c^{\mu}(a\nub_c^{\mu}+a\psib,\omega_c^{\mu}+\xi) 
= 0.
\]

After some considerable rearranging, we find that $\psib$ and $a$ must satisfy 
\begin{equation}\label{pre fp periodic}
\Gamma_c^{\mu}[\omega_c^{\mu}]\psib
=
\rhs_{c,1}^{\mu}(\xi) + \rhs_{c,2}^{\mu}(\xi) + \rhs_{c,3}^{\mu}(\psib,\xi) + \rhs_{c,4}^{\mu}(\psib,\xi) + a\rhs_{c,5}^{\mu}(\psib,\xi),
\end{equation}
where
\begin{equation}\label{initial periodic rhs}
\begin{aligned}
\rhs_{c,1}^{\mu}(\xi) &:= -2c^2\omega_c^{\mu}\xi(\nub_c^{\mu})'' \\
\\
\rhs_{c,2}^{\mu}(\xi) &:= -(\D_{\mu}[\omega_c^{\mu}+\xi]-\D_{\mu}[\omega_c^{\mu}])\nub_c^{\mu} \\
\\
\rhs_{c,3}^{\mu}(\psib,\xi) &:= -c^2[\xi^2(\nub_c^{\mu})''+(2\omega_c^{\mu}+\xi)\xi\psib''] \\
\\
\rhs_{c,4}^{\mu}(\psib,\xi) &:= -(\D_{\mu}[\omega_c^{\mu}+\xi]-\D_{\mu}[\omega_c^{\mu}])\psib \\
\\
\rhs_{c,5}^{\mu}(\psib,\xi) &:= -\D_{\mu}[\omega_c^{\mu}+\xi]\nl(\nub_c^{\mu}+\psib,\nub_c^{\mu}+\psib).
\end{aligned}
\end{equation}
This is roughly the same system that Hoffman and Wright study when they construct periodic solutions for the small mass problem; specifically, its analogue appears in equation (B.9) in \cite{hoffman-wright}.
Our goal, like theirs, is to rewrite \eqref{pre fp periodic} as a fixed point argument in the unknowns $\psib$ and $\xi$ on the space $E_{\per,0}^2 \times O_{\per}^2 \times \R$ with $\mu$ as a small parameter and $c$ fixed.
We intend to use the following lemma, proved in \cite{faver-wright} as Lemma C.1.

\begin{lemma}\label{fixed point lemma}
Let $\X$ be a Banach space and for $r > 0$ let $\Bfrak(r) = \{x \in \X : \norm{x} \le r\}$.  

\begin{enumerate}[label={\bf(\roman*)}]

\item
For $|\mu| \le \mu_0$ let $F_{\mu} \colon \X \times \R \to \X$ be maps with the property that for some $C_1$, $a_1$, $r_1 > 0$, if $x,y \in \Bfrak(r_1)$ and $|a| \le a_1$, then
\begin{align}
\sup_{|\mu| \le \mu_0} \norm{F_{\mu}(x,a)} &\le C_1\big(|a|+|a|\norm{x}+\norm{x}^2\big) \label{fp lemma bound1} \\
\nonumber \\
\sup_{|\mu| \le \mu_0} \norm{F_{\mu}(x,a)-F_{c}(y,a)} &\le C_1\big(|a| + \norm{x}+\norm{y}\big)\norm{x-y} \label{fp lemma bound2}
\end{align}
Then there exist $a_0 \in (0,a_1], r_0 \in (0,r_1]$ such that for each $|\mu| \le \mu_0$ and $|a| \le a_0$, there is a unique $x_{\mu}^a \in \Bfrak(r_0)$ such that $F_{\mu}(x_{\mu}^a,a) = x_{\mu}^a$. 

\item
Suppose as well that the maps $F_{\mu}(\cdot,a)$ are Lipschitz on $\Bfrak(r_0)$ uniformly in $a$ and $\mu$, i.e., there is $L_1 > 0$ such that 
\begin{equation}\label{fp lemma lip bound}
\sup_{\substack{|\mu| \le \mu_0 \\ \norm{x} \le r_0}} \norm{F_{\mu}(x,a)-F_{\mu}(x,\grave{a})} \le L_1|a-\grave{a}|
\end{equation}
for all $|a|$, $|\grave{a}| \le a_0$. 
Then the mappings $[-a_0,a_0] \to \X \colon a \mapsto x_{\mu}^a$ are also uniformly Lipschitz; that is, there is $L_0 > 0$ such that 
\begin{equation}\label{fp lemma lip concl}
\sup_{|\mu| \le \mu_0} \norm{x_{\mu}^a-x_{\mu}^{\grave{a}}} 
\le L_0|a-\grave{a}|
\end{equation}
for all $|a|$, $|\grave{a}| \le a_1$. 
\end{enumerate}
\end{lemma}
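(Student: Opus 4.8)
The plan is to deduce both parts of the lemma from the Banach fixed point theorem; the only genuine care needed is the uniform-in-$\mu$ selection of the ball radius $r_0$ and the amplitude threshold $a_0$.

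For part (i), I would first observe that the constant $C_1$ appearing in \eqref{fp lemma bound1} and \eqref{fp lemma bound2} is independent of $\mu$ (the hypotheses are phrased with $\sup_{|\mu|\le\mu_0}$), so I may fix, once and for all, a radius $r_0\in(0,r_1]$ small enough that $C_1 r_0\le 1/8$, and then an amplitude threshold $a_0\in(0,a_1]$ small enough (in terms of $C_1$ and $r_0$ only) that both $C_1\bigl(a_0+a_0 r_0+r_0^2\bigr)\le r_0$ and $C_1\bigl(a_0+2r_0\bigr)\le 1/2$; both constraints can plainly be met. I would then verify, for each $|\mu|\le\mu_0$ and $|a|\le a_0$, that \eqref{fp lemma bound1} forces $F_\mu(\cdot,a)$ to map the closed ball $\Bfrak(r_0)$ into itself, and that \eqref{fp lemma bound2} forces $F_\mu(\cdot,a)$ to be a contraction of factor at most $1/2$ on $\Bfrak(r_0)$. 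Since $\Bfrak(r_0)$ is a complete metric space, the contraction mapping theorem then furnishes, for each such $\mu$ and $a$, a unique fixed point $x_\mu^a\in\Bfrak(r_0)$ with $F_\mu(x_\mu^a,a)=x_\mu^a$.

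For part (ii), I would combine the extra Lipschitz hypothesis \eqref{fp lemma lip bound} with the contraction estimate just established. Fixing $|\mu|\le\mu_0$ and $|a|,|\grave a|\le a_0$, and writing $x_\mu^a=F_\mu(x_\mu^a,a)$ and $x_\mu^{\grave a}=F_\mu(x_\mu^{\grave a},\grave a)$, I would split through the intermediate point $F_\mu(x_\mu^{\grave a},a)$ to get $\norm{x_\mu^a-x_\mu^{\grave a}}\le\norm{F_\mu(x_\mu^a,a)-F_\mu(x_\mu^{\grave a},a)}+\norm{F_\mu(x_\mu^{\grave a},a)-F_\mu(x_\mu^{\grave a},\grave a)}\le\frac12\norm{x_\mu^a-x_\mu^{\grave a}}+L_1|a-\grave a|$, where the first term uses the contraction bound (both $x_\mu^a,x_\mu^{\grave a}\in\Bfrak(r_0)$) and the second uses \eqref{fp lemma lip bound}. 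Absorbing the first term on the right yields $\norm{x_\mu^a-x_\mu^{\grave a}}\le 2L_1|a-\grave a|$, i.e.\ \eqref{fp lemma lip concl} with $L_0:=2L_1$; since $L_1$ carries no $\mu$-dependence, neither does $L_0$.

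I do not anticipate any real obstacle: this lemma merely repackages the uniform contraction principle, and its proof is essentially bookkeeping. The one point requiring attention is that a \emph{single} pair $(r_0,a_0)$ must simultaneously make $F_\mu(\cdot,a)$ a self-map of $\Bfrak(r_0)$ \emph{and} a contraction there, for \emph{every} $|\mu|\le\mu_0$; this is possible precisely because the hypothesized bounds are uniform in $\mu$, so $C_1$ and $L_1$ — and hence $r_0$, $a_0$, and $L_0$ — may be chosen independently of $\mu$. (I read the evident typos $F_c$ in \eqref{fp lemma bound2} and $a_1$ in the conclusion \eqref{fp lemma lip concl} as $F_\mu$ and $a_0$, respectively.)
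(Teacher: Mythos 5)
Your argument is correct, and it is the standard uniform contraction-mapping proof. Note, however, that the present paper does not prove this lemma itself: it is stated here only for later use, with its proof deferred to Lemma~C.1 of \cite{faver-wright}. Your choice of thresholds (fix $r_0$ with $C_1 r_0 \le 1/8$, then shrink $a_0$ so that $C_1(a_0 + a_0 r_0 + r_0^2) \le r_0$ and $C_1(a_0 + 2r_0) \le 1/2$), the self-map and contraction verifications, and the triangle-inequality-plus-absorption step for the Lipschitz dependence on $a$ are all exactly what one expects that cited proof to contain, and each step is sound. Your reading of the typos $F_c \mapsto F_\mu$ in \eqref{fp lemma bound2} and $a_1 \mapsto a_0$ in the scope of \eqref{fp lemma lip concl} is also the correct one, since the fixed points $x_\mu^a$ are only constructed for $|a| \le a_0$.
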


Unlike Hoffman and Wright, we cannot invoke this lemma immediately as the terms $\rhs_{c,1}^{\mu}$ and $\rhs_{c,2}^{\mu}$ do not have quite the right structure.
Namely, they contain linear terms in $\xi$ that have no companion factor of $\mu$, and so these terms will not be small enough to achieve the estimates \eqref{fp lemma bound1} and \eqref{fp lemma bound2}.
Due to a different scaling inherent to the small mass limit, this was not an issue for Hoffman and Wright.

Therefore, we begin to modify the problem \eqref{pre fp periodic} as follows.
Let 
\begin{equation}\label{varpi Pi proj defns}
\varpi_c^{\mu}\psib 
:= \frac{\ip{\psib}{\nub_c^{\mu}}_0}{\norm{\nub_c^{\mu}}_0^2}\nub_c^{\mu}
\quadword{and}
\Pi_c^{\mu} 
:= \ind-\varpi_c^{\mu}.
\end{equation}
The definition of $\nub_c^{\mu}$ in \eqref{nub defn} ensures that the denominator of $\varpi_c^{\mu}$ is bounded away from zero uniformly in $\mu$.
Then \eqref{pre fp periodic} holds if and only if both
\begin{equation}\label{pi applied}
\varpi_c^{\mu}\Gamma_c^{\mu}[\omega_c^{\mu}]\psib = \varpi_c^{\mu}[\rhs_{c,1}^{\mu}(\xi) + \rhs_{c,2}^{\mu}(\xi)+\rhs_{c,3}^{\mu}(\psib,\xi)+\rhs_{c,4}^{\mu}(\psib,\xi)+a\rhs_{c,5}^{\mu}(\psib,\xi)]
\end{equation}
and 
\begin{equation}\label{Pi applied}
\Pi_c^{\mu}\Gamma_c^{\mu}[\omega_c^{\mu}]\psib = \Pi_c^{\mu}[\rhs_{c,1}^{\mu}(\xi) + \rhs_{c,2}^{\mu}(\xi)+\rhs_{c,3}^{\mu}(\psib,\xi)+\rhs_{c,4}^{\mu}(\psib,\xi)+a\rhs_{c,5}^{\mu}(\psib,\xi)].
\end{equation}

Since $\ip{\Gamma_c^{\mu}[\omega_c^{\mu}]\psib}{\nub_c^{\mu}}_0 = 0$ by \eqref{periodic solvability}, the first equation \eqref{pi applied} is equivalent to
\[
-\varpi_c^{\mu}[\rhs_{c,1}^{\mu}(\xi,,w) + \rhs_{c,2}^{\mu}(\xi)]
=
\varpi_c^{\mu}[\rhs_{c,3}^{\mu}(\psib,\xi) + \rhs_{c,4}^{\mu}(\psib,\xi) + a\rhs_{c,5}^{\mu}(\psib,\xi)]
\]
Observe that 
\begin{equation}\label{rhs-6 periodic defn}
-\rhs_{c,1}^{\mu}(\xi)-\rhs_{c,2}^{\mu}(\xi) = 
\bunderbrace{\xi(2c^2\omega_c^{\mu}(\nub_c^{\mu})'' + \partial_{\omega}\D_{\mu}[\omega_c^{\mu}]\nub_c^{\mu})}{\xi\partial_{\omega}\Gamma_c^{\mu}[\omega_c^{\mu}]\nub_c^{\mu}}
+ \bunderbrace{(\D_{\mu}[\omega_c^{\mu}+\xi]-\D_{\mu}[\omega_c^{\mu}] - \xi\partial_{\omega}\D_{\mu}[\omega_c^{\mu}])\nub_c^{\mu}}{\rhs_{c,6}^{\mu}(\xi)},
\end{equation}
where the derivative $\partial_{\omega}\Gamma_c^{\mu}[\omega_c^{\mu}]$ of a scaled Fourier multiplier is discussed in Appendix \ref{fourier multipliers appendix}.
Consequently, \eqref{pi applied} is really equivalent to
\begin{equation}\label{xi eqn}
\xi
= \bunderbrace{\frac{1}{\ip{\partial_{\omega}\Gamma_c^{\mu}[\omega_c^{\mu}]\nub_c^{\mu}}{\nub_c^{\mu}}_0}\varpi_c^{\mu}[\rhs_{c,3}^{\mu}(\psib,\xi) + \rhs_{c,4}^{\mu}(\psib,\xi) + a\rhs_{c,5}^{\mu}(\psib,\xi)+\rhs_{c,6}^{\mu}(\xi)]}{\Xi_c^{\mu}(\psib,\xi,a)},
\end{equation}
and part \ref{ip bound} of Proposition \ref{lambda props} guarantees that $|\ip{\partial_{\omega}\Gamma_c^{\mu}[\omega_c^{\mu}]\nub_c^{\mu}}{\nub_c^{\mu}}_0|$ is bounded away\footnote{One can think of the condition $|\ip{\partial_{\omega}\Gamma_c^{\mu}[\omega_c^{\mu}]\nub_c^{\mu}}{\nub_c^{\mu}}_0| > 0$ as, ultimately, a quantitative version of the ``bifurcation condition'' from the original theorem of Crandall and Rabinowitz, namely, part (d) of Theorem 1.7 in \cite{crandall-rabinowitz}, when their Banach spaces are specified to be Hilbert spaces.} from zero uniformly in $\mu$.

Now we construct the fixed point equation for $\psib$.
By definition of $\Pi_c^{\mu}$ in \eqref{varpi Pi proj defns} and part \ref{range=kernel perp} of Proposition \ref{lambda props}, we have
\[
\Pi_c^{\mu}\Gamma_{c}^{\mu}[\omega_c^{\mu}]\psib 
= \Gamma_{c}^{\mu}[\omega_c^{\mu}]\psib,
\]
so that \eqref{Pi applied} is equivalent to
\begin{equation}\label{Pi applied and simplified}
\Gamma_c^{\mu}[\omega_c^{\mu}]\psib = \Pi_c^{\mu}[\rhs_{c,1}^{\mu}(\xi) + \rhs_{c,2}^{\mu}(\xi)+\rhs_{c,3}^{\mu}(\psib,\xi)+\rhs_{c,4}^{\mu}(\psib,\xi)+a\rhs_{c,5}^{\mu}(\psib,\xi)].
\end{equation}
Next, it is obvious from the definition of $\Pi_c^{\mu}$ that $\ip{\Pi_c^{\mu}\fb}{\nub_c^{\mu}}_0 = 0$.
Consequently, there exists a solution $\psib$ to \eqref{Pi applied and simplified}, but, since $\Gamma_c^{\mu}[\omega_c^{\mu}]$ has a nontrivial kernel by part \ref{upsilon-c-mu part} of Proposition \ref{lambda props}, this solution is not unique.
Part \ref{sort of inverse bound} of that proposition, however, allows us to force uniqueness by taking $\psib$ to be the solution of \eqref{Pi applied and simplified} that also satisfies $\ip{\psib}{\nub_c^{\mu}}_0 = 0$.
Hence the $\psib$ that we seek must satisfy
\[
\psib 
= \Gamma_c^{\mu}[\omega_c^{\mu}]^{-1}\Pi_c^{\mu}[\rhs_{c,1}^{\mu}(\xi) + \rhs_{c,2}^{\mu}(\xi)+\rhs_{c,3}^{\mu}(\psib,\xi)+\rhs_{c,4}^{\mu}(\psib,\xi)+a\rhs_{c,5}^{\mu}(\psib,\xi)].
\]
As with our construction of the fixed-point equation for $\xi$, the terms $\rhs_{c,1}^{\mu}$ and $\rhs_{c,2}^{\mu}$ are too large in $\xi$.
However, by \eqref{xi eqn} we may replace $\xi$ by $\Xi_{c}^{\mu}(\psib,\xi,a)$, which will turn out to correct this overshoot.
So, our equation for $\psib$ is
\begin{equation}\label{psib eqn}
\psib 
= \bunderbrace{\Gamma_c^{\mu}[\omega_c^{\mu}]^{-1}\Pi_c^{\mu}[\rhs_{c,1}^{\mu}(\Xi_{c}^{\mu}(\psib,\xi,a)) + \rhs_{c,2}^{\mu}(\Xi_{c}^{\mu}(\psib,\xi,a))+\rhs_{c,3}^{\mu}(\psib,\xi)+\rhs_{c,4}^{\mu}(\psib,\xi)+a\rhs_{c,5}^{\mu}(\psib,\xi)]}{\Psib_c^{\mu}(\psib,\xi,a)}.
\end{equation}
It is worthwhile pointing out now that 
\begin{equation}\label{periodic smoothing est}
\norm{\Gamma_c^{\mu}[\omega_c^{\mu}]^{-1}\Pi_c^{\mu}\psib}_{r+2}
\le C(c,r)\norm{\psib}_r
\end{equation}
by the estimate in part \ref{sort of inverse bound} from Lemma \ref{lambda props}.

\subsection{Solution of the fixed-point problem}
The fixed-point problem
\[
\begin{cases}
\psib = \Psib_c^{\mu}(\psib,\xi,a) \\
\xi = \Xi_c^{\mu}(\psib,\xi,a)
\end{cases}
\]
is now in a form amenable to Lemma \ref{fixed point lemma}.
One first shows that $\Xi_c^{\mu}$ satisfies the estimates \eqref{fp lemma bound1}, \eqref{fp lemma bound2}, and \eqref{fp lemma lip bound} using, chiefly, the calculus on Fourier multipliers from Lemma \ref{calculus on Fourier multipliers lemma} and the techniques of Appendices B of \cite{hoffman-wright} and C of \cite{faver-wright}.
Then one establishes the same estimates on $\Psib_c^{\mu}$, using the existing estimates on $\Xi_c^{\mu}$ along the way.
Since the techniques so closely resemble those of \cite{hoffman-wright} and \cite{faver-wright}, we omit the details.
\section{Function Theory}

\subsection{Proof of Proposition \ref{symbol of B}}\label{proof of B props appendix}

\subsubsection{The proof of part \ref{B simple zeros}}
Observe that 
\[
\tB_c(0) 
= 4
> 0
\quadword{and}
\tB_c\left(\frac{\pi}{2}\right)
= -\frac{c^2\pi^2}{4} + 2
< -\frac{8}{4}+2
= 0.
\]
The intermediate value theorem furnishes $\omega_c \in (0,\pi/2)$ such that $\tB_c(\omega_c) = 0$.
Since $0 < \omega_c < \pi/2$, we can rewrite the relation $\tB_c(\omega_c) = 0$ as
\[
\omega_c
= \sqrt{\frac{2+2\cos(\omega_c)}{c^2}}
> \sqrt{\frac{2}{c^2}}
= \frac{\sqrt{2}}{|c|},
\]
and so we have the refined bounds $\sqrt{2}/|c| < \omega_c < \pi/2$.
Next, for $k \in \R$, we calculate
\[
|(\tB_c)'(k)|
= 2|c^2k-\sin(k)|
> 0
\]
since $c^2 > 1$.
This shows that the zeros at $z=\pm\omega_c$ are simple and, moreover, unique in $\R$.
The bound \eqref{b-b-c ineq} follows by estimating
\[
|(\tB_c)'(\omega_c)|
\ge 2(c^2A_c-\sin(A_c))
\ge 2(1-\sin(1)).
\]

Now we prove that $z=\pm\omega_c$ are the only zeros of $\tB_c$ on a suitably narrow strip in $\C$.
For $k$, $q \in \R$, we compute
\[
\tB_c(k+iq)
= \bunderbrace{-c^2(k^2-q^2)+2+(e^{-q}+e^q)\cos(k)}{\rhs_c(k,q)}
+ i\bunderbrace{-2c^2kq+(e^{-q}-e^q)\sin(k)}{\I_c(k,q)}.
\]
We claim for a suitable $q_{\B} > 0$, if $|q| \le 3q_{\B}$, then $\I_c(k,q) = 0$ if and only if $k=0$.
But 
\[
\rhs_c(0,q)
= q^2+e^q+e^{-q}
> 0
\]
for all $q \in \R$, and so, if our claim is true, then $\tB_c(z) \ne 0$ for $|\im(z)| \le 3q_{\B}$.

So, we just need to prove the claim about $\I_c$.
Observe that, for $q \ne 0$, $\I_c(k,q) = 0$ if and only if
\begin{equation}\label{Ifrak eq}
\frac{2c^2q}{e^{-q}-e^q} 
= \sinc(k)
:= \frac{\sin(k)}{k}.
\end{equation}
Elementary calculus tells us
\[
\sinc(k)
> -\frac{1}{4}, \ k \in \R
\quadword{and}
2\frac{q}{e^{-q}-e^q} < -\frac{1}{4}, \ |q| \le 3.
\]
So, we set $q_{\B} = 1$ and find that for $|q| \le 3q_{\B}$ and any $k \in \R$
\[
\frac{2c^2q}{e^{-q}-e^q} 
< \frac{2q}{e^{-q}-e^q}
< -\frac{1}{4}
< \sinc(k).
\]
Hence \eqref{Ifrak eq} cannot hold for $|q| \le 3q_{\B}$.

With part \ref{B simple zeros} established, part \ref{B meromorphic} follows at once.

\subsubsection{The proof of part \ref{B quadratic decay}}
The reverse triangle inequality implies
\[
|\tB_c(z)|
\ge c^2|z|^2 - |2+2\cos(z)|
\]
We estimate
\[
|2+2\cos(z)|
\le 2+2e^{2q_{\B}}
\le 2+2e^2
\]
for $z \in S_{2q_{\B}}$, since we assumed $q_{\B} = 1$.
Hence
\[
|\tB_c(z)|
\ge c^2|z|^2 - (2+2e^2).
\]
Now take
\[
|z| 
\ge \sqrt{\frac{2+2e^2}{c^2-1/2}}
=: r_{\B}(c)
\]
to find that 
\[
\frac{1}{2}|z|^2 
\le c^2|z|^2-(2+2e^2)
\le |\tB_c(z)|.
\]

\subsubsection{The proof of part \ref{B L1 q estimate}}
Residue theory tells us that 
\begin{equation}\label{laurent decomp}
\frac{1}{\tB_c(z)}
= \frac{1}{(\tB_c)'(\omega_c)(z-\omega_c)} 
+ \frac{1}{(\tB_c)'(-\omega_c)(z+\omega_c)}
+ \Rfrak_c(z), \ z \in S_{3q_{\B}}\setminus\{\pm\omega_c\},
\end{equation}
where $\Rfrak_c$ is analytic on $S_{3q_{\B}}$.

We need two elementary Fourier transforms.
Let $\alpha$, $\beta \in \C$ with $\re(\alpha) > 0$ and $\re(\beta) < 0$ and define
\[
\mathsf{e}_{\alpha}(x)
:= \begin{cases}
e^{\alpha{x}}, \ x < 0 \\
0, \ x \ge 0
\end{cases}
\quadword{and}
\mathsf{e}^{\beta}(x)
:= \begin{cases}
0, \ x < 0 \\
e^{\beta{x}}, \ x \ge 0.
\end{cases}
\]
Then
\[
\hat{\mathsf{e}}_{\alpha}(k)
= \frac{1}{\sqrt{2\pi}(\alpha-ik)}
\quadword{and}
\hat{\mathsf{e}^{\beta}}(k)
= -\frac{1}{\sqrt{2\pi}(\beta-ik)}.
\]
Since
\[
\frac{1}{-k+iq\pm\omega_c}
= \frac{i\sqrt{2\pi}}{\sqrt{2\pi}((-q\pm{i}\omega_c)-ik)},
\]
we have
\begin{equation}\label{special inv ft-}
\ft^{-1}\left[\frac{1}{-\cdot+iq\pm\omega_c}\right](x)
= \begin{cases}
i\sqrt{2\pi}e^{(-q\pm{i}\omega_c)x}, \ x < 0 \\
0, \ x \ge 0
\end{cases}
\end{equation}
if $q < 0$ and 
\begin{equation}\label{special inv ft+}
\ft^{-1}\left[\frac{1}{-\cdot+iq\pm\omega_c}\right](x)
= \begin{cases}
0, \ x < 0 \\
i\sqrt{2\pi}e^{(-q\pm{i}\omega_c)x}, \ x \ge 0
\end{cases}
\end{equation}
if $q > 0$.
In either case, using the decomposition \eqref{laurent decomp} and the transforms \eqref{special inv ft-} and \eqref{special inv ft+}, we bound
\begin{equation}\label{intermediate L1 inv ft bound}
\bignorm{\ft^{-1}\left[\frac{1}{\tB_c(-\cdot+iq)}\right]}_{L^1} 
\le \sqrt{2\pi}\left(\frac{1}{|q| + i\omega_c} + \frac{1}{|q|-i\omega_c}\right) + \norm{\ft^{-1}[\Rfrak_c(-\cdot+iq)}_{L^1},
\end{equation}
where
\begin{equation}\label{last L1 function to estimate}
\ft^{-1}[\Rfrak_c(-\cdot+iq)](x)
= \frac{1}{\sqrt{2\pi}}\int_{-\infty}^{\infty} e^{ikx}\Rfrak_c(-k+iq) \dk
= -\frac{e^{-qx}}{\sqrt{2\pi}}\int_{\im(z) = q} e^{-ixz}\Rfrak_c(z) \dz.
\end{equation}

We claim that 
\begin{equation}\label{O(q) aux}
\sup_{\substack{1 < |c| \le \sqrt{2} \\ 0 < |q| \le q_{\B}}} \norm{\ft^{-1}[\Rfrak_c(-\cdot+iq)]}_{L^1(\R_{\pm})}
< \infty.
\end{equation}
If this estimate holds, then \eqref{intermediate L1 inv ft bound} will establish the desired estimate \eqref{inv ft L1 O(q) est}.
We first prove \eqref{O(q) aux} for the $L^1(\R_-)$ case and then comment briefly on how to proceed with the similar $L^1(\R_+)$ estimate.
Observe that $\Rfrak_c$ vanishes as $|\re(z)| \to \infty$, since the other three functions in \eqref{laurent decomp}, where $\Rfrak_c$ is defined implicitly, all vanish as $|\re(z)| \to \infty$.
Moreover, $\Rfrak_c$ is analytic on the strip $S_{3q_{\B}}$. 
We can therefore shift the integration contour in \eqref{last L1 function to estimate} from $\im(z) = q$ to $\im(z) = 2q_{\B}$ and obtain
\[
\int_{\im(z) = q} e^{-ixz}\Rfrak_c(z) \dz
= e^{2q_{\B}x}\int_{-\infty}^{\infty} e^{ikx}\Rfrak_c(k + 2iq_{\B}) \dk.
\]
From \eqref{last L1 function to estimate}, we conclude
\begin{equation}\label{convenient expr for inv ft}
\ft^{-1}[\Rfrak_c(-\cdot+iq)](x)
= -\frac{e^{(2q_{\B}-q)x}}{\sqrt{2\pi}}\int_{-\infty}^{\infty} e^{ikx}\Rfrak_c(k+2iq_{\B}) \dk,
\end{equation}
Since $2q_{\B}-q > 0$, we will have \eqref{O(q) aux} in the $L^1(\R_-)$ case if we can show
\begin{equation}\label{hopeful est}
\sup_{\substack{1 < |c| \le \sqrt{2} \\ x \in \R_-}}
\left|\int_{-\infty}^{\infty} e^{ikx}\Rfrak_c(k+2iq_{\B}) \dk\right| 
< \infty.
\end{equation}

The integral $\medint_{-\infty}^{\infty} e^{ikx}\Rfrak_c(-k+2iq_{\B}) \dk$ in \eqref{hopeful est} is the inverse Fourier transform of $\Rfrak_c(-\cdot+2iq_{\B})$, which we may estimate using the implicit definition of $\Rfrak_c$ in \eqref{laurent decomp}:
We have
\begin{multline*}
|\ft^{-1}[\Rfrak_c(-\cdot+2iq_{\B})](x)|
\le \bunderbrace{\left|\ft^{-1}\left[\frac{1}{\tB_c(-\cdot+2iq_{\B})}\right](x)\right|}{I}
+ \bunderbrace{\frac{1}{|(\tB_c)'(\omega_c)|}\left|\ft^{-1}\left[\frac{1}{(-\cdot+2iq_{\B}-\omega_c)}\right](x)\right|}{II} \\
\\
+ \bunderbrace{\frac{1}{|(\tB_c)'(-\omega_c)|}\left|\ft^{-1}\left[\frac{1}{(-\cdot+2iq_{\B}+\omega_c)}\right](x)\right|}{III}.
\end{multline*}
The quantities $II$ and $III$ are uniformly bounded for $1 < |c| \le \sqrt{2}$ due to the calculations of the specific Fourier transforms in \eqref{special inv ft-} and \eqref{special inv ft+} and the bounds \eqref{b-b-c ineq}.
We obtain a uniform bound for $I$ over $1 < |c| \le \sqrt{2}$	 by estimating the integral $\medint_{-\infty}^{\infty} (1/|\tB_c(-k+2iq_{\B})|) \dk$ over the intervals $(-\infty,r_0]$ and $[r_0,\infty)$, where $r_0$ is defined in \eqref{r0 sup}, with the quadratic estimate \eqref{B quadratic estimate} and then over the interval $[-r_0,r_0]$ just by bounding $1/|\tB_c(-k+2iq_{\B})|$ uniformly in $c$ for $|k| \le r_0$.
This proves \eqref{hopeful est}.

To study $\norm{\ft^{-1}[\Rfrak_c(-\cdot+iq)}_{L^1(\R_+)}$, we repeat the work above, except we shift the contour in \eqref{last L1 function to estimate} to $\im(z) = -2q_{\B}$, so that 
\[
\int_{-\infty}^{\infty} e^{ikx}\Rfrak_c(-k+iq) \dk
= -e^{-(2q_{\B}+q)x}\int_{-\infty}^{\infty} e^{-ikx}\Rfrak_c(k-2iq_{\B}) \dk .
\]
Then we obtain an estimate analogous to \eqref{hopeful est}, which in turn proves \eqref{O(q) aux} for $L^1(\R_+)$.

\subsection{Additional estimates on the Friesecke-Pego solitary wave $\varsigma_{\cep}$}\label{additional friesecke-pego estimates appendix}
We deduce the following lemma from Lemmas 3.1 and 3.2 in \cite{hoffman-wayne}.

\begin{lemma}[Hoffman \& Wayne]\label{hoffman-wayne lemma}
There exist $\ep_{\HWa} \in (0,\ep_{\FP}]$ and $\apzc$, $C > 0$ such that
\[
\bignorm{e^{(\apzc/\ep)\cdot}\left(\frac{1}{\ep^2}\varsigma_{\cep}\left(\frac{\cdot}{\ep}\right)-\sigma\right)}_{H^1} \le C\ep^2
\]
for $0 < \ep < \ep_{\HWa}$.
\end{lemma}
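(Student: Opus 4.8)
The final statement to prove is Lemma \ref{hoffman-wayne lemma}, which asserts exponentially weighted convergence estimates on the Friesecke-Pego solitary wave profile $\varsigma_{\cep}$ toward the KdV soliton $\sigma$. The plan is to quote and repackage results that Hoffman and Wayne establish in \cite{hoffman-wayne}, specifically their Lemmas 3.1 and 3.2, and then reconcile the notation with the scaling conventions already fixed in this paper.

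First I would recall from Theorem \ref{friesecke-pego} that the Friesecke-Pego profile $\varsigma_{\cep}$ exists for $0 < \ep < \ep_{\FP}$ and satisfies the unweighted estimate \eqref{original friesecke-pego estimate}, namely $\norm{\ep^{-2}\varsigma_{\cep}(\cdot/\ep) - \sigma}_{H^r} \le C(r)\ep^2$. The content of Hoffman and Wayne's refinement is that this convergence in fact holds in an exponentially weighted norm: the difference $\ep^{-2}\varsigma_{\cep}(\cdot/\ep) - \sigma$ decays like $e^{-\alpha|x|/\ep}$ for a suitable $\alpha > 0$, with the same $\O(\ep^2)$ bound. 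I would state that their work produces a threshold $\ep_{\HWa} \in (0,\ep_{\FP}]$ and positive constants $\apzc$ and $C$ so that the weighted $H^1$ estimate holds for $0 < \ep < \ep_{\HWa}$; the key input on their side is the exponential localization of both $\varsigma_{\cep}$ (with decay rate proportional to $\ep$, consistent with $q_{\FP}\ep$ from \eqref{q-varsigma-cep}) and of $\sigma(x) = \frac14\sech^2(x/2)$, combined with their fixed-point construction being carried out in an exponentially weighted space from the outset.

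The only genuine task beyond citation is bookkeeping: matching the rescaling here, where the solitary wave lives on the lattice scale and $\varsigma_{\cep}(\cdot/\ep)$ lives on the long-wave scale, against the normalization in \cite{hoffman-wayne}, and verifying that the decay rate constant can be taken $\ep$-independent after factoring out the $1/\ep$ in the exponent $e^{(\apzc/\ep)\cdot}$. I would also note that restricting to $H^1$ (rather than all $H^r$) suffices for every later use of this lemma — in particular for the $L^\infty$ bound $\norm{\varsigma_{\cep}}_{L^\infty} = \O(\ep^2)$ invoked via Proposition \ref{hoffman-wayne proposition} and for the weighted estimates in Section \ref{B-cep Sigma-cep star inverse Neumann} — so no bootstrapping of the weighted estimate to higher regularity is needed.

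The main obstacle, such as it is, is purely expository: ensuring that the statement extracted is faithful to \cite{hoffman-wayne} under the present conventions, since that reference may phrase its results for a differently normalized profile or a shifted center, and one must check that the even symmetry and the choice of $\sigma$ in \eqref{sigma defn} line up so that the weight $e^{(\apzc/\ep)x}$ (rather than $e^{(\apzc/\ep)|x|}$) already controls the relevant norm on the half-line, with the full-line statement following by evenness. There is no hard analytic step here; the lemma is a direct consequence of the cited results once the scaling dictionary is in place.
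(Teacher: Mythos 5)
Your proposal is correct and takes essentially the same route as the paper: the paper's entire treatment of this lemma is the single sentence ``We deduce the following lemma from Lemmas 3.1 and 3.2 in \cite{hoffman-wayne},'' so both you and the authors regard this as a citation exercise with bookkeeping. Your extra remarks---that the one-sided weight $e^{(\apzc/\ep)\cdot}$ suffices by evenness, that the decay rates scale like $\ep$ consistently with $q_{\varsigma}(\cep) = \ep q_{\FP}$, and that $H^1$ is all that is needed downstream---are reasonable observations the paper leaves implicit, but they do not change the nature of the argument.
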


Now we prove a bevy of estimates on $\varsigma_{\cep}$, all of which say either that $\varsigma_{\cep}$ is ``small'' in a certain norm or that $\varsigma_{\cep}$ and $\ep^2\sigma(\ep\cdot)$, where $\sigma$ was defined in \eqref{sigma defn}, are ``close'' in some norm.

\begin{proposition}\label{hoffman-wayne proposition}
There is $C > 0$ such that the following estimates hold for all $\ep \in (0,\ep_{\HWa})$.

\begin{enumerate}[label={\bf(\roman*)},ref={(\roman*)}]

\item\label{HW fundamental}
$\norm{e^{\apzc|\cdot|}(\varsigma_{\cep}-\ep^2\sigma(\ep\cdot))}_{L^2} \le C\ep^{7/2}$.

\item\label{FP L1-1}
$\norm{\varsigma_{\cep}-\ep^2\sigma(\ep\cdot)}_{L^1} \le C\ep^{7/2}$.

\item\label{FP L1-2}
$\norm{\varsigma_{\cep}}_{L^1} \le C\ep$.

\item\label{FP Linfty-1}
$\norm{\varsigma_{\cep}-\ep^2\sigma(\ep\cdot)}_{L^{\infty}} \le C\ep^2$.

\item\label{FP Linfty-2}
$\norm{\varsigma_{\cep}}_{L^{\infty}} \le C\ep^2$.

\item\label{FP weighted Linfty}
$\norm{e^{q\cdot}\varsigma_{\cep}}_{L^{\infty}} \le C\ep^2$ for $0 < q < \min\{\ep/2,\apzc\}$.
\end{enumerate}
\end{proposition}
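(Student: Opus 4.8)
The plan is to deduce all six estimates from the single rescaled bound of Lemma \ref{hoffman-wayne lemma} (supplemented by the unweighted $H^r$ bound of Theorem \ref{friesecke-pego}), so that the whole argument reduces to a change of variables and elementary inequalities. I would write $\Sigma_\ep(y) := \ep^{-2}\varsigma_{\cep}(y/\ep)$ for the rescaled profile appearing in Theorem \ref{friesecke-pego}, so that $\varsigma_{\cep}(x) = \ep^2\Sigma_\ep(\ep x)$ and hence $\varsigma_{\cep}(x)-\ep^2\sigma(\ep x) = \ep^2\big(\Sigma_\ep(\ep x)-\sigma(\ep x)\big)$. Lemma \ref{hoffman-wayne lemma} gives $\norm{e^{(\apzc/\ep)\cdot}(\Sigma_\ep-\sigma)}_{H^1} \le C\ep^2$, and since $\varsigma_{\cep}$ and $\sigma$ are even, $\Sigma_\ep-\sigma$ is even; reflecting $\{x<0\}$ onto $\{x>0\}$ upgrades this to the symmetric weighted bound $\norm{e^{(\apzc/\ep)|\cdot|}(\Sigma_\ep-\sigma)}_{H^1} \le C\ep^2$, at the cost of a harmless factor of $\sqrt{2}$. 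I would also use the elementary values $\norm{\sigma}_{L^1} = 1$ and $\norm{\sigma}_{L^\infty} = 1/4$, read off from \eqref{sigma defn}.

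For part \ref{HW fundamental}, substituting the rescaling into the weighted $L^2$ norm and then setting $y = \ep x$ yields
\begin{multline*}
\norm{e^{\apzc|\cdot|}(\varsigma_{\cep}-\ep^2\sigma(\ep\cdot))}_{L^2}^2
= \ep^4\int_{\R} e^{2\apzc|x|}\big|\Sigma_\ep(\ep x)-\sigma(\ep x)\big|^2\dx \\
= \ep^3\norm{e^{(\apzc/\ep)|\cdot|}(\Sigma_\ep-\sigma)}_{L^2}^2
\le C\ep^7 ,
\end{multline*}
so the Jacobian $\ep^{-1}$ against the prefactor $\ep^4$ turns the $O(\ep^2)$ rate into the asserted $O(\ep^{7/2})$ rate. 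Part \ref{FP L1-1} then follows from part \ref{HW fundamental} and Cauchy--Schwarz against $e^{-\apzc|\cdot|} \in L^2(\R)$, and part \ref{FP L1-2} follows from part \ref{FP L1-1}, the triangle inequality, and $\norm{\ep^2\sigma(\ep\cdot)}_{L^1} = \ep\norm{\sigma}_{L^1} = \ep$, which dominates $\ep^{7/2}$ for $\ep$ small.

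For parts \ref{FP Linfty-1} and \ref{FP Linfty-2}, the change of variables gives $\norm{\varsigma_{\cep}-\ep^2\sigma(\ep\cdot)}_{L^\infty} = \ep^2\norm{\Sigma_\ep-\sigma}_{L^\infty}$, and the Sobolev embedding $H^1(\R) \hookrightarrow L^\infty(\R)$ together with Theorem \ref{friesecke-pego} (or Lemma \ref{hoffman-wayne lemma} with the weight discarded) bounds $\norm{\Sigma_\ep-\sigma}_{L^\infty} \le C\norm{\Sigma_\ep-\sigma}_{H^1} \le C\ep^2$; then part \ref{FP Linfty-2} follows by adding $\norm{\ep^2\sigma(\ep\cdot)}_{L^\infty} = \ep^2\norm{\sigma}_{L^\infty} = \ep^2/4$. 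For part \ref{FP weighted Linfty} I would split $e^{q\cdot}\varsigma_{\cep} = e^{q\cdot}(\varsigma_{\cep}-\ep^2\sigma(\ep\cdot)) + \ep^2 e^{q\cdot}\sigma(\ep\cdot)$. Rescaling the first summand turns its sup into $\ep^2\sup_{y}e^{(q/\ep)y}|\Sigma_\ep(y)-\sigma(y)|$, which, because $q/\ep < \apzc/\ep$ and using the trivial bound for $y<0$, is at most $\ep^2\norm{e^{(\apzc/\ep)|\cdot|}(\Sigma_\ep-\sigma)}_{L^\infty} \le C\ep^4$ by the embedding and the weighted $H^1$ bound; in the second summand $\sigma(\ep x) = \tfrac14\sech^2(\ep x/2)$ decays like $e^{-\ep|x|}$, so $e^{qx}\sigma(\ep x)$ is bounded on $\R$ as soon as $q<\ep$, which holds since $q<\ep/2$, making this term $O(\ep^2)$.

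The only step requiring genuine care is the accounting of powers of $\ep$ under the rescaling --- in particular the half-integer exponent in parts \ref{HW fundamental} and \ref{FP L1-1}, produced by the $\ep^4\cdot\ep^{-1}$ balance inside the square --- together with the initial passage from the one-sided weight of Lemma \ref{hoffman-wayne lemma} to a symmetric one, which is the only place the evenness of $\varsigma_{\cep}$ enters; neither constitutes a real obstacle.
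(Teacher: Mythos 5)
Your proof is correct and follows essentially the same route as the paper's: a rescaling that accounts for the $\ep^4$ prefactor against the $\ep^{-1}$ Jacobian, Cauchy--Schwarz against $e^{-\apzc|\cdot|}$ for the $L^1$ bound, Sobolev embedding for the $L^\infty$ bounds, and a split into a $\sigma$-piece (handled with $q<\ep/2$) plus a difference piece (handled with the weighted bound) for part (vi). The one place you are slightly more explicit than the paper is in passing from the one-sided weight $e^{(\apzc/\ep)\cdot}$ of Lemma \ref{hoffman-wayne lemma} to the symmetric weight $e^{(\apzc/\ep)|\cdot|}$ via evenness, which the paper invokes implicitly in its proof of part \ref{FP weighted Linfty}.
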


\begin{proof}

\begin{enumerate}[label={\bf(\roman*)}]

\item
We use the evenness of the integrand and substitute $u = \ep{x}$ to find
\begin{multline*}
\norm{e^{\apzc|\cdot|}(\varsigma_{\cep}-\ep^2\sigma(\ep\cdot))}_{L^2}^2
=\int_{-\infty}^{\infty} e^{2\apzc|x|}|\varsigma_{\cep}(x) - \ep^2\sigma(\ep{x})|^2 \dx
= 2\ep^3\int_0^{\infty} e^{2\apzc{u}/\ep}\left|\frac{1}{\ep^2}\varsigma_{\cep}\left(\frac{u}{\ep}\right)-\sigma(u)\right|^2 \du \\
\\
\le 2\ep^3\int_{-\infty}^{\infty} e^{2\apzc{u}/\ep}\left|\frac{1}{\ep^2}\varsigma_{\cep}\left(\frac{u}{\ep}\right)-\sigma(u)\right|^2 \du 
= 2\ep^3\bignorm{e^{(\apzc/\ep)\cdot}\left(\frac{1}{\ep^2}\varsigma_{\cep}\left(\frac{\cdot}{\ep}\right)-\sigma\right)}_{L^2}^2 
\le C\ep^7.
\end{multline*}

\item
The Cauchy-Schwarz inequality implies
\begin{multline*}
\norm{\varsigma_{\cep}-\ep^2\sigma(\ep\cdot)}_{L^1}
= \int_{-\infty}^{\infty} |\varsigma_{\cep}(x) - \ep^2\sigma(\ep{x})| \dx
= \int_{-\infty}^{\infty} e^{-\apzc|x|}\big(e^{\apzc|x|}|\varsigma_{\cep}(x) - \ep^2\sigma(\ep{x})|\big) \dx \\
\\
\le \left(\int_{-\infty}^{\infty} e^{-2\apzc|x|} \dx\right)^{1/2}\left(\int_{-\infty}^{\infty} e^{2\apzc|x|}|\varsigma_{\cep}(x)-\ep^2\sigma(\ep{x})|^2\dx\right)^{1/2}.
\end{multline*}
The first integral on the second line is just a constant (depending on $\apzc$), and the second integral is $\O(\ep^{7/2})$ by part \ref{HW fundamental}.

\item
This follows directly from part \ref{FP L1-1} and the calculation $\norm{\sigma(\ep\cdot)}_{L^1} = \O(\ep^{-1})$.

\item
The Sobolev embedding and the Friesecke-Pego estimate \eqref{original friesecke-pego estimate} imply
\[
\bignorm{\frac{1}{\ep^2}\varsigma_{\cep}\left(\frac{\cdot}{\ep}\right)-\sigma}_{L^{\infty}} \le C\ep^2,
\]
and then rescaling gives the desired estimate.

\item
This follows directly from part \ref{FP Linfty-1} and the calculation $\norm{\sigma(\ep\cdot)}_{L^{\infty}} = \O(1)$.

\item
First, we have
\[
\norm{e^{q\cdot}\varsigma_{\cep}}_{L^{\infty}}
\le \norm{e^{q\cdot}\varsigma_{\cep}}_{L^{\infty}(\R_-)}
+ \norm{e^{q\cdot}\varsigma_{\cep}}_{L^{\infty}(\R_+)}.
\]
Since $q > 0$, part \ref{FP Linfty-2} gives
\[
\norm{e^{q\cdot}\varsigma_{\cep}}_{L^{\infty}(\R_-)}
\le \norm{\varsigma_{\cep}}_{L^{\infty}(\R_-)}
\le \norm{\varsigma_{\cep}}_{L^{\infty}}
\le C\ep^2. 
\]
Next,
\begin{equation}\label{last HW R+}
\norm{e^{q\cdot}\varsigma_{\cep}}_{L^{\infty}(\R_+)}
\le \ep^2\norm{e^{q\cdot}\sigma(\ep\cdot)}_{L^{\infty}(\R_+)}
+ \norm{e^{q\cdot}(\varsigma_{\cep}-\ep^2\sigma(\ep\cdot))}_{L^{\infty}(\R_+)}.
\end{equation}
Since $0 < q < \ep/2$ and $x \ge 0$, we have
\[
|e^{q{x}}\sigma(\ep{x})| 
\le Ce^{q{x}}e^{-\ep{x}}
\le Ce^{-\ep{x}/2}
\le C,
\]
hence $\norm{e^{q\cdot}\sigma(\ep\cdot)}_{L^{\infty}(\R_+)} = \O(1)$.
For the other term in \eqref{last HW R+}, we rewrite
\begin{multline*}
\norm{e^{q\cdot}(\varsigma_{\cep}-\ep^2\sigma(\ep\cdot))}_{L^{\infty}(\R_+)}
\le \norm{e^{\apzc|\cdot|}(\varsigma_{\cep}-\ep^2\sigma(\ep\cdot))}_{L^{\infty}(\R_+)}
\le \norm{e^{\apzc|\cdot|}(\varsigma_{\cep}-\ep^2\sigma(\ep\cdot))}_{L^{\infty}} \\
\\
= \ep^2\bignorm{e^{(\apzc/\ep)|\cdot|}\left(\frac{1}{\ep^2}\varsigma_{\cep}\left(\frac{\cdot}{\ep}\right)-\sigma\right)}_{L^{\infty}}
\le C\ep^4
\end{multline*}
by Lemma \ref{hoffman-wayne lemma}.
\qedhere
\end{enumerate}
\end{proof}
\section{Some Proofs for the Verification of Hypotheses \ref{hypothesis 3} and \ref{hypothesis 4} in the case $|c| \gtrsim 1$}\label{hypos 3-4 appendix}

\subsection{Background from the theory of modified functional differential equations}
We extract the content of the following theorem from Theorem 3.2 and Proposition 3.4 in \cite{hvl}.

\begin{theorem}[Hupkes \& Verduyn Lunel]\label{hvl theorem}
Let $A_0, A_1, \ldots, A_n \in \C^{m \times m}$ and let $d_0 < d_1 < \cdots < d_n$ be real numbers.
For $\fb \in W^{1,\infty}$, define
\begin{equation}\label{abstract hvl syst}
\Lambda\fb 
:= \fb' - \sum_{j=1}^n A_jS^{d_j}\fb.
\end{equation}
Let
\begin{equation}\label{abstract characteristic equation}
\Delta(z)
:= z\ind-\sum_{j=1}^n e^{d_jz}A_j \in \C^{m \times m},
\end{equation}
where $\ind$ is the $m\times{m}$ identity matrix.
We call $\Delta$ the characteristic matrix corresponding to the system \eqref{abstract hvl syst}.
Suppose that $q \in \R$ with $\det[\Delta(z)] \ne 0$ for $\re(z) = q$.
Then $\Lambda$ is an isomorphism between $W_q^{1,\infty}(\R,\C^m)$ and $L_q^{\infty}(\R,\C^m)$, where these spaces were defined in Definition \ref{one sided expn weighted spaces defn}.
We denote its inverse by $\Lambda_q^{-1}$.
Moreover, we have two formulas for $\Lambda_q^{-1}$.

\begin{enumerate}[label={\bf(\roman*)}, ref={(\roman*)}]

\item\label{hvl-i}
There is a function $\Gscrb_q \in \cap_{p=1}^{\infty} L^p(\R,\C^{m \times m})$ such that for $\gb \in L_q^{\infty}(\R,\C^{m \times m})$,
\begin{equation}\label{Lambda inv via convolution}
(\Lambda_q^{-1})(x)
= e^{qx}\int_{-\infty}^{\infty} e^{-qs}\Gscrb_q(x-s)\gb(s)\ds
\end{equation}	
Moreover,
\begin{equation}\label{convolution kernel}
\hat{\Gscrb}_q(k) = \Delta(ik+q)^{-1}, \ k \in \R.
\end{equation}

\item\label{hvl-ii}
Given $\delta \in (0,|q|)$ and $\gb \in L_q^{\infty}(\R,\C^m)$, we have
\begin{equation}\label{Lambda inv formula}
(\Lambda_q^{-1}\gb)(x)
= \frac{1}{2\pi{i}}\int_{q+\delta-i\infty}^{q+\delta+i\infty} e^{xz}\Delta(z)^{-1}\lt_+[\gb](z)\dz
+ \frac{1}{2\pi{i}}\int_{q-\delta-i\infty}^{q-\delta+i\infty} e^{xz}\Delta(z)^{-1}\lt_-[\gb](z) \dz,
\end{equation}
where $\lt_{\pm}$ are the Laplace transforms defined in Appendix \ref{laplace transform appendix}.
\end{enumerate}
\end{theorem}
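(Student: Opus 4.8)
The plan is to derive this statement essentially verbatim from the Fredholm and representation theory for functional differential equations of mixed type developed by Hupkes and Verduyn Lunel in \cite{hvl}, so that the ``proof'' reduces to matching notation with that paper together with a single exponential conjugation. I sketch the mechanism rather than reproving \cite{hvl}.

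First I would remove the exponential weight. For $\ub \in W^{1,\infty}(\R,\C^m)$, the product rule and the shift identity $S^{d}(e^{q\cdot}\ub) = e^{q\cdot}e^{qd}S^{d}\ub$ give $\Lambda(e^{q\cdot}\ub) = e^{q\cdot}\widetilde{\Lambda}_q\ub$, where $\widetilde{\Lambda}_q\ub := \ub' - \bigl(\sum_j e^{qd_j}A_jS^{d_j} - q\ind\bigr)\ub$. Hence $\Lambda$ is an isomorphism from $W_q^{1,\infty}(\R,\C^m)$ onto $L_q^{\infty}(\R,\C^m)$ precisely when $\widetilde{\Lambda}_q$ is an isomorphism from $W^{1,\infty}(\R,\C^m)$ onto $L^{\infty}(\R,\C^m)$, and the characteristic matrix of $\widetilde{\Lambda}_q$, computed from \eqref{abstract characteristic equation}, is $z \mapsto \Delta(z+q)$; on the imaginary axis this is $k \mapsto \Delta(ik+q)$. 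Taking Fourier transforms, $\widetilde{\Lambda}_q$ is the Fourier multiplier with matrix symbol $\Delta(ik+q)$, and the hypothesis $\det[\Delta(z)] \ne 0$ on $\re(z) = q$ is exactly the statement that this symbol is invertible for every $k\in\R$, so that the formal inverse is the multiplier with symbol $\Delta(ik+q)^{-1}$.

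The substantive content, which is Theorem 3.2 in \cite{hvl}, is that this formal inverse is an honest bounded operator between the stated $L^\infty$-type spaces, realized by convolution against a kernel $\Gscrb_q$ with $\hat{\Gscrb}_q(k) = \Delta(ik+q)^{-1}$. This is not a formality, since $L^\infty$ lies outside the reach of Mikhlin-type multiplier theory; it rests instead on the structure of $\Delta(z)^{-1}$, a matrix of meromorphic functions decaying like $1/|z|$ as $|\re(z)|\to\infty$ (from the $z\ind$ term in \eqref{abstract characteristic equation}) with only finitely many poles near the line $\re(z) = q$ and none on it. Isolating those poles by a partial-fraction decomposition --- morally the computation performed for $1/\tB_c$ in Appendix \ref{proof of B props appendix} --- yields $\Gscrb_q \in \cap_{p=1}^\infty L^p$, and convolution against an $L^1$ kernel is bounded on $L^\infty$; conjugating back by $e^{q\cdot}$ produces \eqref{Lambda inv via convolution} and \eqref{convolution kernel}. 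For part \ref{hvl-ii} I would split $\gb$ into the part supported on $[0,\infty)$ and the part supported on $(-\infty,0]$; by the inversion formulas \eqref{lt+inv} and \eqref{lt-inv} these pieces are encoded by $\lt_+[\gb]$ and $\lt_-[\gb]$, which are analytic on $\re(z) > q$ and $\re(z) < q$ respectively. Inserting the corresponding Laplace representations into the convolution formula and applying Fubini produces two contour integrals whose contours are the lines $\re(z) = q+\delta$ and $\re(z) = q-\delta$, the separation reflecting those disjoint half-planes of analyticity; a Cauchy-theorem argument, using that $\det\Delta$ has no zeros strictly between the two contours and that $\delta < |q|$ keeps each contour on the side of $\re(z)=0$ making the relevant half-line integral absolutely convergent, shows the answer is independent of the small choice of $\delta$ and equals \eqref{Lambda inv formula}. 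This is Proposition 3.4 in \cite{hvl}.

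The main obstacle is exactly the $L^\infty$-boundedness of the inverse in the previous paragraph: it cannot be obtained by soft functional analysis, and one must instead establish that $\Delta(i\cdot + q)^{-1}$ is the Fourier transform of an $L^1$ matrix kernel, uniformly enough to license the convolution representation on $L_q^\infty$ --- which is precisely where the detailed estimates of \cite{hvl} (paralleling those of Appendix \ref{proof of B props appendix}) are unavoidable. Everything else --- the conjugation of the first step, the recognition of $\Delta$ as the characteristic matrix, and the contour deformation --- is routine bookkeeping.
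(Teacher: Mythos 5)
The paper gives no independent proof of this theorem: it is stated as an extraction from Theorem 3.2 and Proposition 3.4 of \cite{hvl}, which is precisely what you do. Your explanatory sketch of the mechanism behind the citation --- the exponential conjugation $\Lambda(e^{q\cdot}\ub) = e^{q\cdot}\widetilde{\Lambda}_q\ub$ identifying $\Delta(\cdot+q)$ as the shifted characteristic matrix, the observation that the invertibility hypothesis on $\re(z)=q$ is exactly invertibility of the Fourier symbol $\Delta(ik+q)$, and the recognition that the real work is the $L^1$ kernel estimate licensing the convolution representation on $L^\infty$ and the Laplace-contour formula via splitting $\gb$ by sign --- is accurate and consistent with the cited results, so your approach coincides with the paper's.
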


\subsection{Proof of Proposition \ref{b-residue}}\label{proof of prop b-residue appendix}
That $\varsigma_{\cep}g \in L_{q_{\ep}}^{\infty}$ for $g \in L_{-q_{\ep}}^{\infty}$ is a straightforward consequence of part \ref{FP weighted Linfty} of Proposition \ref{hoffman-wayne proposition}.

\subsubsection{Residue theory}\label{residue strategery}
Take $R > 0$ so large that $0 < \omega_{\cep} < R$ for all $0 < \ep < \ep_{\B}$ and consider the contour $\varrho_{\ep,1}^{\pm}(R) + \varrho_{\ep,2}^{\pm}(R) + \varrho_{\ep,3}^{\pm}(R) + \varrho_{\ep,4}^{\pm}(R)$ sketched in Figure \ref{fig_contour}.
The residue theorem tells us
\begin{multline}\label{residue!}
\frac{1}{2\pi{i}}\int_{\varrho_{\ep,1}^{\pm}(R) + \varrho_{\ep,2}^{\pm}(R) + \varrho_{\ep,3}^{\pm}(R) + \varrho_{\ep,4}^{\pm}(R)}
\K_{\ep}(x,z)\lt_{\pm}[\varsigma_{\cep}g](z) \dz
= 
\res\left(\K_{\ep}(x,z)\lt_{\pm}[\varsigma_{\cep}g](z); z=i\omega\right) \\
\\
+\res\left(\K_{\ep}(x,z)\lt_{\pm}[\varsigma_{\cep}g](z); z=-i\omega\right).
\end{multline}
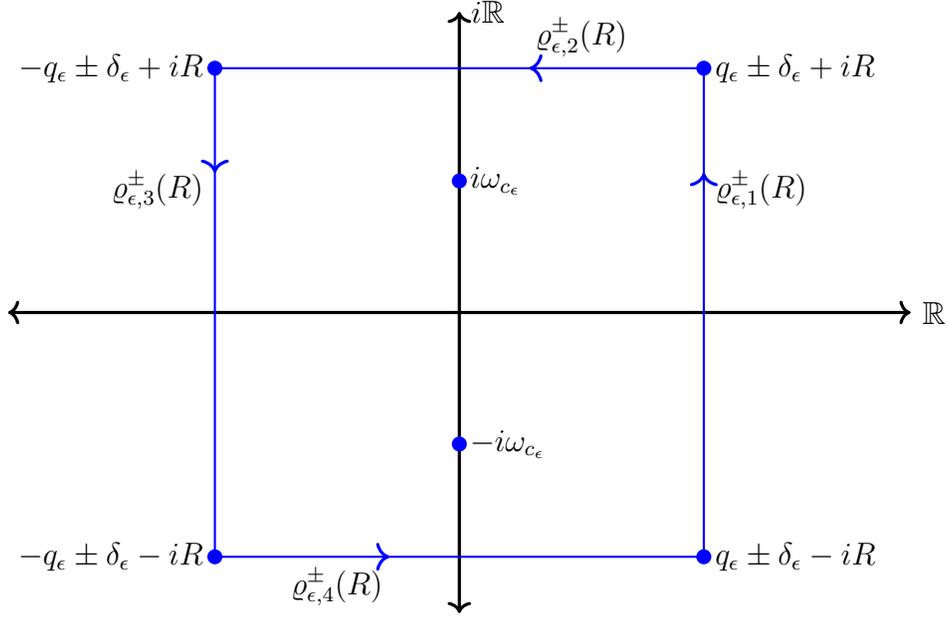
\begin{figure}
\[
\begin{tikzpicture}

\draw[<->, very thick] (-6,0)--(6,0)node[right]{$\R$};
\draw[<->,very thick] (0,-4)--(0,4)node[right]{$i\R$};

\draw[thick,blue] 
(3.25,0)
--node[midway,right,black]{$\varrho_{\ep,1}^{\pm}(R)$} 
(3.25,3.25)node[right,black]{$q_{\ep}\pm\delta_{\ep}+iR$}
-- node[midway,above,black]{$\varrho_{\ep,2}^{\pm}(R)$}
(0,3.25)
-- (-3.25,3.25)node[left,black]{$-q_{\ep}\pm\delta_{\ep}+iR$}
-- node[midway,left,black]{$\varrho_{\ep,3}^{\pm}(R)$}
(-3.25,0) 
-- (-3.25,-3.25)node[left,black]{$-q_{\ep}\pm\delta_{\ep}-iR$}
-- node[midway,below,black]{$\varrho_{\ep,4}^{\pm}(R)$}
(0,-3.25)
-- (3.25,-3.25)node[right,black]{$q_{\ep}\pm\delta_{\ep}-iR$}
-- cycle[arrow inside={opt={blue,scale=1.5}}{1/14,3/14,6/14,10/14}];

\fill[blue] (3.25,3.25) circle(.1);
\fill[blue] (3.25,-3.25) circle(.1);
\fill[blue] (-3.25,-3.25) circle(.1);
\fill[blue] (-3.25,3.25) circle(.1);

\fill[blue] (0,1.75)node[right,black]{$i\omega_{\cep}$} circle(.1);
\fill[blue] (0,-1.75)node[right,black]{$-i\omega_{\cep}$} circle(.1);

\end{tikzpicture}
\]
\caption{The contour $\varrho_{\ep,1}^{\pm}(R) + \varrho_{\ep,2}^{\pm}(R) + \varrho_{\ep,3}^{\pm}(R) + \varrho_{\ep,4}^{\pm}(R)$}
\label{fig_contour}
\end{figure}

By the inverse formulas in \eqref{B inv formula}, we can write
\begin{equation}\label{B_-eta contour}
[\B_{\cep}^-]^{-1}[\varsigma_{\cep}g](x)
= -\frac{1}{2\pi{i}}\lim_{R \to \infty} \left(\int_{\varrho_{\ep,3}^+(R)} \K_{\ep}(x,z)\lt_+[\varsigma_{\cep}g](z) \dz
+\int_{\varrho_{\ep,3}^-(R)} \K_{\ep}(x,z)\lt_+[\varsigma_{\cep}g](z) \dz\right)
\end{equation}
and
\begin{equation}\label{B_+eta contour}
[\B_{\cep}^+]^{-1}[\varsigma_{\cep}g](x)
= \frac{1}{2\pi{i}}\lim_{R \to \infty} \left(\int_{\varrho_{\ep,1}^+(R)} \K_{\ep}(x,z)\lt_+[\varsigma_{\cep}g](z) \dz
+\int_{\varrho_{\ep,1}^-(R)} \K_{\ep}(x,z)\lt_-[\varsigma_{\cep}g](z) \dz\right).
\end{equation}
If we can show that 
\begin{equation}\label{vanishing curves}
\lim_{R \to \infty} \int_{\varrho_{\ep,2}^{\pm}(R)} \K_{\ep}(x,z)\lt_{\pm}[\varsigma_{\cep}g](z) \dz 
= \lim_{R \to \infty} \int_{\varrho_{\ep,4}^{\pm}(R)} \K_{\ep}(x,z)\lt_{\pm}[\varsigma_{\cep}g](z) \dz
=0
\end{equation}
then we can combine \eqref{residue!}, \eqref{B_-eta contour}, and \eqref{B_+eta contour} to conclude 
\begin{equation}\label{formula pre-res}
\begin{aligned}
[\B_{\cep}^-]^{-1}[\varsigma_{\cep}g](x)
&= [\B_{\cep}^+]^{-1}[\varsigma_{\cep}g](x)
- \res\left(\K_{\ep}(x,z)\lt_+[\varsigma_{\cep}g](z); z=i\omega_{\cep}\right) \\
&- \res\left(\K_{\ep}(x,z)\lt_+[\varsigma_{\cep}g](z); z=-i\omega_{\cep}\right) 
-\res\left(\K_{\ep}(x,z)\lt_-[\varsigma_{\cep}g](z); z=i\omega_{\cep}\right) \\
&-\res\left(\K_{\ep}(x,z)\lt_-[\varsigma_{\cep}g](z); z=-i\omega_{\cep}\right).
\end{aligned}
\end{equation}
We will then be able to massage the right side of \eqref{formula pre-res} into our desired formula \eqref{residue switcheroo}.
So, we commence with the residue theory.

\subsubsection{Calculation of the residues \eqref{residue!}}
Recall from part \ref{B simple zeros} of Proposition \ref{symbol of B} that 
\[
|(\tB_{\cep})'(\pm\omega_{\cep})|
\ge b_{\B} > 0
\]
for $0 < \ep < \ep_{\B}$ and $|\mu| \le \mu_{\B}(\cep)$.
This hypothesis and the definition of $\K_{\ep}$ in \eqref{K defn} ensures that $\K_{\ep}(x,z)\lt_{\pm}[\varsigma_{\cep}g](z)$ has simple poles at $z=\pm{i}\omega_{\cep}$ and so
\begin{equation}\label{res+ep}
\res(\K_{\ep}(x,z)\lt_{\pm}[\varsigma_{\cep}g](z);z={i\omega_{\cep}})
= i\left(\frac{\lt_{\pm}[\varsigma_{\cep}g](i\omega_{\cep})}{(\tB_{\cep})'(\omega_{\cep})}\right)e^{i\omega_{\cep}x}
\end{equation}
and
\begin{equation}\label{res-ep}
\res(\K_{\ep}(x,z)\lt_{\pm}[\varsigma_{\cep}g](z);z={-i\omega_{\cep}})
= -i\left(\frac{\lt_{\pm}[\varsigma_{\cep}g](-i\omega_{\cep})}{(\tB_{\cep})'(-\omega_{\cep})}\right)e^{-i\omega_{\cep}x}.
\end{equation}

\subsubsection{Estimates on the Laplace transforms in \eqref{res+ep} and \eqref{res-ep}}\label{Laplace estimates}
Throughout this section, we will need the estimates on $\varsigma_{\cep}$ from Proposition \ref{hoffman-wayne proposition}.

We calculate
\[
|\lt_+[\varsigma_{\cep}g](\pm{i\omega_{\cep}})|
 = \left|\int_0^{\infty} e^{\mp{i\omega_{\cep}}s}\varsigma_{\cep}(s)g(s)\ds\right|
 \le \int_0^{\infty} \varsigma_{\cep}(s) |g(s)| \ds.
\]
Now we multiply by $1 = e^{q_{\ep}{s}}e^{-q_{\ep}{s}}$ to find
\begin{multline*}
\int_0^{\infty} \varsigma_{\cep}(s)|g(s)|\ds
= \int_0^{\infty} e^{-q_{\ep}{s}}\varsigma_{\cep}(s)e^{q_{\ep}{s}}|g(s)| \ds
\le \norm{g}_{L_{-q_{\ep}}^{\infty}}\int_0^{\infty} e^{-q_{\ep}{s}}\varsigma_{\cep}(s) \ds \\
\\
\le \norm{g}_{L_{-q_{\ep}}^{\infty}}\int_0^{\infty} \varsigma_{\cep}(s) \ds 
\le \norm{g}_{L_{-q_{\ep}}^{\infty}}\norm{\varsigma_{\cep}}_{L^1}
\le C\ep\norm{g}_{L_{-q_{\ep}}^{\infty}}.
\end{multline*}
Here we need to recall from Theorem \ref{friesecke-pego} that $\varsigma_{\cep} > 0$, so $\norm{\varsigma_{\cep}}_{L^1} = \medint_{-\infty}^{\infty} \varsigma_{\cep}(x) \dx$.

The situation for $\lt_-$ is slightly different.
With the same manipulations as above, we find
\[
|\lt_-[\varsigma_{\cep}g](\pm{i\omega_{\cep}})|
\le \norm{g}_{L_{-q_{\ep}}^{\infty}}\int_0^{\infty} e^{q_{\ep}{s}}\varsigma_{\cep}(s)\ds.
\]
Now the exponential contains the positive factor $q_{\ep}$, so we cannot blithely ignore it.
Instead, we bound this integral as 
\[
\int_0^{\infty} e^{q_{\ep}{s}}\varsigma_{\cep}(s)\ds
\le \bunderbrace{\ep^2\int_0^{\infty} e^{q_{\ep}{s}}\sigma(\ep{s})\ds}{\I_{\ep,1}}
 + \bunderbrace{\int_0^{\infty} e^{q_{\ep}{s}}|\varsigma_{\cep}(s) - \ep^2\sigma(\ep{s})| \ds}{\I_{\ep,2}}.
\]
Since $\sigma(X) = \sech^2(X/2)/4$, we use the estimates on $q_{\ep}$ from \eqref{very hungry inequality} to bound
\[
\I_{\ep,1}
\le C\ep^2\int_0^{\infty} e^{q_{\ep}{s}}e^{-\ep{s}}\ds
\le C\ep^2\int_0^{\infty} e^{-\ep{s}/2} \ds
= \O(\ep).
\]

Next, we rewrite
\begin{multline*}
\I_{\ep,2}
= \int_0^{\infty} (e^{q_{\ep}{s}}e^{-\apzc\ep{s}})\big(e^{\apzc\ep{s}}|\varsigma_{\cep}(s)-\ep^2\sigma(\ep{s})|\big) \ds \\
\\
\le \bunderbrace{\left(\int_0^{\infty} e^{2(q_{\ep}-\apzc\ep)s}\ds\right)^{1/2}}{\I_{\ep,3}}\bunderbrace{\left(\int_0^{\infty} e^{2\apzc\ep{s}}|\varsigma_{\cep}(s)-\ep^2\sigma(\ep{s})|^2\ds\right)^{1/2}}{\I_{\ep,4}}.
\end{multline*}
We appeal again to \eqref{very hungry inequality} to find
\[
\I_{\ep,3} 
\le \left(\int_0^{\infty} e^{-2\apzc\ep{s}}\ds\right)^{1/2}
= \O(\ep^{-1/2}).
\]
Last, since $0 \le e^{\apzc\ep{s}} \le e^{\apzc{s}}$ for $0 \le \ep \le 1$ and $s \ge 0$, we use part \ref{HW fundamental} of Proposition \ref{hoffman-wayne proposition} to estimate
\[
\I_{\ep,4}
\le \left(\int_0^{\infty} e^{2\apzc{s}}|\varsigma_{\cep}(s)-\ep^2\sigma(\ep{s})|^2\ds\right)^{1/2}
\le \norm{e^{\apzc|\cdot|}(\varsigma_{\cep}-\ep^2\sigma(\ep\cdot))}_{L^2}
\le C\ep^2.
\]

Combining all these estimates, we conclude
\begin{equation}\label{ultimate Laplace estimate}
|\lt_{\pm}[\varsigma_{\cep}g](\pm{i\omega_{\cep}})|
\le C\ep\norm{g}_{L_{-q_{\ep}}^{\infty}}.
\end{equation}

\subsubsection{The limits \eqref{vanishing curves}}
We prove only that 
\begin{equation}\label{vanishing limit eg}
\lim_{R \to \infty} \int_{\varrho_{\ep,2}^{+}(R)} \K_{\ep}(x,z)\lt_{+}[\varsigma_{\cep}g](z) \dz 
= 0,
\end{equation}
the other cases being similar.
We parametrize the line segment $\varrho_{\ep,2}^+(R)$ by
\[
\zpzc_{\ep}(t,R)
:= (1-t)(q_{\ep} + \delta_{\ep} + iR) + t(-q_{\ep}+\delta_{\ep}+iR)
= -2q_{\ep}{t} + \delta_{\ep}+q_{\ep} + iR,
\]
and so the line integral in \eqref{vanishing limit eg} over $\varrho_{\ep,2}^+(R)$ is 
\[
-2q_{\ep}\int_0^1 \K_{\ep}(x,\zpzc_{\ep}(t,R))\lt_+[\varsigma_{\cep}g](\zpzc_{\ep}(t,R)) \dt.
\]
To establish \eqref{vanishing limit eg}, it suffices, of course, to show that 
\begin{equation}\label{suffice contour vanish}
\lim_{R \to \infty} \max_{0 \le t \le 1} | \K_{\ep}(x,\zpzc_{\ep}(t,R))\lt_+[\varsigma_{\cep}g](\zpzc_{\ep}(t,R))|
=0.
\end{equation}
The methods of Section \ref{Laplace estimates} can be adapted to show that $|\lt_+[\varsigma_{\cep}g](\zpzc_{\ep}(t,R))|$ is bounded above by a constant independent of $t$ or $R$ (but dependent on $g$ and $\ep$, although that does not matter here).
Next, we refer to the definition of $\K_{\ep}$ in \eqref{K defn} to estimate
\[
|\K_{\ep}(x,-2q_{\ep}{t} + \delta_{\ep} + q_{\ep} + iR)| 
\le \frac{e^{x(-2q_{\ep}{t}+\delta_{\ep})}}{\tB_{\cep}(i\zpzc_{\ep}(t,R))}
\]
The numerator on the right side above is independent of $R$ and bounded in $t$.
We can infer from the quadratic estimates on $\tB_{\cep}$ in \eqref{B quadratic estimate} that the denominator is $\O(R^2)$ uniformly in $t$, and so the whole expression above vanishes as $R \to \infty$.

\subsubsection{Conclusion of the proof of Proposition \ref{b-residue}}
Since we have established the vanishing of the integrals in \eqref{vanishing curves}, we may use the residues computed in \eqref{res+ep} and \eqref{res-ep} and the strategy outlined in Section \ref{residue strategery} to conclude from \eqref{formula pre-res} that
\begin{multline*}
[\B_{\cep}^-]^{-1}[\varsigma_{\cep}g](x)
= [\B_{\cep}^+]^{-1}[\varsigma_{\cep}g](x)
+ -\left(\frac{\lt_+[\varsigma_{\cep}g](i\omega_{\cep}) + \lt_-[\varsigma_{\cep}g](i\omega_{\cep})}{(\tB_{\cep})'(\omega_{\cep})}\right)e^{i\omega_{\cep}{x}} \\
\\
+i\left(\frac{\lt_+[\varsigma_{\cep}g](-i\omega_{\cep}) + \lt_-[\varsigma_{\cep}g](-i\omega_{\cep})}{(\tB_{\cep})'(-\omega_{\cep})}\right)e^{-i\omega_{\cep}{x}}.
\end{multline*}
The estimates on the functionals $\alpha_{\ep}$ and $\beta_{\ep}$ in \eqref{alpha beta functional estimates} then follow from the estimates on the Laplace transforms in Section \ref{Laplace estimates}, particularly \eqref{ultimate Laplace estimate}.

\subsection{Proof of Proposition \ref{alpha beta prop}}\label{alpha beta prop proof appendix}

\subsubsection{Refined estimates on $\alpha_{\ep}$}\label{alpha estimates proof appendix}
The functional $\alpha_{\ep}$ was defined in \eqref{alpha defn}.
Using the expression for $g_{\ep}$ from \eqref{new g-ep}, we have
\[
\alpha_{\ep}[\M{g}_{\ep}]
= \alpha_{\ep}[\M{e}^{i\omega_{\cep}\cdot}] 
+ \alpha_{\ep}\left[\M\sum_{k=1}^{\infty} \big([\B_{\cep}^-]^{-1}\Sigma_{\cep}^*\big)^ke^{i\omega_{\cep}\cdot}\right].
\]
Since $\M{e}^{i\omega_{\cep}\cdot} = \tM(\omega_{\cep})e^{i\omega_{\cep}\cdot}$, per \eqref{M defn}, we can calculate the first term just by computing
\[
\alpha_{\ep}[e^{i\omega_{\cep}\cdot}]
= -i\tM(\omega_{\cep})\left(\frac{\lt_+[\varsigma_{\cep}e^{i\omega_{\cep}\cdot}](i\omega_{\cep})
+ \lt_-[\varsigma_{\cep}e^{i\omega_{\cep}\cdot}](i\omega_{\cep})}{(\tB_{\cep})'(\omega_{\cep})}\right).
\]
The Laplace transforms are
\[
\lt_{\pm}[\varsigma_{\cep}e^{i\omega_{\cep}\cdot}](i\omega_{\cep})
= \int_0^{\infty} e^{\mp{i\omega{s}}}\varsigma_{\cep}(s)e^{\pm{i\omega_{\cep}{s}}}\ds
= \int_0^{\infty} \varsigma_{\cep}(s) \ds
= \frac{\norm{\varsigma_{\cep}}_{L^1}}{2}.
\]
Here we needed the result from Theorem \ref{friesecke-pego} that $\varsigma_{\cep}$ is positive.
Thus
\begin{equation}\label{phi0+}
\alpha_{\ep}[\M{e}^{i\omega_{\cep}\cdot}] 
= -i\ep\left(\frac{\tM(\omega_{\cep})}{(\tB_{\cep})'(\omega_{\cep})}\right)\left(\frac{\norm{\varsigma_{\cep}}_{L^1}}{\ep}\right)
=: i\ep\theta_{\ep,0}^+,
\end{equation}
and we note that $\theta_{\ep,0}^+$ is real.
The uniform boundedness of $\omega_{\cep}$, the estimate \eqref{b-b-c ineq}, the definition of $\tM$ in \eqref{M defn}, and part \ref{FP L1-2} of Proposition \ref{hoffman-wayne proposition} tell us there are $C_1$, $C_2 > 0$ such that 
\[
0 < C_1 \le |\theta_{\ep,0}^+| \le C_2 < \infty
\]
for all $0 < \ep < \ep_{\B}$.

Now we show
\begin{equation}\label{O(ep^2) goal}
\alpha_{\ep}\left[\M\sum_{k=1}^{\infty} \big([\B_{\cep}^-]^{-1}\Sigma_{\cep}^*\big)^ke^{i\omega_{\cep}\cdot}\right]
= \O(\ep^2).
\end{equation}
This holds if there exists $C = \O(1)$ such that 
\begin{equation}\label{toward O(ep-2) est}
\big|\alpha_{\ep}[\M\big([\B_{\cep}^-]^{-1}\Sigma_{\cep}^*\big)^ke^{i\omega_{\cep}\cdot}]\big|
\le C^k\ep^{k+1}.
\end{equation}
In that case, we have
\begin{equation}\label{geometric series}
\alpha_{\ep}\left[\M\sum_{k=1}^{\infty} \big([\B_{\cep}^-]^{-1}\Sigma_{\cep}^*\big)^ke^{i\omega_{\cep}\cdot}\right]
\le \sum_{k=1}^{\infty} C^k\ep^{k+1}
= C\ep^2\sum_{k=0}^{\infty} (C\ep)^k.
\end{equation}
Then we set
\begin{equation}\label{ep-res-ep1}
\ep_{\RES,1} 
:= \min\left\{\ep_{\B},\frac{1}{2C}\right\},
\end{equation}
in which case the geometric series in \eqref{geometric series} converges for\footnote{We are, admittedly, being rather cavalier about what $C$ is. 
In Section \ref{basic notation section}, we agreed that $C$ would denote any constant that is $\O(1)$ in $\ep$; now we are restricting our range of $\ep$ based on one of these $C$.
To be more precise, we could trace the lineage of the $C$ defining $\ep_{\RES,1}$ in \eqref{ep-res-ep1} back to three sources: the estimates in Proposition \ref{B inv prop} on $[\B_{\cep}^-]^{-1}$, in \eqref{alpha beta functional estimates} on $\alpha_{\ep}$, and in Proposition \ref{hoffman-wayne proposition} on $\varsigma_{\cep}$.}
$0 < \ep < \ep_{\RES,1}$.
We conclude
\begin{equation}\label{alpha expansion}
\alpha_{\ep}\left[\M{g}_{\ep}\right]
= i\ep\theta_{\ep,0}^+ + \ep^2\theta_{\ep}^+,
\end{equation}
where $\theta_{\ep,0}^+ \in \R\setminus\{0\}$ and $\theta_{\ep}^+ = \O(1)$.

So, we only need to prove \eqref{toward O(ep-2) est}.
Fix some $h \in L^{\infty} \cup L_{-q_{\ep}}^{\infty}$.
Then
\begin{equation}\label{ind-est-1}
\norm{\varsigma_{\cep}h}_{L_{-q_{\ep}}^{\infty}}
\le C\ep^2\min\left\{\norm{h}_{L^{\infty}},\norm{h}_{L_{-q_{\ep}}^{\infty}}\right\},
\end{equation}
for if $h \in L_{-q_{\ep}}^{\infty}$, then
\[
\norm{e^{q_{\ep}\cdot}\varsigma_{\cep}h}_{L^{\infty}}
\le \norm{\varsigma_{\cep}}_{L^{\infty}}\norm{e^{q_{\ep}\cdot}h}_{L^{\infty}}
\le C\ep^2\norm{h}_{L_{-q_{\ep}}^{\infty}}
\]
by part \ref{FP Linfty-2} of Proposition \ref{hoffman-wayne proposition}.
Otherwise, if $h \in L^{\infty}$, we can use the estimate $\norm{e^{q_{\ep}\cdot}\varsigma_{\cep}}_{L^{\infty}} \le C\ep^2$ from part \ref{FP weighted Linfty} of that proposition to bound $\norm{\varsigma_{\cep}h}_{L_{-q_{\ep}}^{\infty}} \le C\ep^2\norm{h}_{L^{\infty}}$.

Next, we estimate
\begin{equation}\label{ind-est-2}
\norm{[\B_{\cep}^-]^{-1}\varsigma_{\cep}h}_{L_{-q_{\ep}}^{\infty}}
\le \norm{[\B_{\cep}^-]^{-1}\varsigma_{\cep}h}_{W_{-q_{\ep}}^{2,\infty}}
\le C\ep^{-1}\norm{\varsigma_{\cep}h}_{L_{-q_{\ep}}^{\infty}}
\le C\ep\min\left\{\norm{h}_{L^{\infty}},\norm{h}_{L_{-q_{\ep}}^{\infty}}\right\}
\end{equation}
by Proposition \ref{B inv prop} and \eqref{ind-est-1}.
Last, we use the estimate \eqref{alpha beta functional estimates} on $\alpha_{\ep}$ to bound
\begin{equation}\label{ind-est-3}
\big|\alpha_{\ep}[\M[\B_{\cep}^-]^{-1}\varsigma_{\cep}h]\big|
\le C\ep\norm{[\B_{\cep}^-]^{-1}\varsigma_{\cep}h}_{L_{-q_{\ep}}^{\infty}}
\le C\ep^2\min\left\{\norm{h}_{L^{\infty}},\norm{h}_{L_{-q_{\ep}}^{\infty}}\right\}
\end{equation}
by \eqref{ind-est-2}.

If we rewrite
\[
\M\big([\B_{\cep}^-]^{-1}\Sigma_{\cep}^*\big)^ke^{i\omega_{\cep}\cdot}
= \M[\B_{\cep}^-]^{-1}\varsigma_{\cep}\M\big([\B_{\cep}^-]^{-1}\Sigma_{\cep}^*\big)^{k-1}e^{i\omega_{\cep}\cdot}
\]
and take 
\[
h = \M\big([\B_{\cep}^-]^{-1}\Sigma_{\cep}^*\big)^{k-1}e^{i\omega_{\cep}\cdot},
\]
then we can induct on $k$ and use \eqref{ind-est-3} to obtain our desired estimate \eqref{toward O(ep-2) est}.

\subsubsection{Refined estimates on $\beta_{\ep}$}\label{beta estimates proof appendix}
The functional $\beta_{\ep}$ was defined in \eqref{beta defn}.
We have
\[
\beta_{\ep}[\M{g}_{\ep}]
= \tM_{\ep}(\omega_{\cep})\beta_{\ep}[e^{i\omega_{\cep}\cdot}] 
+ \beta_{\ep}\left[\M\sum_{k=1}^{\infty} \big([\B_{\cep}^-]^{-1}\Sigma_{\cep}^*\big)^ke^{i\omega_{\cep}\cdot}\right].
\]
The methods of Appendix \ref{alpha estimates proof appendix} show
\[
\beta_{\ep}\left[\M\sum_{k=1}^{\infty} \big([\B_{\cep}^-]^{-1}\Sigma_{\cep}^*\big)^ke^{i\omega_{\cep}\cdot}\right] = \O(\ep^2);
\]
all we have to do is replace $\alpha_{\ep}$ with $\beta_{\ep}$.
However, now we will show that $\beta_{\ep}[e^{i\omega_{\cep}\cdot}] = \O(\ep^2)$ as well.

We begin with the Laplace transforms in the definition of $\beta_{\ep}[e^{i\omega_{\cep}\cdot}]$:
\begin{multline}\label{lt+-}
\lt_+[\varsigma_{\cep}e^{i\omega\cdot}](-i\omega_{\cep})
= \int_0^{\infty} e^{-(-i\omega_{\cep})s}\varsigma_{\cep}(s)e^{i\omega_{\cep}{s}}\ds
= \int_0^{\infty} e^{2i\omega_{\cep}{s}}\varsigma_{\cep}(s) \ds \\
\\
= \bunderbrace{\int_0^{\infty} \cos(2\omega_{\cep}{s})\varsigma_{\cep}(s)\ds}{\I_{\ep,1}}
+ \bunderbrace{i\int_0^{\infty} \sin(2\omega_{\cep}{s})\varsigma_{\cep}(s) \ds}{i\I_{\ep,2}}.
\end{multline}
We decompose
\[
\I_{\ep,1}
= \bunderbrace{\frac{\ep^2}{2}\int_{-\infty}^{\infty} \cos(2\omega_{\cep}{s})\sigma(\ep{s}) \ds}{\I_{\ep,3}}
+ \bunderbrace{\int_0^{\infty} \cos(2\omega_{\cep}{s})(\varsigma_{\cep}(s)-\ep^2\sigma(\ep{s}))\ds}{\I_{\ep,4}}.
\]

Note that the evenness of the integrand allows us to integrate over all of $\R$ in $\I_{\ep,3}$.
Moreover, after changing variables with $u = \ep{s}$ in $\I_{\ep,3}$, evenness also implies
\[
\I_{\ep,3} 
= \frac{\ep}{2}\int_{-\infty}^{\infty} \cos\left(\frac{2\omega_{\cep}{u}}{\ep}\right)\sigma(u)\du
= \frac{\ep}{2}\hat{\sigma}\left(\frac{2\omega_{\cep}}{\ep}\right).
\]
Since $\sigma \in \cap_{r=1}^{\infty} H_1^r$ and $\omega_{\cep}$ is bounded uniformly in $\ep$, a variant on the Riemann-Lebesgue lemma (specifically, Lemma A.5 in \cite{faver-wright}) shows $\I_{\ep,3} = \O(\ep^2)$.
Next, we estimate directly
\begin{equation}\label{easy-peasy L1}
|\I_{\ep,4}| 
\le \norm{\varsigma_{\cep}-\ep^2\sigma(\ep\cdot)}_{L^1}
\le C\ep^2
\end{equation}
by part \ref{FP L1-1} of Proposition \ref{hoffman-wayne proposition}.

We make the same decomposition on $\I_{\ep,2}$ from \eqref{lt+-} and use the $L^1$-estimate as in \eqref{easy-peasy L1} to find
\[
|\I_{\ep,2}|
\le \left|\ep^2\int_0^{\infty} \sin(2\omega_{\cep}{s})\sigma(\ep{s}) \ds\right| + C\ep^2.
\]
An $L^1$-estimate on $\medint_0^{\infty} \sin(2\omega_{\cep}{s})\sigma(\ep{s})\ds$ will cost us a power of $\ep$ and reduce the estimate on $\I_{\ep,2}$ to only $\O(\ep)$.
We can do better by changing variables with $u = \ep{s}$ and integrating by parts to find
\begin{multline*}
\ep^2\int_0^{\infty} \sin(2\omega_{\cep}{s})\sigma(\ep{s}) \ds
= \ep\int_0^{\infty} \sin\left(\frac{2\omega_{\cep}{u}}{\ep}\right)\sigma(u) \du \\
\\
= \ep\left(-\frac{\ep}{2\omega_{\cep}}\cos\left(\frac{2\omega_{\cep}{u}}{\ep}\right)\sigma({u})\bigg|_{u=0}^{u=\infty}
+ \frac{\ep}{2\omega_{\cep}}\int_0^{\infty} \cos\left(\frac{2\omega_{\cep}{u}}{\ep}\right)\sigma(u) \du\right) \\
\\
= \frac{\ep^2\sigma(0)}{2\omega_{\cep}}+\ep^2\int_0^{\infty}\cos\left(\frac{2\omega_{\cep}{u}}{\ep}\right)\sigma(u) \du.
\end{multline*}
This is plainly $\O(\ep^2)$.

We have 
\[
\lt_-[\varsigma_{\cep}e^{i\omega_{\cep}\cdot}](-i\omega_{\cep})
\overline{\lt_+[\varsigma_{\cep}e^{i\omega_{\cep}\cdot}](i\omega_{\cep})}
 = \O(\ep^2),
\] 
and so we conclude that for some $\ep_{\RES} \le \ep_{\RES,1}$ and all $\ep \in (0,\overline{\ep})$, we have
\begin{equation}\label{beta expansion}
\beta_{\ep}[\M{g}_{\ep}]
= \ep^2\theta_{\ep}^-,
\end{equation}
where $\theta_{\ep}^- = \O(1)$.
\section{Proofs Involved in the Construction of the Nonlocal Solitary Wave Problem}\label{nanopteron proofs appendix}

\subsection{Proof of Proposition \ref{friesecke-pego bootstrap}}\label{friesecke-pego bootstrap appendix}
We simply rearrange \eqref{what sigma c does} into
\begin{equation}\label{basic boot}
\varsigma_c''
= \frac{(A-2)(\varsigma_c+\varsigma_c^2)}{c^2}.
\end{equation}
The operator $(A-2)/c^2$ maps $E_q^r$ into $E_q^r$ for any $q$ and $r$.
Since $\varsigma_c \in E_{q_{\varsigma}(c)}^2$, we also have $\varsigma_c^2 \in E_{q_{\varsigma}(c)}^2$, and so \eqref{basic boot} implies $\varsigma_c'' \in E_{q_{\varsigma}(c)}^2$, i.e., $\varsigma_c \in E_{q_{\varsigma}(c)}^4$.
We continue to bootstrap with \eqref{basic boot} to obtain $\varsigma_c \in \cap_{r=1}^{\infty} E_{q_{\varsigma}(c)}^r$.

\subsection{Proof of Proposition \ref{H-c invert general}}\label{H-c invert general appendix}
First we study an auxiliary Fourier multiplier.
Let
\[
\tM_c(z)
:= -c^2z^2+2-2\cos(z).
\]
Clearly $\tM_c$ is entire and one can check that $\tM_c$ has a double zero at $z=0$ and no other zeros on $\R$.
Next, as in the proof of part \ref{B quadratic decay} of Proposition \ref{symbol of B}, one can show the existence of $C(c)$, $z_0(c) > 0$ such that if $q_{\H}(c) \le |\im(z)| \le \min\{q_{\varsigma}(c),1\}$ and $|z| \ge z_0(c)$, then
\[
C(c)|\re(z)|^2 
\le |\tM_c(z)|.
\]
Lemma \ref{beale fm} then provides $q_{\H}^{\star}(c)$, $q_{\H}^{\star\star}(c)$ such that $q_{\H}(c) < q_{\H}^{\star}(c) < q_{\H}^{\star\star}(c) < \min\{q_{\varsigma}(c),1\}$ such that for $q \in [q_{\H}^{\star}(c),q_{\H}^{\star\star}(c)]$ and $r \ge 0$, the Fourier multiplier $\M_c$ with symbol $\tM_c$ is invertible from $E_q^{r+2}$ to $E_{q,0}^r$.

Now we are ready to prove the proposition.
Fix $q \in [q_{\H}^{\star}(c),q_{\H}^{\star\star}(c)]$ and $r \ge 0$.
Since $q \ge q_{\H}^{\star}(c) > q_{\H}(c)$, we have $E_q^{r+2} \subseteq E_{q_{\H}(c)}^2$, and so $\H_c$ is injective from $E_q^{r+2}$ to $E_{q,0}^r$.
For surjectivity, take $g \in E_{q,0}^r$.
Then $g \in E_{q_{\H}(c),0}^0$, so there exists $f \in E_{q_{\H}(c)}^2$ such that $\H_cf = g$. 

Next, we need to show that $f$ that $f \in E_q^{r+2}$.
We rearrange the equality $\H_cf = g$ into
\[
\M_cf
= 2(A-2)\varsigma_cf + g.
\]
Since $\varsigma_c \in E_{q_{\varsigma}(c)}^2$ and $f \in E_{q_{\H}(c)}^2$, and since $q \le q_{\H}^{\star\star} < q_{\varsigma}(c)$, we have $\varsigma_cf \in E_{q_{\varsigma}(c) + q_{\H}(c)}^2 \subseteq E_q^2$.
Then $2(A-2)\varsigma_cf + g \in E_{q,0}^0$, and so 
\[
f 
= \M_c^{-1}[2(A-2)\varsigma_cf + g] 
\in E_q^2.
\]
If $r = 0$, then we are done; otherwise, we rearrange the equality $\H_cf = g$ into the different form
\[
f''
= \frac{1}{c^2}(A-2)(1+2\varsigma_c)f + g,
\]
and we may bootstrap from this equality until we achieve $f \in E_q^{r+2}$.

\subsection{Proof of the formula \ref{iota-chi formula}}\label{iota-chi formula proof appendix}
We use the definition of $\chi_c$ in \eqref{chi-c defn}, the definitions of $\B_c$ and $\Sigma_c$ in \eqref{B-c Sigma-c defns}, the definition of $\Sigma_c^*$ in \eqref{L-c adjoint defn}, the fundamental property $(\B_c-\Sigma_c^*)\gamma_c = 0$, and the evenness of $\B_c\gamma_c\sin(\omega_c\cdot)$ to calculate
\[
\iota_c[\chi_c]
= 2\lim_{R \to \infty} \int_0^R \big(c^2\gamma_c''(x) + (2+A)\gamma_c(x)\big)\sin(\omega_cx) \dx.
\]

Now we integrate by parts.
First, we use the formula (as stated in Appendix D of \cite{hoffman-wright})
\[
\int_0^R f''(x)g(x) \dx = f'(R)g(R)-f(R)g'(R)+\int_0^R f(x)g''(x) \dx,
\]
valid for $f$ and $g$ odd, to rewrite
\begin{multline}\label{ibp1}
2c^2\int_0^R \gamma_c''(x)\sin(\omega_cx) \dx
= 2c^2\big(\gamma_c'(R)\sin(\omega_cR)-\omega_c\gamma_c(R)\cos(\omega_cR)\big) \\
-2c^2\omega_c^2\int_0^R \gamma_c(x)\sin(\omega_cx)\dx.
\end{multline}
Observe
\begin{equation}\label{ibp2}
-\omega_c^2\int_0^R \gamma_c(x) \sin(\omega_cx) \dx
= \int_0^R \gamma_c(x)[\partial_x^2\sin(\omega_c\cdot)](x) \dx.
\end{equation}
We have another formula, easily established through direct calculation and $u$-substitution:
\begin{multline}\label{adjoint 0 R A formula}
\int_0^R (Af)(x)g(x) \dx
=\int_0^R f(x)(Ag)(x) \dx 
+ \frac{1}{2}\int_R^{R+1} f(x)(S^{-1}g)(x) \dx
- \frac{1}{2}\int_{R-1}^R f(x)(S^1g)(x) \dx \\
\\
+ \frac{1}{2}\int_{-1}^0 f(x)(S^1g)(x) \dx 
- \frac{1}{2}\int_0^1f(x)(S^{-1})g(x) \dx.
\end{multline}
We combine \eqref{ibp1} and \eqref{ibp2}, apply \eqref{adjoint 0 R A formula}, and use $\B_c^* = \B_c$ to find
\begin{equation}\label{I-c-3 limits}
\begin{aligned}
\iota_c[\chi_c]
&=2c^2\lim_{R \to \infty} \big(\gamma_c'(R)\sin(\omega_cR)-\omega_c\gamma_c(R)\cos(\omega_cR)\big) \\
\\
&+2\lim_{R \to \infty} \int_0^R \gamma_c(x)(\B_c\sin(\omega_c\cdot))(x)\dx \\
\\
&+ \left(\int_{-1}^0 \gamma_c(x)\sin(\omega_c(x+1))\dx- \int_0^1 \gamma_c(x)\sin(\omega_c(x-1))\dx \right) \\
\\
&+ \lim_{R \to \infty} \left(\int_R^{R+1} \gamma_c(x)\sin(\omega_c(x-1))\dx - \int_{R-1}^R \gamma_c(x)\sin(\omega_c(x+1))\dx\right).
\end{aligned}
\end{equation}

We can evaluate exactly each of the four terms above.
Trigonometric identities and the asymptotics of $\gamma_c$ from \eqref{gamma asymptotics} give 
\[
\lim_{R \to \infty} \big(\gamma_c'(R)\sin(\omega_cR)-\omega_c\gamma_c(R)\cos(\omega_cR)\big)
= \omega_c\sin(\omega_c\vartheta_c).
\]
Next, since $\B_ce^{\pm{i}\omega_c\cdot} = 0$,  we have
\[
\lim_{R \to \infty} \int_0^R \gamma_c(x)(\B_c\sin(\omega_c\cdot))(x)\dx 
= 0,
\]
and so the second term is zero.
For the third term, we rewrite the integrals using the addition formula for sine and then use the evenness of $\gamma_c\sin(\omega_c\cdot)$ and the oddness of $\gamma_c\cos(\omega_c\cdot)$ to show that the resulting integrals all add up to zero.

Last, we rewrite the first integral in the fourth term as
\begin{multline*}
\int_R^{R+1} \gamma_c(x)\sin(\omega_c(x-1))\dx
= \int_R^{R+1} \big(\gamma_c(x)-\sin(\omega_c(x+\vartheta_c))\big)\sin(\omega_c(x-1)) \dx \\
\\
+ \int_R^{R+1} \sin(\omega_c(x+\vartheta_c))\sin(\omega_c(x-1))\dx,
\end{multline*}
where
\[
\left|\int_R^{R+1} \big(\gamma_c(x)-\sin(\omega_c(x+\vartheta_c))\big)\sin(\omega_c(x-1)) \dx\right|
\le \max_{R \le x \le R+1} |\gamma_c(x) - \sin(\omega_c(x+\vartheta_c))| \to 0
\]
as $R \to \infty$ by the asymptotics of $\gamma_c$ in \eqref{gamma asymptotics}.
Similar manipulations in the other integral in the fourth term and a host of trig identities then yield
\begin{multline*}
\lim_{R \to \infty} \left(\int_R^{R+1} \gamma_c(x)\sin(\omega_c(x-1))\dx - \int_{R-1}^R \gamma_c(x)\sin(\omega_c(x+1))\dx\right) \\
\\
= \lim_{R \to \infty} \left(\int_R^{R+1} \sin(\omega_c(x+\vartheta_c))\sin(\omega_c(x-1)) \dx
- \int_{R-1}^R \sin(\omega_c(x+\vartheta_c))\sin(\omega_c(x+1)) \dx\right) \\
\\
= -\sin(\omega_c\vartheta_c)\sin(\omega_c).
\end{multline*}

All together, we have
\[
\iota_c[\chi_c]
= \big(2c^2\omega_c-\sin(\omega_c)\big)\sin(\omega_c\vartheta_c),
\]
and this is exactly the formula \eqref{iota-chi formula}.
The preceding calculation of $\iota_c[\chi_c]$ is similar to the work in Appendix D of \cite{hoffman-wright}, except there the small parameter $\mu$ appeared throughout the calculations as well, which allowed Hoffman and Wright to ignore some terms analogous to those in \eqref{I-c-3 limits}.
\section{Proof of Proposition \ref{main workhorse proposition}}\label{all the nonlocal sol wave proofs}

\subsection{General estimates in $H_q^r$}

We begin with a collection of very useful estimates that we will invoke throughout the proof of Proposition \ref{main workhorse proposition}.
Most of these estimates follow from straightforward calculus and occasional recourses to the norms
\[
f \mapsto \norm{f}_{L^2} + \norm{\cosh(q\cdot)\partial_x^r[f]}_{L^2},
\quadword{and}
f \mapsto \norm{f}_{L^2} + \norm{\cosh^q(\cdot)\partial_x^r[f]}_{L^2},
\]
which are equivalent on $H_q^r$ to the norm defined in \eqref{hrq}.
Further details of the proof are given in Appendix C.3.3 of \cite{faver-dissertation}.

\begin{proposition}\label{general Hrq estimates prop}

\begin{enumerate}[label={\bf(\roman*)},ref={(\roman*)}]

\item
If $f \in H_q^r$ and $g \in W^{r,\infty}$, then 
\begin{equation}\label{Hrq-Wrinfty general est}
\norm{fg}_{r,q}
\le \norm{f}_{r,q}\norm{g}_{W^{r,\infty}}.
\end{equation}

\item
If $f$, $g \in H_q^r$, then
\begin{equation}\label{Hrq algebra est}
\norm{fg}_{r,q}
\le \norm{\sech(q\cdot)}_{W^{r,\infty}}\norm{f}_{r,q}\norm{g}_{r,q}.
\end{equation}

\item
If $f \in W^{r,\infty}$ and $\omega \in \R$, then 
\begin{equation}\label{Wrinfty scaling est}
\norm{f(\omega\cdot)}_{W^{r,\infty}}
\le \left(\max_{0 \le k \le r} |\omega|^k\right)\norm{f}_{W^{r,\infty}}.
\end{equation}

\item\label{decay borrowing}
If $f \in H_{q_2}^r$; $g \in W^{r,\infty}$; $\omega$, $\grave{\omega} \in \R$; and $0 < q_1 < q_2$, then
\begin{equation}\label{decay borrowing est}
\norm{f \cdot (g(\omega\cdot)-g(\grave{\omega}\cdot))}_{r,q_1}
\le C(r,q_2-q_1)\left(\max_{0 \le k \le r} \Lip(\partial_x^k[g])\right)\norm{f}_{r,q_2}\norm{g}_{W^{r,\infty}}|\omega-\grave{\omega}|,
\end{equation}
where the Lipschitz constant $\Lip(\cdot)$ was defined in \eqref{Lip defn}.

\item\label{decay borrowing BIG}
If $f \in H_{q_2}^r$; $g$, $\grave{g} \in W^{r,\infty}$; $\omega$, $\grave{\omega} \in \R$; and $0 < q_1 < q_2$, then
\begin{multline}\label{outer inner Lipschitz}
\norm{f \cdot (g(\omega\cdot)-\grave{g}(\grave{\omega}\cdot))}_{r,q_1} \\
\\
\le C(r,q_2-q_1)\norm{f}_{r,q_2}\left[\left(\max_{0 \le k \le r} \Lip(\partial_x^k[g])\right)\norm{g}_{W^{r,\infty}}|\omega-\grave{\omega}|
+ \left(\max_{0 \le k \le r} |\grave{\omega}|^k\right)\norm{g-\grave{g}}_{W^{r,\infty}}\right] .
\end{multline}
\end{enumerate}
\end{proposition}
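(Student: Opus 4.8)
The plan is to establish all five estimates of Proposition~\ref{general Hrq estimates prop} by reducing everything to the equivalent norms on $H_q^r$ described just before the statement, namely to control of $\|f\|_{L^2}$ together with $\|\cosh^q(\cdot)\partial_x^r[f]\|_{L^2}$ (or the variant with $\cosh(q\cdot)$), and then to use the ordinary Leibniz rule together with the Banach-algebra structure of $W^{r,\infty}$. Throughout, the weight $\cosh^q(\cdot)$ is even, smooth, bounded below by $1$, and has all derivatives bounded by $C(r)\cosh^q(\cdot)$, so multiplication by it interacts cleanly with differentiation. I would first record the pointwise bound $|\partial_x^k[g(\omega\cdot)]| = |\omega|^k|(\partial_x^k g)(\omega\cdot)| \le (\max_{0\le k\le r}|\omega|^k)\|g\|_{W^{r,\infty}}$, which immediately gives \eqref{Wrinfty scaling est}; this is the workhorse for the scaled-argument terms.

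For \eqref{Hrq-Wrinfty general est}, I would expand $\partial_x^r[fg] = \sum_{j=0}^r \binom{r}{j}\partial_x^j f\,\partial_x^{r-j}g$, insert the weight $\cosh^q(\cdot)$ on the $\partial_x^j f$ factor, and estimate $\|\cosh^q(\cdot)\partial_x^j f\,\partial_x^{r-j}g\|_{L^2} \le \|\cosh^q(\cdot)\partial_x^j f\|_{L^2}\|\partial_x^{r-j}g\|_{L^\infty}$, bounding each factor by the respective full norm; the $L^2$ term $\|fg\|_{L^2}\le\|f\|_{L^2}\|g\|_{L^\infty}$ is similar. For \eqref{Hrq algebra est} the only change is that, since neither factor lies in $W^{r,\infty}$, I would route the uncontrolled $L^\infty$ factor through the Sobolev embedding in weighted form: $\cosh^q(\cdot)$ times a product of lower-order derivatives of $f$ and $g$ is handled by distributing the weight as $\cosh^q = \cosh^{q}\cdot\sech^{q}\cdot\cosh^{q}$ — one copy of $\cosh^q$ on each factor and a compensating $\sech^q$, whose $W^{r,\infty}$ norm appears in the stated constant — and then using $H_q^r \hookrightarrow L^\infty$ with uniform constant; carefully tracking which derivative order carries the weight gives exactly the bound with the factor $\|\sech(q\cdot)\|_{W^{r,\infty}}$.

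The estimates \eqref{decay borrowing est} and \eqref{outer inner Lipschitz} are the substantive ones, and \eqref{outer inner Lipschitz} is the step I expect to be the main obstacle, since it combines an ``inner'' Lipschitz estimate in the frequency $\omega$ with an ``outer'' Lipschitz estimate in the function $g$. The mechanism is the same as in \eqref{decay borrowing est}: the difference $g(\omega\cdot)-g(\grave\omega\cdot)$ is, after differentiating $k$ times, a sum of terms of the form $\omega^k(\partial_x^k g)(\omega\cdot) - \grave\omega^k(\partial_x^k g)(\grave\omega\cdot)$, and one writes this as $[\omega^k - \grave\omega^k](\partial_x^k g)(\omega\cdot) + \grave\omega^k[(\partial_x^k g)(\omega\cdot) - (\partial_x^k g)(\grave\omega\cdot)]$; the second bracket is bounded using the Lipschitz constant of $\partial_x^k g$ and $|(\omega-\grave\omega)x|$, and the stray factor of $x$ is absorbed into the weight by sacrificing a sliver of decay — this is where the hypothesis $0<q_1<q_2$ and the constant $C(r,q_2-q_1)=\sup_x |x|^r\sech^{q_2-q_1}(x)\cdot(\ldots)$ enter, since $|x|^r e^{-(q_2-q_1)|x|}$ is bounded. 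Multiplying against $f\in H_{q_2}^r$ and keeping $\cosh^{q_1}(\cdot) = \cosh^{q_1}(\cdot)\sech^{q_2}(\cdot)\cdot\cosh^{q_2}(\cdot)$ converts the $q_2$-decay of $f$ into the $q_1$-decay of the product. For \eqref{outer inner Lipschitz} I would additionally split $g(\omega\cdot)-\grave g(\grave\omega\cdot) = [g(\omega\cdot)-g(\grave\omega\cdot)] + [g(\grave\omega\cdot)-\grave g(\grave\omega\cdot)]$, apply \eqref{decay borrowing est} to the first bracket, and apply \eqref{Hrq-Wrinfty general est} together with \eqref{Wrinfty scaling est} to the second; summing the two contributions yields the stated bound. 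The only real care needed is bookkeeping of which derivative order carries the exponential weight in the multi-term Leibniz expansions, and verifying that each ``decay-borrowing'' constant is finite and depends only on $r$ and $q_2-q_1$; no deep ideas are required beyond what is already in Appendix~C.3.3 of \cite{faver-dissertation}, which I would cite for the remaining routine details.
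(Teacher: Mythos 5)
Your proposal is correct and follows essentially the same route the paper gestures at: the paper's own treatment merely records the equivalent norms $f\mapsto\|f\|_{L^2}+\|\cosh^q(\cdot)\partial_x^r f\|_{L^2}$ (and the $\cosh(q\cdot)$ variant), remarks that the estimates follow from ``straightforward calculus,'' and then defers the details to Appendix C.3.3 of \cite{faver-dissertation} and to Lemma A.2 of \cite{faver-wright} for part \ref{decay borrowing} --- precisely the Leibniz-expansion-plus-weight-redistribution machinery you lay out, including the decay-borrowing trick of splitting $\cosh^{q_1}=\cosh^{q_1}\sech^{q_2}\cosh^{q_2}$ to absorb a stray factor of $|x|$ at the cost of the gap $q_2-q_1$. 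Your reduction of part \ref{decay borrowing BIG} to parts \ref{decay borrowing}, \eqref{Hrq-Wrinfty general est}, and \eqref{Wrinfty scaling est} via the splitting $g(\omega\cdot)-\grave g(\grave\omega\cdot)=[g(\omega\cdot)-g(\grave\omega\cdot)]+[(g-\grave g)(\grave\omega\cdot)]$, together with $\|f\|_{r,q_1}\le\|f\|_{r,q_2}$, is the natural one and works.
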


We refer to parts \ref{decay borrowing} and \ref{decay borrowing BIG} as ``decay borrowing'' estimates, as they permit us to achieve a Lipschitz estimate on a product starting in a space of lower decay by ``borrowing'' from the decay rates of one of the faster-decaying functions in the product.  
A version of part \ref{decay borrowing} in particular was stated and proved as Lemma A.2 in \cite{faver-wright}.
From this proposition we immediately deduce two estimates for our quadratic nonlinearity $\nl$ from \eqref{nl}.

\begin{lemma}\label{estimates for Q}

\begin{enumerate}[label={\bf(\roman*)}]

\item
If $\rhob$, $\grave{\rhob} \in H_q^r \times H_q^r$, then
\begin{equation}\label{Hrq product estimate for Q}
\norm{\nl(\rhob,\grave{\rhob})}_{r,q}
\le C(r,q)\norm{\rhob}_{r,q}\norm{\grave{\rhob}}_{r,q}
\end{equation}

\item
If $\rhob \in H_q^r \times H_q^r$ and $\phib \in W^{r,\infty} \times W^{r,\infty}$, then
\begin{equation}\label{Hrq-Wrinfty estimate for Q}
\norm{\nl(\rhob,\phib)}_{r,q}
\le C(r,q)\norm{\rhob}_{r,q}\norm{\phib}_{W^{r,\infty} \times W^{r,\infty}}.
\end{equation}
\end{enumerate}
\end{lemma}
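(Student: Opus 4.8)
The statement to prove is Lemma \ref{estimates for Q}, which asserts two product-type estimates for the symmetric bilinear nonlinearity $\nl$ defined in \eqref{nl}. The plan is to derive both inequalities directly from the general $H_q^r$ estimates collected in Proposition \ref{general Hrq estimates prop}, exploiting the explicit component formulas $\nl_1(\rhob,\grave{\rhob}) = \rho_1\grave{\rho}_1 + \rho_2\grave{\rho}_2$ and $\nl_2(\rhob,\grave{\rhob}) = \rho_1\grave{\rho}_2 + \grave{\rho}_1\rho_2$, and the definition $\norm{\fb}_{r,q} = \norm{f_1}_{r,q} + \norm{f_2}_{r,q}$ for vector-valued functions.

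For part (i), I would start from the triangle inequality applied componentwise: $\norm{\nl(\rhob,\grave{\rhob})}_{r,q} = \norm{\nl_1(\rhob,\grave{\rhob})}_{r,q} + \norm{\nl_2(\rhob,\grave{\rhob})}_{r,q}$, then bound each term using the triangle inequality again to split the sums-of-products, e.g.\ $\norm{\nl_1(\rhob,\grave{\rhob})}_{r,q} \le \norm{\rho_1\grave{\rho}_1}_{r,q} + \norm{\rho_2\grave{\rho}_2}_{r,q}$. Each scalar product $\norm{\rho_i\grave{\rho}_j}_{r,q}$ is then estimated by the algebra inequality \eqref{Hrq algebra est}, namely $\norm{\rho_i\grave{\rho}_j}_{r,q} \le \norm{\sech(q\cdot)}_{W^{r,\infty}}\norm{\rho_i}_{r,q}\norm{\grave{\rho}_j}_{r,q}$. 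Summing the resulting (at most four) terms and bounding $\norm{\rho_i}_{r,q} \le \norm{\rhob}_{r,q}$, $\norm{\grave{\rho}_j}_{r,q} \le \norm{\grave{\rhob}}_{r,q}$ gives \eqref{Hrq product estimate for Q} with $C(r,q)$ absorbing the numerical factor and $\norm{\sech(q\cdot)}_{W^{r,\infty}}$ (which is finite for $q>0$, or trivially handled for $q=0$ since then $H_q^r = H^r$ is an algebra up to a standard constant).

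For part (ii), the argument is identical in structure but uses the mixed estimate \eqref{Hrq-Wrinfty general est}, $\norm{fg}_{r,q} \le \norm{f}_{r,q}\norm{g}_{W^{r,\infty}}$, in place of the algebra estimate: writing out $\nl_1$ and $\nl_2$ with $\rhob \in H_q^r \times H_q^r$ and $\phib \in W^{r,\infty} \times W^{r,\infty}$, each of the four scalar products has one factor from $H_q^r$ and one from $W^{r,\infty}$, so \eqref{Hrq-Wrinfty general est} applies termwise; summing and bounding each component norm by the full vector norm yields \eqref{Hrq-Wrinfty estimate for Q}. I do not anticipate any genuine obstacle here — this lemma is a routine corollary, and the only mild care needed is to keep track of which of the two product estimates from Proposition \ref{general Hrq estimates prop} is appropriate for each of the two parts and to note that the symmetry and bilinearity of $\nl$ recorded in \eqref{nl symm bil} guarantee there is no ambiguity in the component expansions. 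The bulk of the work was already done in establishing Proposition \ref{general Hrq estimates prop}, so the proof of Lemma \ref{estimates for Q} is essentially a two-line bookkeeping exercise for each part.
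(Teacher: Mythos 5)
Your proof is correct and is exactly the argument the paper intends: the authors state that Lemma \ref{estimates for Q} is ``immediately deduced'' from Proposition \ref{general Hrq estimates prop}, and your componentwise expansion of $\nl_1$, $\nl_2$ followed by termwise application of \eqref{Hrq algebra est} for part (i) and \eqref{Hrq-Wrinfty general est} for part (ii) is precisely that deduction.
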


Proposition \ref{general Hrq estimates prop} also allows us to relate $\chi_c^{\mu}$ from \eqref{chi-c-mu defn} and $\chi_c$ from \eqref{chi-c defn}.

\begin{lemma}\label{chi-c-mu minus chi-c est}
For any $q \in (0,q_{\varsigma}(c))$ and $r \ge 0$, there is a constant $C(c,q,r) > 0$ such that 
\[
\norm{\chi_c^{\mu}-\chi_c}_{r,q}
\le C(c,q,r)|\mu|
\]
for all $|\mu| \le \mu_{\per}(c)$.
\end{lemma}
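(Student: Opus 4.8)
The goal is to control $\chi_c^\mu - \chi_c$ in $H_q^r$, where by \eqref{chi-c-mu defn} and \eqref{chi-c defn},
\[
\chi_c^\mu = 2\D_\mu\nl(\varsigmab_c,\phib_c^\mu[0])\cdot\e_2
\quadword{and}
\chi_c = 2\D_0\nl(\varsigmab_c,\phib_c^0[0])\cdot\e_2.
\]
Both are built from the same localized factor $\varsigmab_c$ against a smooth bounded periodic factor. First I would recall the two $\mu$-dependencies: the coefficient operator $\D_\mu$ depends affinely on $\mu$ through the decomposition $\D_\mu = \D_0 + \mu\Dring$ from Section \ref{beale's ansatz section}, and the periodic factor $\phib_c^\mu[0]$ depends on $\mu$ through the amplitude $\upsilon_c^\mu = \O_c(\mu)$ and the frequency $\omega_c^\mu$, which by part \ref{omega-c-mu minus omega-c} of Proposition \ref{critical frequency props prop} (equivalently Lemma \ref{critical frequency technical lemma}) satisfies $\omega_c^\mu - \omega_c = \O_c(\mu)$. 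Using Proposition \ref{periodic solutions theorem}, at $a=0$ we have $\psi_{c,j}^\mu[0]=0$, so the periodic part reduces exactly to
\[
\phib_c^\mu[0](x) = \begin{pmatrix*} \upsilon_c^\mu\cos(\omega_c^\mu x) \\ \sin(\omega_c^\mu x)\end{pmatrix*}.
\]
Hence $\phib_c^0[0] = (0,\sin(\omega_c\cdot))$ and the difference $\phib_c^\mu[0]-\phib_c^0[0]$ consists of the term $\upsilon_c^\mu\cos(\omega_c^\mu\cdot)$ in the first slot, which is $\O_c(\mu)$ in $W^{r,\infty}$ by the bound on $\upsilon_c^\mu$ plus \eqref{Wrinfty scaling est} and the uniform bound $\omega_c^\mu < \pi/2$, and the term $\sin(\omega_c^\mu\cdot)-\sin(\omega_c\cdot)$ in the second slot.

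The key step is then a telescoping decomposition:
\begin{multline*}
\chi_c^\mu - \chi_c
= 2\mu\Dring\nl(\varsigmab_c,\phib_c^\mu[0])\cdot\e_2
+ 2\D_0\big(\nl(\varsigmab_c,\phib_c^\mu[0])-\nl(\varsigmab_c,\phib_c^0[0])\big)\cdot\e_2 \\
= 2\mu\Dring\nl(\varsigmab_c,\phib_c^\mu[0])\cdot\e_2
+ 2\D_0\nl\big(\varsigmab_c,\phib_c^\mu[0]-\phib_c^0[0]\big)\cdot\e_2,
\end{multline*}
using the bilinearity \eqref{nl symm bil} of $\nl$ in the last equality. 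For the first summand I would use that $\Dring$ (a fixed linear combination of shift operators) is bounded on $H_q^r$, that $\nl$ maps $H_q^r\times W^{r,\infty}$ to $H_q^r$ via \eqref{Hrq-Wrinfty estimate for Q}, that $\varsigmab_c \in H_q^r\times H_q^r$ by Proposition \ref{friesecke-pego bootstrap} (as $q < q_\varsigma(c)$), and that $\|\phib_c^\mu[0]\|_{W^{r,\infty}}$ is bounded uniformly in $|\mu|\le\mu_{\per}(c)$ by Corollary \ref{periodic corollary}; this gives a clean $\O_c(\mu)$ bound. For the second summand I would again invoke \eqref{Hrq-Wrinfty estimate for Q} together with the boundedness of $\D_0$ on $H_q^r$, reducing matters to estimating $\|\phib_c^\mu[0]-\phib_c^0[0]\|_{W^{r,\infty}}$. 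The first component $\upsilon_c^\mu\cos(\omega_c^\mu\cdot)$ is handled as above; the second component $\sin(\omega_c^\mu\cdot)-\sin(\omega_c\cdot)$ is controlled by \eqref{outer inner Lipschitz} applied with $f\equiv\varsigmab_c$ (or more directly, since $\sin$ has bounded derivatives of all orders and $\omega_c^\mu,\omega_c$ lie in a fixed compact interval, by the elementary estimate $\|\sin(\omega_c^\mu\cdot)-\sin(\omega_c\cdot)\|_{W^{r,\infty}(I)}\le C(r)(1+|\omega|^{r+1})|\omega_c^\mu-\omega_c|$ on any fixed bounded interval, which suffices because after multiplication by the exponentially decaying $\varsigmab_c$ the tails are negligible — this is precisely the content of the decay-borrowing estimate part \ref{decay borrowing}). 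Combining with $\omega_c^\mu-\omega_c = \O_c(\mu)$ yields the claimed bound.

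The only genuinely delicate point — and the one I would treat most carefully — is the frequency-difference term: a naive $W^{r,\infty}(\R)$ estimate of $\sin(\omega_c^\mu\cdot)-\sin(\omega_c\cdot)$ fails because $\sin(\omega_c^\mu x)-\sin(\omega_c x)$ does not decay as $|x|\to\infty$ and in fact its $W^{r,\infty}(\R)$ norm is order $1$, not order $\mu$. The resolution is to never separate the product: one must keep $\varsigmab_c$ attached and use \eqref{decay borrowing est} (with $q_1 = q$, $q_2$ slightly larger but still below $q_\varsigma(c)$, which is legitimate since Proposition \ref{friesecke-pego bootstrap} gives $\varsigmab_c \in H_{q_2}^r \times H_{q_2}^r$) so that the exponential weight supplies the decay needed to convert the pointwise $\O(|\omega_c^\mu-\omega_c|\cdot|x|)$ growth of the sine difference into a genuine $\O_c(\mu)$ bound in $H_q^r$. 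Once this is set up, the remaining estimates are routine applications of Lemma \ref{estimates for Q} and Proposition \ref{general Hrq estimates prop}, and the constants depend only on $c$, $q$, and $r$ as required.
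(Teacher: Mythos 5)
Your proposal matches the paper's proof: the same telescoping decomposition into $(\D_\mu-\D_0)\nl(\varsigmab_c,\phib_c^\mu[0])\cdot\e_2$ and $\D_0\nl(\varsigmab_c,\phib_c^\mu[0]-\phib_c^0[0])\cdot\e_2$, the first term handled by $\D_\mu-\D_0=\mu\Dring$ together with the uniform periodic bounds, and the second by the decay-borrowing estimate \eqref{decay borrowing est} applied to $\varsigma_c\cdot(\sin(\omega_c^\mu\cdot)-\sin(\omega_c\cdot))$ combined with $\omega_c^\mu-\omega_c=\O_c(\mu)$. You correctly flag the only delicate point — that the frequency difference must be estimated with $\varsigma_c$ attached rather than in $W^{r,\infty}(\R)$ alone — which is precisely the role of Proposition \ref{general Hrq estimates prop}\ref{decay borrowing} in the paper's argument; the only small redundancy is that the $\upsilon_c^\mu\cos(\omega_c^\mu\cdot)$ component of the difference drops out of the $\e_2$-projection entirely (since $\varsigmab_c=(\varsigma_c,0)$), so it needn't be estimated in the second summand.
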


\begin{proof}
Recall that
\[
\chi_c^{\mu} 
= 2\D_{\mu}\nl(\varsigmab_c,\phib_c^{\mu}[0])\cdot\e_2
\quadword{and}
\chi_c 
= 2\D_0\nl(\varsigmab_c,\phib_c^0[0])\cdot\e_2.
\]
Then
\[
\chi_c^{\mu}-\chi_c
= \bunderbrace{(\D_{\mu}-\D_0)\nl(\varsigmab_c,\phib_c^{\mu}[0])\cdot\e_2}{I}
 + \bunderbrace{\D_0\nl(\varsigmab_c,\phib_c^{\mu}[0]-\phib_c^0[0])\cdot\e_2}{II}.
\]
From the definition of $\D_{\mu}$ in \eqref{D-mu defn}, the uniform bounds on the periodic solutions from \eqref{periodic theorem bounded}, and the product estimate \eqref{Hrq-Wrinfty estimate for Q}, we estimate that if $0 < q < q_{\varsigma}(c)$, then
\[
\norm{I}_{r,q}
\le C(c,r)|\mu|.
\]
Next, we have
\[
\phib_c^{\mu}[0]
= \begin{pmatrix*}
\upsilon_c^{\mu}\cos(\omega_c^{\mu}\cdot) \\
\sin(\omega_c^{\mu}\cdot)
\end{pmatrix*}
\quadword{and}
\phib_c^0[0]
= \begin{pmatrix*}
0 \\
\sin(\omega_c\cdot)
\end{pmatrix*},
\]
per \eqref{phib-c-mu-a expansion} and the estimate $\upsilon_c^{\mu} = \O_c(\mu)$.
We use this to calculate
\[
II
= (2+A)(\varsigma_c\cdot(\sin(\omega_c^{\mu}\cdot)-\sin(\omega_c\cdot))).
\]
Then we use the decay-borrowing estimate \eqref{decay borrowing est} with the condition $0 < q < q_{\varsigma}(c)$ as well as part \ref{omega-c-mu minus omega-c} of Proposition \ref{critical frequency props prop}  to find
\[
\norm{II}_{r,q}
\le C(c,q,r)|\omega_c^{\mu}-\omega_c|
\le C(c,q,r)|\mu|.
\qedhere
\]
\end{proof}

\subsection{Proof of Proposition \ref{main workhorse proposition}}\label{proof of main workhorse proposition appendix}
We rely on the following lemma, which we prove in the next three sections.
We inherit the general techniques from the myriad nanopteron estimates in \cite{faver-wright}, \cite{hoffman-wright}, \cite{faver-spring-dimer}, and \cite{johnson-wright}.

\begin{lemma}\label{workhorse lemma for workhorse prop}
For all $r \ge 1$, there are constants $C(c,r)$, $C(c) > 0$ such that the following estimates hold for any $|\mu| \le \mu_{\per}(c)$.

\begin{enumerate}[label={\bf(\roman*)},ref={(\roman*)}]

\item\label{workhorse map part}
If $\etab \in E_{q_{\star}(c)}^r \times O_{q_{\star}(c)}^r$ and $|a| \le a_{\per}(c)$, then
\begin{equation}\label{workhorse map est}
\norm{\nanob_c^{\mu}(\etab,a)}_{\X^r} 
\le C(c,r)\big(|\mu| + |\mu|\norm{\etab}_{r,q_{\star}(c)} + |\mu||a| + |a|\norm{\etab}_{r,q_{\star}(c)} + \norm{\etab}_{r,q_{\star}(c)}^2 + a^2\big).
\end{equation}

\item\label{workhorse lip part}
If $\etab$, $\grave{\etab} \in E_{q_{\star}(c)}^1 \times O_{q_{\star}(c)}^1$ and $|a|$, $|\grave{a}| \le a_{\per}(c)$, then 
\begin{equation}\label{workhorse lip est}
\norm{\nanob_c^{\mu}(\etab,a)-\nanob_c^{\mu}(\grave{\etab},\grave{a})}_{\X^0}
\le C(c)\big(|\mu| + |a| + |\grave{a}| + \norm{\etab}_{1,q_{\star}(c)} + \norm{\grave{\etab}}_{1,q_{\star}(c)})(\norm{\etab-\grave{\etab}}_{1,\qbar_{\star}(c)} + |a-\grave{a}|\big).
\end{equation}

\item
If $\etab \in E_{q_{\star}(c)}^r \times O_{q_{\star}(c)}^r$ and $|a| \le a_{\per}(c)$, then
\begin{equation}\label{workhorse boot est}
\norm{\nanob_c^{\mu}(\etab,a)}_{\X^{r+1}} 
\le C(c,r)\big(|\mu| + |\mu|\norm{\etab}_{r,q_{\star}(c)} + |\mu||a| + |a|\norm{\etab}_{r,q_{\star}(c)} + \norm{\etab}_{r,q_{\star}(c)}^2 + a^2\big).
\end{equation}
\end{enumerate}
\end{lemma}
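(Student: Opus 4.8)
\textbf{Proof proposal for Lemma \ref{workhorse lemma for workhorse prop}.}

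The plan is to prove the three estimates by unwinding the definitions of $\nanob_c^\mu = (\nano_{c,1}^\mu,\nano_{c,2}^\mu,\nano_{c,3}^\mu)$ from \eqref{N1 defn}, \eqref{A defn}, and \eqref{N2 defn}, and bounding each component separately. First I would record that $\nano_{c,1}^\mu(\etab,a) = -\H_c^{-1}\sum_k h_{c,k}^\mu(\etab,a)$, and that $\H_c^{-1}$ is bounded from $E_{q,0}^r$ to $E_q^{r+2}\hookrightarrow E_q^r$ for $q\in(q_{\H}^\star(c),q_{\H}^{\star\star}(c))$ by Proposition \ref{H-c invert general}; since $q_\star(c)$ was chosen in that range, it suffices to bound each $h_{c,k}^\mu$ in $\norm{\cdot}_{r,q_\star(c)}$. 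Here I would go term by term through \eqref{h ell defns}: $h_{c,1}^\mu$ is $\O_c(\mu)$ because $\Dring$ is a bounded operator and $\varsigmab_c + \nl(\varsigmab_c,\varsigmab_c)\in E_{q_\varsigma(c)}^r$ (Proposition \ref{friesecke-pego bootstrap} plus \eqref{Hrq product estimate for Q}); $h_{c,2}^\mu$ is bounded by $C|\mu|\norm{\etab}_{r,q_\star(c)}$ using $\Dring$ bounded and \eqref{Hrq-Wrinfty estimate for Q} with $\varsigmab_c\in W^{r,\infty}$; $h_{c,3}^\mu = -2a\D_\mu\nl(\varsigmab_c,\phib_c^\mu[a])\cdot\e_1$ is $\le C|a|(\norm{\phib_c^\mu[a]}_{W^{r,\infty}})\norm{\varsigmab_c}_{r,q_\star(c)}\le C|a|$ via \eqref{Hrq-Wrinfty estimate for Q} and the uniform periodic bound \eqref{periodic theorem bounded} — wait, more carefully, for the map estimate one wants a factor beyond $|a|$ alone, but since $|a|\le a_{\per}(c)$ the term $|a|$ is already admissible in \eqref{workhorse map est}; $h_{c,4}^\mu$ is $\le C|a|\norm{\phib_c^\mu[a]}_{W^{r,\infty}}\norm{\etab}_{r,q_\star(c)}\le C|a|\norm{\etab}_{r,q_\star(c)}$; and $h_{c,5}^\mu$ is $\le C\norm{\etab}_{r,q_\star(c)}^2$ by \eqref{Hrq product estimate for Q}. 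Summing gives \eqref{workhorse map est} for the first component. The $a$-component $\nano_{c,3}^\mu(\etab,a) = \iota_c[\chi_c]^{-1}\sum_k\iota_c[\tell_{c,k}^\mu(\etab,a)]$ is handled the same way: $\iota_c$ is bounded on $O_q^r$ by part \ref{iota-c continuous functional} of Theorem \ref{abstract phase shift theorem}, $\iota_c[\chi_c]\ne0$ by Hypothesis \ref{hypothesis 4} and \eqref{iota-chi formula}, and the $\tell_{c,k}^\mu$ satisfy the same termwise bounds as the $\ell_{c,k}^\mu$ except that $\tell_{c,3}^\mu = \ell_{c,3}^\mu + a\chi_c$; for this one I would show $\norm{\ell_{c,3}^\mu(\etab,a) + a\chi_c}_{r,q_\star(c)}\le C(|\mu||a| + a^2 + |a|\norm{\etab}_{r,q_\star(c)})$ by writing $\ell_{c,3}^\mu + a\chi_c = -2a\D_\mu\nl(\varsigmab_c,\phib_c^\mu[a]) + 2a\D_0\nl(\varsigmab_c,\phib_c^0[0])$, splitting off $(\D_\mu - \D_0)$ (costs $|\mu|$), $\phib_c^\mu[a] - \phib_c^\mu[0]$ (costs $|a|$ by the Lipschitz bound \eqref{periodic theorem lipschitz}), and $\phib_c^\mu[0] - \phib_c^0[0]$ (costs $|\mu|$ via $\upsilon_c^\mu = \O_c(\mu)$, part \ref{omega-c-mu minus omega-c} of Proposition \ref{critical frequency props prop}, and the decay-borrowing estimate \eqref{decay borrowing est} — this is exactly the computation in Lemma \ref{chi-c-mu minus chi-c est}). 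Finally $\nano_{c,2}^\mu(\etab,a) = \L_c^{-1}\P_c\sum_k\tell_{c,k}^\mu(\etab,a)$: $\P_c$ is bounded on $O_q^r$ (it is identity minus a rank-one term built from the bounded functional $\iota_c$ and $\chi_c\in\cap_r O_q^r$), and $\L_c^{-1}\colon\P_c[O_q^r]\to O_q^{r+2}$ is bounded as established just after \eqref{N2 defn}, so the same termwise bounds on $\tell_{c,k}^\mu$ feed through.

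For part \ref{workhorse lip part}, the $\X^0$ norm is measured in the \emph{lower} decay rate $\qbar_\star(c) < q_\star(c)$ and only in $H^1$, and the arguments $\etab,\grave\etab$ are assumed to lie in the $q_\star(c)$-weighted space; this is the structure tailored to Lemma \ref{nano fp lemma}. I would subtract $\nanob_c^\mu(\etab,a) - \nanob_c^\mu(\grave\etab,\grave a)$ componentwise and estimate each difference $h_{c,k}^\mu(\etab,a) - h_{c,k}^\mu(\grave\etab,\grave a)$, $\ell$- and $\tell$-analogues likewise, in $\norm{\cdot}_{1,\qbar_\star(c)}$. The bilinear and quadratic terms ($h_{c,5}^\mu$, $h_{c,4}^\mu$, and the $a$-linear parts) produce the expected $(\norm{\etab}_{1,q_\star(c)} + \norm{\grave\etab}_{1,q_\star(c)} + |a| + |\grave a|)(\norm{\etab - \grave\etab}_{1,\qbar_\star(c)} + |a - \grave a|)$ structure by the usual "$xy - \grave x\grave y = (x-\grave x)y + \grave x(y - \grave y)$" trick together with \eqref{Hrq product estimate for Q} and \eqref{Hrq-Wrinfty estimate for Q}; the $\mu$-linear terms give $C|\mu|(\norm{\etab-\grave\etab}_{1,\qbar_\star(c)} + |a-\grave a|)$. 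The one genuinely delicate piece is the difference $\tell_{c,3}^\mu(\etab,a) - \tell_{c,3}^\mu(\grave\etab,\grave a)$, which contains the difference of periodic profiles $\phib_c^\mu[a] - \phib_c^\mu[\grave a]$ evaluated at different frequencies $\omega_c^\mu[a]$ versus $\omega_c^\mu[\grave a]$; here I would invoke the decay-borrowing estimate in the form \eqref{outer inner Lipschitz}, which is precisely designed to turn such a difference into $C(|a-\grave a|)$ in a \emph{lower} decay norm by borrowing exponential decay from the fixed factor $\varsigmab_c\in E_{q_\varsigma(c)}^r$, using the Lipschitz bound \eqref{periodic theorem lipschitz} on both $\omega_c^\mu[\cdot]$ and $\psi_{c,j}^\mu[\cdot]$. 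This is the main obstacle: keeping careful track of which factor carries the $q_\varsigma(c)$ decay (always the Friesecke-Pego wave $\varsigmab_c$ or its square) so that the product lands in $H_{\qbar_\star(c)}^1$ with the $q_\star(c) - \qbar_\star(c) > 0$ gap available for borrowing, and verifying that the periodic factors $\phib_c^\mu[a]$ only ever appear multiplied by such a decaying factor (which is true because $\ell_{c,3}^\mu,\ell_{c,4}^\mu$ each contain $\varsigmab_c$ or $\etab$).

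For part \ref{workhorse boot part}, the key observation is that $\H_c^{-1}$, $\L_c^{-1}$ both gain \emph{two} derivatives (mapping into $E_q^{r+2}$, resp. $O_q^{r+2}$), and the right-hand sides $h_{c,k}^\mu$, $\tell_{c,k}^\mu$ map $E_{q_\star(c)}^r\times O_{q_\star(c)}^r\times\R$ into $E_{q_\star(c),0}^r\times O_{q_\star(c)}^r$ — in fact the very same algebraic structure shows they map into the $(r{+}1)$-index space if $\etab$ does, since every operation ($\D_\mu$, $\Dring$, $\nl$, multiplication by $\varsigmab_c\in\cap_r E^r_{q_\varsigma(c)}$ or by $\phib_c^\mu[a]\in\cap_r W^{r,\infty}$) preserves or raises regularity. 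Thus applying $\H_c^{-1}$ and $\L_c^{-1}$ to data in $E_{q_\star(c),0}^{r+1}\times O_{q_\star(c)}^{r+1}$ (wait, one gains two derivatives but we only claim $r+1$, so the bound is comfortable) yields the $\X^{r+1}$ estimate \eqref{workhorse boot est} with exactly the same right-hand side as \eqref{workhorse map est}, repeating the termwise bounds at regularity $r$ on the right and $r+1$ on the left via the two-derivative smoothing. With Lemma \ref{workhorse lemma for workhorse prop} in hand, Proposition \ref{main workhorse proposition} follows by the standard argument: choose $\tau_c$ larger than the constant $C(c,1)$ from \eqref{workhorse map est} and then $\mu_\star(c)$ small enough that all the $|\mu|$-, $|a|\le\tau_c|\mu|$-, and $\norm{\etab}_{1,q_\star(c)}\le\tau_c|\mu|$-dependent terms in \eqref{workhorse map est} close the ball (part \ref{main workhorse map part}), make the coefficient in \eqref{workhorse lip est} at most $1/2$ by further shrinking $\mu_\star(c)$ (part \ref{main workhorse lip part}), and read off part \ref{main workhorse boot part} directly from \eqref{workhorse boot est} with $\tilde\tau$ depending on $\tau$, $\tau_c$, and $C(c,r)$.
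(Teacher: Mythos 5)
The proposal tracks the paper's approach term-by-term, but it contains a genuine error in the mapping estimate for $h_{c,3}^{\mu}$ that would break the contraction argument downstream.

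You claim $\norm{h_{c,3}^{\mu}(\etab,a)}_{r,q_{\star}(c)}\le C|a|$ and assert that ``$|a|$ is already admissible in \eqref{workhorse map est}.'' That is false: the right side of \eqref{workhorse map est} contains only $|\mu|$, $|\mu|\norm{\etab}$, $|\mu||a|$, $|a|\norm{\etab}$, $\norm{\etab}^2$, and $a^2$ --- no bare $|a|$ term. This omission is not cosmetic. In the proof of Proposition \ref{main workhorse proposition}, with $(\etab,a)\in\U_{\tau_c,\mu}^1$ so that $|a|\le\tau_c|\mu|$, a term $C(c,1)|a|$ would contribute $C(c,1)\tau_c|\mu|$, which cannot be absorbed back into $\tau_c|\mu|$ by shrinking $\mu_{\star}(c)$ (it would require $C(c,1)\le1$, which is not guaranteed). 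The only term in $\rhs_{\map}$ that does not carry an extra factor of $\tau_c|\mu|$ is the standalone $|\mu|$; everything else must be at least quadratic in $(\mu,\norm{\etab},a)$.

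The missing idea is that the first (even) component of the periodic profile is itself small. Since $\varsigmab_c=(\varsigma_c,0)$, the quantity $\nl(\varsigmab_c,\phib_c^{\mu}[a])\cdot\e_1=\varsigma_c\phi_{c,1}^{\mu}[a]$ involves only $\phi_{c,1}^{\mu}[a]$, and by the expansion \eqref{phib-c-mu-a expansion} together with $\upsilon_c^{\mu}=\O_c(\mu)$ (from \eqref{upsilonw defn}) and $\norm{\psi_{c,1}^{\mu}[a]}_{W^{r,\infty}}\le C(c,r)|a|$ (from \eqref{naked psib-c-mu-a est}), one gets $\norm{\phi_{c,1}^{\mu}[a]}_{W^{r,\infty}}\le C(|\mu|+|a|)$. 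The piece of $\D_{\mu}\nl(\varsigmab_c,\phib_c^{\mu}[a])\cdot\e_1$ that touches $\phi_{c,2}^{\mu}[a]$ is multiplied by $\mu\delta$ because of the off-diagonal structure of $\D_{\mu}$ in \eqref{D-mu defn}. Combining, $h_{c,3}^{\mu}=\O(|\mu a|+a^2)$, which is admissible. The same refinement is also needed in your Lipschitz analysis, where you lump $h_{c,3}^{\mu}$ among the ``$a$-linear parts'': the $|a-\grave a|$-contribution from $(a-\grave a)\D_{\mu}\nl(\varsigmab_c,\phib_c^{\mu}[a])\cdot\e_1$ must carry the prefactor $(|\mu|+|a|)$, since a bare $C|a-\grave a|$ term is likewise inadmissible in \eqref{workhorse lip est}. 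Apart from this point, and the minor superfluous $|a|\norm{\etab}$ term you include in the $\tell_{c,3}^{\mu}$ bound (which depends only on $a$, not $\etab$), your outline agrees with the paper's.
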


Now we prove Proposition \ref{main workhorse proposition}.
Set
\[
\tau_c
= 6C(c,1)
\]
with $C(c,1)$ from part \ref{workhorse map part} of Lemma \ref{workhorse lemma for workhorse prop}, and let
\[
\mu_{\star}(c)
:= \min\left\{1,\frac{1}{\tau_c},\frac{1}{\tau_c^2}, \mu_{\per}(c),\frac{a_{\per}(c)}{\tau_c},\frac{1}{2C(c)(1+4\tau_c)}\right\},
\]
with $C(c)$ from part \ref{workhorse lip part} of the same lemma.
Note in particular that if $|\mu| \le \mu_{\star}(c)$ and $(\etab,a) \in \U_{\tau,\mu}^r$, then 
\[
|a|
\le \norm{\etab}_{r,q_{\star}(c)} + |a|
\le \tau_c|\mu|
\le a_{\per}(c), 
\]
and so $\nanob_c^{\mu}(\etab,a)$ is well-defined.

\begin{enumerate}[label=,labelsep=0pt]

\item
{\it{Proof of \ref{main workhorse map part}.}}
Assume $|\mu| \le \mu_{\star}(c)$ and $(\etab,a) \in \U_{\tau_c,\mu}^1$.
We estimate from \eqref{workhorse map est} with $r=1$ that
\[
\norm{\nanob_c^{\mu}(\etab,a)}_{1,q_{\star}(c)}
\le C(c,1)(1 + 3\tau_c|\mu| + 2\tau_c^2|\mu|)|\mu|
\le 6C(c,1)|\mu|
= \tau_c|\mu|.
\]
That is, $\nanob_c^{\mu}(\etab,a) \in \U_{\tau_c,\mu}^1$.

\item
{\it{Proof of \ref{main workhorse lip part}.}}
Next, let $(\etab,a)$, $(\grave{\etab},\grave{a}) \in \U_{\tau_c,\mu}^1$.
We estimate from \eqref{workhorse lip est} that 
\[
\norm{\nanob_c^{\mu}(\etab,a)-\nanob_c^{\mu}(\grave{\etab},\grave{a})}_{1,\qbar_{\star}(c)}
\le C(c)|\mu|(1+4\tau_c)\big(\norm{\etab-\grave{\etab}}_{1,\qbar_{\star}(c)} + |a-\grave{a}|\big)
< \frac{1}{2}\big(\norm{\etab-\grave{\etab}}_{1,\qbar_{\star}(c)} + |a-\grave{a}|\big).
\]

\item
{\it{Proof of \ref{main workhorse boot part}.}}
Finally, let $(\etab,a) \in \U_{\tau_c,\mu}^1 \cap \U_{\tau,\mu}^r$, where $\tau > 0$ is arbitrary.
Then \eqref{workhorse boot est} implies
\[
\norm{\nanob_c^{\mu}(\etab,a)}_{r,q_{\star}(c)} 
\le C(c,r)(1 + 3\tau|\mu| + 2\tau^2|\mu|)|\mu|
\le C(c,r)(1+3\tau+2\tau^2)|\mu|.
\]
With 
\[
\ttau
:= C(c,r)(1+3\tau+2\tau^2),
\]
we see that $\nanob_c^{\mu}(\etab,a) \in \U_{\ttau,\mu}^{r+1}$.
\end{enumerate}

\subsection{Mapping estimates}\label{mapping estimates appendix}
In this appendix we prove part \ref{workhorse map part} of Lemma \ref{workhorse lemma for workhorse prop}.
The definitions of $\nano_{c,1}^{\mu}$ in \eqref{N1 defn}, $\nano_{c,2}^{\mu}$ in \eqref{N2 defn}, and $\nano_{c,3}^{\mu}$ in \eqref{A defn} imply
\[
\norm{\nano_{c,1}^{\mu}(\etab,a)}_{r,q_{\star}(c)}
\le \norm{\H_c^{-1}}_{\b(E_{q_{\star}(c),0}^r,E_{q_{\star}(c)}^{r+2})}\sum_{k=1}^5 \norm{h_{c,k}^{\mu}(\etab,a)}_{r,q_{\star}(c)},
\]
\[
\norm{\nano_{c,2}^{\mu}(\etab,a)}_{r,q_{\star}(c)}
\le \norm{\L_c^{-1}\P_c}_{\b(O_{q_{\star}(c)}^r,O_{q_{\star}(c)}^{r+2})}\sum_{k=1}^5\norm{\tell_{c,k}^{\mu}(\etab,a)}_{r,q_{\star}(c)},
\]
and
\[
|\nano_{c,3}^{\mu}(\etab,a)|
\le \frac{1}{|\iota_c[\chi_c]|}\norm{\iota_c}_{(O_q^r)^*}\sum_{k=1}^5 \norm{\tell_{c,k}^{\mu}(\etab,a)}_{r,q_{\star}(c)}.
\]
Then to obtain the estimate \eqref{workhorse map est}, it suffices to find bounds of the form
\[
\norm{h_{c,k}^{\mu}(\etab,a)}_{r,q_{\star}(c)}
+ \norm{\tell_{c,k}^{\mu}(\etab,a)}_{r,q_{\star}(c)}
\le C(c,r)\rhs_{\map}(\norm{\etab}_{r,q_{\star}(c)},a,\mu),\ k = 1,\ldots,6,
\]
where for $a$, $\mu$, $\rho \in \R$, we define
\[
\rhs_{\map}(\rho,a,\mu)
:= |\mu| + |\mu\rho| + |\mu{a}| + |a\rho| + a^2 + \rho^2.
\]
We do this in the following sections; throughout, we recall that these $h_{c,k}^{\mu}$ and $\tell_{c,k}^{\mu}$ terms were defined in \eqref{h ell defns} and \eqref{tells}.

\subsubsection{Mapping estimates for $h_{c,1}^{\mu}$ and $\tell_{c,1}^{\mu}$}
Since $\varsigma_c \in \cap_{r=0}^{\infty} E_{q_{\varsigma}(c)}^r$, we have
\[
\norm{h_{c,1}^{\mu}(\etab,a)}_{r,q_{\star}(c)}
+ \norm{\tell_{c,1}^{\mu}(\etab,a)}_{r,q_{\star}(c)}
= |\mu|\norm{\Dring(\varsigmab_c+\nl(\varsigmab_c,\varsigmab_c)}_{r,q_{\star}(c)}
= \O_c(\mu).
\]

\subsubsection{Mapping estimates for $h_{c,2}^{\mu}$ and $\tell_{c,2}^{\mu}$}
We use \eqref{Hrq algebra est} to estimate 
\[
\norm{h_{c,1}^{\mu}(\etab,a)}_{r,q_{\star}(c)}
\le C(c,r)|\mu|\norm{\etab}_{r,q_{\star}(c)} + C(c,r)|\mu|\norm{\varsigma_c\eta_1}_{r,q_{\star}(c)} 
\le C(c,r)|\mu|\norm{\etab}_{r,q_{\star}(c)}.
\]

\subsubsection{Mapping estimates for $h_{c,3}^{\mu}$}\label{h-c-3-mu mapping appendix}
The estimates for this term and its counterpart $\tell_{c,3}^{\mu}$ are probably the most intricate of all the mapping estimates.
First, we compute
\begin{equation}\label{h-c-3-mu mapping decomp}
h_{c,3}^{\mu}(\etab,a)
= -2a\D_{\mu}\nl(\varsigmab_c,\phib_c^{\mu}[a])\cdot\e_1
= \bunderbrace{-a(2+\mu)(2-A)(\varsigma_c\phi_{c,1}^{\mu}[a])}{aI} - \bunderbrace{\mu{a}\delta(\varsigma_c\phi_{c,2}^{\mu}[a])}{aII}.
\end{equation}
The bound \eqref{periodic theorem bounded} on $\norm{\phib_c^{\mu}[a]}_{W^{r,\infty}}$ from Proposition \ref{periodic solutions theorem} and the product estimate \eqref{Hrq-Wrinfty general est} from Proposition \ref{general Hrq estimates prop} tell us
\begin{equation}\label{h-c-3-mu mapping II}
\norm{II}_{r,q_{\star}(c)}
\le C(c,r)|\mu|.
\end{equation}

Now we use the expansion \eqref{phib-c-mu-a expansion} in Proposition \ref{periodic solutions theorem} to write
\[
\phi_{c,1}^{\mu}[a]
= \upsilon_c^{\mu}\cos(\omega_c^{\mu}[a]\cdot) + \psi_{c,1}^{\mu}[a](\omega_c^{\mu}[a]\cdot),
\]
We then use the product estimate \eqref{Hrq-Wrinfty general est} and this expansion to bound
\begin{align*}
\norm{I}_{r,q_{\star}(c)}
&\le C(c,r)\norm{\varsigma_c\phi_{c,1}^{\mu}[a]}_{r,q_{\star}(c)} \\
\\
&\le C(c,r)\norm{\upsilon_c^{\mu}\cos(\omega_c^{\mu}[a]\cdot) + \psi_{c,1}^{\mu}[a](\omega_c^{\mu}[a]\cdot)}_{W^{r,\infty}} \\
\\
&\le C(c,r)|\upsilon_c^{\mu}|\norm{\cos(\omega_c^{\mu}[a]\cdot)}_{W^{r,\infty}}
+ C(c,r)\norm{\psi_{c,1}^{\mu}[a](\omega_c^{\mu}[a]\cdot)}_{W^{r,\infty}}.
\end{align*}
The estimate $\upsilon_c^{\mu} = \O_c(\mu)$ and the bounds on $|\omega_c^{\mu}[a]|$ from \eqref{periodic theorem bounded} give
\[
|\upsilon_c^{\mu}|\norm{\cos(\omega_c^{\mu}[a]\cdot)}_{W^{r,\infty}}
\le C(c,r)|\mu|.
\]

Last, from the uniform bound \eqref{naked psib-c-mu-a est} on $\psib_c^{\mu}$ and the scaling estimate \eqref{Wrinfty scaling est}, we have
\[
\norm{\psi_{c,1}^{\mu}[a](\omega_c^{\mu}[a]\cdot)}_{W^{r,\infty}}
\le C(c,r)|a|
\]
and thus
\begin{equation}\label{h-c-3-mu mapping I}
\norm{I}_{r,q_{\star}(c)}
\le C(c,r)(|\mu| + |a|).
\end{equation}
We conclude
\[
\norm{h_{c,3}^{\mu}(\etab,a)}_{r,q_{\star}(c)}
\le |a|\big(\norm{I}_{r,q_{\star}(c)} + \norm{II}_{r,q_{\star}(c)}\big)
\le C(c,r)(|a\mu| + a^2).
\]

\subsubsection{Mapping estimates for $\tell_{c,3}^{\mu}$}\label{tell-c-3-mu mapping appendix}
Recall the definitions of $\tell_{c,3}^{\mu}$ in \eqref{tells} and of $\chi_c$ from \eqref{chi-c defn} and $\chi_c^{\mu}$ from \eqref{chi-c-mu defn}.
Then
\begin{align*}
\tell_{c,3}^{\mu}(\etab,a)
&= -2a\D_{\mu}\nl(\varsigmab_c,\phib_c^{\mu}[a])\cdot\e_2 +a\chi_c \\
\\
&= -2a\D_{\mu}\nl(\varsigmab_c,\phib_c^{\mu}[a])\cdot\e_2 + a\chi_c^{\mu} + a(\chi_c-\chi_c^{\mu}) \\
\\
&= -2a\D_{\mu}\nl(\varsigmab_c,\phib_c^{\mu}[a]-\phib_c^{\mu}[0])\cdot\e_2 + a(\chi_c-\chi_c^{\mu}) \\
\\
&= \bunderbrace{\mu{a}\delta(\varsigma_c\cdot\big(\phi_{c,1}^{\mu}[a]-\phi_{c,2}^{\mu}[0])\big)}{\mu{a}I}
- \bunderbrace{a(2+\mu)(2+A)\big(\sigma_c\cdot(\phi_{c,2}^{\mu}[a]-\phi_{c,2}^{\mu}[0])\big)}{aII}
+ \bunderbrace{a(\chi_c-\chi_c^{\mu})}{aIII}.
\end{align*}
We know from Lemma \ref{chi-c-mu minus chi-c est} that $\norm{III}_{r,q_{\star}(c)} \le C(c,q_{\star}(c),r)|\mu|$, so we only need to estimate the terms $I$ and $II$.

We can use the triangle inequality and the periodic bounds \eqref{periodic theorem bounded} to obtain the crude estimate
\[
|\mu{a}|\norm{I}_{r,q_{\star}(c)}
\le C(c,r)|\mu{a}|,
\] 
but we need to do more work with $II$.
From the periodic expansion \eqref{phib-c-mu-a expansion} and part \ref{psib-c-mu-0=0} of Proposition \ref{periodic solutions theorem}, we calculate
\[
\phi_{c,2}^{\mu}[a]-\phi_{c,2}^{\mu}[0]
= \big(\sin(\omega_c^{\mu}[a]\cdot)-\sin(\omega_c^{\mu}[0]\cdot)\big)
+ \psi_{c,2}^{\mu}[a](\omega_c^{\mu}[a]\cdot).
\]
Then
\[
\norm{II}_{r,q_{\star}(c)}
\le C(c,r)\bunderbrace{\norm{\varsigma_c\cdot\big(\sin(\omega_c^{\mu}[a]\cdot)-\sin(\omega_c^{\mu}[0]\cdot)\big)}_{r,q_{\star}(c)}}{IV}
+ \bunderbrace{\norm{\varsigma_c\psi_{c,2}^{\mu}[a](\omega_c^{\mu}[a]\cdot)}_{r,q_{\star}(c)}}{V}.
\]
We use \eqref{periodic theorem bounded}, \eqref{naked psib-c-mu-a est}, and \eqref{Wrinfty scaling est} to estimate
\[
V
\le C(c,r)|a|.
\]

To estimate $IV$, we exploit the condition $\varsigma_c \in \cap_{r=0}^{\infty} (E_{q_{\varsigma}(c)}^r \cap E_{q_{\star}(c)}^r)$ with $q_{\star}(c) < q_{\varsigma}(c)$.
Since this inequality is strict, we may call on the decay borrowing estimate \eqref{decay borrowing est} to find
\[
\norm{IV}_{r,q_{\star}(c)}
\le C(c,r)\norm{\varsigma_c}_{r,q_{\varsigma}(c)}|\omega_c^{\mu}[a]-\omega_c^{\mu}[0]|.
\]
Then we use \eqref{periodic theorem lipschitz} to conclude
\[
\norm{IV}_{r,q_{\star}(c)}
\le C(c,r)|a|.
\]
We put all these estimates together to find
\[
\norm{\tell_{c,3}^{\mu}(\etab,a)}_{r,q_{\star}(c)}
\le C(c,r)(|\mu{a}| + a^2).
\]

\subsubsection{Mapping estimates for $h_{c,4}^{\mu}$ and $\tell_{c,4}^{\mu}$}
These terms are roughly quadratic of the form $a\etab$.
We first use the estimate \eqref{Hrq-Wrinfty estimate for Q} in Lemma \ref{estimates for Q} to find
\begin{multline*}
\norm{h_{c,4}^{\mu}(\etab,a)}_{r,q_{\star}(c)}
+ \norm{\tell_{c,4}^{\mu}(\etab,a)}_{r,q_{\star}(c)}
\le 2|a|\norm{\D_{\mu}\nl(\phib_c^{\mu}[a],\etab)}_{r,q_{\star}(c)} \\
\\
\le C(c,r)|a|\norm{\phib_c^{\mu}[a]}_{W^{r,\infty}}\norm{\etab}_{r,q_{\star}(c)}.
\end{multline*}
Next, we use the estimate \eqref{periodic theorem bounded} on $\norm{\phib_c^{\mu}[a]}_{W^{r,\infty}}$ to conclude
\[
\norm{h_{c,4}^{\mu}(\etab,a)}_{r,q_{\star}(c)}
+ \norm{\tell_{c,4}^{\mu}(\etab,a)}_{r,q_{\star}(c)}
\le C(c,r)|a|\norm{\etab}_{r,q_{\star}(c)}.
\]

\subsubsection{Mapping estimates for $h_{c,5}^{\mu}$ and $\tell_{c,5}^{\mu}$}
These terms are both quadratic in $\etab$, and so we estimate simultaneously
\[
\norm{h_{c,5}^{\mu}(\etab,a)}_{r,q_{\star}(c)}
+ \norm{\tell_{c,5}^{\mu}(\etab,a)}_{r,q_{\star}(c)}
\le \norm{\D_{\mu}\nl(\etab,\etab)}_{r,q_{\star}(c)}
\le C(c,r)\norm{\etab}_{r,q_{\star}(c)}^2
\]
by \eqref{Hrq-Wrinfty estimate for Q} in Lemma \ref{estimates for Q}.

\subsection{Lipschitz estimates}
In this appendix we prove part \ref{workhorse lip part} of Lemma \ref{workhorse lemma for workhorse prop}.
By the same reasoning from the start of Appendix \ref{mapping estimates appendix}, it suffices to find bounds of the form
\begin{multline*}
\norm{h_{c,k}^{\mu}(\etab,a)-h_{c,k}^{\mu}(\grave{\etab},\grave{a})}_{1,\qbar_{\star}(c)}
+\norm{\tell_{c,k}^{\mu}(\etab,a)-\tell_{c,k}^{\mu}(\grave{\etab},\grave{a})}_{1,\qbar_{\star}(c)} \\
\\
\le C(c)\rhs_{\lip}\big(\norm{\etab}_{1,q_{\star}(c)},\norm{\grave{\etab}}_{1,q_{\star}(c)},|a|,|\grave{a}|\big)(\norm{\etab-\grave{\etab}}_{1,\qbar_{\star}(c)}+|a-\grave{a}|), \ k = 1,\ldots, 6,
\end{multline*}
where for $\rho$, $\grave{\rho}$, $a$, $\grave{a}$, $\mu \in \R$ we set
\[
\rhs_{\lip}(\rho,\grave{\rho},a,\grave{a},\mu)
:= |\mu| + |\rho| + |\grave{\rho}| + |a| + |\grave{a}|.
\]

\subsubsection{Lipschitz estimates for $h_{c,1}^{\mu}$ and $\tell_{c,1}^{\mu}$}
This is obvious because these terms are constant in both $\etab$ and $a$.

\subsubsection{Lipschitz estimates for $h_{c,2}^{\mu}$ and $\tell_{c,2}^{\mu}$}
This is obvious because these terms are linear in $\etab$ and come with a factor of $\mu$.

\subsubsection{Lipschitz estimates for $h_{c,3}^{\mu}$}
First we write
\[
h_{c,3}^{\mu}(\etab,a) - h_{c,3}^{\mu}(\grave{\etab},\grave{a})
= \bunderbrace{(a-\grave{a})\D_{\mu}\nl(\varsigmab_c,\phib_c^{\mu}[a])\cdot\e_1}{(a-\grave{a})I}
+ \bunderbrace{\grave{a}\D_{\mu}\nl\big(\varsigmab_c,\phib_c^{\mu}[a]-\phib_c^{\mu}[\grave{a}]\big)\cdot\e_1}{\grave{a}II}
\]
The methods of Appendix \ref{h-c-3-mu mapping appendix}, specifically, the decomposition \eqref{h-c-3-mu mapping decomp} and the estimates \eqref{h-c-3-mu mapping I} and \eqref{h-c-3-mu mapping II}, carry over to yield
\begin{equation}\label{2-I}
\norm{I}_{1,\qbar_{\star}(c)}
\le C(q,c)(|\mu| + |a|).
\end{equation}

Next, we estimate $II$ using techniques similar to those in Appendix \ref{tell-c-3-mu mapping appendix}.
Rewrite
\[
II
=\bunderbrace{ -(2+\mu)(2-A)\left(\varsigma_c \cdot \big(\phi_{c,1}^{\mu}[a] - \phi_{c,1}^{\mu}[\grave{a}]\big)\right)}{-(2+\mu)(2-A)(III)}
-\bunderbrace{\mu\delta\left(\varsigma_c \cdot \big(\phi_{c,2}^{\mu}[a] - \phi_{c,2}^{\mu}[\grave{a}]\big)\right)}{\mu\delta(IV)} .
\]
We will estimate only $III$ explicitly; the estimates on $IV$ are the same.
We have
\[
III
= \bunderbrace{\varsigma_c\cdot\big(\upsilon_c^{\mu}\cos(\omega_c^{\mu}[a]\cdot)-\upsilon_c^{\mu}\cos(\omega_c^{\mu}[\grave{a}]\cdot)\big)}{III_1}
+ \bunderbrace{\varsigma_c\cdot\big(\psi_{c,1}^{\mu}[a](\omega_c^{\mu}[a]\cdot) - \psi_{c,1}^{\mu}[\grave{a}](\omega_c^{\mu}[\grave{a}]\cdot)\big)}{III_2}.
\]
Apply the decay borrowing estimates \eqref{decay borrowing est} to $III_1$ and \eqref{outer inner Lipschitz} to $III_2$ and use the periodic Lipschitz estimates \eqref{periodic theorem lipschitz} and the estimate $\upsilon_{\mu} = \O_c(\mu)$ to conclude
\begin{equation}\label{2-III}
\norm{III_1}_{1,\qbar_{\star}(c)}
\le C(c)|\mu||a-\grave{a}|.
\end{equation}
Combine this estimate and its omitted counterpart on $IV$ with \eqref{2-I} to conclude
\[
\norm{h_{c,3}^{\mu}(\etab,a) - h_{c,3}^{\mu}(\grave{\etab},\grave{a})}_{1,\qbar_{\star}(c)}
\le 
C(c)(|\mu| + |a| + |\grave{a}|)|a-\grave{a}|.
\]

\subsubsection{Lipschitz estimates for $\tell_{c,3}^{\mu}$}
From the definitions of $\tell_{c,3}^{\mu}$ in \eqref{tells}, we compute
\begin{multline*}
\tell_{c,3}^{\mu}(\etab,a)-\tell_{c,3}^{\mu}(\grave{\etab},\grave{a})
= \big(-2\D_{\mu}\nl(\varsigmab_c,\phib_c^{\mu}[a])\cdot\e_2 + a\chi_c\big) - \big(-2\D_{\mu}\nl(\varsigmab_c,\phib_c^{\mu}[\grave{a}])\cdot\e_2 + \grave{a}\chi_c\big) \\
\\
= \bunderbrace{\big(-2\D_{\mu}\nl(\varsigmab_c,\phib_c^{\mu}[a])\cdot\e_2 + a\chi_c^{\mu}\big) - \big(-2\D_{\mu}\nl(\varsigmab_c,\phib_c^{\mu}[\grave{a}])\cdot\e_2 + \grave{a}\chi_c^{\mu}\big)}{I}
+\bunderbrace{(a-\grave{a})(\chi_c-\chi_c^{\mu})}{(a-\grave{a})II}
\end{multline*}
We know from Lemma \ref{chi-c-mu minus chi-c est} that 
\[
\norm{II}_{1,\qbar_{\star}(c)}
\le C(c)|\mu|,
\]
and so we just work on $I$, which, using the definition of $\chi_c^{\mu}$ in \eqref{chi-c-mu defn}, is 
\[
I
= 2\D_{\mu}\nl\big(\varsigmab_c,a(\phib_c^{\mu}[0]-\phib_c^{\mu}[a])-\grave{a}(\phib_c^{\mu}[0]-\phib_c^{\mu}[\grave{a}])\big)\cdot\e_2.
\]
Adding zero, we have
\[
a\big(\phib_c^{\mu}[0]-\phib_c^{\mu}[a])\big)-\grave{a}\big(\phib_c^{\mu}[0]-\phib_c^{\mu}[\grave{a}]\big)
= a\big(\phib_c^{\mu}[\grave{a}]-\phib_c^{\mu}[a]\big) + (a-\grave{a})\big(\phib_c^{\mu}[0]-\phib_c^{\mu}[\grave{a}]\big),
\]
and therefore
\begin{multline*}
\norm{\tell_{c,2}^{\mu}(\etab,a)-\tell_{c,2}^{\mu}(\grave{\etab},\grave{a})}_{1,\qbar_{\star}(c)}
\le \bunderbrace{C|a|\bignorm{\varsigma_c\cdot\big(\phi_{c,2}^{\mu}[\grave{a}]-\phi_{c,2}^{\mu}[a]\big)}_{1,\qbar_{\star}(c)}}{C|a|III} \\
\\
+ \bunderbrace{C|a-\grave{a}|\bignorm{\varsigma_c\cdot\big(\phi_{c,2}^{\mu}[0]-\phi_{c,2}^{\mu}[\grave{a}]\big)}_{1,\qbar_{\star}(c)}}{C|a-\grave{a}|IV}.
\end{multline*}
For both $III$ and $IV$, we use the decay borrowing estimate \eqref{outer inner Lipschitz} and the periodic Lipschitz estimate \eqref{periodic theorem lipschitz} to bound
\[
III \le C|a-\grave{a}|
\quadword{and}
IV \le C|\grave{a}|.
\]

\subsubsection{Lipschitz estimates for $h_{c,4}^{\mu}$ and $\tell_{c,4}^{\mu}$}
We compute
\begin{multline*}
\norm{h_{c,4}^{\mu}(\etab,a)-h_{c,4}^{\mu}(\grave{\etab},\grave{a})}_{1,\qbar_{\star}(c)}
+ \norm{\tell_{c,4}^{\mu}(\etab,a) - \tell_{c,4}^{\mu}(\grave{\etab},\grave{a})}_{1,\qbar_{\star}(c)} \\
\\
= 2\norm{a\D_{\mu}\nl(\phib_c^{\mu}[a],\etab)-\grave{a}\D_{\mu}\nl(\phib_c^{\mu}[\grave{a}],\grave{\etab})}_{1,\qbar_{\star}(c)} \\
\\
\le \bunderbrace{C|a-\grave{a}|\norm{\nl(\phib_c^{\mu}[a],\etab)}_{1,\qbar_{\star}(c)}}{C|a-\grave{a}|I}
+ \bunderbrace{C|\grave{a}|\bignorm{\nl\big(\phib_c^{\mu}[a]-\phib_c^{\mu}[\grave{a}],\etab\big)}_{1,\qbar_{\star}(c)}}{C|\grave{a}|II}
+ \bunderbrace{C|\grave{a}|\norm{\nl(\phib_c^{\mu}[\grave{a}],\etab-\grave{\etab})}_{1,\qbar_{\star}(c)}}{C|\grave{a}|III}.
\end{multline*}
We estimate
\[
I
\le C(c)\norm{\etab}_{1,\qbar_{\star}(c)}
\quadword{and}
III
\le C(c)\norm{\etab-\grave{\etab}}_{1,\qbar_{\star}(c)}
\]
with the product estimate \eqref{Hrq-Wrinfty estimate for Q} on $\nl$ and the periodic bounds \eqref{periodic theorem bounded}.
For $II$, recall that $\etab$, $\grave{\etab} \in E_{\qbar_{\star}(c)}^1 \times O_{\qbar_{\star}(c)}^1 \subseteq E_{q_{\star}(c)}^1 \times O_{q_{\star}(c)}^1$.
We can therefore invoke the decay borrowing estimate \eqref{outer inner Lipschitz} and the periodic Lipschitz estimate \eqref{periodic theorem lipschitz} to conclude
\[
II
\le C|a-\grave{a}|\norm{\etab}_{1,\qbar_{\star}(c)}.
\]

\subsubsection{Lipschitz estimates for $h_{c,5}^{\mu}$ and $\tell_{c,5}^{\mu}$}
These estimates follow immediately from the quadratic estimate \eqref{Hrq product estimate for Q} for $\nl$.

\subsection{Bootstrap estimates}
Since $\H_c^{-1} \in \b(E_{q_{\star}(c),0}^r,E_{q_{\star}(c)}^{r+2})$ and $\L_c^{-1}\P_c \in \b(O_{q_{\star}(c)}^r,O_{q_{\star}(c)}^{r+2})$, we have
\begin{multline*}
\norm{\nano_{c,1}^{\mu}(\etab,a)}_{r+1,q_{\star}(c)}
+ \norm{\nano_{c,2}^{\mu}(\etab,a)}_{r+1,q_{\star}(c)}
\le C(c,r)\sum_{k=1}^5 \norm{h_{c,k}^{\mu}(\etab,a)}_{r,q_{\star}(c)} \\
\\
+ C(c,r)\sum_{k=1}^5 \norm{h_{c,k}^{\mu}(\etab,a)}_{r,q_{\star}(c)}.
\end{multline*}
The individual mapping estimates above show that each of these eleven terms on the right is bounded by $C(c,r)\rhs_{\map}^{\mu}(\norm{\etab}_{r,q_{\star}(c)},a)$, and so we conclude that \eqref{workhorse boot est} also holds.

\begingroup
\footnotesize
\setlength{\parskip}{0pt}
\bibliographystyle{alpha}
\bibliography{equal_mass_bib}
\endgroup
\end{document}